\numberwithin{equation}{section}
\newtheorem{lemma}[equation]{Lemma}
\newtheorem{theorem}[equation]{Theorem}
\newtheorem{corollary}[equation]{Corollary}
\newtheorem{proposition}[equation]{Proposition}
\theoremstyle{definition}
\newtheorem{definition}[equation]{Definition}
\newtheorem{remark}[equation]{Remark}
\newcommand{\R}{{\mathbb R}}
\newcommand{\Z}{{\mathbb Z}}
\newcommand{\bH}{{\mathbf H}}
\newcommand{\bI}{{\mathbf I}}
\newcommand{\bZ}{{\mathbf Z}}
\newcommand{\M}{{\mathbf M}}
\newcommand{\cC}{{\mathscr C}}
\newcommand{\cD}{{\mathscr D}}
\newcommand{\cF}{{\mathscr F}}
\newcommand{\cL}{{\mathscr L}}
\newcommand{\cN}{{\mathscr N}}
\newcommand{\cP}{{\mathscr P}}
\newcommand{\cS}{{\mathscr S}}
\newcommand{\cZ}{{\mathscr Z}}
\newcommand{\del}{\delta}
\newcommand{\Del}{\Delta}
\newcommand{\eps}{\epsilon}
\newcommand{\gam}{\gamma}
\newcommand{\Gam}{\Gamma}
\newcommand{\kap}{\kappa}
\newcommand{\lam}{\lambda}
\newcommand{\Lam}{\Lambda}
\newcommand{\om}{\omega}
\renewcommand{\rho}{\varrho}
\newcommand{\si}{\sigma}
\newcommand{\Sig}{\Sigma}
\newcommand{\B}[2]{B_{#1}(#2)}   
\newcommand{\Sph}[2]{S_{#1}(#2)} 
\newcommand{\F}{F}
\newcommand{\G}{\Theta}
\newcommand{\Fi}{\F_\infty}
\newcommand{\Gi}{\G_\infty}
\newcommand{\olX}{\,\overline{\!X}}
\newcommand{\la}{\langle}
\newcommand{\ra}{\rangle}
\newcommand{\gp}[2]{\la #2\ra_{#1}}
\newcommand{\bb}[1]{\llbracket #1\rrbracket} 
\newcommand{\asrk}{\operatorname{asrk}}
\newcommand{\ben}{\begin{enumerate}}
\newcommand{\een}{\end{enumerate}}
\renewcommand{\d}{\partial}
\newcommand{\di}{\d_\infty} 
\newcommand{\on}{\:\mbox{\rule{0.1ex}{1.2ex}\rule{1.1ex}{0.1ex}}\:}
\newcommand{\ol}{\overline}
\newcommand{\can}{\operatorname{can}}
\newcommand{\const}{\operatorname{const}}
\newcommand{\diam}{\operatorname{diam}}
\newcommand{\cs}{\text{\rm c}} 
\newcommand{\C}{\text{\rm C}} 
\newcommand{\spt}{\operatorname{spt}}
\newcommand{\T}{\text{\rm T}}
\newcommand{\CT}{\cC_\T} 
\newcommand{\bT}{\partial_{\,\T}} 
\newcommand{\dT}{d_\T} 
\newcommand{\aT}{\angle_\T} 
\newcommand{\CAT}{\operatorname{CAT}}
\newcommand{\Hd}{d_\text{\rm H}} 
\newcommand{\id}{\operatorname{id}}
\newcommand{\im}{\operatorname{im}}
\newcommand{\loc}{\text{\rm loc}}
\newcommand{\Ray}{{\rm R}}
\newcommand{\sub}{\subset}
\newcommand{\sm}{\setminus}
\newcommand{\til}{\tilde}
\newcommand{\es}{\emptyset}
\newcommand{\Lip}{\operatorname{Lip}}
\begin{document}

\title{Higher rank hyperbolicity}
\author{Bruce Kleiner}
\address{Courant Institute of Mathematical Sciences\\
251 Mercer St.\\
New York, NY 10012}
\email{bkleiner@cims.nyu.edu}
\thanks{B.K.~was supported by NSF grant DMS-1711556, and a Simons Collaboration grant.}
\author{Urs Lang}
\address{Department of Mathematics\\ETH Zurich\\
R\"amistrasse 101\\
8092 Zurich}
\email{lang@math.ethz.ch}
\date{January 25, 2019}

\begin{abstract}
The large-scale geometry of hyperbolic metric spaces exhibits many 
distinctive features, such as the stability of quasi-geodesics
(the Morse Lemma), the visibility property, and the homeomorphism between
visual boundaries induced by a quasi-isometry. 
We prove a number of closely analogous results for spaces of rank~$n \ge 2$ 
in an asymptotic sense, under some weak assumptions reminiscent of 
nonpositive curvature. For this purpose we replace quasi-geodesic lines
with quasi-minimizing (locally finite) $n$-cycles of $r^n$~volume growth;
prime examples include $n$-cycles associated with $n$-quasiflats.  
Solving an asymptotic Plateau problem and producing unique tangent cones
at infinity for such cycles, we show in particular that
every quasi-isometry between two proper $\CAT(0)$ spaces of asymptotic
rank~$n$ extends to a class of $(n-1)$-cycles in the Tits boundaries.
\end{abstract}

\maketitle
  
\tableofcontents


\section{Introduction}

\subsection{Overview}
\mbox{}
Since the appearance of Gromov's seminal paper \cite{Gro2} more than thirty
years ago, hyperbolicity has played a central role in geometric group theory,
and inspired a number of generalizations and variations.
These include, among others, relative hyperbolicity
\cite{Bow2, DruS, Far, Gro2, Osi1},
various notions of ``directional'' hyperbolicity inherent in
stability/contraction properties of \mbox{(quasi-)}geodesics
\cite{BesF2, ChaS, Cor, KapKlL, KapLP, Sis, Sul}
(this in fact goes back to the notion of rank~one geodesics~\cite{Bal, BalBE}
which predates hyperbolicity), acylindrical hyperbolicity
\cite{BesF1, Bow1, DahGO, Osi2}, and hierarchical hyperbolicity
\cite{BehHS1, BehHS2, HagS, MasM}.
(The literature is far richer than indicated here --- we apologize for
omissions.)
These approaches provide unified descriptions of certain hyperbolicity
phenomena in a variety of non-hyperbolic settings such as non-uniform
lattices in rank one symmetric spaces, mapping class groups, Teichm\"uller
space, and some $\CAT(0)$ cube complexes and three-manifold groups. 

In this paper we develop a notion of higher rank hyperbolicity that
complements, and partly overlaps with, the concepts mentioned above.
We show that for metric spaces of asymptotic rank $n \ge 2$ satisfying
certain weak convexity assumptions (see Section~\ref{subsect:setup} below),
characteristics of hyperbolicity such as slimness of (quasi-)geodesic 
triangles, stability of quasi-geodesics, and visibility remain valid when
properly reformulated in terms of $n$-dimensional (relative) cycles.
In particular, our results hold for proper and cocompact $\CAT(0)$ spaces 
of Euclidean rank~$n$ and in that case they confirm several aspects of
Gromov's discussion in Section~6 of~\cite{Gro4}, and also the well-known
principle that in nonpositively curved spaces hyperbolic behavior should
manifest itself in dimensions above the maximal dimension of a flat.
Our approach also encompasses the stability properties of maximal quasiflats
that were used in the proofs of the quasi-isometric rigidity of higher rank
symmetric spaces in~\cite{EskF, KleL, Most}.

We show further that a quasi-isometry between two proper $\CAT(0)$ spaces 
of asymptotic rank $n \ge 2$ naturally induces an isomorphism between the 
groups of compactly supported integral $(n-1)$-cycles --- metric integral 
currents in the sense of Ambrosio--Kirchheim~\cite{AmbK} --- in their Tits 
boundaries. We remind the reader that in the (hyperbolic) rank one case, 
the usual visual boundaries are homeomorphic, whereas for $n \ge 2$ this 
can fail, even if the quasi-isometry is equivariant with respect to geometric 
actions of some finitely generated group~\cite{CroK}.
The construction of the above isomorphism involves, on the one hand,
an existence result for area-minimizing $n$-dimensional varieties with
prescribed asymptotics. To our knowledge, this is the first general such
result in a setting of nonpositive (rather than strictly negative) curvature
(compare Section~1 in~\cite{Gro3}).
On the other hand, we show that $n$-dimensional (quasi-)minimizers
with $r^n$ volume growth possess unique tangent cones at infinity,
a phenomenon that occurs rather rarely (compare, for example, the
discussion in~\cite{ColM}).

\subsection{Setup} 
\label{subsect:setup}
\mbox{}
For simplicity, we assume throughout the paper that the underlying metric
space $X = (X,d)$ is proper (that is, bounded closed subsets are compact).
For a first set of results, described in Section~\ref{subsect:slim-morse}
below, we assume that $X$ satisfies the following two conditions
for some $n \ge 1$:
\ben
\item[(CI$_n$)]
(Coning inequalities)\; There is a constant $c$ such that any two points
$x,x'$ in $X$ can be joined by a curve of length $\le c\,d(x,x')$,
and for $k = 1,\dots,n$, every $k$-cycle $R$ in some $r$-ball bounds a
$(k+1)$-chain $S$ with mass
\[
\M(S) \le c\,r\,\M(R).  
\]
Here, for a general proper metric space $X$, we use metric integral currents
(see Section~\ref{sect:prelim}).
However, if $X$ is bi-Lipschitz homeomorphic to a finite-dimensional
simplicial complex with standard metrics on the simplices, then
(by a variant of the Federer--Fleming deformation theorem~\cite{FedF})
one may equivalently take simplicial chains or singular Lipschitz chains
(with integer coefficients).
\item[(AR$_n$)]
(Asymptotic rank $\le n$)\; No asymptotic cone of $X$ contains an isometric
copy of an $(n+1)$-dimensional normed space.
Equivalently, $\asrk(X) \le n$, where $\asrk(X)$ is defined as the supremal
$k$ for which there exist a sequence $r_i \to \infty$ and subsets
$Y_i \sub X$ such that the rescaled sets $(Y_i,r_i^{-1}d)$ converge in the
Gromov--Hausdorff topology to the unit ball in some $k$-dimensional normed
space (see Section~\ref{sect:asympt-rank}). 
\een
Condition~(CI$_n$) is reminiscent of nonpositive curvature:
if $X$ is a $\CAT(0)$ or Busemann space~\cite{Pap},
the required inequality holds for the geodesic cone $S$ from the center
of the $r$-ball over $R$ (see Section~\ref{subsect:homotopies}).
Furthermore, any $n$-connected simplicial complex as above with a
properly discontinuous and cocompact simplicial action of a combable group
satisfies~(CI$_n$); see Section~10.2 in~\cite{Eps+}. Every combable group,
in particular every automatic group, admits such an action.

When $X$ is a cocompact $\CAT(0)$ or Busemann space,
the asymptotic rank $\asrk(X)$ equals the maximal dimension of an
isometrically embedded Euclidean or normed space, respectively~\cite{Kle}.
More generally, for spaces satisfying~(CI$_n$), condition~(AR$_n$) is
equivalent to a {\em sub-Euclidean isoperimetric inequality\/} for
$n$-cycles~\cite{Wen4}; this result, restated in Theorem~\ref{thm:subeucl},
plays a key role in this paper.
If $X$ is a geodesic Gromov hyperbolic space, then every asymptotic cone 
of $X$ is an $\R$-tree, thus $\asrk(X) \le 1$.
Conversely, a space satisfying~(CI$_1$) and~(AR$_1$) is Gromov hyperbolic
(compare Corollary~1.3 in~\cite{Wen4} and the special case $n = 1$
of Theorem~\ref{intro-slim} below).

We remark that the asymptotic rank is a quasi-isometry invariant for metric
spaces~\cite{Wen4}, whereas condition~(CI$_n$) is preserved, for instance,
by quasi-isometries between proper and cocompact, $n$-connected simplicial
complexes with standard metrics on the simplices.

The main results discussed in the second half of the paper,
starting from Section~\ref{sect:conical}, involve actual convexity
properties or the ideal boundary of $X$ (rather than condition~(CI$_n$)).
For the outline of these results in Sections~\ref{subsect:asymp}
and~\ref{subsect:qi}, we will therefore assume that $X$ is $\CAT(0)$.
In the body of the paper, we will work with the weaker sufficient condition
that $X$ admits a {\em convex bicombing} --- this disposes with geodesic
uniqueness but retains Busemann convexity for a distinguished family of
geodesics; see Definition~\ref{def:bicombing} and the comments thereafter.

\subsection{Quasi-minimizers with controlled density}
\label{subsect:qmin}
\mbox{}
We now discuss the objects we use to exhibit higher rank hyperbolic behavior, 
that is, $n$-dimensional replacements for quasi-geodesics.  

One approach would be to study $n$-quasiflats, or more generally, images of 
quasi-isometric embeddings $W \to X$ for suitable subsets $W \sub \R^n$.
(See Section~\ref{subsect:metric} for the standard definitions of
quasi-isometric maps.)
However, since geodesics may be viewed either as isometric embeddings of 
intervals or as length minimizing curves, an alternative approach is to 
consider (relative) $n$-cycles which ``quasi-minimize'' area
(compare~\cite{BanL,Gro3}, for example).  
We follow the latter approach in this paper: it turns out that it is not 
only more general, but it also leads to cleaner and sharper results.  
The quasi-minimality condition will be used in conjunction with a polynomial
growth bound of order $n$. We now provide more details.

We will work with the chain complexes $\bI_{*,\cs}(X)$ and $\bI_{*,\loc}(X)$
of metric {\em integral currents\/} with compact support
and {\em locally integral currents\/} introduced in~\cite{AmbK, Lan3}.
This enables us in particular to pass to limits and to produce
area-minimizers with sharp density and monotonicity properties.
All relevant concepts and results will be reviewed in detail in 
Section~\ref{sect:prelim}. Every singular Lipschitz $n$-chain in $X$ with
integer coefficients may be viewed as an element of $\bI_{n,\cs}(X)$
(and, conversely, every integral current in $\R^N$ admits an
approximation by Lipschitz chains; see Theorem~5.8 in~\cite{FedF}).
Similarly, $\bI_{n,\loc}(X)$ comprises all locally finite Lipschitz $n$-chains.
Associated with every $S \in \bI_{n,\loc}(X)$ is a locally finite Borel 
measure $\|S\|$ on $X$ whose total {\em mass\/} is denoted
$\M(S) := \|S\|(X)$, and the {\em support\/} $\spt(S) \sub X$ is the
smallest closed set supporting $\|S\|$.
We let $\bZ_{n,\cs}(X)$ and $\bZ_{n,\loc}(X)$ denote the respective
cycle groups for $n \ge 1$.

A local cycle $S \in \bZ_{n,\loc}(X)$ will be called (large-scale) 
{\em quasi-minimizing\/} if there exist constants $Q \ge 1$ and $a \ge 0$ 
such that, for every $x \in \spt(S)$ and almost every $r > a$, the restriction
$S \on \B{x}{r} \in \bI_{n,\cs}(X)$ of $S$ to the closed $r$-ball centered
at $x$ satisfies
\[
\M(S \on \B{x}{r}) \le Q\,\M(T)
\]
for all $T \in \bI_{n,\cs}(X)$ with $\d T = \d(S \on \B{x}{r})$;
then~$S$ is {\em $(Q,a)$-quasi-minimizing}. 
A $(1,0)$-quasi-minimizing local cycle is {\em \mbox{(area-)}minimizing}. 
Every quasiflat in $X$ may be viewed as a quasi-minimizer
(see Proposition~\ref{prop:lip-qflats} and Proposition~\ref{prop:qflats} for
two precise statements).

We say that $S \in \bZ_{n,\loc}(X)$ has (large-scale) {\em controlled density\/}
if there exist constants $C > 0$ and $a \ge 0$ such that
\[
\G_{p,r}(S) := \frac{\|S\|(\B{p}{r})}{r^n} \le C
\]
for all $p \in X$ and $r > a$; then $S$ has {\em $(C,a)$-controlled density}.
A generally weaker condition is that the {\em asymptotic density} 
\[
\Gi(S) := \limsup_{r \to \infty}\G_{p,r}(S)
\]
of $S$ be finite; here $p$ is fixed, however the upper limit is independent 
of $p$. Similarly, for $Z \in \bZ_{n,\loc}(X)$ and any $p \in X$, 
we define the {\em asymptotic filling density\/}
\[
\Fi(Z) := \limsup_{r \to \infty}\F_{p,r}(Z),
\]
where $\F_{p,r}(Z)$ denotes the infimum of $\M(V)/r^{n+1}$ over all 
$V \in \bI_{n+1,\cs}(X)$ with $\spt(Z - \d V) \cap \B{p}{r} = \es$ 
(that is, $V$ ``fills $Z$ in $\B{p}{r}$'').
For $S,S' \in \bZ_{n,\loc}(X)$, the relation $\Fi(S - S') = 0$
will serve as an appropriate notion of asymptoticity.

We now discuss the main results in the paper.

\subsection{Slim simplices, Morse Lemma, and asymptote classes} 
\label{subsect:slim-morse}
\mbox{}
We first recall that a geodesic metric space $X$ is 
{\em (Gromov) hyperbolic}\/~\cite{Gro2} if there exists a constant
$\del \ge 0$ such that every geodesic triangle in $X$ is {\em $\del$-slim},
that is, each of its sides lies in the closed $\del$-neighborhood of the
union of the other two. According to the Morse Lemma (which for the real
hyperbolic plane goes back to~\cite{Mor}), every $(L,a)$-quasi-geodesic
segment in $X$ is then at Hausdorff distance at most $b$ from a geodesic
segment connecting its endpoints, where the constant $b$ depends only
on $L,a$ and $\del$.
Thus any triangle composed of three $(L,a)$-quasi-geodesic segments
is still $(\delta + 2b)$-slim. 

We prove the following higher rank analog of this property. 

\begin{theorem}[slim simplices] \label{intro-slim} \mbox{}
Let $X$ be a proper metric space satisfying conditions~{\rm (CI$_n$)}
and\/~{\rm (AR$_n$)} for some $n \ge 1$.
Let $\Del$ be a Euclidean $(n+1)$-simplex, and let $f \colon \d\Del \to X$ 
be a map such that for every facet $W$ of $\Del$, the restriction $f|_W$ 
is an $(L,a)$-quasi-isometric embedding. Then, for every facet $W$, the image
$f(W)$ is contained in the closed $D$-neighborhood of
$f\bigl( \ol{\d\Del \sm W} \bigr)$ for some constant $D = D(X,n,L,a)$.
\end{theorem}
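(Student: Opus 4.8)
The plan is to replace the map $f$ by a genuine integral $n$-cycle and then run a slicing argument against the sub-Euclidean isoperimetric inequality. First I would tame $f$: triangulate $\d\Del\cong S^n$ with simplices of unit size, send the vertices to their $f$-images, and fill the resulting skeleta in dimensions $1,2,\dots,n$ by repeated use of the curve-joining property and the coning inequalities in~(CI$_n$). This produces a compactly supported $n$-cycle $R=\sum_j R_j$, one summand per facet $W_j$ of $\Del$, all of whose relevant constants depend only on $X,n,L,a$: $\spt R_j\sub N_{D_1}(f(W_j))$, $f(W_j)\sub N_{D_1}(\spt R_j)$, $\spt(\d R_j)\sub N_{D_1}(f(\d W_j))$; each $R_j$ is $(Q,a_0)$-quasi-minimizing of $(C_1,a_0)$-controlled density; and, since $f|_{W_j}$ is an $(L,a)$-quasi-isometric embedding of a piece of $\R^n$, one also has the matching lower bound $\|R_j\|(\B{x}{r})\ge c_1 r^n$ whenever $x\in\spt R_j$ and $a_0<r<d(x,\spt\d R_j)$ --- for the last point one pushes $R_j$ forward by a Lipschitz map $g$ coarsely inverting $f|_{W_j}$ on a neighbourhood of $f(W_j)$ and reads off a degree-one lower bound near $g(x)$. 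Summing, $R$ itself has $((n{+}2)C_1,a_0)$-controlled density, \emph{uniformly in the size of $\Del$}. By~(CI$_n$) the cycle $R$ bounds, and by lower semicontinuity of mass there is a mass-minimal filling $S\in\bI_{n+1,\cs}(X)$; using Theorem~\ref{thm:subeucl} and the controlled density of $R$ one obtains the sharp density estimate $\|S\|(\B{p}{r})\le\phi(r)\,r^{n+1}$ for all $p$ and all $r>a_0$, with $\phi(r)\to0$ as $r\to\infty$.

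Now suppose, towards a bound on $D$, that some $p_0\in f(W_0)$ satisfies $d(p_0,f(\ol{\d\Del\sm W_0}))\ge D$; choose $p\in\spt R_0$ with $d(p,p_0)\le D_1$. Every $R_j$ with $j\neq0$, and also $\d R_0$, is supported in $N_{D_1}(f(\ol{\d\Del\sm W_0}))$, so with $\rho:=D-3D_1$ we have $R\on\B{p}{\rho}=R_0\on\B{p}{\rho}$ and $\spt(\d R_0)\cap\B{p}{\rho}=\es$. The crux is to slice the \emph{global} minimizer $S$ at $p$: for a.e.\ $r\in(a_0,\rho)$ the slice satisfies $\d\la S,d_p,r\ra=\pm\la\d S,d_p,r\ra=\pm\la R,d_p,r\ra=\pm\la R_0,d_p,r\ra$, which is $\pm\d(R_0\on\B{p}{r})$. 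Thus $R_0\on\B{p}{r}$ and a current of mass $\M(\la S,d_p,r\ra)$ both fill the same $(n-1)$-cycle, so quasi-minimality of $R_0$ (with base point $p$ and radius $r>a_0$) gives
\[
c_1\,r^n\ \le\ \M(R_0\on\B{p}{r})\ \le\ Q\,\M(\la S,d_p,r\ra).
\]
Integrating the coarea inequality over $r\in(a_0,\rho)$ yields $\|S\|(\B{p}{\rho})\ge\frac{c_1}{Q(n+1)}\bigl(\rho^{n+1}-(a_0)^{n+1}\bigr)$, and comparing with $\|S\|(\B{p}{\rho})\le\phi(\rho)\,\rho^{n+1}$ forces $\tfrac{c_1}{Q(n+1)}\bigl(1-(a_0/\rho)^{n+1}\bigr)\le\phi(\rho)$. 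Since the right-hand side tends to $0$, this is impossible once $\rho$ exceeds some $\rho^*=\rho^*(X,n,L,a)$, so $D\le\rho^*+3D_1=:D(X,n,L,a)$.

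I expect the real work to lie in the first paragraph rather than in the slicing: carrying out the taming so that $R$ is at once a cycle, quasi-minimizing facet by facet, and of two-sided $r^n$ density; and, more essentially, extracting the $o(r^{n+1})$ density bound $\|S\|(\B{p}{r})\le\phi(r)r^{n+1}$ for mass-minimizers with controlled-density boundary from the sub-Euclidean isoperimetric inequality --- it is precisely the improvement of the crude Euclidean bound $\|S\|(\B{p}{r})\lesssim r^{n+1}$ to $o(r^{n+1})$ that drives the contradiction, mirroring how, for $n=1$, the linear (rather than quadratic) filling inequality in a hyperbolic space makes (quasi-)geodesic triangles slim. The remaining ingredients --- the slicing identities, the coarea inequality, and existence of a mass-minimizer --- are standard in the metric-current framework of~\cite{AmbK,Lan3}.
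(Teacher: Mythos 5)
Your overall mechanism is the right one and is essentially the paper's: build an integral $n$-cycle $R=\sum_j R_j$ facet by facet using the coning inequalities (this is Proposition~\ref{prop:qflats}), and then derive a contradiction between a lower bound of order $r^{n+1}$ on fillings centered on a piece that is far from the other pieces (your slicing/coarea computation is exactly the proof of Lemma~\ref{lem:fill-density}) and an upper bound of order $o(r^{n+1})$ coming from the sub-Euclidean isoperimetric inequality. The facet-by-facet quasi-minimality, the controlled density of $R$, and the localization $R\on\B{p}{\rho}=R_0\on\B{p}{\rho}$ are all as in the paper's Theorem~\ref{thm:slim}, which packages the contradiction as Theorem~\ref{thm:morse-1}.

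The one substantive issue is the step you yourself flag as ``the real work'': the estimate $\|S\|(\B{p}{r})\le\phi(r)\,r^{n+1}$, $\phi(r)\to0$ uniformly in $p$ and in the input data, for the \emph{global mass-minimizing filling} $S$ of $R$. This does not follow from a single application of Theorem~\ref{thm:subeucl}: that theorem produces, for a cycle of controlled mass in a ball, \emph{some} filling of small mass, whereas you need to control the restriction to every ball of one fixed minimizer. The naive attempt (slice $S$ on $\B{p}{2r}$ and compare with a competitor) is circular, because bounding the slice mass by coarea already requires the bound on $\|S\|(\B{p}{2r})$ you are trying to prove; one must run the iterated multi-scale argument of Proposition~\ref{prop:part-filling} (fill at a large scale $r_0$, slice, refill at $\eta r_0$, iterate), \emph{and additionally} use minimality of $S$ together with a patching of slice differences to transfer the mass bound from the partial fillings $V_i$ to $S\on\B{p}{r_i}$. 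This can be made to work, but it is strictly more than what the contradiction requires. The paper avoids it entirely by never introducing a global minimal filling: the lower bound (Lemma~\ref{lem:fill-density}) is stated for \emph{every} $V\in\bI_{n+1,\cs}(X)$ with $\spt(Z-\d V)\cap\B{p}{r}=\es$, i.e.\ as $\F_{p,r}(Z)\ge c$, so the upper bound only has to produce \emph{one} partial filling of mass $<c\,r^{n+1}$, which is exactly the conclusion of Proposition~\ref{prop:part-filling}. If you replace ``slice the global minimizer $S$'' by ``slice an arbitrary partial filling $V$ of $R$ in $\B{p}{\rho}$'', your second paragraph goes through verbatim and the black-boxed estimate reduces to the partial-filling statement, which is the form in which the iterated isoperimetric argument is actually carried out.
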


Here $\Del$ is the convex hull of a set of $n+2$ points in $\R^{n+1}$
such that $\Del$ has non-empty interior, and a {\em facet\/} of $\Del$ is the
convex hull of $n+1$ of them. 

The proof of this result depends, on the one hand, on an iterated application 
of the aforementioned sub-Euclidean isoperimetric inequality. For a cycle 
$Z \in \bZ_{n,\cs}(X)$ with controlled density, this provides an arbitrarily 
small upper bound $F_{p,r}(Z) < \eps$ on the filling density in any ball
$\B{p}{r}$ of sufficiently large radius, depending on $\eps$
(Proposition~\ref{prop:part-filling}).
On the other hand, if $Z$ is ``piecewise $(Q,a)$-quasi-minimizing'',
then $F_{x,r}(Z) \ge c = c(X,n,Q) > 0$ for any ball $\B{x}{r}$ with $r > 4a$
centered on one of the pieces and disjoint from the union of the remaining
ones (Lemma~\ref{lem:fill-density}); thus $x$ cannot be too far away
from this union. 
For an appropriately chosen cycle $Z$ approximating the image of
$f \colon \d\Del \to X$, this yields Theorem~\ref{intro-slim}
(see Theorem~\ref{thm:slim}). 

In combination with the existence of area-minimizing integral currents with 
prescribed boundary, a similar argument yields a higher rank 
analog of the Morse Lemma stated above; see Theorem~\ref{thm:morse-2}.
We further establish the following asymptotic version of this result
(see Theorem~\ref{thm:morse-3} for a generalization including boundaries).

\begin{theorem}[asymptotic Morse Lemma] \label{intro-morse} \mbox{}
Let $X$ be a proper metric space satisfying conditions~{\rm (CI$_n$)}
and\/~{\rm (AR$_n$)} for some $n \ge 1$.
Suppose that $S \in \bZ_{n,\loc}(X)$ is $(Q,a)$-quasi-minimizing and
has $(C,a)$-controlled density.
Then there exists an area-minimizing local cycle $\til S \in \bZ_{n,\loc}(X)$ 
such that $\Fi(S - \til S) = 0$, and every such $\til S$
satisfies $\Gi(\til S) \le \Gi(S)$ and
$\Hd(\spt(S),\spt(\til S)) \le b$ for some constant $b = b(X,n,Q,C,a)$.
\end{theorem}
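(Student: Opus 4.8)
The plan is to produce $\til S$ as a limit of area-minimizing fillings on larger and larger balls, exactly mimicking the way one obtains an asymptotic geodesic from a quasi-geodesic ray. Fix a basepoint $p$. For each large radius $r$, consider the restriction $S \on \B{p}{r}$; by the coning inequality (CI$_n$) applied to the boundary cycle $\d(S \on \B{p}{r})$, which lies in a single $r$-ball and whose mass is controlled by the density bound via a coarea/slicing argument (for a.e.\ $r$ one has $\M(\d(S\on\B pr)) \le C' r^{n-1}$), one can fill that boundary by a chain $T_r \in \bI_{n,\cs}(X)$ with $\M(T_r) \le c\,r\cdot C'r^{n-1} = c C' r^n$; replacing the filling inside $\B pr$ by a mass-minimizer only lowers the mass. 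Thus the modified cycle $S_r := (S - S\on\B pr) + T_r^{\min}$ agrees with $S$ outside $\B pr$, still has controlled density (with a slightly larger constant, uniform in $r$), and is area-minimizing inside $\B{p}{r/2}$, say. By the compactness theorem for integral currents in proper spaces (reviewed in Section~\ref{sect:prelim}) one extracts a subsequence $S_{r_i} \to \til S \in \bZ_{n,\loc}(X)$ in the local flat topology; lower semicontinuity of mass together with the uniform density bound shows $\til S$ is area-minimizing everywhere and $\Gi(\til S) \le \Gi(S)$. That $\Fi(S - \til S) = 0$ follows because, for fixed radius $\rho$, once $r_i \gg \rho$ the difference $S - S_{r_i}$ is filled by $T_{r_i}^{\min} - S\on\B{p}{r_i}$ whose mass in $\B p\rho$ is $o(\rho^{n+1})$ after one further uses the sub-Euclidean isoperimetric inequality (Theorem~\ref{thm:subeucl}) to see the minimal fillings have filling density tending to zero; passing to the limit gives $\Fi(S-\til S)=0$, and this estimate is basepoint-independent as noted in Section~\ref{subsect:qmin}.

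The second, harder half is the Hausdorff-distance bound $\Hd(\spt(S),\spt(\til S)) \le b$, and it is here that the quasi-minimality of $S$ finally enters in an essential way (the first half used only controlled density). The strategy is the two-sided filling-density dichotomy sketched after Theorem~\ref{intro-slim}. For one inclusion: suppose $x \in \spt(\til S)$ is at distance $> b$ from $\spt(S)$. Then on a ball $\B{x}{b/2}$ disjoint from $\spt(S)$, the cycle $\til S$ must, by $\Fi(S-\til S)=0$, be fillable there with small filling density; but $\til S$ is area-minimizing, and a minimizer has a \emph{lower} density bound — a monotonicity/density argument shows that a nontrivial minimizing cycle through $x$ satisfies $\F_{x,r}(\til S) \ge c(X,n) > 0$ for all $r$ below some scale (this is the $Q=1$ case of Lemma~\ref{lem:fill-density}), contradicting the small-filling-density conclusion once $b$ is large enough. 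For the reverse inclusion: suppose $x \in \spt(S)$ is at distance $> b$ from $\spt(\til S)$. Since $S$ is $(Q,a)$-quasi-minimizing, Lemma~\ref{lem:fill-density} gives $\F_{x,r}(S) \ge c(X,n,Q) > 0$ for $r > 4a$; but $\F_{x,r}(S) \le \F_{x,r}(\til S) + (\text{error from }\Fi(S-\til S)=0)$, and on a ball around $x$ avoiding $\spt(\til S)$ we have $\F_{x,r}(\til S)$ as small as we like (indeed zero, using a finite piece of the chain realizing $S-\til S$), so again we contradict the lower bound for $b$ large. Both arguments pin down $b$ in terms of $X,n,Q,C,a$ through the constants in Lemma~\ref{lem:fill-density}, Theorem~\ref{thm:subeucl}, and the coning constant $c$.

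I expect the main obstacle to be the quantitative comparison between $\F_{x,r}(S)$ and $\F_{x,r}(\til S)$: the relation $\Fi(S-\til S)=0$ is only an asymptotic ($r\to\infty$) statement about the \emph{difference} measured from a fixed basepoint, whereas Lemma~\ref{lem:fill-density} needs a \emph{lower} bound on filling density at a \emph{specific} scale $r$ around a \emph{moving} center $x$. Bridging these requires care: one must choose the scale $r$ comparable to $d(x,p)$ (or iterate with a sequence of basepoints), control how the filling density transforms when the center moves, and absorb the coning-inequality errors incurred when cutting and re-gluing chains near $\Sph{x}{r}$. This is the analog of the "local-to-global" bookkeeping in the classical Morse Lemma, and I anticipate it will occupy the bulk of the proof of Theorem~\ref{thm:morse-3}; the existence half, by contrast, should be a fairly direct compactness argument.
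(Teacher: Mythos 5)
Your outline follows the paper's route almost exactly: existence of $\til S$ as a local flat limit of minimizing fillings of $\d(S\on\B{p}{r_i})$ (Theorems~\ref{thm:plateau}, \ref{thm:cptness}, \ref{thm:constr-minimizers}), the relation $\Fi(S-\til S)=0$ via the sub-Euclidean isoperimetric inequality, and the Hausdorff bound via the two-sided dichotomy between the lower filling-density bound of Lemma~\ref{lem:fill-density} and an upper bound coming from $\Fi(S-\til S)=0$. The existence half is essentially complete (modulo the small point that your $S_{r_i}$ are only minimizing on growing balls, so you need the localized form of Theorem~\ref{thm:cptness}(2), or you should pass to the limit of the minimizing fillings themselves as the paper does).

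The one genuine gap is the step you yourself flag: converting the asymptotic statement $\Fi(S-\til S)=0$, taken at a fixed basepoint, into the bound $\F_{x,r}(S-\til S)<\eps$ at a \emph{specific} scale $r$ and an \emph{arbitrary} center $x\in\spt(S)\cup\spt(\til S)$, which is what the contradiction with Lemma~\ref{lem:fill-density} requires. Your proposed remedy (relate $\F_{x,r}$ to $\F_{p,r'}$ by moving the center and cutting/regluing chains) is not how this is resolved, and would be painful. The paper's resolution is Proposition~\ref{prop:part-filling}: its threshold radius $a'_\eps$ depends only on $X,n,C,a,\eps$ and \emph{not on the basepoint}, because its hypothesis is a density bound $\G_{p,r}(\cdot)\le C$ for all $r>a$ at the chosen basepoint. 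Since $S$ has $(C,a)$-controlled density (uniformly over basepoints) and the same proposition upgrades $\Gi(\til S)\le\Gi(S)$ to $(\til C,\til a)$-controlled density of $\til S$ (again uniformly over basepoints --- note that $\Gi(\til S)\le\Gi(S)$ alone is not enough here), the cycle $Z=S-\til S$ has controlled density, and one simply reruns the partial-filling iteration with basepoint $x$ itself. No transfer between centers is needed, and the threshold $b$ comes out depending only on $X,n,Q,C,a$. This is exactly the content of Theorem~\ref{thm:morse-1}, to which the Hausdorff estimate reduces once one observes that $Z$ is $(Q,a)$-quasi-minimizing mod $\spt(S)$ and mod $\spt(\til S)$.
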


This implies in particular the following analog of Morse's
Theorem~1~\cite{Mor} on the stability of geodesics in the hyperbolic plane.
We remark that for a Riemannian manifold $X$, metric locally integral currents
in $X$ can be identified with the classical ones from~\cite{Fed}.  

\begin{corollary}[persistence of minimizers]
\label{intro-persistence} \mbox{}
Let $X = (X,g)$ be a Hadamard manifold of asymptotic rank $n \ge 1$, and
suppose that $S \in \bZ_{n,\loc}(X)$ is area-minimizing and has controlled
density.
Then for every Riemannian metric $\til g$ on $X$ bi-Lipschitz equivalent to 
$g$ there is an $\til S \in \bZ_{n,\loc}(X)$ that is area-minimizing with 
respect to~$\til g$ and whose support is at finite Hausdorff distance 
from $\spt(S)$.
\end{corollary}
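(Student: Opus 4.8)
The plan is to deduce the statement from the asymptotic Morse Lemma (Theorem~\ref{intro-morse}) applied to $X$ equipped with the metric $\til g$. Fix $\Lam \ge 1$ with $\Lam^{-1}g \le \til g \le \Lam g$ as quadratic forms on tangent vectors. Then the induced distances satisfy $\Lam^{-1/2}d_g \le d_{\til g} \le \Lam^{1/2}d_g$, so $(X,\til g)$ is again a proper metric space; moreover, since in a Riemannian manifold the mass of a (classical $=$ metric) locally integral current is the integral of its multiplicity against the Riemannian $n$-volume of its rectifiable carrier, one has $\Lam^{-n/2}\|S\|_g(A) \le \|S\|_{\til g}(A) \le \Lam^{n/2}\|S\|_g(A)$ for every $S \in \bI_{n,\loc}(X)$ and every Borel set $A$, and $B^{\til g}_x(r) \sub B^{g}_x(\Lam^{1/2}r)$ for all $x,r$.

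First I would verify that $(X,\til g)$ satisfies conditions~(CI$_n$) and~(AR$_n$). Condition~(AR$_n$) is immediate: the identity is a bi-Lipschitz homeomorphism, hence a quasi-isometry, from $(X,g)$ to $(X,\til g)$, and the asymptotic rank is a quasi-isometry invariant (Section~\ref{subsect:setup}), so $\asrk(X,\til g) = \asrk(X,g) = n$. For~(CI$_n$): the Hadamard manifold $(X,g)$ is $\CAT(0)$ and therefore satisfies~(CI$_n$) with an absolute constant via geodesic cones from ball centers (Section~\ref{subsect:setup}); given a $k$-cycle $R$ supported in a $\til g$-ball of radius $r$ about $x$, it lies in the $g$-ball of radius $\Lam^{1/2}r$ about $x$, and coning there and converting masses back to $\til g$ produces a filling with the required bound and a constant depending only on $n$ and $\Lam$ (the quasi-convexity requirement for curves is handled the same way).

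Next I would check that $S$, being area-minimizing and of controlled density for $g$, is large-scale quasi-minimizing and of controlled density for $\til g$. Controlled density follows from the mass comparison together with $B^{\til g}_x(r) \sub B^{g}_x(\Lam^{1/2}r)$. For quasi-minimality, fix $x \in \spt(S)$ and a radius $r$ (from the full-measure set) for which $U := S \on B^{\til g}_x(r)$ lies in $\bI_{n,\cs}(X)$, and let $T \in \bI_{n,\cs}(X)$ with $\d T = \d U$; then $T - U \in \bZ_{n,\cs}(X)$. Choosing $\rho$ large enough (and, for almost every such $\rho$, in the full-measure set from the minimality of $S$) that $\spt U \cup \spt T$ and the ball $B^{\til g}_x(r)$ are contained in $B^{g}_x(\rho)$, the current $S \on B^{g}_x(\rho) + (T - U)$ is a competitor for $S \on B^{g}_x(\rho)$ with the same boundary, so $g$-minimality and subadditivity of mass give $\M_g(S \on B^{g}_x(\rho)) \le \M_g(S\on B^{g}_x(\rho) - U) + \M_g(T) = \M_g(S\on B^{g}_x(\rho)) - \M_g(U) + \M_g(T)$, using that $\|S\|_g$ is additive on the disjoint sets $B^{\til g}_x(r)$ and $B^{g}_x(\rho) \sm B^{\til g}_x(r)$. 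Hence $\M_g(U) \le \M_g(T)$, and converting masses back to $\til g$ yields $\M_{\til g}(U) \le \Lam^{n}\M_{\til g}(T)$. Thus $S$ is $(\Lam^n,a)$-quasi-minimizing and has $(\Lam^n C,a)$-controlled density for $\til g$, where $C,a$ are the constants for $g$.

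Finally, applying Theorem~\ref{intro-morse} to $(X,\til g)$ and $S$ produces an area-minimizing $\til S \in \bZ_{n,\loc}(X)$ with $\Fi(S - \til S) = 0$ and $\Hd(\spt(S),\spt(\til S)) \le b$, all with respect to $\til g$; by the identification of metric with classical currents, $\til S$ is area-minimizing for $\til g$ in the sense of~\cite{Fed}, and since $d_g$ and $d_{\til g}$ are bi-Lipschitz equivalent the support of $\til S$ is at finite $g$-Hausdorff distance from $\spt(S)$ as well. I expect the only step requiring genuine care to be the quasi-minimality check, where the large-scale comparison balls of $\til g$ must be matched with the global minimality of $S$ for $g$; passing to a large $g$-ball and exploiting additivity of $\|S\|_g$ on complementary regions resolves this, and no deeper obstacle arises, all of the substantive work being contained in Theorem~\ref{intro-morse}.
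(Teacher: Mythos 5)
Your proposal is correct and takes exactly the route the paper indicates in the remark following the corollary: pass to $(X,\til d)$, check that it satisfies (CI$_n$) and (AR$_n$) and that $S$ remains quasi-minimizing with controlled density, and invoke Theorem~\ref{intro-morse}. Your detailed verifications (bi-Lipschitz mass comparison, the competitor argument via a large $g$-ball and additivity of $\|S\|_g$) correctly fill in the steps the paper leaves implicit.
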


Note that if $\til d$ is the distance function on $X$ induced by $\til g$,
then $X = (X,\til d)$ satisfies the assumptions of Theorem~\ref{intro-morse}, 
and $S$ is quasi-minimizing and has controlled density with respect 
to~$\til d$. Hence, the result follows.
By regularity theory, $\spt(\til S)$ is a smooth 
$n$-dimensional submanifold except for a closed singular set of Hausdorff 
dimension at most $n - 2$ (see~\cite{DeLS} for a guide to the literature).
For example, $S$ could be the current associated to an oriented $n$-flat 
in $(X,g)$ (but see also Theorem~\ref{intro-t-plateau} below). 
The primary instance of Corollary~\ref{intro-persistence} is when $(X,g)$
is the universal covering of a compact manifold of nonpositive sectional
curvature such that $(X,g)$ contains no $(n+1)$-flat, and $\til g$ is
the lift of an arbitrary metric on the quotient. 

Morse's result was generalized in various directions to
surfaces of arbitrary dimension and codimension in spaces of negative 
curvature~\cite{BanL, Gro3, Lab, Lan0, Lan2} and to totally geodesic 
hyperplanes in some product spaces~\cite{Lan1}. 
There is a parallel development based on periodicity 
(rather than hyperbolicity) and limited to codimension one,
starting with the work of Hedlund~\cite{Hed} on the two-dimensional torus 
and including the investigation of laminations of compact Riemannian 
manifolds by minimal hypersurfaces; see~\cite{AueB, CafD, Mos} and 
the references therein. 
Corollary~\ref{intro-persistence} is now the first result in this area
for higher rank and arbitrary codimension.

The tools developed so far enable us further to introduce visual metrics
on sets of asymptote classes of local $n$-cycles, in analogy with the
usual metrization of the visual boundary of a geodesic Gromov hyperbolic
space. Let $X$ be a proper metric space satisfying condition~{\rm (CI$_n$)}
for $n = \asrk(X) \ge 1$. We consider the group
\[
\bZ_{n,\loc}^\infty(X) := \{S \in \bZ_{n,\loc}(X): \Gi(S) < \infty\}
\]
and the quotient space $\cZ X := \bZ_{n,\loc}^\infty(X)/{\sim_\F}$ of
{\em $F$-asymptote classes}, where $S \sim_\F S'$ if and only if
$\Fi(S - S') = 0$. Making use of the existence of area-minimizers
in each class $[S]$, we define an analog of the Gromov product of two
points at infinity and show that for any constants $C > 0$ and $a \ge 0$,
the set $\cZ_{C,a}X$ of all classes represented by some element with
$(C,a)$-controlled density admits an analog of Gromov's $\delta$-inequality
(Proposition~\ref{prop:d-ineq}) and carries a family of visual metrics,
with respect to which $\cZ_{C,a}X$ is compact;
see Theorem~\ref{thm:visual-metrics}.

\subsection{Asymptotic geometry of local cycles}
\label{subsect:asymp}
\mbox{}
For the remainder of the introduction, we will be mainly concerned 
with asymptotic properties of local $n$-cycles in spaces of asymptotic 
rank $n \ge 2$, and relations with the ideal boundary of $X$.
For this reason we assume in Sections~\ref{subsect:asymp} and~\ref{subsect:qi}
that $X$ is a $\CAT(0)$ space, so that we may make use of the boundary at
infinity $\di X$ and the compactification $\olX := X \cup \di X$ --- both
equipped with the cone topology --- as well as the Tits boundary $\bT X$ and
the Tits cone $\CT X$.  As mentioned earlier, all of the results discussed
here hold more generally if $X$ is a proper metric space equipped with a
convex bicombing, and the respective statements will be given in the
body of the paper.  

A point in $\di X$ is an asymptote class of unit speed rays in $X$. 
The Tits cone $\CT X$ may be defined as the set of asymptote classes of 
rays $\rho \colon \R_+ \to X$ of arbitrary (constant) speed 
$s \ge 0$, endowed with the metric $\dT$, where 
\[
\dT([\rho],[\rho']) = \lim_{t \to\infty} \frac1t \,d(\rho(t),\rho'(t))
\]
is the asymptotic slope of the convex function $t \mapsto d(\rho(t),\rho'(t))$.
For every $p \in X$ there is a canonical $1$-Lipschitz map 
\[
\can_p \colon \CT X \to X
\] 
such that $\can_p([\rho]) = \rho(1)$ for every ray $\rho$ with $\rho(0) = p$.
The Tits boundary $\bT X$ is the unit sphere in $\CT X$ and agrees with
$\di X$ as a set, but is endowed with the finer topology induced by $\dT$.
With respect to the (equivalent) angle metric $0 \le \aT \le \pi$
characterized by the relation $2\sin(\aT(u,v)/2) = \dT(u,v)$,
$\bT X$ is a $\CAT(1)$ space, and $\CT X$ agrees with the Euclidean cone 
over $(\bT X,\aT)$ and is thus a $\CAT(0)$ space. If $X$ is a symmetric space
of non-compact type or a thick Euclidean building of rank $n \ge 2$, 
then $(\bT X,\aT)$ has the structure of a thick $(n-1)$-dimensional 
spherical building.

For a local cycle $S \in \bZ_{n,\loc}(X)$, we let 
\[
\Lam(S) \sub \di X
\]
denote the {\em limit set\/} of $\spt(S)$, that is, the set of all points in 
$\di X$ belonging to the closure of $\spt(S)$ in $\olX$. 
We say that $S$ is {\em conical\/} with respect to some point $p \in X$ 
if $S$ is invariant, for every $\lam \in (0,1)$, under the $\lam$-Lipschitz 
map $h_{p,\lam} \colon X \to X$ that takes $x$ to $\si_{px}(\lam)$, where 
$\si_{px} \colon [0,1] \to X$ denotes the geodesic from~$p$ to~$x$.

The following result summarizes Theorem~\ref{thm:conical-repr},
Proposition~\ref{prop:lim-sets}, and Theorem~\ref{thm:f-classes} for the case
when $X$ is $\CAT(0)$. It shows in particular that the group
$\cZ X = \bZ_{n,\loc}^\infty(X)/{\sim_\F}$ of $\F$-asymptote classes
is canonically isomorphic to the group of integral $(n-1)$-cycles in $\bT X$.

\begin{theorem}[Tits boundary] \label{intro-f-classes} \mbox{}
Let $X$ be a proper $\CAT(0)$ space with $\asrk(X) = n \ge 2$. 
If $S \in \bZ_{n,\loc}^\infty(X)$, then for every $p \in X$ there is a unique
representative $S_{p,0} \in [S] \in \cZ X$ that is conical with respect to $p$,
and there is a unique local cycle $\Sig \in \bZ_{n,\loc}(\CT X)$ such that
$\can_{p\#}\Sig = S_{p,0}$ for all $p \in X$; furthermore, $\Sig$ is conical
with respect to the cone vertex $o$, and the spherical slice 
$\d(\Sig \on \B{o}{1})$ defines an element 
$\bT S = \bT[S] \in \bZ_{n-1,\cs}(\bT X)$.
This yields an isomorphism
\[
\bT \colon \cZ X \to \bZ_{n-1,\cs}(\bT X).
\]
For every $p \in X$, $\spt(\bT S) = \Lam(S_{p,0}) \sub \Lam(S)$, 
and if $S$ is quasi-minimizing, then $\Lam(S_{p,0}) = \Lam(S)$.
\end{theorem}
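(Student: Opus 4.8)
\emph{Strategy.} The plan is to factor $\bT$ through the Tits cone. I would show: (i)~that every $\F$-asymptote class in $\cZ X$ contains, for each fixed $p$, a unique representative $S_{p,0}$ that is conical with respect to $p$; (ii)~that such conical cycles in $X$ lift canonically along $\can_p$ to local $n$-cycles in $\CT X$ conical with respect to the vertex $o$, with the lift independent of $p$; and (iii)~that a local $n$-cycle $\Sig$ in $\CT X$ conical with respect to $o$ is the cone from $o$ over the integral $(n-1)$-cycle $\d(\Sig \on \B{o}{1})$ of finite mass in $\bT X$, this assignment being additive and surjective onto such cycles. Composing (i)--(iii) yields $\bT$ and the fact that it is a group homomorphism; the refinement that the image is exactly $\bZ_{n-1,\cs}(\bT X)$, the injectivity of $\bT$, and the statements about limit sets are then extracted from the construction.

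\emph{Uniqueness of the conical representative (the easy half of (i)).} The key observation is a scaling estimate: if $V$ fills $Z$ in $\B{p}{r}$ then, since $h_{p,\lam}$ is $\lam$-Lipschitz and carries the complement of $\B{p}{r}$ into the complement of $\B{p}{\lam r}$, the pushforward $h_{p,\lam\#}V$ fills $h_{p,\lam\#}Z$ in $\B{p}{\lam r}$ with $\M(h_{p,\lam\#}V) \le \lam^{n+1}\M(V)$; hence $\F_{p,\lam r}(h_{p,\lam\#}Z) \le \F_{p,r}(Z)$. If $D$ is conical with respect to $p$ and $\Fi(D) = 0$ --- e.g.\ $D = S_1 - S_2$, the difference of two conical representatives of one class, which is again conical --- then $h_{p,\lam\#}D = D$, so $r \mapsto \F_{p,r}(D)$ is nondecreasing with supremum $\Fi(D) = 0$, whence $\F_{p,r}(D) = 0$ for every $r$. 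Choosing fillings $V_j$ of $D$ in $\B{p}{r}$ with $\M(V_j) \to 0$, the slicing identity for $\d(V_j \on \B{p}{r})$ together with the coarea bound on the slice masses forces the flat norm of $D \on \B{p}{r}$ to vanish for a.e.\ $r$, so $D = 0$. The same argument, applied in $\CT X$, will give uniqueness of the lift $\Sig$.

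\emph{Existence and lifting ((i), (ii)).} Given $S \in \bZ_{n,\loc}^\infty(X)$, I would obtain $S_{p,0}$ from an area-minimizer: the asymptotic Morse Lemma (Theorem~\ref{intro-morse}), extended by the existence of area-minimizing local cycles with prescribed asymptotics, furnishes an area-minimizing $\til S \in [S]$; monotonicity of mass ratios for $\til S$ --- the point where the sub-Euclidean isoperimetric inequality, hence $\asrk(X) = n$, enters --- makes the normalized far slices of $\til S$ converge (not merely subsequentially) to an integral $(n-1)$-cycle $W$ in $\bT X$, and one sets $\Sig :=$ the cone from $o$ over $W$ and $S_{p,0} := \can_{p\#}\Sig$, which is conical with respect to $p$ because $\can_p \circ h_{o,\lam} = h_{p,\lam} \circ \can_p$, and lies in $[S]$. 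The reconstruction of $\Sig$ from $S_{p,0}$ records, as an $(n-1)$-current on $\Lam(S_{p,0}) \sub \bT X$, the radially constant density of the cone $S_{p,0}$, using that $\can_p$ identifies the cone structures of $\CT X$ and $X$ off a negligible branching locus (where distinct rays from $p$ merge); combined with the uniqueness step this also gives uniqueness of $\Sig$. Independence of $p$ follows by sliding the base point: for a second point $p'$, the homotopy $u \mapsto \can_{\gam(t)}(u)$ along the geodesic $\gam$ from $p$ to $p'$ has Lipschitz constant $\le \max(1,d(p,p'))$ by $\CAT(0)$ convexity and traces out an $(n+1)$-current $V$ with $\d V = \can_{p'\#}\Sig - \can_{p\#}\Sig$ and $\|V\|(\B{p}{r}) = O(r^n)$, so $\Fi(\can_{p'\#}\Sig - \can_{p\#}\Sig) = 0$; thus $\can_{p'\#}\Sig \in [S]$ is conical with respect to $p'$ and equals $S_{p',0}$, and the same $\Sig$ serves all $p$.

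\emph{The Tits slice, the isomorphism, limit sets, and the main obstacle.} Since $\Sig$ is conical with respect to $o$ it equals the cone over $W = \d(\Sig \on \B{o}{1})$, and the cone volume formula gives $\M(W) < \infty$; conversely any $W' \in \bZ_{n-1,\cs}(\bT X)$ yields a conical $\Sig'$ and a local $n$-cycle $\can_{p\#}\Sig'$ of finite asymptotic density, so $\bT$ is onto $\bZ_{n-1,\cs}(\bT X)$, and it is injective and additive by the previous steps. Tracing the construction, $\spt(\bT S) = \spt(W) = \Lam(S_{p,0})$; the homotopy above (or $\Fi(S - S_{p,0}) = 0$ together with a positive lower bound for $\F_{x,r}(\cdot)$ along a minimizer) gives $\Lam(S_{p,0}) \sub \Lam(S)$; and if $S$ is quasi-minimizing, Theorem~\ref{intro-morse} bounds $\Hd(\spt S,\spt\til S)$, so $\Lam(S) = \Lam(\til S) = \Lam(\til S_{p,0}) = \Lam(S_{p,0})$, the middle equality by monotonicity for the minimizer $\til S$ (and $\til S_{p,0} = S_{p,0}$ since $\til S \in [S]$). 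I expect the main obstacle to be precisely the points glossed over: that $\spt(W)$ is \emph{compact} --- i.e.\ $\bT S \in \bZ_{n-1,\cs}(\bT X)$ rather than merely $\M(\bT S) < \infty$ --- and that the lift along the generally non-injective, non-bi-Lipschitz map $\can_p$ is genuinely canonical. Both hinge on $\asrk(X) = n$ (not just $\le n$): the $\CAT(1)$ space $(\bT X,\aT)$ is then $(n-1)$-dimensional and admits no small nonzero top-dimensional cycle, so finite mass forces the Tits limit set to be totally bounded, while a non--totally-bounded limit set would, through the cone structure, produce an $(n+1)$-dimensional normed subspace in an asymptotic cone of $X$, contradicting (AR$_n$).
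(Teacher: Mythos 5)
Your overall architecture (conical representative for each $p$, lift to $\CT X$, cone over an $(n-1)$-cycle in $\bT X$) is the same as the paper's, and your uniqueness argument for the conical representative — monotonicity of $r \mapsto \F_{p,r}$ for conical cycles plus a slicing argument showing that a conical cycle with vanishing filling density is zero — is essentially Lemma~\ref{lem:conical}. But the two load-bearing steps are not carried. First, existence: you claim that monotonicity of mass ratios for a minimizer $\til S \in [S]$ makes the blow-downs converge ``not merely subsequentially.'' Monotonicity (which for $\CAT(0)$ spaces holds for minimizers regardless of asymptotic rank, cf.\ Remark~\ref{rem:monotonicity}) only yields convergence of the numbers $\G_{p,r}(\til S)$ and hence subsequential weak convergence of the rescalings; uniqueness of the tangent cone at infinity is exactly the hard point, and the paper's mechanism is entirely different: Proposition~\ref{prop:part-filling}, an iterated application of the sub-Euclidean isoperimetric inequality, shows $\F_{p,r}(h_{p,\lam\#}S - S) < \eps$ for \emph{all} $\lam$ and all $r > a'_\eps$, so the family $h_{p,\lam\#}S$ is Cauchy in the local flat topology and the limit $S_{p,0}$ is independent of the subsequence. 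Without this, your $W$ and hence $\Sig$ are not well defined. (Routing through the minimizer does not help here: one still has to show its blow-downs converge, which is the same problem.)

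Second, the lift. You correctly flag that passing from the conical cycle $S_{p,0} \sub X$ to a current $\Sig$ in $(\CT X, \dT)$ with $\can_{p\#}\Sig = S_{p,0}$ is the main obstacle, but the proposed resolution (``$\can_p$ identifies the cone structures off a negligible branching locus'') does not work: $\can_p$ is merely $1$-Lipschitz and can contract distances by unbounded factors, and the identification $r^{-1}d(\can_p(ru),\can_p(rv)) \to \dT(u,v)$ holds only in the limit $r \to \infty$ (Lemma~\ref{lem:unif-conv}). The paper's Theorem~\ref{thm:lift-cones} constructs $\Sig$ as a flat limit of the rescaled slices $S \on \B{p}{r}$ embedded into a fixed compact injective space built from $K = [0,1]\Lam$; this requires knowing beforehand that $\Lam$ is compact in the Tits topology, which is Proposition~\ref{prop:cpt-lim-sets} and rests on the visibility/asymptotic-conicality theorems together with the lower density bound for quasi-minimizers — not on ``finite mass forces total boundedness'' (false for supports of finite-mass currents in non-proper spaces) nor on producing an $(n+1)$-dimensional normed subspace. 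Finally, for the last assertion you invoke Theorem~\ref{intro-morse}, whose hypothesis is $(C,a)$-controlled density; a quasi-minimizing $S$ with $\Gi(S) < \infty$ need not satisfy this (the paper gives a counterexample after Definition~\ref{def:g-f}), which is why Proposition~\ref{prop:lim-sets} instead goes through the visibility theorem, whose hypothesis is a density bound at a single base point.
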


We call $\bT S = \bT[S]$ the {\em Tits boundary\/} of $S$ or $[S]$, 
respectively. Due to the rank assumption, $\bI_{m,\cs}(\bT X) = \{0\}$ for 
$m > n-1$, thus $\bZ_{n-1,\cs}(\bT X)$ agrees with the homology group
$\bH_{n-1,\cs}(\bT X)$ of integral currents, which is in turn  
isomorphic to the usual singular homology group $H_{n-1}(\bT X)$
(see~\cite{RieS}). Hence, $\cZ X$ is isomorphic to $H_{n-1}(\bT X)$.

Regarding the last assertion of Theorem~\ref{intro-f-classes}, we will 
in fact show that every quasi-minimizer $S \in \bZ_{n,\loc}^\infty(X)$ is 
asymptotically conical in that $\spt(S)$ and $\spt(S_{p,0})$ lie within 
``sublinear'' distance from each other, in terms of the distance to $p$;
see (the proof of) Theorem~\ref{thm:visibility} and 
Theorem~\ref{thm:conicality}. The following key result, which is part of 
the first of these two theorems, may be viewed as an analog of the 
{\em visibility axiom} for a Hadamard manifold $X$. This postulates that 
for all $p \in X$ and $\eps > 0$ there is an $r = r(p,\eps)$ such that 
every geodesic segment $[x,y] \sub X$ at distance at least $r$ from $p$ 
subtends an angle $\angle_p(x,y) \le \eps$ at $p$; 
see Definition~4.2 and Remark~4.3 in~\cite{EbeO} 
(compare pp.~294ff and~400 in \cite{BriH} for a discussion in the context 
of $\CAT(0)$ spaces).

\begin{theorem}[visibility property] \label{intro-visibility} \mbox{}
Let $X$ be a proper $\CAT(0)$ space with $\asrk(X) = n \ge 2$. 
Suppose that $S \in \bZ_{n,\loc}(X)$ is $(Q,a)$-quasi-minimizing and satisfies 
$\G_{p,r}(S) \le C$ for some $p \in X$ and for all $r > a$. 
Then for every $\eps > 0$ there exists a constant $r_\eps = r_\eps(X,Q,C,a)$
such that for every $x \in \spt(S)$ with $d(p,x) \ge r_\eps$ there exists a
unit speed ray~$\rho$ emanating from~$p$ and representing a point in $\Lam(S)$
such that\/ $\inf_{t \ge 0}d(x,\rho(t)) < \eps\,d(p,x)$.
\end{theorem}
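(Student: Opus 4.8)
The plan is to prove the visibility property by a contradiction/compactness argument combined with the filling-density dichotomy already set up in the introduction. Suppose the statement fails for some fixed $\eps > 0$: then there is a sequence of radii $r_i \to \infty$, a sequence of $(Q,a)$-quasi-minimizing cycles $S_i \in \bZ_{n,\loc}(X)$ with $\G_{p,r}(S_i) \le C$ for all $r > a$, and points $x_i \in \spt(S_i)$ with $d(p,x_i) = r_i$ such that no unit speed ray $\rho$ from $p$ representing a point of $\Lam(S_i)$ comes within $\eps\,r_i$ of $x_i$. First I would reduce to a single cycle $S$: since $X$ is $\CAT(0)$ and the constants $Q,C,a$ are fixed, one can run the argument for a single $S$ with a sequence $x_i \in \spt(S)$, $d(p,x_i) \to \infty$, by the usual device of passing to the cone or working at scale $r_i$. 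The key geometric input is that the geodesic segment $[p,x_i]$ separates space in a weak sense relevant to $n$-cycles: one wants to show that $S$ must ``hug'' one such segment out to infinity in the direction of $x_i$.

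The heart of the argument is a quantitative lower bound on the filling density of $S$ in balls centered along $[p,x_i]$, played against the sub-Euclidean isoperimetric inequality. Concretely: set $y_i$ to be the midpoint of $[p,x_i]$ (or a point at distance $\sim r_i/2$ from $p$ along this segment) and consider the ball $\B{y_i}{\rho_i}$ with $\rho_i$ a fixed fraction of $r_i$, small enough that the ball misses $p$ but large enough that $x_i$ and the ``far'' part of $\spt(S)$ are outside it on opposite sides. By Lemma~\ref{lem:fill-density} applied to an appropriate decomposition of $S \on \B{y_i}{\rho_i}$ into a piece near $x_i$'s side and the complementary piece, I would extract $\F_{y_i,\rho_i}(S) \ge c = c(X,n,Q) > 0$. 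On the other hand, Proposition~\ref{prop:part-filling}, applied to the cycle $S \on \B{p}{R_i}$ for $R_i$ slightly larger than $r_i$ (which has $(C',a)$-controlled density because $S$ does), gives $\F_{p,R_i}(S \on \B{p}{R_i}) < \eps'$, and since $\B{y_i}{\rho_i} \sub \B{p}{R_i}$ one gets $\F_{y_i,\rho_i} \le (R_i/\rho_i)^{n+1}\F_{p,R_i} \to 0$ after fixing the ratio $R_i/\rho_i$ appropriately. This is the contradiction. The geometric content that makes the decomposition work is precisely the failure hypothesis: if no ray of $\Lam(S)$ through the $\eps r_i$-neighborhood of $x_i$ exists, then $\spt(S)$ must ``wrap around'' in a way that forces a nonzero relative cycle to be filled inside $\B{y_i}{\rho_i}$, i.e. the boundary slice of $S \on \B{y_i}{\rho_i}$ is genuinely nontrivial in homology relative to the sphere.

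To make the ``wrapping'' precise I would use the cone topology on $\olX$ and the structure of $\Lam(S)$. Pick any $x \in \spt(S)$ far from $p$; since $S$ is quasi-minimizing with controlled density it has nonempty limit set $\Lam(S) \subset \di X$ (this follows from the coning and isoperimetric machinery, or can be obtained as a byproduct of the contradiction setup — a bounded $\spt(S)$ would violate $(Q,a)$-quasi-minimality against $r^n$ growth). The point is that the geodesic $[p,x_i]$, extended, either converges in $\olX$ to a point of $\Lam(S)$ — in which case that limiting ray does the job for large $i$, contradicting the failure hypothesis — or it does not, and then the portion of $\spt(S)$ beyond $\B{y_i}{\rho_i}$ on the ``$x_i$ side'' is disjoint, after projecting via the cone structure, from the portion giving rise to the actual limit points, forcing the relative slice to be homologically essential in $\B{y_i}{\rho_i}$.

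The main obstacle I expect is organizing this cone-separation step cleanly: turning the metric statement ``no ray through the $\eps r_i$-ball around $x_i$ represents a limit point'' into a topological statement that the relative cycle $\d(S \on \B{y_i}{\rho_i})$ is nonzero in $\bH_{n-1}$ of the appropriate pair, with a lower filling bound uniform in $i$. This requires choosing the auxiliary radii $\rho_i, R_i$ and the center $y_i$ carefully so that Lemma~\ref{lem:fill-density} genuinely applies (its hypothesis is that the ball is centered on one ``piece'' and disjoint from the others, so one must first exhibit $S$ as piecewise quasi-minimizing with the relevant pieces being, say, $\spt(S) \cap \B{p}{2r_i}$ versus the tail), and so that the ratio $R_i/\rho_i$ stays bounded while $\rho_i \to \infty$. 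A secondary technical point is making the scaling in $\F$ — namely $\F_{y,\rho} \le (R/\rho)^{n+1} \F_{p,R}$ whenever $\B{y}{\rho} \subset \B{p}{R}$ — completely rigorous, but this is a direct consequence of the definition of $\F_{p,r}$ as an infimum over fillings $V$ with $\spt(S - \d V) \cap \B{p}{r} = \es$. Once the separation and radius bookkeeping are in place, the contradiction between the uniform lower bound $c(X,n,Q)$ and the vanishing upper bound is immediate, and then standard $\CAT(0)$ compactness (Arzel\`a--Ascoli for the geodesics $[p,x_i]$) produces the desired ray $\rho$ in $\Lam(S)$.
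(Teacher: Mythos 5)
Your proposal correctly identifies the two forces that must collide --- the lower filling-density bound from quasi-minimality (Lemma~\ref{lem:fill-density}) against the sub-Euclidean upper bound from Proposition~\ref{prop:part-filling} --- but the step you yourself flag as ``the main obstacle'' is not a technicality: it is the missing idea, and the route you sketch for it does not work. You try to apply the filling-density lower bound in a ball $\B{y_i}{\rho_i}$ centered at the \emph{midpoint} of $[p,x_i]$, after decomposing $S$ into ``pieces'' separated by that ball. But Lemma~\ref{lem:fill-density} only gives a lower bound for balls centered at points of $\spt(S)$ (for a cycle quasi-minimizing mod a set $Y$ disjoint from the ball); the midpoint $y_i$ need not lie on $\spt(S)$, and there is no reason the slice $\d(S\on\B{y_i}{\rho_i})$ is ``homologically essential'' in a way that yields a filling bound uniform in $i$ --- quasi-minimality simply does not see balls away from the support. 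Moreover, the upper bound cannot be $\F_{p,R_i}(S)\to 0$ for $S$ itself: a nonzero quasi-minimizer has $\Fi(S)>0$ (Lemma~\ref{lem:fill-density} again), so the dichotomy of Proposition~\ref{prop:part-filling} gives smallness only for a \emph{difference} $S-S'$ that is a priori $\F$-asymptotically null. Your proposal never produces such a comparison object, which is why the ``wrapping/separation'' step cannot be closed.

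The paper's proof supplies exactly this object: the conical representative $S_{p,0}$ (more generally $S_{p,\lam}=h_{p,\lam\#}S$) from Theorem~\ref{thm:conical-repr}, which is $\F$-asymptotic to $S$, has the same density bound, and is supported in the geodesic cone $\C_p(\Lam(S))$. One then takes $s$ slightly less than $d(x,\spt(S_{p,0}))$, applies Proposition~\ref{prop:part-filling} to $Z=S-S_{p,0}$ in $\B{p}{r_x+s}$ to get a filling $V$ with $\M(V)<\del(r_x+s)^{n+1}$, and observes that since $\B{x}{s}$ misses $\spt(S_{p,0})$, this same $V$ is a partial filling of $S$ alone in $\B{x}{s}$ --- a ball centered at $x\in\spt(S)$, where Lemma~\ref{lem:fill-density} legitimately forces $\M(V)\ge c\,s^{n+1}$. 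Comparing the two bounds gives $s<(c^{-1}\del)^{1/(n+1)}(r_x+s)$, i.e.\ the sublinear estimate $d(x,\spt(S_{p,0}))<\eps\,d(p,x)$, and the ray is read off from $\spt(S_{p,0})\sub\C_p(\Lam(S))$. No decomposition of $S$, no separation along $[p,x_i]$, and no compactness/contradiction argument is needed; the constant $r_\eps$ comes out uniform in $(X,Q,C,a)$ directly. To repair your proof you would need to construct $S_{p,0}$ (or an equivalent conical comparison cycle) first --- at which point you would essentially be reproducing the paper's argument.
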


The next result solves an asymptotic Plateau problem (see also
Theorem~\ref{thm:a-plateau} and Theorem~\ref{thm:t-plateau}).
This may be viewed as a higher rank analog of the property that any pair
of distinct points in the visual boundary $\di X$ can be joined by a
geodesic line in $X$. 

\begin{theorem}[minimizer with prescribed Tits data]
\label{intro-t-plateau}\mbox{}
Let $X$ be a proper $\CAT(0)$ space with $\asrk(X) = n \ge 2$.
Then for every cycle $R \in \bZ_{n-1,\cs}(\bT X)$ there exists an 
area-minimizing local cycle $S \in \bZ_{n,\loc}^\infty(X)$ with $\bT S = R$. 
Every such $S$ satisfies $\Lam(S) = \spt(R)$ and
$\G_{p,r}(S) \le \Gi(S) = \M(R)/n$ for all $p \in X$ and $r > 0$,
in particular $S$ has controlled density,
and $\M(R)/n = \Gi(S) \ge \om_n$ whenever $R \ne 0$. 
\end{theorem}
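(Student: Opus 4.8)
The plan is to solve the asymptotic Plateau problem by a limiting argument: fill the given Tits cycle $R$ at larger and larger scales inside $X$, extract a subsequential limit that is an area-minimizing local cycle, and then identify its Tits boundary. The density and rigidity assertions will follow from monotonicity of mass ratio for minimizers together with the sub-Euclidean isoperimetric inequality (Theorem~\ref{thm:subeucl}), which forces the mass ratio of a minimizer to be asymptotically constant.

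\medskip

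\emph{Step 1: Constructing approximate minimizers.} Given $R \in \bZ_{n-1,\cs}(\bT X)$, I would first push it into $X$. Using the coning map structure from Theorem~\ref{intro-f-classes} in reverse, or more directly the canonical $1$-Lipschitz maps $\can_p \colon \CT X \to X$, the cone $o * R$ in $\CT X$ pushes forward under $\can_p$ to a chain in $X$ whose boundary is (a translate/scaling of) the image of $R$; truncating at radius $r$ gives a cycle $R_r \in \bZ_{n-1,\cs}(X)$ supported near the sphere $\Sph{p}{r}$. For each $r$, solve the Plateau problem in $X$ for $R_r$: since $X$ is proper $\CAT(0)$ (hence admits a convex bicombing) and satisfies $(\mathrm{CI}_n)$ and $(\mathrm{AR}_n)$, the existence theory for area-minimizing integral currents with prescribed boundary (the general existence result advertised in the introduction, which I am permitted to assume from earlier in the paper) yields a minimizer $S_r \in \bI_{n,\cs}(X)$ with $\d S_r = R_r$. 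The coning inequality bounds $\M(S_r) \le c\, r\, \M(R_r) \le C' r^n$, and by the sub-Euclidean isoperimetric inequality the density of $S_r$ is controlled by $\M(R)/n$ up to lower-order corrections. Crucially, $\CAT(0)$ comparison and minimality give an interior density lower bound $\om_n$ at every point of $\spt(S_r)$ away from the boundary, and the monotonicity formula for minimizers in $\CAT(0)$ (or convex-bicombing) spaces shows the mass ratio $\G_{x,\rho}(S_r)$ is monotone in $\rho$.

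\medskip

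\emph{Step 2: Passing to the limit.} With uniform mass bounds on every ball (from the coning/monotonicity estimates, independent of $r$ once $r$ is large), the compactness theorem for locally integral currents gives a subsequence $S_{r_i}$ converging weakly to some $S \in \bZ_{n,\loc}(X)$. Lower semicontinuity of mass under weak convergence, together with the fact that each $S_{r_i}$ is minimizing on all balls that eventually contain no boundary mass, upgrades $S$ to an area-minimizing local cycle. The controlled-density bound $\G_{p,r}(S) \le \M(R)/n$ survives the limit. One must check $S \ne 0$ when $R \ne 0$; this is where the interior density lower bound $\om_n$ is essential — it prevents the mass from escaping to infinity or collapsing, so $\spt(S)$ is nonempty and in fact $\G_{x,\rho}(S) \ge \om_n$ for all $x \in \spt(S)$, $\rho > 0$, by monotonicity.

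\medskip

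\emph{Step 3: Identifying $\bT S = R$ and the rigidity.} By Theorem~\ref{intro-f-classes}, $S$ has a Tits boundary $\bT S \in \bZ_{n-1,\cs}(\bT X)$; I would verify $\bT S = R$ by checking that the conical representative $S_{p,0}$ of $[S]$ has the prescribed asymptotic slice, using that the approximate fillings $S_{r_i}$ were built from the cone over $R$ and hence their limit sets accumulate exactly on $\spt(R)$. For the sharp density: a minimizing local cycle with finite asymptotic density has, by monotonicity, a well-defined limit $\Gi(S) = \lim_{\rho \to \infty} \G_{x,\rho}(S)$ independent of $x$, and the sub-Euclidean isoperimetric inequality pins this limit down — the blow-down / tangent cone at infinity of $S$ is the Euclidean cone over $\bT S = R$, whose mass in the unit ball is exactly $\M(R)/n$ (the cone formula $\M(o * R \on \B{o}{1}) = \M(R)/n$). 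Hence $\G_{p,r}(S) \le \Gi(S) = \M(R)/n$ for all $r$, with equality as $r \to \infty$; combining with the interior lower bound $\om_n$ forces $\M(R)/n \ge \om_n$ when $R \ne 0$. The statement $\Lam(S) = \spt(R)$ then follows from the last sentence of Theorem~\ref{intro-f-classes} (since $S$ is quasi-minimizing, indeed minimizing) together with $\spt(\bT S) = \spt(R)$.

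\medskip

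\emph{Main obstacle.} The delicate point is Step~2 — ensuring the limit $S$ is genuinely a \emph{global} local cycle with $\bT S = R$ rather than something that has lost part of $R$ at infinity, i.e., controlling the behavior of the boundary pieces $R_{r_i}$ as $r_i \to \infty$ and showing no mass concentrates on spheres and escapes. This is exactly where one needs the uniform interior density lower bound from $\CAT(0)$-monotonicity (to get non-degeneracy) and a careful choice of the approximating boundaries $R_r$ so that their supports stay genuinely "near infinity" and their masses converge to $\M(R)$; the asymptotic isoperimetric inequality of Theorem~\ref{thm:subeucl} is the quantitative engine that makes the mass-ratio limit both exist and equal the sharp value $\M(R)/n$.
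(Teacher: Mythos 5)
Your proposal follows essentially the same route as the paper: take the cone over $R$ in $\CT X$, push it into $X$ via $\can_p$ to obtain a conical cycle $S_0$ with $\bT S_0 = R$ (Theorems~\ref{thm:lift-cones} and~\ref{thm:f-classes}), fill the truncations $S_0 \on \B{p}{r_i}$ by compactly supported minimizers, extract a limit by the compactness theorem, and control the asymptotics with the sub-Euclidean isoperimetric inequality, the density identities then coming from $\CAT(0)$ monotonicity and the sharp cone inequality exactly as you describe. The one correction worth recording is that non-vanishing of the limit and the identification $\bT S = R$ do not follow from the interior density lower bound $\om_n$ (the minimizers could a priori retreat toward the spheres $\Sph{p}{r_i}$ without violating any pointwise density bound); they follow from the partial-filling estimate of Proposition~\ref{prop:part-filling}, which yields $\Fi(S_0 - S) = 0$ uniformly along the sequence, combined with the fact that a non-zero conical cycle has $\Fi(S_0) > 0$ (Lemma~\ref{lem:conical}) --- which is precisely the mechanism you single out in your ``main obstacle'' paragraph.
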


Here $\om_n$ denotes the Lebesgue measure of the unit ball in $\R^n$.
The equality $\Gi(S) = \om_n$ clearly holds if $S$ is the current
associated with an oriented $n$-flat in $X$. 

For ambient spaces of strictly negative curvature, minimal varieties of 
arbitrary dimension and codimension with prescribed limit sets were first
constructed in~\cite{And1,And2}.  
We refer to~\cite{CasHR,Gab,Gro3,Lan2} and the references therein 
for some generalizations and variations of these results.
In Section~8.3 of~\cite{Gro1}, Gromov raised the question about the 
asymptotic behavior of minimal varieties in spaces of nonpositive 
curvature and symmetric spaces in particular.
Theorem~\ref{intro-t-plateau} addresses this for $n$-currents in spaces 
of rank~$n$. 

Theorem~\ref{intro-f-classes} and Theorem~\ref{intro-t-plateau} show 
in particular that the three classes of conical, minimizing, or 
quasi-minimizing elements of $\bZ_{n,\loc}^\infty(X)$ give rise to the same 
collection of limit sets, which also agrees with 
$\{\spt(R): R \in \bZ_{n-1,\cs}(\bT X)\}$. We denote this canonical 
class of subsets of $\di X$ by $\cL X$.
From Theorem~\ref{intro-visibility} and Theorem~\ref{intro-t-plateau}
we deduce the following result (see Theorem~\ref{thm:dense-orbit},
and~\cite{GurS} for a closely related discussion).

\begin{theorem}[dense orbit] \label{intro-dense-orbit} \mbox{}
Let $X$ be a proper $\CAT(0)$ space of asymptotic rank $n \ge 2$, 
and suppose that\/ $\Gam$ is a cocompact group of isometries of $X$.
Then, for every non-empty set $\Lam \in \cL X$, the orbit of $\Lam$ under
the action of\/ $\Gam$, extended to $\olX = X \cup \di X$, is dense in 
$\di X$ (with respect to the cone topology).
\end{theorem}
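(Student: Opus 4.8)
The plan is to derive density of the $\Gam$-orbit of a nonempty limit set $\Lam \in \cL X$ from the visibility property, exploiting cocompactness to move a point of $\Lam$ arbitrarily close to any prescribed direction. First I would fix a basepoint $p \in X$ and a cocompactness constant $R_0$ such that $\Gam \cdot \B{p}{R_0} = X$. Given a target point $\xi \in \di X$ and $\eps > 0$, I want to produce $\gam \in \Gam$ with $\gam \cdot \Lam$ containing a point within cone-distance $\eps$ of $\xi$. By Theorem~\ref{intro-t-plateau}, there is an area-minimizing $S \in \bZ_{n,\loc}^\infty(X)$ with $\bT S = R$ for some fixed nonzero $R \in \bZ_{n-1,\cs}(\bT X)$ with $\spt(R) = \Lam$; this $S$ is $(1,0)$-quasi-minimizing and has $(C,0)$-controlled density with $C = \M(R)/n$.

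The key step is a ``transitivity along a ray'' argument. Pick the unit speed ray $\rho$ from $p$ with $\rho(\infty) = \xi$. For a large parameter $t$, use cocompactness to choose $\gam_t \in \Gam$ with $d(\gam_t^{-1}\rho(t), p) \le R_0$, equivalently $d(\rho(t), \gam_t p) \le R_0$. Now $\gam_t S$ is again area-minimizing with the same quasi-minimizing and controlled-density constants (isometry invariance of mass and of the Plateau condition), and $\spt(\gam_t S)$ is nonempty, so it contains some point $y_t$; moreover $\Lam(\gam_t S) = \spt(\gam_t R) = \gam_t \cdot \Lam$. The distance from $p$ to any point of $\spt(\gam_t S)$ can be made large (at least $\asymp t$) by choosing a point of $\spt(\gam_t S)$ near its limit set, or more directly: I would instead apply the visibility property of Theorem~\ref{intro-visibility} with basepoint $\gam_t p$ — but since the constants $r_\eps$ depend only on $X,Q,C,a$ and not on the basepoint (by homogeneity of the ball condition under translating $p$, using controlled density $\G_{q,r}(\gam_t S) \le C$ for \emph{all} $q$, which holds since $S$ has $(C,0)$-controlled density), I get: for any $x \in \spt(\gam_t S)$ with $d(\gam_t p, x) \ge r_\eps$, there is a ray $\rho'$ from $\gam_t p$ representing a point of $\Lam(\gam_t S) = \gam_t \cdot \Lam$ with $\inf_s d(x,\rho'(s)) < \eps\, d(\gam_t p, x)$.

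Here is where the angle bound at $p$ comes in. Take $x$ a point of $\spt(\gam_t S)$ far from $\gam_t p$ (exists since $\Lam(\gam_t S) \ne \es$, so $\spt(\gam_t S)$ is noncompact), ideally with $x$ itself close to $\rho(t)$ up to controlled error — this requires one more observation: because $\spt(\gam_t S)$ comes within bounded distance $R_0$ of $\rho(t)$ (as $\gam_t p$ does, and $\spt(\gam_t S)$ passes near $\gam_t p$ since... actually $\spt(\gam_t S)$ need not pass near $\gam_t p$). Let me instead argue cleanly: the visibility ray $\rho'$ from $\gam_t p$ hits the direction of $\Lam(\gam_t S)$; meanwhile $d(p, \gam_t p) \le d(p,\rho(t)) + R_0 = t + R_0$, and the geodesic $[p,\gam_t p]$ makes a small angle with $\rho$ at $p$ when $t$ is large, since $\gam_t p$ is within bounded distance $R_0$ of the ray $\rho$ at parameter $t$ (elementary $\CAT(0)$ comparison: $\angle_p(\rho(t), \gam_t p) \le \arcsin(R_0/(t - R_0)) \to 0$). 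Combining with the triangle inequality for the cone topology — the ray from $p$ through $\gam_t p$ extended, versus $\rho'$ which emanates from the nearby point $\gam_t p$ — shows that $\rho'$ represents a boundary point whose direction from $p$ is within $\eps' = \eps'(t,R_0,\eps) \to 0$ of $\xi$. That boundary point lies in $\gam_t \cdot \Lam$, giving a point of the orbit arbitrarily close to $\xi$ in the cone topology, as required.

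The main obstacle I anticipate is the last paragraph: carefully converting the several bounded/sublinear quantities (the $R_0$ from cocompactness, the $\eps\, d(\gam_t p, x)$ error from visibility, the shift of basepoint from $p$ to $\gam_t p$) into a single genuinely small angle at the fixed basepoint $p$ in the cone topology. The cone topology on $\olX$ is metrizable but its ``angles at $p$'' behave subadditively only up to the usual $\CAT(0)$ estimates, and one must ensure the error term $\eps\, d(\gam_t p, x)$ — which is \emph{linear} in the distance — is nonetheless absorbed, because the direction-at-$p$ distortion caused by an ``$\eps\,\ell$-sized'' deviation at distance $\ell$ from $\gam_t p$ is itself of order $\eps$, independent of $\ell$; making this quantitative via the comparison triangle $\triangle(p, \gam_t p, x)$ and letting first $\eps \to 0$ then $t \to \infty$ is the crux. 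Everything else is bookkeeping with isometry-invariance of the hypotheses and the fact that $\Lam(\gam S) = \gam \cdot \Lam(S)$, which follows since $\gam$ extends to a homeomorphism of $\olX$.
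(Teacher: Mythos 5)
Your overall strategy (cocompactness plus the visibility theorem) is the one the paper uses, but the execution breaks down at the decisive step, and you half-noticed this yourself. The problem is the final ``clean'' argument: from the facts that $\gam_t p$ lies within $R_0$ of $\rho_0(t)$ and that the visibility ray $\rho'$ \emph{emanates from} $\gam_t p$, you conclude that $[\rho']$ is close to $\xi$ in the cone topology at $p$. That inference is false: the asymptotic direction of a ray is not controlled by its initial point. Already in $\R^2$ with $p=0$ and $\xi$ the positive $x$-direction, a ray starting at $(t,0)$ may point in the negative $x$-direction, so $[\rho']$ could be antipodal to $\xi$. What you actually need is a point $x_t\in\spt(\gam_{t\#}S)$ that lies within \emph{bounded} distance of $\rho_0(t)$, together with a visibility ray emanating from the \emph{fixed} basepoint $p$ and passing within $\eps\,d(p,x_t)$ of $x_t$; only then does $d(\rho_0(t),\im\rho)<b+\eps(t+b)$ force $[\rho]$ into a small cone-topology neighborhood of $\xi$. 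You set up the visibility theorem at the moving basepoint $\gam_t p$ (even though you correctly observed that the $(C,a)$-controlled density of $\gam_{t\#}S$ would let you apply it at $p$), and you chose $\gam_t$ to move $p$ rather than a point of $\spt(S)$ onto $\rho_0(t)$ --- which is why you ran into ``$\spt(\gam_t S)$ need not pass near $\gam_t p$.''

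The paper's proof repairs exactly this: fix $y_0\in\spt(S)$ (nonempty since $\Lam\ne\es$) and use cocompactness to choose $\gam_t$ with $d(\gam_t y_0,\rho_0(t))\le b$, so $x_t:=\gam_t y_0\in\spt(\gam_{t\#}S)$ is the required point near $\rho_0(t)$; then apply Theorem~\ref{thm:visibility} with basepoint $p$ to $\gam_{t\#}S$ (legitimate because controlled density gives $\G_{p,r}(\gam_{t\#}S)\le C$ for the fixed $p$), obtaining a ray from $p$ into $\gam_t\Lam$ passing within $\eps(t+b)$ of $x_t$. Your setup can also be salvaged by the observation that $d(\gam_t p,\spt(\gam_{t\#}S))=d(p,\spt(S))$ is a finite constant independent of $t$, so $\spt(\gam_{t\#}S)$ does meet a fixed-radius ball around $\rho_0(t)$ after all --- but as written, the argument replacing this step is invalid and the proof has a genuine gap.
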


\subsection{Applications to quasi-isometries} \label{subsect:qi}
\mbox{}
We recall that every quasi-isometric embedding $f \colon X \to \bar X$ 
between two geodesic Gromov hyperbolic spaces naturally induces a 
topological embedding $\di f \colon \di X \to \di \bar X$ of their visual 
boundaries. In fact, $\di f$ is a power quasi-symmetric
(and hence bi-H\"older)
embedding with respect to any pair of visual metrics on $\di X$ and
$\di \bar X$~\cite{BonS, BuyS}. The proof is based on the Morse Lemma.

We now consider a quasi-isometric embedding $f \colon X \to \bar X$ between 
two proper $\CAT(0)$ spaces of asymptotic rank $n \ge 2$.
Theorem~\ref{intro-f-classes} and Theorem~\ref{intro-t-plateau}
show that every $(n-1)$-cycle in $\bT X$ corresponds to an $\F$-asymptote 
class in $X$ which is represented by a minimizing local $n$-cycle  
with controlled density. Furthermore, for any quasi-minimizer 
$S \in \bZ_{n,\loc}(X)$ with controlled density, 
there exists a Lipschitz map $g \colon X \to \bar X$ such that 
$\sup_{x \in \spt(S)}d(f(x),g(x)) < \infty$, and this map takes $S$ to a local 
cycle $g_\#S \in \bZ_{n,\loc}(\bar X)$ that is again quasi-minimizing and has
controlled density (see Proposition~\ref{prop:qi-inv}). The ambiguity in the 
choice of $g$ disappears on the level of $\F$-asymptote classes. 
In fact, there is a unique monomorphism
\[
\cZ f \colon \cZ X \to \cZ \bar X
\]
such that $\cZ f\,[S] = [g_\#S]$ whenever $S \in \bZ_{n,\loc}^\infty(X)$ 
and $g$ is a Lipschitz map as above; see Theorem~\ref{thm:mapping-as-classes}.
Since classes in $\cZ \bar X$ are, in turn, in bijective correspondence 
with $(n-1)$-cycles in $\bT \bar X$, this provides a canonical map from 
$\bZ_{n-1,\cs}(\bT X)$ into $\bZ_{n-1,\cs}(\bT \bar X)$ induced by $f$.

\begin{theorem}[mapping Tits cycles] \label{intro-tits-cycles} \mbox{}
Let $X,\bar X$ be two proper $\CAT(0)$ spaces of asymptotic rank $n \ge 2$, 
and suppose that $f \colon X \to \bar X$ is a quasi-isometric embedding.
Then there exists a unique monomorphism
\[
f_\T \colon \bZ_{n-1,\cs}(\bT X) \to \bZ_{n-1,\cs}(\bT \bar X)
\]
such that $f_\T(\bT S) = \bT(g_\#S)$ whenever $S \in \bZ_{n,\loc}^\infty(X)$ 
and $g \colon X \to \bar X$ is a Lipschitz map with 
$\sup_{x \in \spt(S)}d(f(x),g(x)) < \infty$. If $f$ is a quasi-isometry, then
$f_\T$ is an isomorphism.
\end{theorem}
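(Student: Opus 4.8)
The plan is to factor $f_\T$ through the isomorphisms $\bT$ of Theorem~\ref{intro-f-classes}. Applying that theorem to $X$ and to $\bar X$ gives isomorphisms $\bT\colon\cZ X\to\bZ_{n-1,\cs}(\bT X)$ and $\bT\colon\cZ\bar X\to\bZ_{n-1,\cs}(\bT\bar X)$, and Theorem~\ref{thm:mapping-as-classes} provides a monomorphism $\cZ f\colon\cZ X\to\cZ\bar X$. I would define
\[
f_\T := \bT\circ\cZ f\circ\bT^{-1}\colon\bZ_{n-1,\cs}(\bT X)\to\bZ_{n-1,\cs}(\bT\bar X),
\]
the composite of a monomorphism with two isomorphisms, hence itself a monomorphism. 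To verify the stated relation, let $S\in\bZ_{n,\loc}^\infty(X)$ and let $g\colon X\to\bar X$ be Lipschitz with $\sup_{x\in\spt(S)}d(f(x),g(x))<\infty$. Since $f$, and hence $g$, is a quasi-isometric embedding, $g$ distorts balls by a bounded factor, so $g_\#S$ still has finite asymptotic density and lies in $\bZ_{n,\loc}^\infty(\bar X)$; by the characterizing property of $\cZ f$ in Theorem~\ref{thm:mapping-as-classes} this gives $\cZ f\,[S]=[g_\#S]$. As $\bT^{-1}(\bT S)=[S]$ and $\bT[g_\#S]=\bT(g_\#S)$, we conclude $f_\T(\bT S)=\bT(g_\#S)$, as required. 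For uniqueness, recall from Theorem~\ref{intro-t-plateau} that every $R\in\bZ_{n-1,\cs}(\bT X)$ equals $\bT S$ for some area-minimizing $S$ with controlled density, for which Proposition~\ref{prop:qi-inv} supplies a Lipschitz approximation $g$ of $f$ along $\spt(S)$; so the relation pins down $f_\T(R)$ for every $R$.

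Now suppose $f$ is a quasi-isometry and fix a quasi-inverse $\bar f\colon\bar X\to X$, again a quasi-isometric embedding, with associated monomorphism $\bar f_\T\colon\bZ_{n-1,\cs}(\bT\bar X)\to\bZ_{n-1,\cs}(\bT X)$. I claim $\bar f_\T\circ f_\T=\id$ and $f_\T\circ\bar f_\T=\id$; since both are monomorphisms, this shows $f_\T$ is an isomorphism with inverse $\bar f_\T$. In view of the factorization it suffices to prove $\cZ\bar f\circ\cZ f=\id_{\cZ X}$ and, by symmetry, $\cZ f\circ\cZ\bar f=\id_{\cZ\bar X}$. Given a class in $\cZ X$, represent it by a quasi-minimizer $S$ with controlled density (Theorem~\ref{intro-f-classes}, Theorem~\ref{intro-t-plateau}); pick a Lipschitz $g$ at bounded distance from $f$ along $\spt(S)$, so $g_\#S$ is again quasi-minimizing with controlled density by Proposition~\ref{prop:qi-inv} and $\cZ f\,[S]=[g_\#S]$; then pick a Lipschitz $\bar g$ at bounded distance from $\bar f$ on $\overline{g(\spt(S))}\supseteq\spt(g_\#S)$, so $\cZ\bar f\,[g_\#S]=[\bar g_\#g_\#S]=[(\bar g\circ g)_\#S]$. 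The map $\bar g\circ g\colon X\to X$ is Lipschitz and, along $\spt(S)$, at bounded distance from $\bar f\circ f$ and thus from $\id_X$. Pushing $S\on\B{p}{r'}$ through the geodesic homotopy between $\id_X$ and $\bar g\circ g$ therefore fills $(\bar g\circ g)_\#S-S$ inside $\B{p}{r}$ by a chain whose mass is $O(r^n)$ (using controlled density of $S$), so $\Fi\bigl((\bar g\circ g)_\#S-S\bigr)=0$ and $[(\bar g\circ g)_\#S]=[S]$. This gives $\cZ\bar f\circ\cZ f=\id_{\cZ X}$, and interchanging the roles of $X$ and $\bar X$ yields the other identity.

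The bulk of the work is already carried by Theorem~\ref{intro-f-classes} and Theorem~\ref{thm:mapping-as-classes}, so the only genuinely delicate point is the last step of the isomorphism argument: that a Lipschitz self-map of $X$ at bounded distance from the identity along $\spt(S)$ acts trivially on $\F$-asymptote classes, and that composing the Lipschitz approximations of $f$ and $\bar f$ does yield a bounded approximation of $\bar f\circ f$ along $\spt(S)$. The former is the $\Fi$-invariance of the homotopy filling, which is already implicit in the construction of $\cZ f$; the latter needs the routine but genuine observation that the approximations may be chosen at bounded distance from the given quasi-isometries on the relevant \emph{closed} sets — in particular, since cancellation may make $\spt(g_\#S)$ strictly smaller than $\overline{g(\spt(S))}$, one must take $\bar g$ close to $\bar f$ on all of $\overline{g(\spt(S))}$, which is harmless for the density and quasi-minimality bounds of Proposition~\ref{prop:qi-inv}.
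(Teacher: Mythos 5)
Your proposal is correct and follows essentially the same route as the paper: the paper defines $f_\T$ precisely as $\bT\circ\cZ f\circ\bT^{-1}$ via the commutative diagram after Theorem~\ref{thm:mapping-as-classes}, and proves the isomorphism statement through a quasi-inverse $\bar f$ with $\cZ f\circ\cZ\bar f=\id$. You have in fact supplied more detail than the paper (which dismisses the composition identity as ``not difficult to show''), and your care about taking $\bar g$ close to $\bar f$ on all of $\overline{g(\spt(S))}$ rather than merely on $\spt(g_\#S)$ is exactly the right point to worry about.
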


In particular, by the remark after Theorem~\ref{intro-f-classes}, if 
$X$ and $\bar X$ are quasi-isometric, then $H_{n-1}(\bT X)$ are 
$H_{n-1}(\bT \bar X)$ are isomorphic.

The next result describes the effect of a quasi-isometry on intersection 
patterns of limit sets. We let $\cP(\cL X)$ denote the set,
partially ordered by inclusion, of all intersections 
$\bigcap_{i=1}^k \Lam_i$ such that $1 \le k < \infty$ and $\Lam_i \in \cL X$.
Recall that $\cL X = \{\spt(R): R \in \bZ_{n-1,\cs}(\bT X)\}$.

\begin{theorem}[mapping limit sets] \label{intro-lim-sets} \mbox{}
Let $f \colon X \to \bar X$ be a quasi-isometry between two 
proper $\CAT(0)$ spaces of asymptotic rank $n \ge 2$.
Then there exists an isomorphism (order preserving bijection)
\[
\cL f \colon \cP(\cL X) \to \cP(\cL \bar X)
\]
such that $\cL f(\spt(R)) = \spt(f_\T(R))$ for all $R \in \bZ_{n-1,\cs}(\bT X)$.
Furthermore, for every $P \in \cP(\cL X)$ and $\bar P := \cL f(P)$ there 
is a pointed $L$-bi-Lipschitz homeomorphism between the cones 
$\R_+P \sub \CT X$ and $\R_+\bar P \sub \CT \bar X$, where $L$ is the 
multiplicative quasi-isometry constant of $f$.
\end{theorem}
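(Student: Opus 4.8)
The plan is to build $\cL f$ directly from the monomorphism $f_\T$ of Theorem~\ref{intro-tits-cycles}, using the fact that the cycle groups $\bZ_{n-1,\cs}(\bT X)$ and $\bZ_{n-1,\cs}(\bT \bar X)$ carry the combinatorial information needed to recover finite intersections of limit sets. First I would observe that by Theorem~\ref{intro-t-plateau} (applied in both $X$ and $\bar X$) the assignment $R \mapsto \spt(R)$ is a surjection from $\bZ_{n-1,\cs}(\bT X)$ onto $\cL X$, so every element of $\cP(\cL X)$ has the form $\bigcap_{i=1}^k \spt(R_i)$ for finitely many cycles $R_i$. Since $f_\T$ is a monomorphism (an isomorphism when $f$ is a quasi-isometry), the candidate definition is
\[
\cL f\Bigl( \bigcap_{i=1}^k \spt(R_i) \Bigr) := \bigcap_{i=1}^k \spt(f_\T(R_i)).
\]
The first real task is \textbf{well-definedness}: I must show that $\bigcap_i \spt(R_i)$ depends only on the intersection and not on the chosen representatives $R_i$. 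The key point is that $\spt(R) = \Lam(S)$ for any minimizer $S$ with $\bT S = R$ (Theorem~\ref{intro-f-classes}/\ref{intro-t-plateau}), and more usefully that a point $\xi \in \bT X$ lies in $\spt(R)$ if and only if there is a ``small'' cycle $R'$ supported near $\xi$ with $R' $ nontrivially ``contained in'' $R$ in a homological sense — e.g.\ $\xi \in \spt(R)$ iff every neighborhood $U$ of $\xi$ carries a nonzero cycle $R'' \in \bZ_{n-1,\cs}(U)$ that is a summand / restriction-compatible with $R$. This kind of ``support is detected by the cycle group'' statement is exactly what makes $\spt$ a lattice-theoretic invariant of the current, hence preserved by the group isomorphism $f_\T$ up to the ambient homeomorphism class; I would extract it from the slicing and localization properties of integral currents reviewed in the preliminaries. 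With such a characterization in hand, $\xi \in \bigcap_i \spt(R_i)$ becomes a condition on the collection $\{R_i\}$ phrased entirely in terms of the cycle group near $\xi$, so applying $f_\T$ gives a bijection between such ``support-detecting configurations'' in $\bT X$ and in $\bT \bar X$, proving both well-definedness and that $\cL f$ is a bijection with order-preserving inverse induced by $f_\T^{-1}$.

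Order preservation is then immediate: if $P \subseteq P'$ with $P = \bigcap_i \spt(R_i)$ and $P' = \bigcap_j \spt(R'_j)$, then adjoining the $R'_j$ to the $R_i$ does not change the intersection on the $X$ side, and applying $f_\T$ shows $\cL f(P) \subseteq \cL f(P')$; the same argument run with $f_\T^{-1}$ gives the converse, so $\cL f$ is an isomorphism of posets. The identity $\cL f(\spt(R)) = \spt(f_\T(R))$ is the case $k=1$ of the definition.

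The remaining, and I expect \textbf{main}, obstacle is the bi-Lipschitz statement for the cones $\R_+ P$ and $\R_+ \bar P$. Here I would proceed geometrically rather than through currents. Fix $P \in \cP(\cL X)$; by the asymptotic Morse Lemma (Theorem~\ref{intro-morse}) and the conicality/visibility results (Theorems~\ref{intro-f-classes} and~\ref{intro-visibility}), each limit set $\Lam_i$ appearing in $P$ is $\Lam(S_i)$ for a minimizer, and $\spt(S_i)$ is asymptotically conical with sublinear error; intersecting the supports and passing to the limit set recovers $P$, while the Tits cone $\R_+ P$ is the ``tangent cone at infinity'' of the intersection of these supports (scaled by $1/r$). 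A quasi-isometry $f$ with multiplicative constant $L$ moves each $\spt(S_i)$ to within bounded distance of $\spt(g_\# S_i)$, hence moves $\bigcap_i \spt(S_i)$ to within bounded Hausdorff distance of $\bigcap_i \spt(g_\# S_i)$ (using that the $\spt(g_\# S_i)$ are themselves asymptotically conical so their intersection is ``stable'' under bounded perturbation — this stability is the technical heart, and is where I'd use that distinct Tits-geodesic directions separate linearly, i.e.\ a CAT(0)/convex-bicombing comparison argument). Rescaling by $1/r$ and letting $r \to \infty$, the bounded error disappears and $f$ induces a map $\R_+ P \to \R_+ \bar P$; because $f$ distorts distances by at most the factor $L$ multiplicatively (the additive constant is killed in the blow-down), the induced map is $L$-bi-Lipschitz, and it is pointed by sending the cone vertex $o$ to $\bar o$. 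Surjectivity and the bi-Lipschitz lower bound come from running the same construction with a quasi-inverse of $f$. The delicate bookkeeping is controlling how the sublinear errors in ``asymptotically conical'' interact with taking finite intersections; I would handle this by choosing, for each $\eps>0$, a radius beyond which every $\spt(S_i)$ lies in the $\eps r$-neighborhood of its cone and conversely, and noting that a finite intersection of $\eps r$-neighborhoods of conical sets is contained in the $C(n,k)\,\eps r$-neighborhood of the intersection of the cones — a statement that again rests on the linear divergence of distinct directions in $\CT X$.
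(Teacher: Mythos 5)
Your definition $\cL f\bigl(\bigcap_i\spt(R_i)\bigr):=\bigcap_i\spt(f_\T(R_i))$ is the right one, and the case $k=1$ (that $\Lam[S]\mapsto\Lam[\bar S]$ is a well-defined injection on $\cL X$) goes through essentially as in the paper, via the sublinear comparison of the geodesic cones $\C_p(\Lam[S])$ and $\C_{\bar p}(\Lam[\bar S])$. The gap is in everything you build on top of that for $k\ge 2$. First, the well-definedness argument: $f_\T$ is only a group isomorphism $\bZ_{n-1,\cs}(\bT X)\to\bZ_{n-1,\cs}(\bT\bar X)$, and it is \emph{not} induced by any map $\bT X\to\bT\bar X$ (the Tits boundaries need not even be homeomorphic). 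So there is nothing to localize along: a characterization of ``$\xi\in\spt(R_1)\cap\spt(R_2)$'' in terms of cycles supported in small neighborhoods of $\xi$ cannot be transported by $f_\T$, because $f_\T$ gives you no correspondence between neighborhoods in $\bT X$ and neighborhoods in $\bT\bar X$. An abstract isomorphism of cycle groups does not determine supports, let alone intersections of supports; the ``lattice-theoretic invariance'' step is exactly the content that has to be proved geometrically, not something you can extract from the group structure.

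Second, the blow-down of $f$ restricted to $\bigcap_i\spt(S_i)$ does not produce $\R_+P$. Supports whose limit sets meet in $P$ need not intersect at all in $X$ (they are only \emph{sublinearly} conical), and intersections are not stable under the Hausdorff-type convergence you invoke: $A_j\to A$ and $B_j\to B$ do not give $A_j\cap B_j\to A\cap B$. Moreover the claimed inclusion of $\bigcap_i N_{\eps r}(\text{cone}_i)$ in $N_{C(n,k)\eps r}(\bigcap_i\text{cone}_i)$ is false with a constant depending only on $n,k$: already for two rays in a flat making a small angle $\theta$, the constant degenerates like $1/\theta$, so ``linear divergence of distinct directions'' gives no uniform control. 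The paper's device that repairs all of this at once is to construct, for the \emph{union} $M=\bigcup_i\Lam_i$, a single pointed $L$-bi-Lipschitz homeomorphism $\Phi\colon\R_+M\to\R_+\bar M$ with $\Phi(\R_+\Lam_i)=\R_+\bar\Lam_i$ for every $i$ (Theorem~\ref{thm:map-lim-sets}, built from finite nets, Proposition~\ref{prop:map-cones}, Lemma~\ref{lem:unif-conv}, and a diagonal argument). Restricting $\Phi$ to $\bigcap_i\R_+\Lam_i=\R_+P$ then yields simultaneously the well-definedness of $\cL f$ on $\cP(\cL X)$, the order isomorphism, and the $L$-bi-Lipschitz homeomorphism $\R_+P\to\R_+\bar P$. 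Without some statement of this kind your construction does not close up.
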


This follows from Theorem~\ref{thm:map-lim-sets}.
For a higher rank symmetric space $X$ of non-compact type, 
the partially ordered set $\cP(\cL X)$ contains the simplicial building 
structure of $\bT X$. This structure is pivotal in the proofs of both 
Mostow's rigidity theorem~\cite{Mos} and the general non-equivariant
rigidity theorem~\cite{EskF, KleL} for such spaces. Indeed, the latter may 
be derived relatively quickly from Theorem~\ref{intro-lim-sets} in 
conjunction with Tits' work~\cite{Tit} and the case $k = 1$ of the 
following result.

\begin{theorem}[structure of quasiflats] \label{intro-qflats} \mbox{}
Let $X$ be a proper $\CAT(0)$ space of asymptotic rank $n \ge 2$, and let 
$f \colon \R^n \to X$ be an $(L,a)$-quasi-isometric embedding with limit
set $\Lam := \di(f(\R^n))$. Then the cone $\R_+\Lam \sub \CT X$ 
is $L$-bi-Lipschitz homeomorphic to $\R^n$.
Suppose that $\Lam$ is contained in the union of the limit sets of 
$k$ $n$-flats in $X$ with a common point $p \in X$, and let\/ 
$\C_p(\Lam) \sub X$ denote the geodesic cone from $p$ over~$\Lam$.
Then $f(\R^n)$ is within distance at most $b$ from $\C_p(\Lam)$ for some 
constant~$b$ depending only on $X,L,a,k$. In the case $k = 1$, $f(\R^n)$ is 
at Hausdorff distance at most $b$ from the flat $\C_p(\Lam)$.
\end{theorem}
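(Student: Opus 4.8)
The plan is to reduce everything to the machinery of Tits cycles and asymptotic conicality that has already been set up. First I would associate to the quasiflat $f\colon\R^n\to X$ its fundamental cycle: by Proposition~\ref{prop:lip-qflats}/Proposition~\ref{prop:qflats}, after possibly perturbing $f$ to a Lipschitz map at bounded distance, the pushforward $S := f_\#\bb{\R^n}$ is a local cycle in $\bZ_{n,\loc}(X)$ that is $(Q,a)$-quasi-minimizing and has $(C,a)$-controlled density, with constants depending only on $L,a,n$. Its limit set $\Lam$ is exactly $\di(f(\R^n))$, and $\Fi(S)<\infty$, so $S\in\bZ_{n,\loc}^\infty(X)$. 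By Theorem~\ref{intro-f-classes} there is a unique conical representative $S_{p,0}$ with $\Lam(S_{p,0})=\Lam(S)=\Lam$ (using that $S$ is quasi-minimizing), a unique $\Sig\in\bZ_{n,\loc}(\CT X)$ with $\can_{p\#}\Sig=S_{p,0}$, and the spherical slice $R:=\bT S\in\bZ_{n-1,\cs}(\bT X)$ with $\spt(R)=\Lam$. Since $S$ is essentially area-minimizing up to the factor $Q$ and $\Gi(S)<\infty$, Theorem~\ref{intro-t-plateau} applied to $R$ pins down the density: $\Gi(S)=\M(R)/n$ and, because $f$ is a genuine quasi-isometric embedding of $\R^n$, one checks $\M(R)/n=\om_n$, i.e.\ $\Sig$ carries unit density and is the cone over a single $(n-1)$-cycle of total mass $n\om_n$ in $\bT X$.

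The first assertion --- that $\R_+\Lam\subset\CT X$ is $L$-bi-Lipschitz homeomorphic to $\R^n$ --- I would obtain as follows. The composition $\can_p\circ(\text{cone on }f|_{S^{n-1}})$, or more precisely the map $\R^n\to\CT X$ sending a point of norm $t$ in direction $u$ to the scaled class $t\cdot[\text{ray from }p\text{ toward }f(tu)/\text{as }t\to\infty]$, is well defined because $f$ being an $(L,a)$-quasi-isometric embedding forces the rays $t\mapsto\si_{p,f(tu)}$ to converge (the Tits-distance estimate $\dT$ between the limiting directions of $f(tu)$ and $f(tv)$ is comparable, within factor $L$, to $|u-v|$). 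This gives an $L$-bi-Lipschitz map $\R^n\to\R_+\Lam$; surjectivity onto $\R_+\Lam$ is exactly the statement $\Lam(S_{p,0})=\Lam$. The main point to verify here is the two-sided Tits-metric comparison: the lower bound on $\aT$ between asymptotic directions uses the quasi-isometric embedding property together with $\CAT(0)$ comparison, and the upper bound uses $\diam_{\aT}\Lam\le\pi$ from the rank-$n$ hypothesis (no $(n+1)$-flat). I expect this comparison estimate to be the technical heart of the first part, though it is a fairly standard quasiflat-in-$\CAT(0)$ argument.

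For the second assertion, I would argue that $f(\R^n)$ lies within bounded distance of $\C_p(\Lam)$. Since $S_{p,0}$ is conical with respect to $p$, its support is exactly the geodesic cone $\C_p(\Lam(S_{p,0}))=\C_p(\Lam)$ (this is essentially the content of Proposition~\ref{prop:lim-sets} combined with conicality). By the asymptotic conicality statement in (the proof of) Theorem~\ref{thm:visibility} and Theorem~\ref{thm:conicality}, $\spt(S)$ and $\spt(S_{p,0})$ are within \emph{sublinear} distance of each other; but we need a \emph{uniform} bound. Here the hypothesis that $\Lam\subset\bigcup_{i=1}^k\di(F_i)$ for $n$-flats $F_i$ through $p$ is essential: $\C_p(\Lam)$ is then a finite union of convex pieces of the flats $F_i$, hence a nice piecewise-flat object, and one can run a thickening/fellow-traveling argument — using Lemma~\ref{lem:fill-density} to bound filling densities of $S-\sum\pm\bb{F_i\cap(\cdot)}$ near points far from $\C_p(\Lam)$, together with the small-filling-density estimate Proposition~\ref{prop:part-filling} — to upgrade the sublinear bound to a uniform constant $b=b(X,L,a,k)$, exactly as in the proof of the slim-simplices Theorem~\ref{thm:slim} and the Morse Lemma Theorem~\ref{thm:morse-2}. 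In the case $k=1$, $\C_p(\Lam)$ is a single flat $n$-plane (it is convex, $n$-dimensional, and equals $\C_p$ of a set of diameter $\le\pi$ that fills $R$ with unit density), and the bounded-distance statement becomes a Hausdorff-distance statement because $f(\R^n)$ is itself coarsely $n$-dimensional and connected, so it cannot omit large portions of the flat. The main obstacle will be organizing this last upgrade — turning "sublinear distance plus finitely many flat ambient pieces" into a clean uniform Hausdorff bound — but all the filling-density tools needed for it are already available from Section~\ref{subsect:slim-morse}.
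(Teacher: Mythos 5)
Your overall framework (fundamental cycle of the quasiflat, conical representative, controlled density plus Morse Lemma I) is the right one, but two of your key steps have genuine gaps.

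First, the bi-Lipschitz parametrization of $\R_+\Lam$. The map you propose, sending $tu$ to $t\cdot\lim_{t\to\infty}[\si_{p\,f(tu)}]$, need not be well defined: a quasi-geodesic ray $t\mapsto f(tu)$ in a $\CAT(0)$ space of rank $\ge 2$ (already in $\R^2$) need not converge to a single point of $\di X$ --- its direction as seen from $p$ can oscillate forever over a set of positive Tits diameter. There is no ``standard quasiflat-in-$\CAT(0)$ comparison'' giving a two-sided Tits estimate direction by direction. The paper avoids following individual rays altogether: the homeomorphism $\phi\colon\R^n\to\R_+\Lam$ is produced by Theorem~\ref{thm:map-lim-sets} (mapping limit sets) applied to $f\colon\R^n\to X$, which builds $\phi$ on larger and larger finite nets via Proposition~\ref{prop:map-cones} (resting on the visibility property and asymptotic conicality) and passes to a diagonal limit. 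Your side claim that $\Gi(S)=\M(R)/n=\om_n$ for a quasiflat is also false in general; only the inequality $\ge\om_n$ holds.

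Second, the uniform bound on $d(f(x),\C_p(\Lam))$. You correctly see that the sublinear estimate must be upgraded and that the $k$-flats hypothesis has to enter, but your proposed comparison of $S$ with $\sum\pm\bb{F_i\cap(\cdot)}$ is not an argument: $\C_p(\Lam)$ is only a union of cone pieces of the flats, and there is no cycle supported on it that is a priori asymptotic to $S$. The paper's mechanism is different: the hypothesis is used to show that the conical representative $S_{p,0}$ has \emph{controlled density}. One lifts $S_{p,0}$ to $\Sig\in\bZ_{n,\loc}(\CT X)$ by Theorem~\ref{thm:lift-cones}, pulls $\Sig$ back through $\phi$ to get $m\bb{\R^n}$, decomposes $\R_+\Lam$ into the pieces $K_i$ lying over the flats, on which $\can_p$ is a $1$-Lipschitz quasi-isometric embedding, and pushes each piece forward; the preimage of an $r$-ball under $\can_p\circ\phi|_{\phi^{-1}(K_i)}$ has diameter $O(r)$, giving $\G_{q,r}(S_{p,0})\le kC_0$ for \emph{all} $q$. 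Once $S_{p,0}$ has controlled density, $S-S_{p,0}$ is quasi-minimizing mod $\spt(S_{p,0})$ with controlled density and Theorem~\ref{thm:morse-1} applies verbatim. For $k=1$, $S_{p,0}$ is the pushforward of $m\bb{\R^n}$ under the Lipschitz quasiflat $\can_p\circ\phi$, hence itself quasi-minimizing with support at finite Hausdorff distance from $\C_p(\Lam)$ by Proposition~\ref{prop:lip-qflats}; the two-sided bound then follows because $S-S_{p,0}$ is also quasi-minimizing mod $\spt(S)$ --- your ``coarsely $n$-dimensional and connected'' heuristic is not a substitute for this.
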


We refer to Theorem~\ref{thm:struct-qflats} and the comments thereafter 
for a more general statement and some implications.


\section{Preliminaries} \label{sect:prelim}

\subsection{Metric notions} \label{subsect:metric}
\mbox{}
Let $X = (X,d)$ be a metric space. We write 
\[
\B{p}{r} := \{x \in X : d(p,x) \le r\}, \quad
\Sph{p}{r} := \{x \in X : d(p,x) = r\}
\] 
for the closed ball and sphere with radius $r \ge 0$ and center $p \in X$.

A set $N \sub X$ is called {\em $\del$-separated}, for a constant
$\del \ge 0$, if $d(x,y) > \del$ for every pair of distinct points
$x,y \in N$. 
For $A \sub X$, we call a subset $N \sub A$ a {\em $\del$-net in $A$} 
if the family of all balls $\B{x}{\del}$ with $x \in N$ covers $A$. 
Every maximal (with respect to inclusion) $\del$-separated subset of $A$
is a $\del$-net in $A$.

A map $f \colon X \to Y$ into another metric space $Y = (Y,d)$ is
{\em $L$-Lipschitz}, for a constant $L \ge 0$, 
if $d(f(x),f(x')) \le L\,d(x,x')$ for all $x,x' \in X$.
The smallest such $L$ is the {\em Lipschitz constant\/} $\Lip(f)$ of~$f$. 
The map $f \colon X \to Y$ is an {\em $L$-bi-Lipschitz embedding}
if $L^{-1}d(x,x') \le d(f(x),f(x')) \le L\,d(x,x')$ for all $x,x' \in X$.
For an $L$-Lipschitz function $f \colon A \to \R$ defined on a set 
$A \sub X$,
\[
\bar f(x) := \sup\{f(a) - L\,d(a,x): a \in A\} \quad \text{($x \in X$)}
\]
defines an $L$-Lipschitz extension $\bar f \colon X \to \R$ of $f$.
Every $L$-Lipschitz map $f \colon A \to \R^n$, $A \sub X$, admits
a $\sqrt{n}L$-Lipschitz extension $\bar f \colon X \to \R^n$.

A map $f \colon X \to Y$ between two metric spaces is called an
{\em $(L,a)$-quasi-isometric embedding}, for constants
$L \ge 1$ and $a \ge 0$, if
\[
L^{-1} d(x,x') - a \le d(f(x),f(x')) \le L\,d(x,x') + a
\]
for all $x,x' \in X$.
A {\em quasi-isometry} $f \colon X \to Y$ has the additional property
that $Y$ is within finite distance of the image of $f$.
An {\em $(L,a)$-quasi-geodesic segment\/} in $X$ is the image of an 
$(L,a)$-quasi-isometric embedding of some compact interval. 
An $n$-dimensional {\em quasiflat\/} in $X$ is the image of a quasi-isometric
embedding of $\R^n$.

\subsection{Currents in metric spaces} \label{subsect:currents}
\mbox{}
Currents of finite mass in complete metric spaces were introduced by Ambrosio 
and Kirchheim in~\cite{AmbK}. Here we will mainly work with the localized 
variant of this theory for locally compact metric spaces, as described 
in~\cite{Lan3}. However, to avoid certain technicalities, we will
assume throughout that the underlying metric space $X$ is proper,
hence complete and separable.

For every integer $n \ge 0$, let $\cD^n(X)$ denote the set of all 
$(n+1)$-tuples $(\pi_0,\ldots,\pi_n)$ of real valued functions on $X$ such 
that $\pi_0$ is Lipschitz with compact support $\spt(\pi_0)$ and 
$\pi_1,\dots,\pi_n$ are locally Lipschitz. 
(In the case that $X = \R^N$ and the entries of $(\pi_0,\ldots,\pi_n)$ are 
smooth, this tuple should be thought of as representing the compactly supported 
differential $n$-form $\pi_0\,d\pi_1 \wedge \ldots \wedge d\pi_n$.)
An {\em $n$-dimensional current\/} $S$ in $X$ is a function 
$S \colon \cD^n(X) \to \R$ satisfying the following three conditions:
\ben
\item
$S$ is $(n+1)$-linear;
\item 
$S(\pi_{0,k},\ldots,\pi_{n,k}) \to S(\pi_0,\ldots,\pi_n)$
whenever $\pi_{i,k} \to \pi_i$ pointwise on $X$ with 
$\sup_k\Lip(\pi_{i,k}|_K) < \infty$ for every compact set $K \sub X$ 
($i = 0,\dots,n$) and with $\bigcup_k\spt(\pi_{0,k}) \sub K$ for some such set; 
\item
$S(\pi_0,\ldots,\pi_n) = 0$ whenever one of the functions
$\pi_1,\ldots,\pi_n$ is constant on a neighborhood of $\spt(\pi_0)$.
\een
We write $\cD_n(X)$ for the vector space of all $n$-dimensional currents 
in $X$. The defining conditions already imply that every $S \in \cD_n(X)$ is 
alternating in the last $n$ arguments and satisfies a product derivation 
rule in each of these. The definition is further motivated by the fact that
every function $w \in L^1_\loc(\R^n)$ induces a current 
$\bb{w} \in \cD_n(\R^n)$ defined by
\[
\bb{w}(\pi_0,\dots,\pi_n) 
:= \int w \pi_0\det\bigl[\d_j\pi_i\bigr]_{i,j = 1}^n \,dx
\]
for all $(\pi_0,\dots,\pi_n) \in \cD^n(\R^n)$, where the partial derivatives 
$\d_j\pi_i$ exist almost every\-where according to Rademacher's theorem. 
Note that this just corresponds to the integration of the differential form
$\pi_0\,d\pi_1 \wedge \ldots \wedge d\pi_n$ over $\R^n$, weighted by $w$.
For the characteristic function $\chi_W$ of a Borel set $W \sub \R^n$,
we put $\bb{W} := \bb{\chi_W}$.
(See Section~2 in~\cite{Lan3} for details.) 

\subsection{Support, push-forward,  and boundary}
\mbox{}
For every $S \in \cD_n(X)$ there exists a smallest closed subset of $X$,
the {\em support\/} $\spt(S)$ of $S$, such that the value 
$S(\pi_0,\ldots,\pi_n)$ depends only on the restrictions of 
$\pi_0,\dots,\pi_n$ to this set.
For a proper Lipschitz map $f \colon X \to Y$ into another proper
metric space $Y$, the {\em push-forward\/} $f_\#S \in \cD_n(Y)$ is defined
simply by
\[
(f_\#S)(\pi_0,\ldots,\pi_n) := S(\pi_0 \circ f,\ldots,\pi_n \circ f)
\]
for all $(\pi_0,\ldots,\pi_n) \in \cD^n(Y)$. This definition can be
extended to proper Lipschitz maps $f \colon \spt(S) \to Y$ via
appropriate extensions of the functions $\pi_i \circ f$ to $X$. 
In either case, $\spt(f_\#S) \sub f(\spt(S))$.
For $n \ge 1$, the {\em boundary} $\d S \in \cD_{n-1}(X)$ of 
$S \in \cD_n(X)$ is defined by
\[
(\d S)(\pi_0,\dots,\pi_{n-1}) := S(\tau,\pi_0,\dots,\pi_{n-1})
\]
for all $(\pi_0,\ldots,\pi_{n-1}) \in \cD^{n-1}(X)$ and for any compactly 
supported Lipschitz function $\tau$ that is identically $1$ on some 
neighborhood of $\spt(\pi_0)$. If $\til\tau$ is another such function, 
then $\pi_0$ vanishes on a neighborhood of $\spt(\tau - \til\tau)$ and 
$\d S$ is thus well-defined by~(1) and~(3).
Similarly one can check that $\d \circ \d = 0$. The inclusion
$\spt(\d S) \sub \spt(S)$ holds, and $f_\#(\d S) = \d(f_\# S)$ for
$f \colon \spt(S) \to Y$ as above.
(See Section~3 in \cite{Lan3}.)

\subsection{Mass} \label{subsect:mass}
\mbox{}
Let $S \in \cD_n(X)$. A tuple $(\pi_0,\ldots,\pi_n) \in \cD^n(X)$
will be called {\em normalized\/} if the restrictions of
$\pi_1,\ldots,\pi_n$ to the compact set $\spt(\pi_0)$ are $1$-Lipschitz.
For an open set $U \sub X$, 
the {\em mass} $\|S\|(U) \in [0,\infty]$ of $S$ in $U$ is then defined as
the supremum of $\sum_j S(\pi_{0,j},\ldots,\pi_{n,j})$
over all finite families of normalized tuples
$(\pi_{0,j},\ldots,\pi_{n,j}) \in \cD^n(X)$ such that
$\bigcup_j \spt(\pi_{0,j}) \sub U$ and $\sum_j |\pi_{0,j}| \le 1$.
Note that $\|S\|(U) > 0$ if and only if $U \cap \spt(S) \ne \es$.
This induces a regular Borel measure $\|S\|$ on $X$, whose
{\em total mass} $\|S\|(X)$ is denoted by $\M(S)$. For Borel sets
$W,A \sub \R^n$, $\|\bb{W}\|(A)$ equals the Lebesgue measure of $W \cap A$.
If $T \in \cD_n(X)$ is another $n$-current in $X$, then clearly
\[
\|S + T\| \le \|S\| + \|T\|.
\]
We will now assume that the measure $\|S\|$ is locally finite (and hence
finite on bounded sets, as $X$ is proper). Then it can be shown that
\[
|S(\pi_0,\ldots,\pi_n)| \le \int_X |\pi_0|\,d\|S\|
\]
for every normalized tuple $(\pi_0,\ldots,\pi_n) \in \cD^n(X)$.
This inequality allows to define the {\em restriction} $S \on A \in \cD_n(X)$
of $S$ to a Borel set $A \sub X$ by
\[
(S \on A)(\pi_0,\dots,\pi_n) := \lim_{k \to \infty} S(\tau_k,\pi_1,\ldots,\pi_n)
\]
for any sequence of compactly supported Lipschitz functions $\tau_k$ converging
in $L^1(\|S\|)$ to $\chi_A \pi_0$. The measure $\|S \on A\|$ equals
the restriction $\|S\| \on A$ of $\|S\|$.
If $f \colon \spt(S) \to Y$ is a proper $L$-Lipschitz map into a proper 
metric space $Y$ and $B \sub Y$ is a Borel set, then
$(f_\#S) \on B = f_\#(S \on f^{-1}(B))$ and
\[
\|f_\#S\|(B) \le L^n\,\|S\|(f^{-1}(B)).
\]
(See Section~4 in~\cite{Lan3}.)

\subsection{Integral currents}
\mbox{}
A current $S \in \cD_n(X)$ is called {\em locally integer rectifiable\/} 
if the measure $\|S\|$ is locally finite and concentrated on the union of
countably many Lipschitz images of compact subsets of $\R^n$,
and the following integer multiplicity condition holds: for every
Borel set $A \sub X$ with compact closure and every Lipschitz map
$\phi \colon X \to \R^n$, the current $\phi_\#(S \on A) \in \cD_n(\R^n)$
is of the form $\bb{w}$ for some {\em integer valued\/}
$w = w_{A,\phi} \in L^1(\R^n)$.
Then $\|S\|$ turns out to be absolutely continuous with 
respect to $n$-dimensional Hausdorff measure. 
Furthermore, push-forwards and restrictions to Borel sets
of locally integer rectifiable currents are again locally integer rectifiable.

A current $S \in \cD_n(X)$ is called a {\em locally integral current\/} 
if $S$ is locally integer rectifiable and, for $n \ge 1$, $\d S$ satisfies 
the same condition.
(Remarkably, this is the case already when $\|\d S\|$ is locally finite,
provided $S$ is locally integer rectifiable; see Theorem~8.7 in~\cite{Lan3}.)
This yields a chain complex of abelian groups $\bI_{n,\loc}(X)$.
We write $\bI_{n,\cs}(X)$ for the respective subgroups of
{\em integral currents\/} with compact support.
For example, if $\Del \sub \R^n$ is an $n$-simplex and $f \colon \Del \to X$
is a Lipschitz map, then $f_\#\bb{\Del} \in \bI_{n,\cs}(X)$.
Thus every singular Lipschitz chain in $X$ with integer coefficients
defines an element of $\bI_{n,\cs}(X)$.
For $X = \R^N$, there is a canonical chain isomorphism from $\bI_{*,\cs}(\R^N)$
to the chain complex of ``classical'' integral currents in $\R^N$
originating from~\cite{FedF}.

For $n \ge 1$, we let $\bZ_{n,\loc}(X) \sub \bI_{n,\loc}(X)$ and 
$\bZ_{n,\cs}(X) \sub \bI_{n,\cs}(X)$ denote the subgroups of currents
with boundary zero. An element of $\bI_{0,\cs}(X)$ is an integral
linear combination of currents of the form $\bb{x}$, where
$\bb{x}(\pi_0) = \pi_0(x)$ for all $\pi_0 \in \cD^0(X)$. We let
$\bZ_{0,\cs}(X) \sub \bI_{0,\cs}(X)$ denote the subgroup of linear combinations
whose coefficients sum up to zero. The boundary of a current in $\bI_{1,\cs}(X)$
belongs to $\bZ_{0,\cs}(X)$.
Given $Z \in \bZ_{n,\cs}(X)$, for $n \ge 0$, we will call
$V \in \bI_{n+1,\cs}(X)$ a {\em filling} of $Z$ if $\d V = Z$. 

\subsection{Slicing} \label{subsect:slicing}
\mbox{}
Let $S \in \bI_{n,\loc}(X)$ be a locally integral current of dimension
$n \ge 1$. Note that both $\|S\|$ and $\|\d S\|$ are locally finite
(that is, $S$ is {\em locally normal}, see Section~5 in~\cite{Lan3}).
Let $\rho \colon X \to \R$ be a Lipschitz function,
and let $B_s := \{\rho \le s\}$ denote the closed
sublevel set for $s \in \R$.
The corresponding {\em slice} of $S$ is the $(n-1)$-dimensional current
\[
\la S,\rho,s \ra := \d(S \on B_s) - (\d S) \on B_s
\]
with support in $\{\rho = s\} \cap \spt(S)$.
We will use this construction exclusively in the case that
$B_s \cap \spt(S)$ is compact for all $s$ (typically $\rho$ will be the
distance function to a point in $X$). Then, for almost every $s$,
$\la S,\rho,s \ra \in \bI_{n-1,\cs}(X)$ and hence $S \on B_s \in \bI_{n,\cs}(X)$.
Furthermore, for $a < b$, the coarea inequality
\[
\int_a^b \M(\la S,\rho,s \ra) \,ds \le \Lip(\rho)\,\|S\|(\{a < \rho < b\})
\]
holds. In particular, for every $c \in (0,b-a]$, the set of all 
$s \in (a,b)$ such that
\[
\M(\la S,\rho,s \ra) \le c^{-1} \Lip(\rho)\,\|S\|(\{a < \rho < b\})
\]
has measure $> b - a - c$. (See Section~6 and Theorem~8.5 in~\cite{Lan3}.)

\subsection{Homotopies, cones, and isoperimetric inequality} 
\label{subsect:homotopies}
\mbox{}
Let $\bb{0,1} \in \bI_{1,\cs}([0,1])$ denote the current defined by
\[
\bb{0,1}(\pi_0,\pi_1) := \int_0^1 \pi_0(t) \pi'_1(t) \,dt.
\]
Note that $\d\bb{0,1} = \bb{1} - \bb{0}$.
We endow $[0,1] \times X$ with the usual $l_2$~product metric.
There exists a canonical product construction
\[
S \in \bI_{n,\cs}(X) \leadsto 
\bb{0,1} \times S \in \bI_{n+1,\cs}([0,1] \times X)
\]
for all $n \ge 0$.
Suppose now that $Y$ is another proper metric space,
$h \colon [0,1] \times X \to Y$ is a Lipschitz homotopy from 
$f = h(0,\cdot)$ to $g = h(1,\cdot)$, and $S \in \bI_{n,\cs}(X)$. 
Then $h_\#(\bb{0,1} \times S)$ is an element of $\bI_{n+1,\cs}(Y)$ 
with boundary
\[
\d\,h_\#(\bb{0,1} \times S) = g_\# S - f_\# S - h_\#(\bb{0,1} \times \d S)
\]
(for $n = 0$ the last term is zero.)
If $h(t,\cdot)$ is $L$-Lipschitz for every $t$, and $h(\cdot,x)$ is a
geodesic of length at most $D$ for every $x \in \spt(S)$, then
\[
\M(h_\#(\bb{0,1} \times S)) \le (n+1) L^n D\,\M(S).  
\]
(See Section~2.3 in~\cite{Wen1}.) An important special case of this is
when $R \in \bZ_{n,\cs}(X)$ and $h(\cdot,x) = \si_{px}$ is a geodesic from
some fixed point $p \in X$ to $x$ for every $x \in \spt(R)$.   
Then $h_\#(\bb{0,1} \times R) \in \bI_{n+1,\cs}(X)$ is the 
{\em cone from $p$ over $R$} determined by this family of geodesics,
whose boundary is $R$. 
If the family of geodesics satisfies the convexity condition
\[
d(h(t,x),h(t,x')) = d(\si_{px}(t),\si_{px'}(t)) \le t\, d(x,x')
\]
for all $x,x' \in \spt(R)$ and $t \in [0,1]$, and if 
$\spt(R) \sub \B{p}{r}$, then
\[
\M(h_\#(\bb{0,1} \times R)) \le r\,\M(R).  
\]
Finally, if $X$ is a $\CAT(0)$ space, then this inequality
holds with $r/(n+1)$ in place of $r$ (see Theorem~4.1 in~\cite{Wen3}).

\begin{definition}[coning inequalities] \label{def:cone-ineq}
For $n \ge 0$, we say that $X$ satisfies {\em condition~{\rm (CI$_n$)}} if 
for every $k \in \{0,\ldots,n\}$ there is a constant $c_k$ such that every
$R \in \bZ_{k,\cs}(X)$ with support in some $r$-ball possesses a filling
$S \in \bI_{k+1,\cs}(X)$ with mass 
\[
\M(S) \le c_k r\,\M(R).
\]
\end{definition}

Condition~{\rm (CI$_0$)} is satisfied if and only if $X$ is
{\em quasi-convex}, that is, there is a constant $c'_0$ such that
every pair of points $x,x'$ in $X$ can be joined by a curve of length less
than or equal to $c'_0\,d(x,x')$.

Coning inequalities are instrumental for isoperimetric filling inequalities.

\begin{theorem}[isoperimetric inequality] \label{thm:isop-ineq}
Let $n \ge 2$, and let $X$ be a proper metric space satisfying
condition~{\rm (CI$_{n-1}$)}. Then every cycle 
$R \in \bZ_{n-1,\cs}(X)$ possesses a filling $T \in \bI_{n,\cs}(X)$ 
with mass 
\[
\M(T) \le \gam\,\M(R)^{n/(n-1)}
\]
for some constant $\gam > 0$ depending only on the constants 
$c_1,\dots,c_{n-1}$ from Definition~\ref{def:cone-ineq}.
\end{theorem}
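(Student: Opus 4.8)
The plan is to prove the filling inequality by induction on $n$, using the coning inequalities to control the filling of ``thin'' pieces and slicing to decompose a cycle of large mass into pieces of controlled size. First I would set up the base case $n = 2$: given $R \in \bZ_{1,\cs}(X)$, fix a point $p \in \spt(R)$ and use the distance function $\rho = d(p,\cdot)$ together with the coarea inequality from Section~\ref{subsect:slicing} to choose a radius $s$ with $\M(\la R,\rho,s\ra)$ small (of order $\M(R)/s$) and $R \on \B{p}{s}$ carrying at least half the mass; one then compares $R$ with a cycle supported in a ball of radius comparable to $\M(R)$ and fills the remainder with a cone via (CI$_1$). Actually, the cleanest route for the base case is the standard one: if $\diam(\spt(R)) \le \M(R)$ one simply cones off using (CI$_1$), getting $\M(S) \le c_1 \diam(\spt R)\,\M(R) \le c_1 \M(R)^2$; if the diameter is larger, one slices to split $R = R_1 + R_2$ into two cycles each of mass at most, say, $\tfrac23\M(R)$ plus a small error from the slice, and recurses on the cycle rather than on $n$. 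I would phrase this as a single induction on $\lceil \M(R)\rceil$, or equivalently run Wenger's sub-additivity argument, to obtain the exponent $2 = n/(n-1)$ for $n = 2$.

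For the inductive step, assume the inequality holds in dimension $n-1$ with constant depending on $c_1,\dots,c_{n-2}$, and let $R \in \bZ_{n-1,\cs}(X)$. The key mechanism is again slicing: pick $p \in \spt(R)$, let $\rho = d(p,\cdot)$, and for a parameter $s$ to be optimized choose (via the coarea bound, discarding a small bad set) a value with
\[
\M(\la R,\rho,s\ra) \le \tfrac{C}{s}\,\M(R).
\]
Then $R \on \B{p}{s}$ is an $(n-1)$-chain whose boundary is the slice, a cycle of dimension $n-2$; by the inductive hypothesis (or directly by (CI$_{n-2}$), since the slice lies in an $s$-ball and we only need *a* filling, not the optimal one) one fills the slice by some $W \in \bI_{n-1,\cs}(X)$, forming the cycle $R_s := (R \on \B{p}{s}) - W \in \bZ_{n-1,\cs}(X)$ supported in $\B{p}{s}$. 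Cone $R_s$ from $p$ using (CI$_{n-1}$): this costs $c_{n-1}\,s\,\M(R_s) \lesssim s\bigl(\M(R) + \M(W)\bigr)$, and $\M(W) \lesssim s\,\M(\la R,\rho,s\ra) \lesssim \M(R)$ by (CI$_{n-2}$). The leftover piece $R - R_s = (R \on \{\rho > s\}) + W$ has mass at most $\M(R) - \M(R\on\B{p}{s}) + \M(W)$; choosing $s \sim \M(R)^{1/(n-1)}$ makes the cone term of size $\M(R)^{n/(n-1)}$ and arranges that the leftover has strictly smaller mass (say at most $\tfrac12\M(R)$) provided $s$ is also chosen so that $R\on\B{p}{s}$ captures a definite fraction of the mass — which, as in the base case, one secures by a second application of the coarea/pigeonhole argument. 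One then recurses on the leftover cycle and sums a geometric series in $\M(R)^{n/(n-1)}$.

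The main obstacle I anticipate is organizing the two competing demands on the slicing radius $s$: it must be large enough that the slice is *short* (mass $\lesssim \M(R)/s$) yet small enough that the cone over an $s$-ball is cheap ($\lesssim s\,\M(R)$), and simultaneously the sublevel set must carry a fixed positive fraction of $\|R\|$ so that the recursion actually decreases the mass. Balancing the first two gives $s \sim \M(R)^{1/(n-1)}$ and the target exponent; the third is handled by noting that at most a bounded fraction of radii in a dyadic range can fail the mass-fraction condition, so a good $s$ in the required range exists. The only genuinely delicate point is bookkeeping the constants through the induction on $n$ and confirming they depend solely on $c_1,\dots,c_{n-1}$ (the filling of the lower-dimensional slice uses only $c_1,\dots,c_{n-2}$, and the final cone uses $c_{n-1}$), together with checking that all the slices and restrictions land in $\bI_{*,\cs}(X)$ — which is exactly what the ``almost every $s$'' clauses in Section~\ref{subsect:slicing} guarantee. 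I would cite~\cite{Wen1} or~\cite{Wen3} for the precise form of this now-standard cone-and-slice induction and simply indicate the adaptation to the present hypotheses.
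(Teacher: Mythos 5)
First, a point of reference: the paper does not prove Theorem~\ref{thm:isop-ineq} at all --- it is quoted from Wenger~\cite{Wen1} (``see Theorem~1.2 and the remark thereafter regarding compact supports''). So your closing deferral to~\cite{Wen1} is exactly what the paper does, and as a citation your proposal is fine. The problem is that the argument you sketch in between is not the argument of~\cite{Wen1}, and as written it has a genuine gap in the inductive step.

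The gap is in the recursion scheme. You want to choose $s\sim\M(R)^{1/(n-1)}$ so that the cone over $R_s$ costs $\lesssim s\,\M(R)\sim\M(R)^{n/(n-1)}$, \emph{and} so that $\B{p}{s}$ captures a definite fraction of $\|R\|$, making the leftover have mass at most $\tfrac12\M(R)$ and the process a geometric series. These two demands are incompatible for spread-out cycles, and no choice of $p$ or of $s$ within a dyadic range rescues this: take $n=3$ and let $R$ be the boundary of the box $[0,L]\times[0,1]^2$ in $\R^3$, so $\M(R)\approx 4L$ and $\M(R)^{1/2}\approx 2\sqrt L$. For \emph{every} point $p$, the ball of radius $\sim\sqrt L$ carries mass $O(\sqrt L)$, i.e.\ a fraction $O(L^{-1/2})$ of $\M(R)$; to capture half the mass you would need $s\sim L$, and then the cone costs $\sim L\cdot\M(R)\sim\M(R)^2\gg\M(R)^{3/2}$. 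So the proposed recursion either fails to decrease the mass or overshoots the exponent. Your suggested fix (``at most a bounded fraction of radii in a dyadic range can fail the mass-fraction condition'') does not address this: the obstruction is not the choice of radius within a range but the absence of any a priori lower density bound for an arbitrary cycle.

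What the actual proof does instead is decompose $R$ into (possibly many) \emph{round} cycles $R_i$ with $\diam(\spt(R_i))\le C\,\M(R_i)^{1/(n-1)}$ and $\sum_i\M(R_i)\le C'\M(R)$, by repeatedly splitting off a round piece at a point where the density is (nearly) maximal --- a stopping-time argument in which the split-off piece may carry an arbitrarily small fraction of the total mass (in the box example, $\sim L$ pieces of mass $\sim 1$ each). Each round piece is coned off via (CI$_{n-1}$) at cost $\le c_{n-1}C\,\M(R_i)^{n/(n-1)}$, the slices being filled using the inductive hypothesis in dimension $n-1$, and one concludes not by summing a geometric series in $\M(R)$ but by the superadditivity $\sum_i\M(R_i)^{n/(n-1)}\le\bigl(\sum_i\M(R_i)\bigr)^{n/(n-1)}$. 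Your toolbox (coarea slicing, coning, induction on dimension) is the right one, but the decomposition keyed to halving the total mass is the wrong organizing principle and would not survive being written out.
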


(Here the condition~{\rm (CI$_0$)} is actually not needed.)
This was shown in more general form for Ambrosio--Kirchheim currents 
in complete metric spaces in~\cite{Wen1}; 
see~Theorem~1.2 and the remark thereafter regarding compact supports. 
For earlier results of this type, see Remark~6.2 in~\cite{FedF}
and the comments after Corollary~3.4.C in~\cite{Gro1}.

\subsection{Convergence, compactness, and Plateau problem}
\label{subsect:convergence}
\mbox{}
A sequence $(S_i)$ in $\bI_{n,\loc}(X)$ is said to converge {\em weakly}
to a current $S \in \bI_{n,\loc}(X)$ if $S_i \to S$ pointwise as functionals
on $\cD^n(X)$. Then
\[
\|S\|(U) \le \liminf_{i \to \infty} \|S_i\|(U)
\]
for every open set $U \sub X$. Furthermore, weak convergence commutes
with the boundary operator and with push-forwards.

A more geometric notion of convergence, with analogous properties,
is given as follows. A sequence $(S_i)$ in $\bI_{n,\loc}(X)$ converges 
in the {\em local flat topology} to a current $S \in \bI_{n,\loc}(X)$
if for every compact set $K \sub X$ there exists a sequence
$(V_i)$ in $\bI_{n+1,\loc}(X)$ such that
\[
(\|S - S_i - \d V_i\| + \|V_i\|)(K) \to 0.
\]
This implies that $S_i \to S$ weakly.
The {\em flat distance} between two elements $S,S' \in \bI_{n,\cs}(X)$
is defined by
\[
\cF(S - S') := \inf\{ \M(S - S' - \d V) + \M(V): V \in \bI_{n+1,\cs}(X) \};
\]
this yields a metric on $\bI_{n,\cs}(X)$.

We now state the compactness theorem for locally integral currents and
minimizing locally integral currents. An element
$S \in \bI_{n,\loc}(X)$ is {\em \mbox{(area-)}minimizing\/} if
\[
\M(S \on B) \le \M(T)
\]
whenever $B \sub X$ is a Borel set such that $S \on B \in \bI_{n,\cs}(X)$
and $T \in \bI_{n,\cs}(X)$ satisfies $\d T = \d(S \on B)$. 

\begin{theorem}[compactness] \label{thm:cptness}
Let $X$ be a proper metric space, and let $n \ge 1$.
Suppose that $(S_i)$ is a sequence in $\bI_{n,\loc}(X)$ such that
\[
\sup_i (\|S_i\| + \|\d S_i\|)(K) < \infty
\]
for every compact set $K \sub X$.
\ben
\item
There is a subsequence $(S_{i_j})$ that converges weakly to 
a current $S \in \bI_{n,\loc}(X)$.
\item
Suppose, in addition, that $X$ satisfies condition~{\rm (CI$_n$)}. 
Then there is a subsequence $(S_{i_j})$ that converges in the local
flat topology to a current $S \in \bI_{n,\loc}(X)$. 
If each $S_i$ is area-minimizing, then so is $S$.
\een
\end{theorem}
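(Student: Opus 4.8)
The plan is to reduce everything to the localized compactness and closure theory for metric integral currents developed in~\cite{Lan3}, exploiting condition~(CI$_n$) to upgrade weak convergence to local flat convergence and to propagate area-minimality. First I would treat part~(1). By assumption $\sup_i(\|S_i\| + \|\d S_i\|)(K) < \infty$ for every compact $K \sub X$; since $X$ is proper, it is covered by an increasing sequence of compact sets $K_1 \sub K_2 \sub \dots$, and on each $K_\ell$ the currents $S_i$ are uniformly locally normal. The compactness theorem for locally integral currents in locally compact spaces (the localized version of the Ambrosio--Kirchheim compactness theorem, as in Section~8 of~\cite{Lan3}) gives, on each $K_\ell$, a subsequence converging weakly to a locally integral current; a diagonal argument over $\ell$ produces a single subsequence $(S_{i_j})$ converging weakly on all of $X$ to some $S \in \bI_{n,\loc}(X)$. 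The lower semicontinuity of mass under weak convergence is standard, and $\d S_{i_j} \to \d S$ weakly since the boundary operator is weakly continuous; the uniform mass bound on $\|\d S_i\|$ over compacta then shows $\|\d S\|$ is locally finite, so $S$ is again locally integral.

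For part~(2), the extra input is condition~(CI$_n$), which by Theorem~\ref{thm:isop-ineq} yields an isoperimetric filling inequality for $(n-1)$-cycles with a uniform constant. The point is to promote the weak convergence from part~(1) to local flat convergence: given a compact $K$, I would enlarge it slightly to $K'$, slice the differences $S_{i_j} - S_{i_k}$ by a distance function to get cycles supported near $\d K'$ of small mass (using the coarea inequality from Section~\ref{subsect:slicing} together with the uniform mass bounds), fill these cycles inside a ball using the coning inequality~(CI$_n$), and use the resulting fillings to produce the currents $V_i \in \bI_{n+1,\loc}(X)$ witnessing $(\|S - S_{i_j} - \d V_{i_j}\| + \|V_{i_j}\|)(K) \to 0$. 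This is exactly the mechanism by which weak convergence plus coning inequalities implies local flat convergence; the uniform isoperimetric constant from Theorem~\ref{thm:isop-ineq} is what keeps the filling masses controlled.

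Finally, suppose each $S_i$ is area-minimizing; I must show the local flat limit $S$ is too. Fix a Borel set $B$ with $S \on B \in \bI_{n,\cs}(X)$ and a competitor $T \in \bI_{n,\cs}(X)$ with $\d T = \d(S \on B)$; I want $\M(S \on B) \le \M(T)$. Choosing $B$ to be (essentially) a ball $\B{p}{r}$ for which slicing behaves well, I would produce, for each $j$, a comparison current for $S_{i_j}$ obtained by cutting $S_{i_j}$ off in $\B{p}{r}$ and gluing in $T$ together with a small ``collar'' correction built from a slice of $S_{i_j} - S$ and filled via~(CI$_n$); minimality of $S_{i_j}$ then bounds $\M(S_{i_j} \on \B{p}{r})$ by $\M(T)$ plus an error term that tends to $0$ as $j \to \infty$ (for almost every $r$, using the coarea inequality to make the slices of $S - S_{i_j}$ small and the isoperimetric inequality to fill them cheaply). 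Passing to the limit with lower semicontinuity of mass, $\M(S \on \B{p}{r}) \le \liminf_j \M(S_{i_j} \on \B{p}{r}) \le \M(T)$, and letting $r$ range over a full-measure set recovers the minimality inequality for general $B$. The main obstacle is the minimality step: the gluing must be done so that the competitor for $S_{i_j}$ has exactly the right boundary, which forces one to absorb the slice $\langle S - S_{i_j}, d(p,\cdot), r\rangle$ — this is where both the coarea inequality (to choose a good radius with a small-mass slice) and the coning/isoperimetric inequality from~(CI$_n$) (to fill that slice with controlled mass) are essential, and it is the only place where condition~(CI$_n$) is genuinely used beyond part~(1).
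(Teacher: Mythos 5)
Part~(1) of your plan and the final minimality argument are essentially what the paper does: part~(1) is exactly the localized Ambrosio--Kirchheim compactness from \cite{Lan3} (Theorem~8.10 there), and once local flat convergence is in hand, writing $(S - S_{i_j})\on B = T_j + \d V_j$ with $\M(T_j)+\M(V_j)\to 0$, comparing $S_{i_j}\on B$ with $T - T_j$, and using lower semicontinuity of mass is precisely the paper's closing step.

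The gap is in your mechanism for upgrading weak convergence to local flat convergence. You propose to slice $S_{i_j}-S$ (or $S_{i_j}-S_{i_k}$) near $\d K'$, claim the slices have ``small mass ... using the coarea inequality together with the uniform mass bounds'', and then fill via~(CI$_n$). But weak convergence does \emph{not} make $\|S - S_{i_j}\|(A)$ small on any open set $A$: mass is only lower semicontinuous under weak convergence, and for a typical oscillating sequence (e.g.\ zigzag curves converging weakly to a segment) $\|S - S_{i_j}\|(A)$ stays bounded away from zero. Consequently the coarea inequality only produces slices of \emph{bounded} mass, and the coning/isoperimetric inequalities then yield fillings of bounded --- not vanishing --- mass, so the $V_j$ you construct do not witness $(\|S - S_{i_j} - \d V_j\| + \|V_j\|)(K)\to 0$. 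The implication ``uniformly bounded mass and boundary mass $+$ weak convergence $+$ (CI$_n$) $\Rightarrow$ flat convergence'' is a genuinely nontrivial theorem; the paper does not reprove it but invokes Theorem~1.4 of~\cite{Wen2}, applied to the restrictions $(S - S_{i_j})\on B_k$ after passing to a further subsequence along a good exhaustion by balls. If you replace your slicing heuristic by a citation of that result (or reproduce its actual proof), the rest of your argument goes through; as written, the same flaw also infects your ``collar correction'' in the minimality step, which should instead use the $T_j$ and $V_j$ already furnished by the flat convergence.
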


For~(2), a uniformly local version of condition~{\rm (CI$_n$)} suffices;
compare the assumptions in~\cite{Wen2}.  

\begin{proof}
For~(1), see Theorem~8.10 in~\cite{Lan3}.

For the proof of~(2), pick a base point $p \in X$. By passing to a further 
subsequence, denoted again by $(S_{i_j})$, one can arrange that there exists 
a sequence of radii $0 < r_k \uparrow \infty$ such that for every
$B_k := \B{p}{r_k}$, the restrictions $S_{i_j} \on B_k$ and $S \on B_k$
are in $\bI_{n,\cs}(X)$,
\[
\sup_j \bigl(\M(S_{i_j} \on B_k) 
+ \M(\d(S_{i_j} \on B_k))\bigr) < \infty,
\]
and $S_{i_j} \on B_k \to S \on B_k$ weakly, as $j \to \infty$
(see the proof of Proposition~6.6 in~\cite{Lan3}). Now, to show that 
$S_{i_j} \to S$ in the local flat topology, fix an index $k$.
Since $X$ satisfies condition~{\rm (CI$_n$)}, it follows from
Theorem~1.4 in~\cite{Wen2} that $\cF((S - S_{i_j}) \on B_k) \to 0$.
Hence, there exists a sequence $(V_j)$ in $\bI_{n+1,\cs}(X)$ such that,
for $T_j := (S - S_{i_j}) \on B_k - \d V_j$, 
\[
\M(T_j) + \M(V_j) \to 0.
\]
Since $\|S - S_{i_j} - \d V_j\|(B_k) 
\le \|T_j\|(B_k) + \|(S - S_{i_j}) \on (X \sm B_k)\|(B_k) \le \M(T_j)$
and $\|V_j\|(B_k) \le \M(V_j)$, this gives the result.

Suppose now that each $S_i$ is minimizing. 
To prove that $S$ is minimizing, it suffices to show that for every fixed $k$, 
\[
\M(S \on B_k) \le \M(T)
\] 
for all $T \in \bI_{n,\cs}(X)$ with $\d T = \d(S \on B_k)$.
Let $V_j$ and $T_j$ be given as above, and note that then
$\d(T - T_j) = \d(S_{i_j} \on B_k)$. By the minimality of $S_{i_j}$,
\[
\M(S_{i_j} \on B_k) \le \M(T - T_j) \le \M(T) + \M(T_j). 
\]
Since $S_{i_j} \on B_k \to S \on B_k$ weakly and $\M(T_j) \to 0$, 
it follows that
\[
\M(S \on B_k) \le \liminf_{j \to \infty} \M(S_{i_j} \on B_k) \le \M(T),
\]
as desired.
\end{proof}

From Theorem~\ref{thm:isop-ineq} and the first part of
Theorem~\ref{thm:cptness} one obtains a solution of the Plateau problem
in spaces with coning inequalities (compare also Theorem~10.6 in~\cite{AmbK}
and Theorem~1.6 in~\cite{Wen1}).

\begin{theorem}[minimizing filling] \label{thm:plateau}
Let $n \ge 1$, and let $X$ be a proper metric space satisfying 
condition~{\rm (CI$_{n-1}$)}. Then for every $R \in \bZ_{n-1,\cs}(X)$
there exists a filling $S \in \bI_{n,\cs}(X)$ of $R$ with mass
\[
\M(S) = \inf\{\M(S'): S' \in \bI_{n,\loc}(X),\, \d S' = R\}.
\]
Furthermore, $\spt(S)$ is within distance at most $(\M(S)/\del)^{1/n}$
from $\spt(R)$ for some constant $\del > 0$ depending only on $n$ and the
constants $c_1,\dots,c_{n-1}$ from Definition~\ref{def:cone-ineq}.
\end{theorem}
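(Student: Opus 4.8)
The plan is to combine the isoperimetric inequality (Theorem~\ref{thm:isop-ineq}) with the weak compactness statement, part~(1) of Theorem~\ref{thm:cptness}. First I would set $m := \inf\{\M(S') : S' \in \bI_{n,\loc}(X),\, \d S' = R\}$. By Theorem~\ref{thm:isop-ineq} this infimum is finite (it is bounded by $\gam\,\M(R)^{n/(n-1)}$ when $n \ge 2$, and for $n = 1$ the quasi-convexity from (CI$_0$) gives a finite filling directly), and trivially $m \ge 0$. Choose a minimizing sequence $(S_i)$ in $\bI_{n,\cs}(X)$ with $\d S_i = R$ and $\M(S_i) \to m$; one may take each $S_i$ to have compact support since the infimum over $\bI_{n,\loc}$ is approximated by compactly supported fillings (truncate and cone the error using (CI$_{n-1}$), or simply note that in the proof of Theorem~\ref{thm:isop-ineq} the filling produced already has compact support). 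Then $\sup_i(\|S_i\| + \|\d S_i\|)(K) \le \sup_i \M(S_i) + \M(R) < \infty$ for every compact $K$, so part~(1) of Theorem~\ref{thm:cptness} yields a subsequence converging weakly to some $S \in \bI_{n,\loc}(X)$.

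Next I would verify that $S$ is the desired minimizer. Since weak convergence commutes with the boundary operator, $\d S = \lim \d S_i = R$. Lower semicontinuity of mass under weak convergence gives $\M(S) \le \liminf_i \M(S_i) = m$, while $\d S = R$ forces $\M(S) \ge m$ by definition of $m$; hence $\M(S) = m$. In particular $S$ has finite mass, and since $\d S = R \in \bZ_{n-1,\cs}(X)$ has compact support, a standard argument (restrict to large balls and use that the mass outside any large ball tends to zero, together with the monotonicity/density estimates) shows $\spt(S)$ is bounded, so $S \in \bI_{n,\cs}(X)$.

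For the support bound, I would argue by the usual comparison with cones. Let $q \in \spt(S)$ and set $\rho(x) := d(q,x)$, $g(s) := \|S\|(\B{q}{s})$. For almost every $s > 0$ the slice $\la S,\rho,s\ra = \d(S \on \B{q}{s})$ lies in $\bZ_{n-1,\cs}(X)$ with $\M(\la S,\rho,s\ra) \le g'(s)$ by the coarea inequality. Applying (CI$_{n-1}$), or rather the coning estimate from Section~\ref{subsect:homotopies}, I can fill this slice inside $\B{q}{s}$ by a chain of mass $\le c\,s\,g'(s)$ for a suitable constant $c$; replacing $S \on \B{q}{s}$ by this filling and using minimality of $S$ gives $g(s) \le c\,s\,g'(s)$, i.e.\ the differential inequality $g'(s)/g(s) \ge 1/(cs)$, which integrates to a lower density bound $g(s) \ge \del\,s^n$ for $s$ up to $\mathrm{dist}(q,\spt R)$, with $\del$ depending only on $n$ and the relevant coning constants. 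Since $g(s) \le \M(S)$, this forces $\mathrm{dist}(q,\spt R) \le (\M(S)/\del)^{1/n}$, as claimed.

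The main obstacle is the last step: one must be careful that the coning inequality is applied to a cycle supported in a genuine metric ball so that the resulting filling also sits in that ball, and that the monotonicity argument is run only for radii not exceeding the distance to $\spt(R)$ (beyond that, $\d S$ interferes and the slice need not bound inside the ball). Handling the ``almost every $s$'' issue and the passage from the differential inequality to the clean power bound $\del\,s^n$ — including identifying $\del$ explicitly in terms of $c_1,\dots,c_{n-1}$ — is the part that requires genuine care; the compactness half of the argument is essentially formal.
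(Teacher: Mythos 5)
The first half of your argument (minimizing sequence, weak compactness via part~(1) of Theorem~\ref{thm:cptness}, lower semicontinuity of mass, commutation of weak limits with $\d$) is exactly the paper's proof, and is fine.

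The density bound, however, is derived incorrectly. You fill the slice $\la S,\rho,s\ra$ using the \emph{linear} coning inequality, obtaining $g(s) \le c\,s\,g'(s)$. This differential inequality integrates to $g(s) \ge g(s_0)\,(s/s_0)^{1/c}$, i.e.\ a power bound with exponent $1/c$, not $n$ — these agree only if $c = 1/n$, which holds for the sharp cone inequality in $\CAT(0)$ spaces but not for a general space satisfying (CI$_{n-1}$) with arbitrary constants $c_k$. Worse, even with the right exponent this inequality cannot be integrated from $s=0$: you would need a uniform positive lower bound on $g(s_0)/s_0^{n}$ at some starting radius $s_0$, which is precisely what you are trying to prove. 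The correct route — and the one the paper takes, via Lemma~\ref{lem:density} with $(Q,a)=(1,0)$, following Federer--Fleming — is to fill the slice using the isoperimetric inequality of \emph{Euclidean type} (Theorem~\ref{thm:isop-ineq}), which by minimality gives $g(s) \le \gam\,g'(s)^{n/(n-1)}$ for $n \ge 2$, hence $\tfrac{d}{ds}\bigl(g(s)^{1/n}\bigr) \ge \tfrac1n\gam^{(1-n)/n}$. This integrates cleanly from $0$ (only $g(0^+)\ge 0$ is needed) and yields $g(r) \ge n^{-n}\gam^{1-n}r^n =: \del\,r^n$, with $\del$ depending only on $n$ and $c_1,\dots,c_{n-1}$; the case $n=1$ is handled separately by noting that the $0$-dimensional slice is a non-zero integral boundary and hence has mass at least $2$. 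With this correction the distance bound and the compactness of $\spt(S)$ follow as you indicate.
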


\begin{proof}
Let $\cS$ denote the set of all $S' \in \bI_{n,\loc}(X)$ with $\d S' = R$.
By condition~(CI$_{n-1}$), $\cS$ is non-empty.
Choose a sequence $(S_i)$ in $\cS$ such that 
\[
\M(S_i) \to \mu := \inf\{\M(S'): S' \in \cS\} \quad \text{for $i \to \infty$.} 
\]
By Theorem~\ref{thm:cptness}, some subsequence $(S_{i_j})$ converges weakly 
to a current $S \in \cS$, and $\M(S) \le \liminf_{j\to\infty}\M(S_{i_j})$, 
thus $\M(S) = \mu$. It is well-known that an isoperimetric inequality of
Euclidean type as in Theorem~\ref{thm:isop-ineq} leads to a lower
density bound for minimizing $n$-currents: if $x \in \spt(S)$ and $r > 0$ are
such that $\B{x}{r} \cap \spt(\d S) = \es$, then
$\|S\|(\B{x}{r}) \ge \del r^n$, where $\del := n^{-n}\gam^{1-n}$ for $n \ge 2$
and $\del := 2$ for $n = 1$ (see Theorem~9.13 in~\cite{FedF} and 
the special case $(Q,a) = (1,0)$ of Lemma~\ref{lem:density} below).
This gives the desired distance bound and shows in particular that
$\spt(S)$ is compact.
\end{proof}


\section{Quasi-minimizers} \label{sect:qmin}

We now introduce the main objects of study and discuss some basic properties 
and examples.

\begin{definition}[quasi-minimizer] \label{def:qmin}
Suppose that $X$ is a proper metric space, $n \ge 1$, and 
$Q \ge 1$, $a \ge 0$ are constants. For a closed set $Y \sub X$, a local cycle 
\[
S \in \bZ_{n,\loc}(X,Y) := \{Z \in \bI_{n,\loc}(X): \spt(\d S) \sub Y\}
\]
relative to $Y$ will be called {\em $(Q,a)$-quasi-minimizing mod\/ $Y$} if, 
for all $x \in \spt(S)$ and almost all $r > a$ such that 
$\B{x}{r} \cap Y = \es$, the inequality
\[
\M(S \on \B{x}{r}) \le Q\,\M(T)
\] 
holds whenever $T \in \bI_{n,\cs}(X)$ and $\d T = \d(S \on \B{x}{r})$
(recall that $S \on \B{x}{r} \in \bI_{n,\cs}(X)$ for almost all $r > 0$,
see Section~\ref{subsect:slicing}). A current $S \in \bI_{n,\loc}(X)$ 
is {\em $(Q,a)$-quasi-minimizing} or a {\em $(Q,a)$-quasi-minimizer} 
if $S$ is $(Q,a)$-quasi-minimizing mod $\spt(\d S)$, and we say that $S$ is 
{\em quasi-minimizing} or a {\em quasi-minimizer} if this holds for some
$Q \ge 1$ and $a \ge 0$.
\end{definition}

Obviously every minimizing $S \in \bI_{n,\loc}(X)$ is $(1,0)$-quasi-minimizing.

\begin{definition}[density/filling density] \label{def:g-f}
Suppose that $X$ is a proper metric space, $n \ge 1$, and 
$S \in \bI_{n,\loc}(X)$. For $p \in X$ and $r > 0$, put
\begin{align*}
\G_{p,r}(S) &:= \frac{1}{r^n} \|S\|(\B{p}{r}), \\
\F_{p,r}(S) &:= \frac{1}{r^{n+1}}
\inf\{\M(V) : V \in \bI_{n+1,\cs}(X),\,\spt(S-\d V) \cap \B{p}{r} = \es\}
\end{align*}  
(where $\inf \es := \infty$). Furthermore, for any $p \in X$, put 
\begin{align*}
\Gi(S) &:= \limsup_{r\to\infty} \G_{p,r}(S), \\
\Fi(S) &:= \limsup_{r\to\infty} \F_{p,r}(S);
\end{align*}
the upper limits are clearly independent of the choice of $p \in X$.
For constants $C > 0$ and $a \ge 0$, we say that $S$ has 
{\em $(C,a)$-controlled density} if $\G_{p,r}(S) \le C$ for all $p \in X$
and $r > a$, and $S$ has {\em controlled density} if this holds for 
some such constants.
\end{definition}

Note that if $\spt(\d S) \cap \B{p}{r} \ne \es$, then there is no 
$V \in \bI_{n+1,\cs}(X)$ with $\spt(S-\d V) \cap \B{p}{r} = \es$, thus 
$\F_{p,r}(S) = \infty$. Note also that if $S,S' \in \bI_{n,\loc}(X)$,
then 
\[
\G_{p,r}(S+S') \le \G_{p,r}(S) + \G_{p,r}(S')
\]
for all $p \in X$ and $r > 0$, hence $\Gi(S+S') \le \Gi(S) + \Gi(S')$.
Likewise, $\F_{p,r}$ and $\Fi$ satisfy the triangle inequality.

If $S \in \bI_{n,\loc}(X)$ has $(C,a)$-controlled density, 
then obviously $\Gi(S) \le C$. 
However, an $S \in \bZ_{n,\loc}(X)$ with $\Gi(S) < \infty$ 
need not have controlled density. For example, it is not difficult to see
that there exists a complete Riemannian metric on $\R^2$ with bounded 
curvature $|K| \le 1$ and with arbitrarily large disks of constant
curvature $-1$ such that the associated current 
$S = \bb{\R^2} \in \bZ_{2,\loc}(\R^2)$ is of this type.

\begin{lemma}[density] \label{lem:density}
Let $n \ge 1$, let $X$ be a proper metric space satisfying 
condition~{\rm (CI$_{n-1}$)}, and let $Y \sub X$ be a closed set. 
If $S \in \bZ_{n,\loc}(X,Y)$ is $(Q,a)$-quasi-minimizing mod~$Y$,
and if $x \in \spt(S)$ and $r > 2a$ are such that $\B{x}{r} \cap Y = \es$,
then
\[
\G_{x,r}(S) \ge \del
\] 
for some constant $\del > 0$ depending only on $n$, the constants
$c_1,\dots,c_{n-1}$ from Definition~\ref{def:cone-ineq}, and $Q$. 
\end{lemma}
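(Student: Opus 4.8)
The plan is to establish a lower density bound by the standard comparison argument from geometric measure theory, adapted to the quasi-minimizing setting. Fix $x \in \spt(S)$ and $r > 2a$ with $\B{x}{r} \cap Y = \es$. For almost every $s \in (a,r)$ we have $S \on \B{x}{s} \in \bI_{n,\cs}(X)$ with $\d(S \on \B{x}{s}) = \la S, d_x, s\ra$, where $d_x = d(x,\cdot)$; set $g(s) := \|S\|(\B{x}{s}) = \M(S \on \B{x}{s})$, a nondecreasing function. By the coarea inequality from Section~\ref{subsect:slicing}, $g$ is absolutely continuous and $\M(\la S, d_x, s\ra) \le g'(s)$ for almost every $s$. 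The boundary $\la S, d_x, s\ra \in \bZ_{n-1,\cs}(X)$ is supported in $\Sph{x}{s} \sub \B{x}{s}$, so by condition~(CI$_{n-1}$) it admits a filling $P_s \in \bI_{n,\cs}(X)$ with $\M(P_s) \le c_{n-1}\, s\, \M(\la S, d_x, s\ra) \le c_{n-1}\, s\, g'(s)$.

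Next I would feed $P_s$ into the quasi-minimality inequality. Since $S$ is $(Q,a)$-quasi-minimizing mod $Y$ and $\B{x}{s} \cap Y \sub \B{x}{r} \cap Y = \es$, for almost every $s \in (a,r)$ we get
\[
g(s) = \M(S \on \B{x}{s}) \le Q\,\M(P_s) \le Q\, c_{n-1}\, s\, g'(s).
\]
This gives the differential inequality $g'(s)/g(s) \ge 1/(Q c_{n-1} s)$ at almost every $s$ in the (open) set where $g(s) > 0$. Because $x \in \spt(S)$, we have $g(s) > 0$ for every $s > 0$; combined with absolute continuity this lets me integrate from some small $s_0$ up to $r$. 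A subtlety to handle is that I only have the inequality for almost every $s$ and only for $s > a$, not all the way down to $0$; the standard fix is to integrate from $s = r/2$ (say) to $s = r$ — both exceeding $a$ since $r > 2a$ — which yields $g(r)/g(r/2) \ge (r/(r/2))^{1/(Q c_{n-1})} = 2^{1/(Q c_{n-1})}$, i.e. a scale-invariant doubling-type estimate, but this alone does not bound $g(r)/r^n$ from below without a starting point.

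To get an honest lower bound I would instead argue at small scales: the monotone quantity is $s \mapsto g(s)\, s^{-1/(Q c_{n-1})}$, which is nondecreasing on $(a,r)$; but for the density I want comparison with $s^n$, and here $1/(Q c_{n-1})$ will generally be smaller than $n$, so I should compare more carefully. The cleaner route, exactly as in the cited Theorem~9.13 of \cite{FedF}, is: from $g(s) \le Q c_{n-1} s\, g'(s)$ one obtains that $h(s) := g(s)^{1 - 1/n}$ satisfies $h'(s) \ge (1-1/n)\,(Q c_{n-1})^{-1}\, g(s)^{-1/n}\, g'(s) \cdot \text{(rearranged)}$ — more directly, applying the Euclidean isoperimetric inequality (Theorem~\ref{thm:isop-ineq}, available from (CI$_{n-1}$)) to $\la S, d_x, s\ra$ in place of the coning inequality gives $g(s) \le Q\gam\, \M(\la S,d_x,s\ra)^{n/(n-1)} \le Q\gam\, (g'(s))^{n/(n-1)}$, hence $g'(s) \ge (Q\gam)^{-(n-1)/n} g(s)^{(n-1)/n}$, and integrating the resulting inequality for $(g^{1/n})'$ from near $0$ up to $r$, using $g(0^+) \ge 0$ and $g(s) > 0$, yields $g(r) \ge \del r^n$ with $\del = n^{-n}(Q\gam)^{1-n}$ (where $\gam$ depends only on $c_1,\dots,c_{n-1}$). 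The main obstacle is bookkeeping the ``almost every $s$'' and ``$s > a$'' restrictions: one must check that the set of good $s$ has full measure in a suitable subinterval and that absolute continuity of $g$ lets the ODE comparison go through; since $r > 2a$ there is room to run the integration on $(2a, r)$ and then absorb the contribution below $2a$ trivially (it only helps, as $g$ is nondecreasing), or alternatively observe that the power $r^n$ bound can be stated for $r$ replaced by $r - 2a \ge r/2 \cdot$ constant and reabsorbed into $\del$ — but the sharp constant $\del = n^{-n}\gam^{1-n}$ quoted in the proof of Theorem~\ref{thm:plateau} for $(Q,a) = (1,0)$ suggests the intended statement really does give $\del = n^{-n}(Q\gam)^{1-n}$ once the lower-scale technicalities are dispatched.
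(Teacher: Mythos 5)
Your final route is the paper's: set $\mu(s):=\|S\|(\B{x}{s})$, bound the slice mass by $\mu'(s)$, apply quasi-minimality together with the Euclidean isoperimetric inequality (Theorem~\ref{thm:isop-ineq}) to get $\mu(s)\le Q\gam\,\mu'(s)^{n/(n-1)}$, and integrate the resulting differential inequality for $\mu^{1/n}$. You also correctly diagnose why the naive coning-inequality comparison only yields a doubling estimate. Two concrete points need fixing, though. First, the endpoint bookkeeping: you propose integrating on $(2a,r)$ and assert $r-2a\ge r/2\cdot\const$, which is false (take $r$ slightly larger than $2a$); and ``absorbing the contribution below $2a$'' does not help, since the problem is the length of the integration interval, not the mass below it. The correct move, and the one the paper makes, is to integrate on $(a,r)$ --- the differential inequality is available for almost every $s>a$ --- which gives $\mu(r)\ge n^{-n}(Q\gam)^{1-n}(r-a)^n$, and then the hypothesis $r>2a$ enters exactly here via $r-a>r/2$. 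Second, your argument silently assumes $n\ge2$: Theorem~\ref{thm:isop-ineq} requires $n-1\ge1$, so the case $n=1$ needs a separate (easy) treatment --- the paper notes that the $0$-dimensional slice is a non-zero integral boundary, hence has mass at least $2$, and the coarea inequality gives $\|S\|(\B{x}{r})\ge 2(r-a)>r$. With these two repairs your proof coincides with the paper's.
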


\begin{proof} 
Let first $n \ge 2$.
Define $\mu \colon (0,r] \to \R$ by $\mu(s) := \|S\|(\B{x}{s})$.
Note that $\mu$ is non-decreasing, and $\mu > 0$ since $x \in \spt(S)$. 
For almost every $s \in (0,r)$, the derivative $\mu'(s)$ exists, and
the slice $R_s := \d(S \on \B{x}{s})$ is in $\bZ_{n-1,\cs}(X)$ and satisfies
$\M(R_s) \le \mu'(s)$. It follows from the quasi-minimality of $S$ 
and Theorem~\ref{thm:isop-ineq} (isoperimetric inequality) that for almost 
every $s \in (a,r)$, there is a filling $T_s \in \bI_{n,\cs}(X)$ of $R_s$ 
such that 
\begin{align*}
\mu(s) &= \M(S \on \B{x}{s}) \le Q\,\M(T_s) \le Q\gam\,\M(R_s)^{n/(n-1)} \\
&\le Q\gam\,\mu'(s)^{n/(n-1)}
\end{align*}
and hence $\mu'(s) \mu(s)^{(1-n)/n} \ge (Q\gam)^{(1-n)/n}$. Now integration 
from $a$ to $r$ yields $\mu(r) \ge n^{-n}(Q\gam)^{1-n}(r - a)^n$. 
Since $r - a > r/2$, this gives the result.

In the case $n = 1$, since $S$ is $(Q,a)$-quasi-minimizing mod~$Y$ and
$x \in \spt(S)$, the $0$-dimensional slice $R_s = \d(S \on \B{x}{s})$
is a non-zero integral boundary for almost every $s \in (a,r)$,
so in fact $\M(R_s) \ge 2$, and the coarea inequality gives
$\|S\|(\B{x}{r}) \ge 2(r-a) > r$.
\end{proof}

We show two direct consequences of this lemma.

\begin{lemma}[filling density] \label{lem:fill-density}
Let $n \ge 1$, let $X$ be a proper metric space satisfying
condition~{\rm (CI$_{n-1}$)}, and let $Y \sub X$ be a closed set. 
If $S \in \bZ_{n,\loc}(X,Y)$ is $(Q,a)$-quasi-minimizing mod $Y$,
and if $x \in \spt(S)$ and $r > 4a$ are such that $\B{x}{r} \cap Y = \es$, 
then 
\[
\F_{x,r}(S) \ge c
\]
for some constant $c > 0$ depending only on $n$, the constant $\del$ from 
Lemma~\ref{lem:density}, and $Q$.
\end{lemma}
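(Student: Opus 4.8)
The plan is to derive a lower bound on the filling density $\F_{x,r}(S)$ from the lower density bound of Lemma~\ref{lem:density}, using the quasi-minimality of $S$ in a slightly smaller ball together with the coarea inequality. Let $V \in \bI_{n+1,\cs}(X)$ be any competitor with $\spt(S - \d V) \cap \B{x}{r} = \es$; I want to show $\M(V) \ge c\,r^{n+1}$. The point is that such a $V$ provides, for good radii $s < r$, a filling of the slice $R_s := \d(S \on \B{x}{s})$ built out of $S - \d V$ restricted to $\B{x}{s}$ (which vanishes in $\B{x}{r} \supset \B{x}{s}$, so it is really just $-\d V \on \B{x}{s}$ up to the slicing error) plus the slice $\la V, d(x,\cdot), s\ra$. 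Quasi-minimality then forces $\mu(s) := \|S\|(\B{x}{s})$ to be controlled by the mass of this filling, and since Lemma~\ref{lem:density} already tells us $\mu(s) \ge \del s^n$ is \emph{large}, the filling — hence $V$ near the sphere $\Sph{x}{s}$ — must have substantial mass; integrating over a range of $s$ recovers the volume bound on $V$.

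Concretely, here is the order of steps. First, fix $\rho := d(x,\cdot)$ and apply the coarea inequality to $V$: for almost every $s \in (r/2, r)$ the slice $\la V, \rho, s\ra \in \bI_{n,\cs}(X)$ and $\int_{r/2}^{r} \M(\la V, \rho, s\ra)\,ds \le \|V\|(\B{x}{r}) = \M(V)$; moreover we may pick $s$ so that $S \on \B{x}{s} \in \bI_{n,\cs}(X)$ and the quasi-minimality inequality holds at scale $s$ (note $s > r/2 > 2a$, and $\B{x}{s} \cap Y = \es$). Second, observe that since $S - \d V$ is supported outside $\B{x}{r}$, we have $\d\bigl((S - \d V) \on \B{x}{s}\bigr) = \la S - \d V, \rho, s\ra = \la S,\rho,s\ra - \la \d V, \rho, s\ra$, and one checks $\la \d V, \rho, s\ra = -\d\la V,\rho,s\ra$ via the definition of slicing, so $T_s := (\d V)\on\B{x}{s} + \la V, \rho, s\ra$ (alternatively $-(S-\d V)\on\B{x}{s}$ corrected by the $V$-slice) is a compactly supported integral $n$-chain with $\d T_s = \d(S \on \B{x}{s})$ and $\M(T_s) \le \|V\|(\B{x}{s}) + \M(\la V,\rho,s\ra) \le \M(V) + \M(\la V,\rho,s\ra)$. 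Third, apply quasi-minimality: $\del s^n \le \mu(s) = \M(S \on \B{x}{s}) \le Q\,\M(T_s) \le Q\bigl(\M(V) + \M(\la V,\rho,s\ra)\bigr)$ for a.e.\ such $s$. Fourth, integrate this over $s \in (r/2, r)$: using $\int_{r/2}^r \M(\la V,\rho,s\ra)\,ds \le \M(V)$ and $\int_{r/2}^r \del s^n\,ds \ge \del (r/2)^n \cdot (r/2)$, we get $\del\,2^{-n-1} r^{n+1} \le Q\bigl((r/2)\M(V) + \M(V)\bigr) \le Q\,r\,\M(V)$ (for $r$ bounded below, absorbing constants; if needed treat small $r$ separately, but $r > 4a$ and the bound is scale-homogeneous so this is harmless), whence $\M(V) \ge c\,r^{n+1}$ with $c := \del\,2^{-n-2}/Q$. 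Taking the infimum over $V$ gives $\F_{x,r}(S) \ge c$.

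The main obstacle I expect is the bookkeeping in the second step — verifying that the slice of $\d V$ along $\rho$ equals $-\d$ of the slice of $V$, and that the restriction $(S - \d V)\on\B{x}{s}$ really is a compactly supported integral current with the claimed boundary, so that it is a legitimate competitor in the quasi-minimality inequality. This is where one must invoke the slicing identities from Section~\ref{subsect:slicing} carefully and choose $s$ generic (almost every $s$) so that all slices lie in $\bI_{*,\cs}(X)$ simultaneously. The case $n = 1$ needs no separate treatment here since Lemma~\ref{lem:density} already covers it and the argument above only uses the conclusion $\mu(s) \ge \del s^n$; one should only make sure the coarea inequality and slicing are applied with $\Lip(\rho) = 1$. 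Everything else is routine: the coarea inequality, additivity of mass under restriction, and homogeneity in $r$.
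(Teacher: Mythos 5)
Your strategy is exactly the paper's: slice the competitor $V$, use $(\d V)\on\B{x}{s}=S\on\B{x}{s}$ to turn the slice into a filling of $\d(S\on\B{x}{s})$, apply quasi-minimality together with Lemma~\ref{lem:density}, and integrate via the coarea inequality. But the execution has a genuine error that defeats the proof as written. First, your competitor is misassembled: with the paper's definition $\la V,\rho,s\ra=\d(V\on\B{x}{s})-(\d V)\on\B{x}{s}$, your $T_s:=(\d V)\on\B{x}{s}+\la V,\rho,s\ra$ collapses to $\d(V\on\B{x}{s})$, which is a \emph{cycle}, not a current with boundary $\d(S\on\B{x}{s})$. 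The correct competitor is simply $-\la V,\rho,s\ra$ by itself: since $(\d V)\on\B{x}{s}=S\on\B{x}{s}$, one has $\d\la V,\rho,s\ra=-\d((\d V)\on\B{x}{s})=-\d(S\on\B{x}{s})$, and nothing should be added back. Second, and decisively, your mass estimate $\M(T_s)\le\M(V)+\M(\la V,\rho,s\ra)$ bounds the mass of an $n$-current by the mass of the $(n+1)$-current $V$; besides being dimensionally meaningless, this spurious $\M(V)$ term survives the integration over $s\in(r/2,r)$ as $(r/2)\,\M(V)$, so your final inequality reads $\del\,2^{-n-1}r^{n+1}\le Q\,r\,\M(V)$, which gives only $\M(V)\ge c\,r^{n}$, i.e. $\F_{x,r}(S)\ge c/r$ --- not the uniform lower bound the lemma asserts. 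The step ``whence $\M(V)\ge c\,r^{n+1}$'' is an algebra slip covering this loss of a factor of $r$.

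The repair is immediate once the spurious term is dropped. For almost every $s\in(2a,r)$ (note $2a<r/2$), quasi-minimality applied to the competitor $-\la V,\rho,s\ra$ together with Lemma~\ref{lem:density} gives $\del s^n\le\M(S\on\B{x}{s})\le Q\,\M(\la V,\rho,s\ra)$, and the coarea inequality yields
\[
\M(V)\;\ge\;\int_{2a}^{r}\M(\la V,\rho,s\ra)\,ds\;\ge\;\frac{\del}{Q}\int_{r/2}^{r}s^n\,ds\;\ge\;\frac{\del\,(1-2^{-(n+1)})}{Q(n+1)}\,r^{n+1},
\]
which is the desired bound with a constant depending only on $n$, $\del$, and $Q$. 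Your remarks about checking the slicing identities for almost every $s$ and the fact that $n=1$ needs no separate treatment are correct.
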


\begin{proof}
Let $V \in \bI_{n+1,\cs}(X)$ be such that $\spt(S - \d V) \cap \B{x}{r} = \es$. 
For almost every $s \in (0,r)$, the slice 
$T_s := \d(V \on \B{x}{s}) - (\d V) \on \B{x}{s}$ is in $\bI_{n,\cs}(X)$,
and $\d T_s = - \d(S \on \B{x}{s})$ because
$(\d V) \on \B{x}{s} = S \on \B{x}{s}$.
By the quasi-minimality of $S$ and Lemma~\ref{lem:density}, 
for almost every $s \in (2a,r)$,
\[
Q\,\M(T_s) \ge \M(S \on \B{x}{s}) = \|S\|(\B{x}{s}) \ge \del s^n.
\]
Since $\M(V) \ge \int_{2a}^r \M(T_s) \,ds$ and $2a < r/2$, the result follows.
\end{proof}  

Recall that a subset $A$ of a metric space $X$ is {\em doubling} if there 
is a constant $M \ge 1$ such that every bounded subset $B \sub A$ can be 
covered by at most $M$ sets of diameter less than or equal to $\diam(B)/2$. 
The {\em Assouad dimension} of a set $A \sub X$ is the infimum of all 
$\alpha > 0$ for which there exists $L \ge 1$ such that for all
$\lam \in (0,1)$, every bounded set $B \sub A$ can be covered by no more
than $L \lam^{-\alpha}$ sets of diameter $\le \lam \diam(B)$.
The set $A$ has finite Assouad dimension if and only if it is doubling.
(See~\cite{Hei}.) 

\begin{lemma}[doubling] \label{lem:doubling}
Let $n \ge 1$, and let $X$ be a proper metric space
satisfying condition~{\rm (CI$_{n-1}$)}. Suppose that $S \in \bZ_{n,\loc}(X)$ is
a $(Q,a)$-quasi-minimizer with $(C,a)$-controlled density.
Then every $s$-separated subset of $\spt(S)$ with $s \ge 4a$
has Assouad dimension at most~$n$ and is thus doubling. The doubling constant
depends only on~$n$, the constant $\del$ from Lemma~\ref{lem:density},
and~$C$.
\end{lemma}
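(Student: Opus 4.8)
The plan is to combine the lower density bound from Lemma~\ref{lem:density} with the upper bound coming from $(C,a)$-controlled density, using a standard volume-packing argument. Fix $s \ge 4a$ and let $N \sub \spt(S)$ be $s$-separated. To estimate the Assouad dimension of $N$, let $B \sub N$ be a bounded subset with $\diam(B) =: d$, and let $\lam \in (0,1)$. We may assume $\lam d \ge s$, since otherwise the balls $\B{x}{\lam d/3}$ with $x \in B$ are pairwise disjoint and $B$ itself is $\lam d$-separated, so a trivial covering by the singletons $\{x\}$ (each of diameter $0 \le \lam d$) together with the bound $\#B \le L' \lam^{-n}$ — to be established — suffices; in fact in this regime one can cover $B$ by its own points. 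So assume $\lam d \ge s \ge 4a$.

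First I would choose a maximal $\lam d$-separated subset $N' \sub B$; then the balls $\B{y}{\lam d}$, $y \in N'$, cover $B$, and there are $\#N'$ of them, each of diameter $\le 2\lam d$. So it suffices to bound $\#N'$ by a constant multiple of $\lam^{-n}$. For each $y \in N'$ we have $y \in \spt(S)$ and $\lam d/2 \ge 2a$, so Lemma~\ref{lem:density} gives
\[
\|S\|\bigl(\B{y}{\lam d/2}\bigr) = \G_{y,\lam d/2}(S)\,(\lam d/2)^n \ge \del\,(\lam d/2)^n.
\]
The balls $\B{y}{\lam d/2}$, $y \in N'$, are pairwise disjoint and all contained in $\B{y_0}{2d}$ for any fixed $y_0 \in B$ (using $\diam(B) = d$ and $\lam < 1$). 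Hence, by controlled density,
\[
\#N' \cdot \del\,(\lam d/2)^n \le \sum_{y \in N'} \|S\|\bigl(\B{y}{\lam d/2}\bigr) \le \|S\|\bigl(\B{y_0}{2d}\bigr) = \G_{y_0,2d}(S)\,(2d)^n \le C\,(2d)^n,
\]
provided $2d > a$, which holds since $2d \ge 2\lam d \ge 8a > a$. Therefore $\#N' \le (C/\del)\,4^n\,\lam^{-n}$, which is the required bound with $L = 4^n C/\del$ and exponent $n$. The case $k=1$ of the doubling property (covering $B$ by at most $M$ sets of diameter $\le d/2$) is the special case $\lam = 1/2$, giving $M = 2^n L = 8^n C/\del$; since a set is doubling iff it has finite Assouad dimension (see~\cite{Hei}), this also yields that $N$ is doubling, with constants depending only on $n$, $\del$, and $C$.

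The only genuine point requiring care — and the main obstacle — is making sure the radii fed into Lemma~\ref{lem:density} and into the controlled-density hypothesis always satisfy the required lower bounds $r > 2a$ and $r > a$ respectively; this is exactly why the hypothesis $s \ge 4a$ is imposed, as it forces $\lam d \ge s \ge 4a$ in the nontrivial regime. Once the bookkeeping with these thresholds is handled as above, the argument is the classical packing estimate and no further subtlety arises.
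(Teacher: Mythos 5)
Your argument is the same volume--packing argument as the paper's: pack disjoint balls centered at a separated net, bound each from below by Lemma~\ref{lem:density} and their union from above by the $(C,a)$-controlled density. In the regime $\lam d \ge s$ your estimate is correct (the fact that your covering sets have diameter up to $2\lam d$ rather than $\lam d$ only costs a factor $2^n$ in the constant, since one may replace $\lam$ by $\lam/2$ throughout).

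However, the case $\lam d < s$ is not actually disposed of. You write that there the singletons give a trivial covering ``together with the bound $\#B \le L'\lam^{-n}$ --- to be established'', but this bound is never established: your main argument applies Lemma~\ref{lem:density} at radius $\lam d/2$, which requires $\lam d/2 \ge 2a$ and hence $\lam d \ge s$, so it says nothing when $\lam d < s$. And the bound is genuinely needed: since $B$ is $s$-separated, any set of diameter $\le \lam d < s$ contains at most one point of $B$, so the minimal covering has exactly $\#B$ elements, and the Assouad condition forces $\#B \le L\lam^{-n}$. The fix is the same packing run at the separation scale instead: the balls $\B{x}{s/2}$, $x \in B$, are disjoint, have $\|S\|$-measure $\ge \del(s/2)^n$ since $s/2 \ge 2a$, and lie in a ball of radius $d + s/2 < (3/2)s/\lam$ (using $d < s/\lam$), whence $\#B \le 3^n C\del^{-1}\lam^{-n}$. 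The paper's proof achieves both cases in one stroke by choosing the packing radius $r := \max\{s/2,\lam D/4\}$, which is always $\ge 2a$ and always yields disjoint balls; you correctly identified that $s \ge 4a$ is what makes the thresholds work, but you should carry out the small-$\lam$ case rather than label it trivial.
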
  

Note that if $a = 0$, then $\spt(S)$ itself has Assouad dimension at most $n$.

\begin{proof}
Let $A \sub \spt(S)$ be an $s$-separated set, where $s \ge 4a$.
Suppose that $B \sub A$ is a bounded set with $D := \diam(B) > 0$.
Let $\lam \in (0,1)$, and let $N \sub B$ be a $(\lam D/2)$-separated 
$(\lam D/2)$-net in $B$. Put $r := \max\{s/2,\lam D/4\}$.
The balls $\B{x}{r}$ in $X$ with $x \in N$ are pairwise disjoint,
and the corresponding sets $\B{x}{2r} \cap B$ have diameter at most $\lam D$
and cover $B$. Since $r \ge 2a$, Lemma~\ref{lem:density} shows that
$\|S\|(\B{x}{r}) \ge \del r^n$ for all these balls, and their union $U$
is contained in $\B{p}{D + r}$ for any $p \in N$. Note that $D \le 4r/\lam$,
thus $\|S\|(U) \le C(5r/\lam)^n$. It follows that the covering has
cardinality $|N| \le 5^n C\del^{-1}\lam^{-n}$.
\end{proof}

The doubling property will be used in Section~\ref{sect:qi} in order to
approximate quasi-isometric embeddings by Lipschitz maps.
We will then show that if $S$ is a quasi-minimizing local $n$-cycle with
controlled density in a proper $\CAT(0)$ space (or a space
with a convex bicombing) of asymptotic rank $n \ge 2$, and if
$g$ is a Lipschitz quasi-isometric embedding of $\spt(S)$ into
another proper metric space, then $g_\#S$ is again quasi-minimizing and has
controlled density (see Proposition~\ref{prop:qi-inv}).

Here we first prove a simpler result for Lipschitz quasiflats,
which also allows for boundaries.

\begin{proposition}[Lipschitz quasiflats] \label{prop:lip-qflats}
For all $n,L \ge 1$ and $a_0 \ge 0$ there exist $Q \ge 1$, $C > 0$, and 
$a \ge 0$ such that the following holds. Let $W \sub \R^n$ be any closed set
such that the associated current $E := \bb{W}$ is in $\bZ_{n,\loc}(\R^n,\d W)$.
Suppose that $g \colon W \to X$ is a map into a proper metric space~$X$ 
such that for all $x,y \in W$,
\[
L^{-1}d(x,y) - a_0 \le d(g(x),g(y)) \le L\,d(x,y).
\]
Then $S := g_\#E \in \bZ_{n,\loc}(X,g(\d W))$ is 
$(Q,a)$-quasi-minimizing mod $g(\d W)$ and has $(C,a)$-controlled density,
furthermore $d(g(x),\spt(S)) \le a$ for all $x \in W$ with $d(x,\d W) \ge a$.
\end{proposition}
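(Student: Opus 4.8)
The plan is to verify three things about $S=g_\#E$: that it is a local cycle mod $g(\d W)$, that it has controlled density, and that it is quasi-minimizing mod $g(\d W)$; the distance estimate $d(g(x),\spt(S))\le a$ will come out along the way from the lower density bound.

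First I would record the easy facts. Since $g$ is $L$-Lipschitz on $W$ (using the upper bound; note $g$ is automatically a proper map because it is a quasi-isometric embedding from a closed subset of $\R^n$), the push-forward $S=g_\#E$ is defined, lies in $\bI_{n,\loc}(X)$, and $\d S = g_\#(\d E)$, so $\spt(\d S)\sub g(\spt(\d E))\sub g(\d W)$, giving $S\in\bZ_{n,\loc}(X,g(\d W))$. The mass bound $\|g_\#E\|(B)\le L^n\|E\|(g^{-1}(B))$ from the push-forward estimate in Section~\ref{subsect:mass}, combined with $g^{-1}(\B{p}{r})$ being contained in a ball of radius $L(r+a_0)$ in $\R^n$ (from the lower inequality $L^{-1}d(x,y)-a_0\le d(g(x),g(y))$), yields $\G_{p,r}(S)\le L^n\om_n (L(r+a_0)/r)^n \le C$ for $r$ bounded below by some $a$, so $S$ has $(C,a)$-controlled density with $C,a$ depending only on $n,L,a_0$.

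The main work is the quasi-minimality. Fix $x\in\spt(S)$ and $r>a$ with $\B{x}{r}\cap g(\d W)=\es$, and let $T\in\bI_{n,\cs}(X)$ with $\d T=\d(S\on\B{x}{r})$; I must bound $\M(S\on\B{x}{r})$ by $Q\,\M(T)$. The idea is to transport the competitor $T$ back to $\R^n$ and use that $\bb{W}$ is minimizing there (any $\bb{W}$ with $W\sub\R^n$ closed is area-minimizing among currents with the same boundary, since in $\R^n$ a current and its "convexification" onto $W$ compare favorably — more precisely $\bb{W}$ minimizes mass in its homology class relative to $\R^n\sm W$, which is the relevant statement here). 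To make the transport I would use a Lipschitz retraction / extension: extend $g$ to a Lipschitz map $\bar g\colon\R^N\to X$ after first embedding, or better, build a Lipschitz "quasi-inverse" $\psi\colon X\to\R^n$ with $\psi\circ g$ close to the identity on $W$ and $\Lip(\psi)$ controlled (on the relevant bounded region), using the Lipschitz extension results quoted in Section~\ref{subsect:metric}. Then one compares $g^{-1}(\B{x}{r})\cap W$-part of $E$ with $\psi_\#T$: since $\d(\psi_\#T)$ is close to but perhaps not exactly equal to $\d(E\on(\text{ball}))$, one patches with a coning chain of controlled mass (using (CI$_{n}$) in $\R^n$, or just the explicit cone in $\R^n$) to fix the boundary discrepancy, which is supported near $\Sph{x}{r}$ and small because the discrepancy lives at scale $a_0$. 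Using that $\bb{W}$ is minimizing in $\R^n$ and that $\Lip(\psi)^n$, the patching mass, and $\Lip(g)^n=L^n$ are all bounded by constants depending only on $n,L,a_0$, one gets $\M(S\on\B{x}{r})\le Q\,\M(T)$ with such a $Q$.

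The hard part will be handling the boundary discrepancy cleanly: $\psi\circ g\ne\id_W$ in general (only within distance $\sim a_0$), so $\psi_\#(S\on\B{x}{r})$ does not literally have boundary $\d(E\on B)$ for the "right" ball $B$ in $\R^n$, and one must interpolate — via a Lipschitz homotopy between $\psi\circ g|_W$ and $\id_W$, whose tracks have length $\le a_0$, applying the homotopy formula and mass bound from Section~\ref{subsect:homotopies} to the slice $\d(E\on B)$ — and then argue the extra mass contributed is absorbed into the constant $Q$ because the competitor's mass $\M(T)$ is itself bounded below (by the lower density bound for $\bb W$, or by an isoperimetric argument) on the scale of $r$, while the correction terms only grow like $a_0\cdot r^{n-1}$. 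Choosing $a$ large enough compared to $a_0$ makes this a fixed fraction, so it can be folded into $Q$. Finally, the distance claim $d(g(x),\spt(S))\le a$ for $x\in W$ with $d(x,\d W)\ge a$: near such $x$, $E=\bb W$ has full density (it equals $\bb{\R^n}$ locally), so $g(x)$ cannot be at distance more than $\sim La_0$ from $\spt(S)$, else a small ball around $g(x)$ disjoint from $\spt(S)$ pulls back to a region where $E$ has positive mass, a contradiction; enlarging $a$ absorbs the constant.
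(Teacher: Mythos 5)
Your outline follows the same route as the paper's proof: the density estimate is identical, and the core of the quasi-minimality argument in both cases is the Lipschitz quasi-inverse $\bar g\colon X\to\R^n$ (extend $(g|_N)^{-1}$ from a $2La_0$-separated net $N\sub W$) together with the geodesic homotopy in $\R^n$ between $\id_W$ and $h=\bar g\circ g$. One remark on efficiency: you never need to ``patch'' the transported competitor or absorb an $O(a_0r^{n-1})$ error into $Q$ (a step which, as you set it up, presupposes the lower bound $\M(T)\gtrsim r^n$ that you are in the middle of proving). Since $\bZ_{n,\cs}(\R^n)=\{0\}$, any $T$ with $\d T=\d(S\on \B{\bar x}{r})$ satisfies $\bar g_\#T=\bar g_\#(S\on \B{\bar x}{r})$ outright, and the only correction current is the homotopy cylinder $R=\bar g_\#(S\on\B{\bar x}{r})-E\on g^{-1}(\B{\bar x}{r})$; its support lies within distance $b\approx L^2\sqrt n\,a_0$ of $\spt(\d(E\on g^{-1}(\B{\bar x}{r})))$, hence outside $\B{x}{r/(2L)}$ once $r>2Lb$. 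Therefore $\M(\bar g_\#T)\ge\|E\|(\B{x}{r/(2L)})\ge\eps r^n$ with nothing to estimate away, and combining with $\M(S\on\B{\bar x}{r})\le C_0r^n$ gives $Q$ directly.

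The genuine gap is in your final paragraph. The push-forward estimate only gives $\|g_\#E\|(B)\le L^n\|E\|(g^{-1}(B))$; there is no reverse inequality, so ``$E$ has positive mass on $g^{-1}(\B{g(x)}{\rho})$'' does not contradict ``$\spt(S)\cap\B{g(x)}{\rho}=\es$''. A push-forward can annihilate mass by cancellation (that $g_\#E$ does not is essentially the content of the proposition, not something available for free). The correct argument reuses the machinery above: if $S\on\B{g(x)}{r}=0$ for a suitable $r>2Lb$, then $\bar g_\#(S\on\B{g(x)}{r})=0$, while the identity $\bar g_\#(S\on\B{g(x)}{r})=E\on g^{-1}(\B{g(x)}{r})+R$ with $\spt(R)\cap\B{x}{r/(2L)}=\es$ forces $\|E\|(\B{x}{r/(2L)})=0$, contradicting $d(x,\d W)\ge a$. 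So the distance claim is a corollary of the comparison identity in $\R^n$, not of a naive density/pullback argument.
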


The condition on $W$ is satisfied if and only if $W$ has locally finite 
perimeter (that is, $\chi_W$ has locally bounded variation;
see Theorem~7.2 in~\cite{Lan3}). We will use this result only for 
$W$ equal to $\R^n$ or an $n$-simplex in $\R^n$.
 
\begin{proof}
If $p \in X$ and $r > a_0$, and if $B := \B{p}{r}$ and $x,y \in g^{-1}(B)$, 
then $d(x,y) \le L(d(g(x),g(y)) + a_0) \le 3Lr$, thus 
\[
\|S\|(B) = \|g_\#E\|(B) \le L^n\,\|E\|(g^{-1}(B)) \le C_0r^n
\]
for some constant $C_0$ depending only on $n$ and $L$.
Hence $S$ has $(C_0,a_0)$-controlled density.

Let $N \sub W$ be a $2La_0$-separated $2La_0$-net in $W$. If $x,y \in N$ are 
distinct, then $d(g(x),g(y)) \ge L^{-1}d(x,y) - a_0 \ge (2L)^{-1}d(x,y)$,
thus $g|_N$ is injective, and $(g|_N)^{-1} \colon g(N) \to N$ 
admits an $\bar L$-Lipschitz extension $\bar g \colon X \to \R^n$, 
where $\bar L := 2\sqrt{n}L$. 
Put $h := \bar g \circ g$. For every $x \in W$ there is a $y \in N$ such that
$d(x,y) \le 2La_0$. Then $h(y) = y$, thus
\[
d(h(x),x) \le d(h(x),h(y)) + d(y,x) \le (\bar L L + 1)\,d(x,y) \le b
\]
for $b := 2(\bar L L + 1)La_0$.

Suppose now that $x \in W$ and $r > 2Lb$ are such that
$\B{\bar x}{r} \cap g(\d W) = \es$, where $\bar x := g(x)$. For almost
every such $r$, both $S' := S \on \B{\bar x}{r}$ and
$E' := E \on g^{-1}(\B{\bar x}{r})$ are integral currents,
and $g_\#(E') = S'$. Since $g^{-1}(\B{\bar x}{r}) \cap \spt(\d E) = \es$,
the support of $\d E'$ is in $g^{-1}(\Sph{\bar x}{r})$ and thus at 
distance at least $r/L$ from $x$. Note that $\d(\bar g_\#S') = h_\#(\d E')$.
Using the geodesic homotopy from $\id_W$ to $h$, we get a 
current $R \in \bI_{n,\cs}(\R^n)$ with boundary $\d R = \d(\bar g_\#S') - \d E'$
and support within distance $b$ from $\spt(\d E')$;
in fact $R = \bar g_\#S' - E'$, because $\bZ_{n,\cs}(\R^n) = \{0\}$.
Since $r/L - b > r/(2L)$, the support of $R$ lies outside $\B{x}{r/(2L)}$.
It follows that
\[
\M(\bar g_\#S') = \M(E' + R) \ge \|E\|(\B{x}{r/(2L)}) \ge \eps r^n
\]
for some constant $\eps > 0$ depending only on $n$ and $L$. 

Now if $T \in \bI_{n,\cs}(X)$ is such that $\d T = \d S'$, then 
$\bar g_\#T = \bar g_\#S'$, and
\[
\M(S') \le C_0r^n \le C_0\eps^{-1} \M(\bar g_\#T) \le Q\,\M(T)
\]
for $Q := C_0\eps^{-1}\bar L^n$. Since $\spt(S) \sub g(W)$, this shows that
$S$ is $(Q,2Lb)$-quasi-minimizing mod $g(\d W)$.

To prove the last assertion, choose any $a > L(2Lb + a_0)$ and let $x \in W$ be
a point with $d(x,\d W) \ge a$. Then $d(g(x),g(\d W)) \ge L^{-1}a - a_0 > 2Lb$.
For a suitable $r \in (2Lb,a]$, the above argument then shows that 
$\M(\bar g_\#S') > 0$, thus $S' = S \on \B{g(x)}{r} \ne 0$,
and this implies that $d(g(x),\spt(S)) \le r \le a$. 
\end{proof}

The following variant of the above result applies to situations where
quasiflats can possibly not be approximated by Lipschitz ones
(see the proof of Theorem~\ref{thm:slim}.)
Here we call a compact set $W \sub \R^n$ {\em triangulated\/} if $W$ has the
structure of a finite simplicial complex all of whose maximal cells are
Euclidean $n$-simplices (thus $W$ is polyhedral). We denote by $W^0$ and
$(\d W)^0$ the set of vertices and boundary vertices of the triangulation,
respectively.

\begin{proposition}[triangulated quasiflats] \label{prop:qflats}
Let $n \ge 2$, and let $X$ be a proper metric space satisfying 
condition~{\rm (CI$_{n-1}$)}. Then for all $C_0,D_0 > 0$ and 
$L,a_0$ there exist $Q,C,a$ such that the following holds.
Suppose that $W \sub \R^n$ is a compact triangulated set with simplices of 
diameter at most $D_0$, and such that every $r$-ball in\/ $\R^n$ with  
$r \ge D_0$ meets at most $C_0r^n$ $n$-simplices. 
Let $\cP_*(W)$ denote the corresponding chain complex of simplicial integral 
currents. 
If $f \colon W \to X$ is an $(L,a_0)$-quasi-isometric embedding, then there 
exists a chain map $\iota \colon \cP_*(W) \to \bI_{*,\cs}(X)$ such that 
\ben
\item
$\iota$ maps every vertex $\bb{x_0} \in \cP_0(W)$ to $\bb{f(x_0)}$ and, for 
$1 \le k \le n$, every basic oriented simplex $\bb{x_0,\dots,x_k} \in \cP_k(W)$ 
to a current with support in $N_a(f(\{x_0,\dots,x_k\}))$; 
\item 
$S := \iota\bb{W} \in \bI_{n,\cs}(X)$ is $(Q,a)$-quasi-minimizing mod
$N_a(f((\d W)^0))$ and has $(C,a)$-controlled density;
\item 
$d(f(x),\spt(S)) \le a$ for all $x \in W$ with $d(x,\d W) \ge a$. 
\een
\end{proposition}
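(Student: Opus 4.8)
The plan is to mimic the proof of Proposition~\ref{prop:lip-qflats}, the key new feature being that $f$ need not be Lipschitz, so we cannot push forward $\bb{W}$ directly; instead we build a chain map $\iota$ by induction on skeleta, filling the images of boundary spheres using the coning inequality (CI$_{n-1}$). First I would fix the constants: choose $a$ large compared with $L, a_0, D_0$, and the coning constants $c_0,\dots,c_{n-1}$, to be pinned down along the way. For the $0$-skeleton set $\iota\bb{x_0} := \bb{f(x_0)}$. Inductively, suppose $\iota$ has been defined on $\cP_{k-1}(W)$ so that each basic $(k-1)$-simplex $\bb{x_0,\dots,x_{k-1}}$ maps to a current supported in $N_a(f(\{x_0,\dots,x_{k-1}\}))$; for a basic $k$-simplex $\tau = \bb{x_0,\dots,x_k}$, the boundary $\iota(\d\tau)$ is a cycle in $\bZ_{k-1,\cs}(X)$ supported in $N_a(f(\{x_0,\dots,x_k\}))$, which (since the $x_i$ are pairwise within $L D_0 + a_0$ of each other in $X$ up to the quasi-isometry distortion, hence all within a ball of radius $\le 2(LD_0+a_0)+a$) lies in a ball of controlled radius $\rho_0$. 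By (CI$_{k-1}$) it bounds a chain of mass $\le c_{k-1}\rho_0\M(\iota(\d\tau))$, supported within $c_{k-1}\rho_0$ of that ball; taking $a$ larger than all these radii, define $\iota\tau$ to be such a filling. This gives a chain map with property~(1), and $\M(\iota\tau)$ is bounded by a constant depending only on $n,L,a_0,D_0$ and the coning constants. Setting $S := \iota\bb{W}$, summing over the at most $C_0 r^n$ $n$-simplices meeting any $r$-ball (each contributing bounded mass, with support of bounded diameter) gives the $(C,a)$-controlled density bound in~(2).

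For the quasi-minimality, the strategy is the same ``retraction'' argument as in Proposition~\ref{prop:lip-qflats}: produce a Lipschitz map $\bar g \colon X \to \R^n$ that is, on $\spt(S)$, an approximate inverse to $f$, so that pushing a competitor forward into $\R^n$ and using $\bZ_{n,\cs}(\R^n) = \{0\}$ forces a lower mass bound. Concretely, let $N \subset W^0$ be a maximal subset of vertices whose $f$-images are $2La_0$-separated; then $(f|_N)^{-1}\colon f(N)\to\R^n$ is $2L$-Lipschitz on its (separated) domain, so extends to $\bar L$-Lipschitz $\bar g\colon X\to\R^n$ with $\bar L = 2\sqrt n L$. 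One checks $h := \bar g\circ f$ moves every vertex of $W$ by at most a bounded amount $b$, and then that $\bar g$ moves every point of $\spt(\iota\tau)$ for an $n$-simplex $\tau$ by a bounded amount, since $\spt(\iota\tau)\subset N_a(f(\tau^0))$. Now suppose $x\in\spt(S)$, $r$ is large (bigger than a fixed multiple of $a$ and $b$) with $\B{x}{r}\cap N_a(f((\d W)^0)) = \es$, and $T\in\bI_{n,\cs}(X)$ with $\d T = \d(S\on\B{x}{r})$. The cycle $S\on\B{x}{r} - T$ is a compactly supported $n$-cycle; pushing forward by $\bar g$ and using a geodesic homotopy in $\R^n$ to compare $\bar g_\#(S\on\B{x}{r})$ with a genuine simplicial chain in $W$ near $f^{-1}$ of that ball, one sees (as in the earlier proof, using $\bZ_{n,\cs}(\R^n)=\{0\}$) that $\M(\bar g_\#(S\on\B{x}{r})) \ge \eps r^n$ for some $\eps = \eps(n,L)>0$, provided $a$ was chosen so that the $a$-neighborhood bookkeeping stays inside the preimage ball. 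Since $\bar g_\# T = \bar g_\#(S\on\B{x}{r})$ and $\bar g$ is $\bar L$-Lipschitz, $\M(T)\ge \bar L^{-n}\eps r^n \ge \bar L^{-n}\eps C^{-1}\M(S\on\B{x}{r})$, giving quasi-minimality with $Q := \bar L^n C \eps^{-1}$. Property~(3) follows exactly as in Proposition~\ref{prop:lip-qflats}: for $x$ with $d(x,\d W)$ large, a suitable radius $r$ makes $\bar g_\#(S\on\B{f(x)}{r})\ne 0$, hence $S\on\B{f(x)}{r}\ne 0$, so $d(f(x),\spt(S))\le r\le a$.

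The main obstacle I expect is the comparison between $\bar g_\# S$ and an honest simplicial chain in $\R^n$: unlike in the Lipschitz case, $\iota\bb{W}$ is built simplex by simplex by arbitrary fillings, so $\bar g_\#(\iota\tau)$ is only \emph{close} to $\bb{\tau}$, not equal to it. The fix is to show that $\bar g_\#\iota - \iota_{\R^n}$ (where $\iota_{\R^n}$ is the tautological embedding $\cP_*(W)\hookrightarrow\bI_{*,\cs}(\R^n)$) is a chain homotopy with controlled ``displacement'': on each simplex one has a bounded-length geodesic homotopy in $\R^n$ between $\bar g\circ(\text{vertices of }\iota\tau)$ and the actual vertices, and the chain-homotopy formula bounds the mass of the error and, crucially, keeps its support within a bounded neighborhood of $f^{-1}$ of the relevant ball. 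Pinning down the right order of quantifiers on $a$ — it must dominate all the coning radii, the displacement bound $b$, and the neighborhood constants simultaneously — is the only genuinely delicate bookkeeping; once that is set, everything reduces to the estimates already in Proposition~\ref{prop:lip-qflats} and Lemma~\ref{lem:density}.
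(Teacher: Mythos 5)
Your plan follows the paper's proof essentially step for step: inductive construction of $\iota$ over skeleta by filling boundary cycles, the counting argument over simplices for controlled density, the Lipschitz retraction $\bar g$ built from a separated net of vertices, the chain homotopy in $\R^n$ together with $\bZ_{n,\cs}(\R^n)=\{0\}$ to identify $\bar g_\#S$ with $\bb{W}$ up to an error supported near $\d W$, and the resulting lower mass bound on competitors.

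One step as written is unjustified: you claim that the filling supplied by condition~(CI$_{k-1}$) is ``supported within $c_{k-1}\rho_0$ of that ball.'' The coning inequality of Definition~\ref{def:cone-ineq} controls only the \emph{mass} of the filling, not its support, and without support control property~(1), the density count, and the retraction bookkeeping all break down. The paper's fix is to take \emph{minimizing} fillings and invoke the distance estimate of Theorem~\ref{thm:plateau}, which bounds the distance from $\spt(\iota(s))$ to $\spt(\d\iota(s))$ by $(\M(\iota(s))/\del)^{1/n}$; inserting that in place of your raw (CI$_{k-1}$) filling makes your induction go through, with the mass and support constants $M,a'$ then determined inductively by $D_0,L,a_0$ and $c_0,\dots,c_{n-1}$. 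With that repair the rest of your argument, including the identification of the main difficulty (that $\bar g_\#\iota\tau$ is only close to $\bb{\tau}$) and its resolution via a controlled chain homotopy on $\d W$, matches the paper's.
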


Here $N_a(\cdot)$ stands for the closed $a$-neighborhood of a set.
Note that by~(1), $\spt(S) \sub N_a(f(W^0))$ and 
$\spt(\d S) \sub N_{a}(f((\d W)^0))$.
An analogous result for closed sets with locally finite triangulations
and locally integral currents also holds.

\begin{proof}
Put $\cS_* := \bigcup_{k=0}^n \cS_k$, where $\cS_k$ denotes the set of all 
basic simplices $s = \bb{x_0,\dots,x_k} \in \cP_k(W)$ (compare p.~365
in~\cite{Fed} for the notation). Using
Theorem~\ref{thm:plateau} (minimizing filling), we can inductively 
build a map $\iota \colon \cS_* \to \bI_{*,\cs}(X)$ as follows.
For every $\bb{x_0} \in \cS_0$, $\iota\bb{x_0} := f_\#\bb{x_0} = \bb{f(x_0)}$. 
Now let $k \ge 1$, and suppose that $\iota$ is defined on $\cS_{k-1}$.
For every $k$-cell of $W$, we pick
an orientation $s = \bb{x_0,\dots,x_k} \in \cS_k$, then we let 
$\iota(s) \in \bI_{k,\cs}(X)$ be a minimizing filling of
\[
\sum_{i=0}^k (-1)^i\,\iota\bb{x_0,\dots,x_{i-1},x_{i+1},\dots,x_k}
\in \bZ_{k-1,\cs}(X),
\]
and we put $\iota(-s) := -\iota(s)$.
The resulting map on $\cS_*$ readily extends to a chain map 
$\iota \colon \cP_*(W) \to \bI_{*,\cs}(X)$.
It follows inductively from condition~{\rm (CI$_{k-1}$)} and the distance
bound in Theorem~\ref{thm:plateau} for $k = 1,\dots,n$ that for all
$s = \bb{x_0,\dots,x_k} \in \cS_k$,
\[
\M(\iota(s)) \le M
\]
and $\spt(\iota(s)) \sub N_{a'}(f(\{x_0,\dots,x_k\}))$ for some 
constants $M,a'$ depending on $D_0,L,a_0$ and the constants $c_0,\dots,c_{n-1}$ 
implicit in condition~{\rm (CI$_{n-1}$)}.
In the following we assume that $a' \ge a_0$.

Let now $\cS_n^+ \sub \cS_n$ be the set of all positively oriented 
$n$-simplices, whose sum is $\bb{W}$. Put $S := \iota\bb{W}$. 
To show that $S$ has controlled density, 
let $p \in X$ and $r > a'$, and consider the set of all $s \in \cS_n^+$ for 
which $\spt(\iota(s)) \cap \B{p}{r} \ne \es$. Every such $s$ has a 
vertex $x^s$ with $f(x^s) \in \B{p}{r+a'}$, thus the set of all
$x^s$ has diameter at most $L(2(r+a')+a_0) \le 5Lr$. It follows that there 
are at most $C_0(5Lr)^n$ such simplices and that 
$\G_{p,r}(S) \le C := C_0(5L)^nM$ for $p \in X$ and $r > a'$.  

Similarly as in the proof of Proposition~\ref{prop:lip-qflats}, 
there exists an $\bar L$-Lipschitz map $\bar f \colon X \to \R^n$ such that 
$h := \bar f \circ f$ satisfies $d(h(x),x) \le b'$ for all $x \in W$,
where $\bar L := 2\sqrt{n}L$ and $b'$ depends on $n,L,a_0$. Then
\[
\bar\iota := \bar f_\# \circ \iota \colon \cP_*(W) \to \bI_{*,\cs}(\R^n)
\]
is a chain map that sends every $\bb{x_0} \in \cS_0$ to $\bb{h(x_0)}$ and
every $\bb{x_0,\dots,x_k} \in \cS_k$ to a current with support in
$N_{\bar L a' + b'}(\{x_0,\dots,x_k\})$. 
Note that $\bar\iota(\d\bb{W}) = \d(\bar f_\#S)$.
Similarly as above, using geodesic cone fillings of cycles in $\R^n$, 
we can inductively construct a chain homotopy between 
$\id_\#,\bar\iota \colon \cP_*(\d W) \to \bI_{*,\cs}(\R^n)$. 
This yields an $R \in \bI_{n,\cs}(\R^n)$ such that 
$\d R = \d(\bar f_\#S) - \d\bb{W}$ and $\spt(R) \sub N_b((\d W)^0)$ 
for some constant $b$ depending on $n,D_0,L,a'$. 
Since $\bZ_{n,\cs}(\R^n) = \{0\}$, in fact $R = \bar f_\#S - \bb{W}$.

Suppose now that $x \in W$ and $r > 2Lb$ are such that
\[
\B{\bar x}{r} \cap N_{a'}(f((\d W)^0)) = \es
\] 
and $S' := S \on \B{\bar x}{r} \in \bI_{n,\cs}(X)$, where $\bar x := f(x)$. 
For all $y \in (\d W)^0$,
$d(x,y) \ge (d(\bar x,f(y)) - a_0)/L > r/L > r/(2L) + b$; hence 
\[
\spt(R) \cap \B{x}{r/(2L)} = \es.
\]
For every $\bar y \in \spt(S - S') \sub \spt(S)$ there exists a $y \in W^0$ 
such that $d(f(y),\bar y) \le a'$ and 
$r \le d(\bar x,\bar y) \le d(\bar x,f(y)) + a' \le L\,d(x,y) + 2a'$, thus
\begin{align*}
d(x,\bar f(\bar y)) 
&\ge d(x,y) - d(y,h(y)) - d(\bar f(f(y)),\bar f(\bar y)) \\
&\ge L^{-1}(r - 2a') - b' - \bar L a'.
\end{align*}
By increasing $b$ if necessary, so that $r$ is large enough, we arrange 
that this last expression is bigger than $r/(2L)$. This then shows that 
\[
\spt(\bar f_\#(S - S')) \cap \B{x}{r/(2L)} = \es.
\]
Since $\bar f_\#S' = \bb{W} + R - \bar f_\#(S-S')$, it follows that 
\[
\M(\bar f_\#S') \ge \|\bb{W}\|(\B{x}{r/(2L)}) \ge \eps r^n
\]
for some $\eps > 0$ depending on $n$ and $L$. The proof may now be 
completed as for Proposition~\ref{prop:lip-qflats}. For assertion~(3),
choose $a > L(2Lb + a' + a_0)$.
\end{proof}


\section{Asymptotic rank} \label{sect:asympt-rank}

In this section we will first discuss the notion of asymptotic rank and
the sub-Euclidean isoperimetric inequality from~\cite{Wen4}.
Then we will derive a localized version of this result as well as various
characterizations of quasi-minimizing local $n$-cycles in spaces of
asymptotic rank at most~$n$.

In~\cite{Gro4}, Section~6.B$_2$, Gromov defined a number of different
large-scale notions of rank for spaces of nonpositive curvature. 
Many of the ensuing questions were then answered in~\cite{Kle} 
(see the discussion in Section~9 therein).
Theorem~D in that paper shows in particular the following.

\begin{theorem}[rank conditions] \label{thm:rank-conditions}
Let $X$ be a proper Busemann space with cocompact isometry group.
Then for every $n \ge 1$ the following are equivalent:  
\ben
\item
$X$ contains an isometric copy of some $n$-dimensional normed space;
\item
there exists a quasi-isometric embedding of $\R^n$ into $X$;
\item
there exist a sequence of subsets $Y_i \sub X$ and a sequence
$0 < r_i \to \infty$ such that the rescaled sets $(Y_i,r_i^{-1}d)$ converge 
in the Gromov--Hausdorff topology to the closed unit ball in some
$n$-dimensional normed space.
\een
\end{theorem}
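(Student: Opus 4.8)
The implications $(1)\Rightarrow(2)$ and $(1)\Rightarrow(3)$ are elementary. If $X$ contains an isometric copy of an $n$-dimensional normed space $V$, then, since all norms on $\R^n$ are bi-Lipschitz equivalent, the inclusion $V\hookrightarrow X$ is a bi-Lipschitz embedding of $\R^n$ and in particular a quasi-isometric embedding, which gives~$(2)$; and taking $Y_i:=\B{o}{r_i}\sub V$ for the origin $o$ of $V$ and any sequence $r_i\to\infty$, each $(Y_i,r_i^{-1}d)$ is isometric to the closed unit ball of $V$, which gives~$(3)$. The content of the statement therefore lies in $(2)\Rightarrow(1)$ and $(3)\Rightarrow(1)$; both are part of Theorem~D in~\cite{Kle}, and the plan below is to recall the argument from that source.

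The common engine is passage to an asymptotic cone. Fix a non-principal ultrafilter $\omega$, a sequence $r_i\to\infty$, and basepoints $p_i\in X$, and set $X_\omega:=\lim_\omega(X,p_i,r_i^{-1}d)$. Since the Busemann convexity inequality is stable under ultralimits, every such $X_\omega$ is again a complete geodesic Busemann space; and since the isometry group of $X$ acts cocompactly, $X_\omega$ is homogeneous (given two points represented by sequences $(x_i)$ and $(y_i)$, choose isometries $g_i$ of $X$ with $d(g_ix_i,y_i)$ uniformly bounded and pass to $\lim_\omega g_i$). In the situation of~$(2)$, a quasi-isometric embedding $\R^n\to X$ induces, after rescaling the domain and range by $r_i^{-1}$ and taking the ultralimit, a bi-Lipschitz embedding of $\R^n$ (with its Euclidean metric) into $X_\omega$, because the additive quasi-isometry constant becomes negligible at scale $r_i$; blowing up this chart at a generic point then yields an isometrically embedded $n$-dimensional normed space in a tangent cone of $X_\omega$. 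In the situation of~$(3)$, choosing $p_i\in Y_i$ one obtains directly an isometrically embedded closed unit ball of the normed space $V$ in $X_\omega$, and the tangent cone of $X_\omega$ at its center is a copy of $V$. In both cases a tangent cone of $X_\omega$ taken with a sufficiently slow blow-up rate is itself an asymptotic cone of $X$ (by reindexing the iterated ultralimit), hence again a homogeneous Busemann space; so, after replacing $X_\omega$, we may assume that $X_\omega$ contains an isometrically embedded $n$-dimensional normed space $V$.

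It remains to transfer such a flat back to $X$. For each $R>0$ the ball $\B{o}{R}\sub V$ is, by construction of the ultralimit, a Hausdorff limit of subsets $A_i\sub(X,r_i^{-1}d)$; rescaling back to $(X,d)$ and moving the $A_i$ by isometries of $X$ so that their centers lie in a fixed compact set, properness of $X$ lets one extract a subsequential limit (in the local Hausdorff sense), and a diagonal argument over $R\to\infty$ then produces an isometrically embedded copy of $V$ in $X$, establishing~$(1)$. The main obstacle in this last step is that the additive Gromov--Hausdorff error of the approximating sets $A_i$ is \emph{a priori} only $o(r_i)$ rather than bounded; overcoming this is precisely where the cocompactness of $X$ enters essentially, together with the uniqueness of geodesics in a Busemann space --- which guarantees that limits of geodesics remain geodesics and thereby allows one to straighten the approximate flats before passing to the limit. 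We refer to~\cite{Kle} for the details of this argument.
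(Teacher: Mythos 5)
The paper offers no proof of this statement beyond attributing it to Theorem~D of~\cite{Kle}, and your proposal does essentially the same: you correctly dispose of the elementary implications $(1)\Rightarrow(2)$ and $(1)\Rightarrow(3)$, and for the substantive implications $(2)\Rightarrow(1)$ and $(3)\Rightarrow(1)$ you give a reasonable outline (ultralimits, metric differentiation, transfer back via cocompactness) while correctly deferring the genuinely hard step --- upgrading an $o(r_i)$-approximate flat to an exact isometric copy of the normed space --- to the cited source. This matches the paper's treatment, so there is nothing to correct.
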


Stronger conclusions hold if $X$ is a proper and cocompact $\CAT(0)$ space.
Then any normed space isometrically embedded in $X$ is necessarily Euclidean;
furthermore, the {\em Euclidean rank\/} of $X$, the
maximal $n$ for which $X$ contains an $n$-flat, is equal to the
{\em geometric dimension} or the {\em compact topological dimension}
(that is, the supremum of the topological dimensions of compact subsets)
of the Tits cone $\CT X$ or of any asymptotic cone $X_\om$
and also agrees with the maximal $n$ for which $H_{n-1}(\bT X) \ne \{0\}$, 
where $\bT X$ denotes the Tits boundary.
See Theorems~A and~C in~\cite{Kle}.

Property~(3) above suggests the following notion of asymptotic rank that was
investigated in~\cite{Wen4}.

\begin{definition}[asymptotic subset, asymptotic rank] \label{def:as-rank}
Let $X = (X,d)$ be a metric space. Any compact metric space $(Y,d_Y)$ 
that can be obtained as the Gromov--Hausdorff limit of a sequence
$(Y_i,r_i^{-1}d)$ as in~(3) above will be called an {\em asymptotic subset\/}
of $X$. The {\em asymptotic rank\/} $\asrk(X)$ of $X$ is the supremum of all
integers $n \ge 0$ such that there exists an asymptotic subset of $X$
isometric to the unit ball in some $n$-dimensional normed space.
\end{definition}

\begin{remark} \label{rem:as-rank}
Alternatively, $\asrk(X)$ may be defined as the supremum of all $n$ such 
that $X$ admits an asymptotic subset bi-Lipschitz homeomorphic to a compact 
subset of $\R^n$ with positive Lebesgue measure. The equivalence of the two 
definitions is shown by means of a metric differentiation argument
(see~\cite{Wen4}).
\end{remark}

We remark that every asymptotic subset of $X$ embeds isometrically
into some asymptotic cone of $X$. Conversely, every compact subset
$Y \sub X_\om$ of an asymptotic cone is an asymptotic subset of $X$,
and the respective sets $Y_i \sub X$ may be chosen to be finite. 
If $f \colon X \to \bar X$ is a quasi-isometric embedding into another 
metric space $\bar X$, then
\[
\asrk(X) \le \asrk(\bar X);
\]
thus $\asrk$ is a quasi-isometry invariant for metric spaces
(see Corollary~3.3 in \cite{Wen4}).   

In a nonpositively curved symmetric space $X$, every
$n$-cycle $Z$ with $n$ greater than or equal to the rank of $X$
admits a filling $V$ with mass
\[
\M(V) \le \const \cdot \M(Z)
\]
(see p.~105 in \cite{Gro4}, and~\cite{Leu}), whereas in smaller dimensions,
the optimal isoperimetric inequalities in $X$ are of Euclidean type,
as in Theorem~\ref{thm:isop-ineq}. 
It is not known whether the linear inequalities for $n$-cycles remain valid,
for example, in cocompact Hadamard manifolds containing no $(n+1)$-flat. 
However, the following key result due to Stefan Wenger provides
a substitute for spaces of asymptotic rank at most $n$.

\begin{theorem}[sub-Euclidean isoperimetric inequality] \label{thm:subeucl}
Let $X$ be a proper metric space satisfying condition~{\rm (CI$_n$)}
for some $n \ge 1$, and suppose that\/ $\asrk(X) \le n$.
Then for all $C,\eps > 0$ there is a constant $a_\eps \ge 0$
(depending on $X,n,C,\eps$) such that if $r > a_\eps$, then every cycle
$Z \in \bZ_{n,\cs}(X)$ with $\M(Z) \le C r^n$ and $\spt(Z) \sub \B{p}{r}$
for some $p \in X$ possesses a filling $V \in \bI_{n+1,\cs}(X)$ with mass 
\[
\M(V) < \eps r^{n+1}.
\]
\end{theorem}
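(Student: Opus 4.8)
The plan is to derive the sub-Euclidean isoperimetric inequality by contradiction, using the asymptotic rank hypothesis to pass to an asymptotic cone and invoking the fact that no asymptotic cone contains an $(n+1)$-dimensional normed space. So suppose the statement fails: there exist $C,\eps>0$, a sequence $r_i\to\infty$, points $p_i\in X$, and cycles $Z_i\in\bZ_{n,\cs}(X)$ with $\spt(Z_i)\sub\B{p_i}{r_i}$ and $\M(Z_i)\le C r_i^n$, such that every filling $V\in\bI_{n+1,\cs}(X)$ of $Z_i$ has $\M(V)\ge\eps r_i^{n+1}$. First I would rescale: set $\hat d_i:=r_i^{-1}d$ on $X$ and consider the pointed spaces $(X,\hat d_i,p_i)$. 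Under this rescaling the $Z_i$ become cycles $\hat Z_i$ supported in unit balls with $\M(\hat Z_i)\le C$, while every filling has mass $\ge\eps$; the key point is that condition~(CI$_n$) is scale-invariant in the relevant normalization (the coning constants $c_k$ are unchanged), so each rescaled space still satisfies~(CI$_n$) with the same constants, and in particular Theorem~\ref{thm:isop-ineq} gives a uniform Euclidean-type filling bound $\M(\hat V_i)\le\gamma\,\M(\hat Z_i)^{n/(n-1)}\le\gamma C^{n/(n-1)}$ for \emph{some} filling $\hat V_i$.

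Next I would take an ultralimit. Pass to an asymptotic cone $X_\om=\lim_\om(X,\hat d_i,p_i)$; because the spaces satisfy uniform coning inequalities and the currents $\hat Z_i$, together with their near-optimal fillings $\hat V_i$, have uniformly bounded mass and support in a fixed bounded region, the machinery of currents in ultralimits (as developed by Wenger, building on Ambrosio--Kirchheim) produces limit currents $Z_\om\in\bZ_{n,\cs}(X_\om)$ and $V_\om\in\bI_{n+1,\cs}(X_\om)$ with $\d V_\om=Z_\om$ and $\M(V_\om)\le\gamma C^{n/(n-1)}$, and such that $X_\om$ still satisfies~(CI$_n$). The lower-semicontinuity of mass under this limit is what transmits the nontriviality: the hypothesis that no $r_i$-ball filling of $Z_i$ has mass below $\eps r_i^{n+1}$ should force, after rescaling and passing to the limit, that $Z_\om$ admits \emph{no} filling of mass $<\eps$ in $X_\om$ at all; in particular $Z_\om\ne 0$ and the isoperimetric filling function of $X_\om$ in dimension $n$ is not sub-Euclidean — concretely, one produces in $X_\om$ a sequence of cycles whose optimal filling mass grows at least like the $n$-th power of the radius, i.e.\ faster than any sub-Euclidean rate.

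Finally, I would invoke the structural consequence of $\asrk(X)\le n$ for the cone: since $\asrk$ is a quasi-isometry invariant and asymptotic cones only see the large-scale geometry, $\asrk(X_\om)\le n$, so $X_\om$ contains no isometrically (indeed no bi-Lipschitz, by Remark~\ref{rem:as-rank}) embedded $(n+1)$-dimensional normed ball. Wenger's theorem says precisely that a space with coning inequalities through dimension~$n$ and no such embedded flat \emph{must} satisfy a sub-Euclidean isoperimetric inequality for $n$-cycles, uniformly; equivalently, the failure we have manufactured — a sequence of $n$-cycles in $X_\om$ with super-sub-Euclidean filling growth — can be pumped up, via an iterated filling/cone argument (filling $Z_\om$ by $V_\om$, slicing, and rescaling again), into an $(n+1)$-dimensional ``flat-like'' subset contradicting $\asrk(X_\om)\le n$. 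That contradiction closes the argument.

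The main obstacle I expect is the last step: extracting an honest $(n+1)$-dimensional bi-Lipschitz flat from the mere failure of a sub-Euclidean isoperimetric bound. This is exactly the content of Wenger's paper~\cite{Wen4}, and the delicate part is the iteration — one needs that a single cycle with large filling mass forces, after filling and re-slicing and passing to yet another asymptotic cone, a current of positive $(n+1)$-dimensional density whose support carries a normed-space structure; controlling the density and the bi-Lipschitz geometry through this second ultralimit (and ensuring the constants do not degenerate) is where the real work lies. Since the statement to be proved is quoted directly as Theorem~1.1 (or its equivalent) of~\cite{Wen4}, I would in fact simply cite that result for the hard implication and present only the rescaling-and-ultralimit reduction above as the ``proof'', noting that condition~(CI$_n$) is preserved under the normalization and that $\asrk\le n$ passes to asymptotic cones.
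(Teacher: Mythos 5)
The paper offers no proof of this theorem at all: immediately after the statement it remarks that the result "is shown in a more general form\dots in Theorem~1.2 in~\cite{Wen4}" and simply cites Wenger's paper, which is exactly what you conclude by doing. Your preliminary rescaling/ultralimit sketch is a reasonable (if rough) outline of the strategy inside \cite{Wen4} — though the step claiming that lower semicontinuity forces $Z_\om$ to have no small filling in $X_\om$ would need Wenger's nontrivial filling-volume semicontinuity results rather than following directly — but since you, like the authors, ultimately defer the hard implication to the citation, the two approaches coincide.
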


This is shown in a more general form, for complete metric spaces, and 
without restrictions on $\spt(Z)$, in Theorem~1.2 in~\cite{Wen4}. 
The stated version suffices for our purposes, and the proof could be 
slightly simplified under these assumptions.

The following result may be viewed as a localized version of 
Theorem~\ref{thm:subeucl} and will be used repeatedly throughout the paper.
The main content is that if a cycle $Z \in \bZ_{n,\cs}(X)$ satisfies 
$\|Z\|(\B{p}{r}) \le Cr^n$ for some $p \in X$ and for all $r > a \ge 0$,
then for every $\eps > 0$ and every sufficiently large $r > 0$ 
there exists a ``partial filling'' $V \in \bI_{n+1,\cs}(X)$ such that  
$\spt(Z - \d V) \cap \B{p}{r} = \es$ and $\M(V) < \eps r^{n+1}$; that is,
$\F_{p,r}(Z) < \eps$.
We formulate this more generally for local cycles of the form $Z = S - S'$ 
with $\Fi(Z) < \infty$, where only $S \in \bI_{n,\loc}(X)$ is required
to satisfy a density bound with respect to $p$ and $S' \in \bI_{n,\loc}(X)$ 
(possibly zero) is area-minimizing.

\begin{proposition}[partial filling] \label{prop:part-filling}
Let $X$ be a proper metric space satisfying condition~{\rm (CI$_n$)}
for some $n \ge 1$, and suppose that\/ $\asrk(X) \le n$.
Then for all $C,\eps > 0$ and $a \ge 0$ there is a constant $a'_\eps \ge 0$
such that the following holds. 
Suppose that $S \in \bI_{n,\loc}(X)$ satisfies $\G_{p,r}(S) \le C$ for some
point $p \in X$ and for all $r > a$, and $S' \in \bI_{n,\loc}(X)$ is
minimizing with $\d S' = \d S$ and $\Fi(S - S') < \infty$. Then
\[
\G_{p,r}(S') < C + \eps \quad \text{and} \quad \F_{p,r}(S - S') < \eps
\]
for all $r > a'_\eps$, in particular $\Gi(S') \le C$ and $\Fi(S - S') = 0$.
\end{proposition}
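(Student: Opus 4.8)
Write $Z := S - S'$, fix the base point $p$, let $\rho_p := d(p,\cdot)$, and put $M := \Fi(Z) + 1 < \infty$. By the definition of $\Fi$, for every sufficiently large $R$ there is a filling $W \in \bI_{n+1,\cs}(X)$ of $Z$ outside $\B{p}{R}$ with $\M(W) < MR^{n+1}$. The argument I have in mind rests on a single slicing observation: since $\spt(Z - \d W) \cap \B{p}{R} = \es$, we have $(\d W)\on\B{p}{s} = Z\on\B{p}{s}$ for $s < R$, so for a.e.\ such $s$ the compactly supported slice $\la W,\rho_p,s\ra \in \bI_{n,\cs}(X)$ satisfies $\d\la W,\rho_p,s\ra = -\la Z,\rho_p,s\ra$; thus it is itself a filling of $\mp\la Z,\rho_p,s\ra$, which lets us avoid any isoperimetric inequality for $(n-1)$-cycles (that would require $\asrk(X) \le n-1$), and moreover $Y_s := Z\on\B{p}{s} + \la W,\rho_p,s\ra = \d(W\on\B{p}{s})$ is a cycle in $\bZ_{n,\cs}(X)$ supported in $\B{p}{s}$.

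The first step would be to extract a crude density bound for $S'$. For a given large $r$, take $R := 2r$ and $W$ as above, and use the coarea inequality to choose $s \in (r,2r)$ with $\M(\la W,\rho_p,s\ra) \le \tfrac{2}{r}\M(W) < 2^{n+2}Mr^n$, chosen also so that $S\on\B{p}{s}$, $S'\on\B{p}{s}$ and the relevant slices are integral currents. Then $T := S\on\B{p}{s} + \la W,\rho_p,s\ra \in \bI_{n,\cs}(X)$ has $\d T = \d(S'\on\B{p}{s})$ --- here one combines $\d S' = \d S$ with the slice identity above and linearity of slicing --- so the minimality of $S'$ gives $\M(S'\on\B{p}{s}) \le \M(T) \le \|S\|(\B{p}{s}) + \M(\la W,\rho_p,s\ra) < C(2r)^n + 2^{n+2}Mr^n$ once $s > a$. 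As $\|S'\|(\B{p}{r}) \le \|S'\|(\B{p}{s})$, this shows that $S'$, and hence $Z = S - S'$, has $(C'',a'')$-controlled density with $C''$, $a''$ depending only on $n,C,M,a$.

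Next I would prove $\F_{p,r}(Z) < \eps$. Fix $\eps > 0$ and, for large $r$, take again $R := 2r$, $W$, and the same $s \in (r,2r)$. Then $Y_s = \d(W\on\B{p}{s})$ has $\M(Y_s) \le \|Z\|(\B{p}{s}) + \M(\la W,\rho_p,s\ra) < 2^nC''r^n + 2^{n+2}Mr^n \le \hat Cs^n$ with $\hat C := 2^nC'' + 2^{n+2}M$, and $\spt(Y_s) \sub \B{p}{s}$. Since $X$ satisfies {\rm (CI$_n$)} and $\asrk(X) \le n$, Theorem~\ref{thm:subeucl} applied at radius $s$ with constant $\hat C$ and tolerance $2^{-(n+1)}\eps$ produces a filling $V \in \bI_{n+1,\cs}(X)$ of $Y_s$ with $\M(V) < 2^{-(n+1)}\eps\,s^{n+1} < \eps r^{n+1}$, provided $r$ exceeds a threshold depending on $\eps$. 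Because $Z - \d V = Z\on\{\rho_p > s\} - \la W,\rho_p,s\ra$ has support in $\{\rho_p \ge s\}$, disjoint from $\B{p}{r}$, this $V$ witnesses $\F_{p,r}(Z) < \eps$. Finally, the sharp bound $\G_{p,r}(S') < C + \eps$ would be obtained by feeding this back into the competitor argument: fix $\delta \in (0,1)$ with $C((1+\delta)^n - 1) < \eps/2$, apply the previous step at radius $\rho := (1+\delta)r$ with a tolerance $\eta$ small enough that $2\eta(1+\delta)^{n+1}/\delta < \eps/2$, obtaining a filling $W'$ of $Z$ outside $\B{p}{\rho}$ of mass $< \eta\rho^{n+1}$; choose $s \in (r,\rho)$ by coarea with $\M(\la W',\rho_p,s\ra) \le \tfrac{2}{\delta r}\M(W')$, and test the minimality of $S'$ on $\B{p}{s}$ against $S\on\B{p}{s} + \la W',\rho_p,s\ra$ as before. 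This gives $\|S'\|(\B{p}{r}) \le \|S'\|(\B{p}{s}) \le C(1+\delta)^nr^n + \tfrac{2\eta(1+\delta)^{n+1}}{\delta}r^n < (C+\eps)r^n$. Letting $\eps \to 0$ yields $\Gi(S') \le C$ and $\Fi(Z) = 0$, and $a'_\eps$ can be taken to be the largest of the finitely many radius thresholds that arose.

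The step I expect to be the main obstacle is the circular-looking dependence: Theorem~\ref{thm:subeucl} can only be invoked once one controls $\M(Y_s)$, hence $\|Z\|(\B{p}{s})$, hence $\|S'\|(\B{p}{s})$ --- yet a density bound for $S'$ is exactly part of the conclusion. The way out is the bootstrap in the first step, where the minimality of $S'$ \emph{by itself}, tested against the explicit competitor built from a near-optimal filling $W$ of $Z$, already forces a crude bound linear in $r^n$; only afterwards does the sub-Euclidean inequality sharpen the filling density, which in turn sharpens the density of $S'$ down to the optimal constant $C$. The technical device that keeps all of this one dimension up (so that $\asrk(X) \le n$, rather than $\le n-1$, suffices) is the identity $\d\la W,\rho_p,s\ra = -\la Z,\rho_p,s\ra$ for the top-dimensional slice of the filling.
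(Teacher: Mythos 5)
Your local computations are all correct, and the core mechanism is the same as the paper's: slice a near-optimal filling $W$ of $Z = S - S'$ at a radius $s$ chosen by coarea, observe that $(\d W)\on\B{p}{s} = Z\on\B{p}{s}$ so that $S\on\B{p}{s} + \la W,\rho_p,s\ra$ is a competitor for $S'\on\B{p}{s}$, extract a density bound for $S'$ from minimality, and then feed the compactly supported cycle $\d(W\on\B{p}{s})$ into Theorem~\ref{thm:subeucl} to improve the filling density. The sharpening of the density bound to $C+\eps$ via a radius ratio $1+\delta$ close to $1$ is also sound, and your observation that slicing the $(n+1)$-dimensional filling keeps everything in the dimension where $\asrk(X)\le n$ suffices matches the paper exactly.

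The genuine gap is in the quantifier structure. The proposition asserts a threshold $a'_\eps$ depending only on $X,n,C,a,\eps$, uniform over all admissible pairs $(S,S')$; this uniformity is essential for the applications (e.g.\ in Theorem~\ref{thm:morse-1} the constant $b$ must depend only on $n,X,Q,C,a$). In your argument every threshold is contaminated by $M = \Fi(Z)+1$ (through $C''$ and $\hat C$) and, worse, by the radius beyond which fillings of mass $< MR^{n+1}$ actually exist: for a target radius $r$ you take the filling $W$ at $R = 2r$, so you need $2r$ to already lie in the regime where $\F_{p,R}(Z) < M$, and that regime depends on the specific current $Z$, not on $C$ and $a$. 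Establishing $\Fi(Z)=0$ a posteriori does not repair this, because rerunning your argument with $M$ replaced by a small $\delta$ still requires $2r$ to exceed the ($Z$-dependent) radius where $\delta$-good fillings exist. The paper's proof avoids this by descending through a geometric sequence of scales $r_i = \eta^i r_0$: the initial filling is taken at an arbitrarily large scale $r_0$, where its existence is guaranteed for the given $Z$, and each application of the sub-Euclidean inequality reproduces at the next smaller scale a partial filling satisfying the same relative mass bound, so the construction propagates down to any target radius $r = r_k$ exceeding a threshold that is independent of $Z$ and of $r_0$; a final rerun with arbitrarily small $D$ then also removes the dependence on $\Fi(Z)$ from the constants. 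Your single jump from $2r$ to $r$ cannot be started at an arbitrarily large scale and therefore cannot deliver the uniform $a'_\eps$; what you have proved is the non-uniform version in which the displayed inequalities hold for all $r$ larger than a constant depending on the pair $(S,S')$.
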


This shows in particular the following dichotomoy:
if $Z \in \bZ_{n,\loc}(X)$ and $\Gi(Z) < \infty$, then $\Fi(Z)$
is either $0$ or $\infty$. 
 
\begin{proof}
We write $B_r := \B{p}{r}$ for $r > 0$.
Choose a constant $D > \Fi(S - S')$. Then, for every sufficiently large 
$r_0 > 0$, there is a $V_0 \in \bI_{n+1,\cs}(X)$ such that
\[
\M(V_0) < D r_0^{\,n+1}
\]
and $\spt(S - S' - \d V_0) \cap B_{r_0} = \es$. We fix such $r_0$ and $V_0$
for the moment, and we put $r_i := \eta^ir_0$ for some fixed $\eta \in (0,1)$
and every integer $i \ge 1$.

There exists an $s \in (r_1,r_0)$ such that both $S' \on B_s$ and the slice
\[
T_s := \d(V_0 \on B_s) - (\d V_0) \on B_s = \d(V_0 \on B_s) - (S - S') \on B_s
\]
belong to $\bI_{n,\cs}(X)$, and
$\M(T_s) \le \M(V_0)/(r_0 - r_1) \le (1-\eta)^{-1}D r_0^{\,n}$.
Note that $\d(S' \on B_s) = \d(S \on B_s + T_s)$.
Using the minimality of $S'$ and assuming that $r_1 > a$,
so that $\G_{p,s}(S) \le C$, we infer that 
\[
\M(S' \on B_s) \le \M(S \on B_s) + \M(T_s)
\le Cs^n + (1-\eta)^{-1}D r_0^{\,n} \le \bar C r_1^{\,n}
\]
for $\bar C := \eta^{-n}(C + (1-\eta)^{-1}D)$. Thus $\G_{p,r_1}(S') \le \bar C$, 
and the cycle $Z_s := (S - S') \on B_s + T_s$ satisfies 
$\M(Z_s) \le 2\bar C r_1^{\,n}$ and $\spt(Z_s) \sub B_{2r_1}$.
Let $\del > 0$. By Theorem~\ref{thm:subeucl} there exists a constant
$\bar a_\del \ge a$, depending only on $n,X,\bar C,a,\del$, such that if
$r_1 > \bar a_\del$, then $Z_s$ possesses a filling $V_1 \in \bI_{n+1,\cs}(X)$
with mass
\[
\M(V_1) < \del r_1^{\,n+1}.
\]
Note that the support of $S - S' - \d V_1 = S - S' - Z_s$ lies outside
$B_{r_1}$, thus $\F_{p,r_1}(S - S') < \del$.
Note further that for $\del \le D$, $V_1$ replicates the properties of $V_0$
at the next smaller scale $r_1$.

Now, given any $\del \in (0,D]$ and $r > \bar a_\del$, we can choose
$r_0$ initially such that $r = r_k = \eta^kr_0$ for some $k \ge 1$.  
In the case that $k \ge 2$, we repeat the slicing and filling procedure 
described in the preceding paragraph successively for $i = 1,2,\dots,k-1$,
with $(r_{i+1},r_i)$ and $V_i$ in place of $(r_1,r_0)$ and $V_0$.
This produces a sequence of partial fillings
$V_1,\dots,V_k \in \bI_{n+1,\cs}(X)$ of $S-S'$, with
$\spt(S - S' - \d V_i) \cap B_{r_i} = \es$, 
such that $\G_{p,r_i}(S') \le \bar C$ and $\M(V_i) < \del r_i^{\,n+1}$.
For $i = k$, this shows that
\[
\G_{p,r}(S') \le \bar C = \eta^{-n}(C + (1-\eta)^{-1}D) \quad \text{and} \quad
\F_{p,r}(S - S') < \del
\]
whenever $0 < \del \le D$ and $r > \bar a_\del$.

In particular, $\Fi(S - S') = 0$, and we may thus repeat the above
argument for arbitrarily small $D > 0$. Let $\eps > 0$.
Choosing $\eta \in (0,1)$ and $D$ such that $\bar C < C + \eps$,
and putting $\del := \min\{\eps,D\}$,
we conclude that $\G_{p,r}(S') < C + \eps$ and $\F_{p,r}(S - S') < \eps$
whenever $r > a'_\eps := \bar a_\del$. Note that $a'_\eps$ depends only on
$n,X,C,a,\eps$.
\end{proof}
 
The following result is included mainly for illustration. It shows that
for local $n$-cycles with controlled density in spaces satisfying the
assumptions of Theorem~\ref{thm:subeucl}, quasi-minimality is equivalent
to several other conditions, among them the lower bound on the filling density
obtained in Lemma~\ref{lem:fill-density}.

\begin{proposition}[characterizing quasi-minimizers]
Let $X$ be a proper metric space satisfying condition~{\rm (CI$_n$)}
for some $n \ge 1$, and suppose that\/ $\asrk(X) \le n$.
For an $S \in \bZ_{n,\loc}(X)$ with $(C,a)$-controlled density,
the following are equivalent:
\ben
\item
There exist $Q \ge 1$ and $a_1 \ge 0$ such that $S$ is 
$(Q,a_1)$-quasi-minimizing.
\item
There exist $c_2 > 0$ and $a_2 \ge 0$ such that if $x \in \spt(S)$,
then $\M(T) \ge c_2r^n$
for almost every $r > a_2$ and every
$T \in \bI_{n,\cs}(X)$ with $\d T = \d(S \on \B{x}{r})$.
\item
There exist $c_3 > 0$ and $a_3 \ge 0$ such that if $x \in \spt(S)$,
then $\M(T) \ge c_3r^n$ for almost every $r > a_3$ and every
$T \in \bI_{n,\cs}(X)$ with $\d T = \d(S \on \B{x}{r})$ and 
$\spt(T) \sub \Sph{x}{r}$.
\item
There exist $c_4 > 0$ and $a_4 \ge 0$ such that $\F_{x,r}(S) \ge c_4$ for all 
$x \in \spt(S)$ and $r > a_4$.
\een
\end{proposition}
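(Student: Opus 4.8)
The plan is to run the cycle of implications $(1)\Rightarrow(2)\Rightarrow(3)\Rightarrow(4)\Rightarrow(1)$. Since $S$ is a cycle, ``quasi-minimizing mod $Y$'' here means mod $\es$, so no auxiliary closed set intervenes, and the various ``for almost every $r$'' clauses are compatible because $S\on\B{x}{r}\in\bI_{n,\cs}(X)$ for a.e.\ $r$. The first three implications are routine — one is purely formal and the other two reprise the arguments behind Lemma~\ref{lem:density} and Lemma~\ref{lem:fill-density} — and the only substantial step is the closing implication $(4)\Rightarrow(1)$, which is the sole place where the standing hypothesis $\asrk(X)\le n$ is used, via Theorem~\ref{thm:subeucl}.

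For $(1)\Rightarrow(2)$: if $S$ is $(Q,a_1)$-quasi-minimizing then, for $x\in\spt(S)$ and a.e.\ $r>2a_1$, every competitor $T$ with $\d T=\d(S\on\B{x}{r})$ satisfies $\M(S\on\B{x}{r})\le Q\,\M(T)$, while Lemma~\ref{lem:density} gives $\M(S\on\B{x}{r})=\|S\|(\B{x}{r})\ge\del r^n$; hence $\M(T)\ge(\del/Q)\,r^n$, so $(2)$ holds with $c_2:=\del/Q$ and $a_2:=2a_1$. The implication $(2)\Rightarrow(3)$ is immediate, the competitors allowed in $(3)$ being a subclass of those in $(2)$. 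For $(3)\Rightarrow(4)$, fix $x\in\spt(S)$ and $r>a_4:=2a_3$, and take any $V\in\bI_{n+1,\cs}(X)$ with $\spt(S-\d V)\cap\B{x}{r}=\es$ (if none exists, $\F_{x,r}(S)=\infty$). Writing $\rho:=d(x,\cdot)$, for a.e.\ $s\in(0,r)$ the slice $T_s:=\la V,\rho,s\ra=\d(V\on\B{x}{s})-(\d V)\on\B{x}{s}$ lies in $\bI_{n,\cs}(X)$ and is supported in $\Sph{x}{s}$; since $\d V=S$ on $\B{x}{r}\supset\B{x}{s}$ we get $(\d V)\on\B{x}{s}=S\on\B{x}{s}$, so $\d(-T_s)=\d(S\on\B{x}{s})$ and $-T_s$ is an admissible competitor for $(3)$ at radius $s$, whence $\M(T_s)\ge c_3 s^n$ for a.e.\ $s>a_3$. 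The coarea inequality then gives $\M(V)\ge\int_{a_3}^r\M(T_s)\,ds>\frac{c_3}{2(n+1)}\,r^{n+1}$ (using $r>2a_3$), so $\F_{x,r}(S)\ge c_4:=\frac{c_3}{2(n+1)}$.

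The core of the argument is $(4)\Rightarrow(1)$. Fix $x\in\spt(S)$ and a.e.\ $r$ above a threshold $a_1$ to be specified, put $R:=\d(S\on\B{x}{r})\in\bZ_{n-1,\cs}(X)$, and let $T_0$ be a mass-minimizing filling of $R$ furnished by Theorem~\ref{thm:plateau}. It suffices to prove $\M(T_0)\ge\eps_0 r^n$ for a fixed $\eps_0>0$: then every competitor $T$ has $\M(T)\ge\M(T_0)\ge\eps_0 r^n\ge(\eps_0/C)\,\M(S\on\B{x}{r})$ by the density bound, giving $(1)$ with $Q:=\max\{1,C/\eps_0\}$. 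Suppose instead $\M(T_0)<\eps_0 r^n$, where $\eps_0:=\min\{C,\del\,4^{-n}\}$ and $\del$ is the distance constant of Theorem~\ref{thm:plateau}. Then that distance bound confines $\spt(T_0)$ to the annulus $\{\,3r/4\le\rho\le 5r/4\,\}$, so $Z:=(S\on\B{x}{r})-T_0\in\bZ_{n,\cs}(X)$ has $\spt(Z)\sub\B{x}{5r/4}$ and $\M(Z)\le Cr^n+\eps_0 r^n\le 2C\,(5r/4)^n$. Applying Theorem~\ref{thm:subeucl} with density bound $2C$ and filling parameter $\eta:=\frac{c_4}{2\,(5/2)^{n+1}}$ produces, for $r$ large enough, a filling $V_0\in\bI_{n+1,\cs}(X)$ of $Z$ with $\M(V_0)<\eta\,(5r/4)^{n+1}$. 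Since $\d V_0=(S\on\B{x}{r})-T_0$, the current $S-\d V_0=S\on(X\sm\B{x}{r})+T_0$ is supported in $\{\rho\ge 3r/4\}$, so $\spt(S-\d V_0)\cap\B{x}{r/2}=\es$ and $\F_{x,r/2}(S)\le\M(V_0)/(r/2)^{n+1}<\eta\,(5/2)^{n+1}=c_4/2$, contradicting $(4)$ once $r/2>a_4$. Tracking the constraints ($r>a$ for the density bound, $r>2a_4$, and $5r/4$ above the threshold of Theorem~\ref{thm:subeucl}, which depends on $n$, $X$, $2C$, $\eta$) shows that $a_1$ may be chosen to depend only on $n$, $X$, $C$, $a$, $c_4$, $a_4$, which closes the cycle.

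The main obstacle is precisely this last implication: it is genuinely non-local, and the substitution of an arbitrary competitor $T$ by a \emph{minimizing} filling $T_0$ is indispensable — it is the distance estimate in Theorem~\ref{thm:plateau} that simultaneously keeps $\spt(Z)$ inside a ball of radius $O(r)$ (so that Theorem~\ref{thm:subeucl} applies at scale $\sim r$) and pushes the resulting partial filling $V_0$ off $\B{x}{r/2}$ (so that $(4)$ can be invoked there). The remaining care is the bookkeeping of constants: one fixes $\eps_0$, then $\eta$, then $a_1$, in that order, so that the filling density manufactured from a hypothetically small $T_0$ undershoots the lower bound $c_4$ of $(4)$.
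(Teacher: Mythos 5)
Your proof is correct and follows essentially the same route as the paper: the forward implications use Lemma~\ref{lem:density} and the slicing/coarea argument of Lemma~\ref{lem:fill-density}, and the closing step replaces the competitor by a minimizing filling from Theorem~\ref{thm:plateau}, uses its distance bound to confine the resulting cycle $Z$ near $\Sph{x}{r}$, and then contradicts the filling-density lower bound via Theorem~\ref{thm:subeucl}. The only (cosmetic) difference is that you close the cycle with $(4)\Rightarrow(1)$ directly, whereas the paper proves $(4)\Rightarrow(2)$ and recovers $(1)$ from $(2)$ via the controlled-density hypothesis — the same two ingredients in the same order.
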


Notice that~(3) is a divergence condition for $S$; compare, for example, 
the definition of the divergence of a geodesic line in Section~3 
of~\cite{KapL2}.

\begin{proof}
The implications (1)~$\Rightarrow$~(2) and (2)~$\Rightarrow$~(1) follow 
easily from Lemma~\ref{lem:density} (density) and the fact that $S$ has 
controlled density, respectively. 
The implication (2)~$\Rightarrow$~(3) holds trivially, and 
(3)~$\Rightarrow$~(4) is shown by a simple integration as in the proof of 
Lemma~\ref{lem:fill-density} (filling density).

To prove that (4)~$\Rightarrow$~(2), let $x \in \spt(S)$, $r > a$, and
$T \in \bI_{n,\cs}(X)$ be such that $S' := S \on \B{x}{r} \in \bI_{n,\cs}(X)$
and $\d T = \d S'$. By Theorem~\ref{thm:plateau} (minimizing filling),
there is no loss of generality in assuming that $T$ is minimizing.
Then the cycle $Z := S' - T \in \bZ_{n,\cs}(X)$ has mass at most $2Cr^n$, and
$\spt(T)$ is within distance $(\M(T)/\del)^{1/n}$ from $\spt(\d S')$. 
Hence, if $\M(T) < \del(r/2)^n$, say, then 
$\spt(Z) \sub \B{x}{3r/2}$ and $\spt(S - Z) \cap \B{x}{r/2} = \es$, 
and it follows from Theorem~\ref{thm:subeucl} that $\F_{x,r/2}(S) < c_4$,
provided $r$ is sufficiently large. In view of~(4), we conclude that
$\M(T) \ge \del(r/2)^n$ for large enough $r$.
\end{proof}


\section{Morse Lemmas} \label{sect:morse-lemmas}

In this section we will prove some higher rank analogs of the Morse Lemma, 
replacing quasi-geodesics with $n$-dimensional quasi-minimizers 
with controlled density.
Here we will also establish Theorem~\ref{intro-slim}.

A first result follows very quickly from 
Lemma~\ref{lem:fill-density} (filling density) and
Proposition~\ref{prop:part-filling} (partial filling).
 
\begin{theorem}[Morse Lemma I] \label{thm:morse-1}
Let $X$ be a proper metric space satisfying condition~{\rm (CI$_n$)}
for some $n \ge 1$, and suppose that\/ $\asrk(X) \le n$.
Then for all $Q \ge 1$, $C > 0$, and $a \ge 0$ there is a constant $b \ge 0$
such that the following holds. Suppose that $Z \in \bZ_{n,\loc}(X)$ 
has $(C,a)$-controlled density and satisfies $\Fi(Z) < \infty$.
If $Y \sub X$ is a closed set such that $Z$ is $(Q,a)$-quasi-minimizing 
mod~$Y$, then $\spt(Z)$ lies within distance at most $b$ from~$Y$.
\end{theorem}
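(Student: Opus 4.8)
The plan is to argue by contradiction, exploiting the tension between two estimates already isolated in the excerpt: the \emph{upper} bound on filling densities coming from small asymptotic density (Proposition~\ref{prop:part-filling}) and the \emph{lower} bound on filling densities valid near the quasi-minimizing cycle but away from $Y$ (Lemma~\ref{lem:fill-density}). Suppose the conclusion fails; then there are points $x_i \in \spt(Z)$ with $\rho_i := d(x_i, Y) \to \infty$. The idea is to center a ball $\B{x_i}{r_i}$ on $x_i$ with $r_i = \rho_i$ (or a fixed fraction thereof), so that $\B{x_i}{r_i} \cap Y = \es$, and to compare $\F_{x_i, r_i}(Z)$ from above and below.

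First I would record the lower estimate: since $Z$ is $(Q,a)$-quasi-minimizing mod~$Y$, for each $i$ with $r_i > 4a$ and $\B{x_i}{r_i} \cap Y = \es$, Lemma~\ref{lem:fill-density} gives $\F_{x_i, r_i}(Z) \ge c$ for a constant $c = c(X,n,Q) > 0$ independent of $i$. Next I would invoke Proposition~\ref{prop:part-filling} (with $S = Z$, $S' = 0$, which is legitimate because $\d Z = 0$ forces $S'=0$ to be minimizing with $\d S' = \d Z$, and $\Fi(Z - 0) = \Fi(Z) < \infty$), applied with $\eps := c/2$. This yields a radius $a'_\eps = a'_\eps(X,n,C,a)$ such that $\F_{p, r}(Z) < c/2$ for \emph{every} $p \in X$ and every $r > a'_\eps$. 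Choosing $i$ large enough that $\rho_i > \max\{4a,\, a'_\eps\}$ and taking $p = x_i$, $r = r_i = \rho_i$, we get $c \le \F_{x_i, r_i}(Z) < c/2$, a contradiction. Tracking the dependencies, the resulting bound is $b := \max\{4a,\, a'_{c(X,n,Q)/2}\}$, which depends only on $X, n, Q, C, a$ as required. (One should also address the measure-zero issue in the definition of quasi-minimality and in Lemma~\ref{lem:fill-density}: the hypothesis there is ``for almost every $r > a$'', but if the conclusion fails along a sequence $x_i$ then for each $i$ one can still pick an admissible radius $r_i \in (\rho_i/2, \rho_i)$ for which the slice behaves well and $\B{x_i}{r_i} \cap Y = \es$, which is all that is needed.)

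The step I expect to require the most care is the bookkeeping around Proposition~\ref{prop:part-filling}: one must check that its constant $a'_\eps$ genuinely depends only on $n, X, C, a, \eps$ and not on any choice tied to the sequence $x_i$ — but this is exactly what the proposition asserts, so the difficulty is merely to make sure $Z$ itself (not a restriction of it) satisfies the hypothesis $\G_{p,r}(Z) \le C$ for all $p$ and all $r > a$, which is precisely the $(C,a)$-controlled density assumption. A secondary subtlety is that Lemma~\ref{lem:fill-density} wants $\B{x}{r} \cap Y = \es$ with $r > 4a$, whereas we only know $d(x_i, Y) = \rho_i \to \infty$; choosing $r_i$ slightly below $\rho_i$ (say $r_i = \rho_i$ if $\rho_i$ is an admissible radius, or an admissible value in $(\rho_i/2, \rho_i)$ otherwise) handles this and only costs a harmless factor in the final constant. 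With these minor points dispatched, the contradiction is immediate and the theorem follows.
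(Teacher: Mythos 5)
Your argument is correct and is essentially the paper's own proof: the same two ingredients (the lower bound $\F_{x,r}(Z) \ge c$ from Lemma~\ref{lem:fill-density} and the upper bound $\F_{x,r}(Z) < \eps$ from Proposition~\ref{prop:part-filling} with $S = Z$, $S' = 0$) are played against each other, the paper just phrasing it directly rather than by contradiction. Your worry about admissible radii is unnecessary, since the conclusion of Lemma~\ref{lem:fill-density} holds for every $r > 4a$ with $\B{x}{r} \cap Y = \es$, not merely almost every such $r$.
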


Note that if $S,S' \in \bI_{n,\loc}(X)$ are two $(Q,a)$-quasi-minimizers with
$(C/2,a)$-controlled density and $\d S = \d S'$, then 
$Z := S - S' \in \bZ_{n,\loc}(X)$ is $(Q,a)$-quasi-minimizing mod~$\spt(S)$ as 
well as mod~$\spt(S')$ and has $(C,a)$-controlled density.
Theorem~\ref{thm:morse-1} then shows that the Hausdorff distance 
$\Hd(\spt(S),\spt(S'))$ is at most $b$, provided $\Fi(Z) < \infty$ 
(which holds trivially if $S,S' \in \bI_{n,\cs}(X)$).

\begin{proof}
Since $Z$ is $(Q,a)$-quasi-minimizing mod $Y$, 
Lemma~\ref{lem:fill-density} shows that
\[
\F_{x,r}(Z) \ge c = c(n,X,Q) > 0
\]
whenever $x \in \spt(Z)$, $r > 4a$, and $\B{x}{r} \cap Y = \es$.
On the other hand, since $Z$ has $(C,a)$-controlled density and satisfies
$\Fi(Z) < \infty$, we may apply Proposition~\ref{prop:part-filling} 
with $p = x$ and $S = Z$, $S' = 0$. Taking $\eps = c$, we infer that there 
is a constant $b \ge 4a$, depending only on $n,X,Q,C,a$, such that 
\[
\F_{x,r}(Z) < c
\]
for $r > b$. This shows that $r \le b$ (in particular $Y \ne \es$).
\end{proof}

As a first application, we deduce Theorem~\ref{intro-slim},
which we restate for convenience.

\begin{theorem}[slim simplices] \label{thm:slim}
Let $X$ be a proper metric space satisfying condition~{\rm (CI$_n$)}
for some $n \ge 1$, and suppose that\/ $\asrk(X) \le n$.
Let $\Del$ be a Euclidean $(n+1)$-simplex, and let $f \colon \d\Del \to X$ 
be a map such that for every facet $W$ of $\Del$, the restriction $f|_W$ is an 
$(L,a_0)$-quasi-isometric embedding. Then, for every facet $W$, the image
$f(W)$ is contained in the closed $D$-neighborhood of
$f\bigl( \ol{\d\Del \sm W} \bigr)$ for some constant $D \ge 0$ 
depending only on $X,n,L,a_0$.
\end{theorem}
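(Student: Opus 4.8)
The plan is to reduce the statement to Theorem~\ref{thm:morse-1} (Morse Lemma~I), applied to a single compactly supported integral $n$-cycle $Z$ that models the topological sphere $f(\d\Del)$ and is ``piecewise quasi-minimizing'' along its $n+2$ facets. If $\diam(\Del) \le 1$ there is nothing to prove: for any $x \in W$ and any vertex $y$ of $\Del$ lying in $W$ (hence also in $\ol{\d\Del\sm W}$) one has $d(f(x),f(y)) \le L\,d(x,y) + a_0 \le L + a_0$, so the assertion holds with $D = L + a_0$; we may therefore assume $\diam(\Del) > 1$.

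The first step is to construct $Z$. Fix a triangulation of $\d\Del$, compatible with its facet structure, in which every simplex has diameter at most $1$ and, inside each facet $W$ (isometric to an $n$-simplex in $\R^n$), every ball of radius $r \ge 1$ meets at most $C_0\,r^n$ of the $n$-simplices, where $C_0 = C_0(n)$; such triangulations of the $(n-1)$-skeleton of $\d\Del$, extended across each facet, are furnished by the standard Federer--Fleming type construction (\cite{FedF}), and their mesh and local complexity do not depend on $\diam(\Del)$. Now carry out the inductive construction in the proof of Proposition~\ref{prop:qflats} on the whole complex: first define a chain map $\iota$ on the full $(n-1)$-skeleton of $\d\Del$ by iterated minimizing fillings (Theorem~\ref{thm:plateau}), so that its values on faces shared by two facets agree, and then extend $\iota$ over the $n$-cells of each facet. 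For every facet $W$ this produces a current $S_W := \iota\bb{W} \in \bI_{n,\cs}(X)$ enjoying the conclusions of Proposition~\ref{prop:qflats} with constants $Q \ge 1$, $C > 0$, $a \ge 0$ depending only on $X,n,L,a_0$ (the triangulation parameters being absolute): $S_W$ is $(Q,a)$-quasi-minimizing mod $N_a(f((\d W)^0))$, it has $(C,a)$-controlled density, $\spt(S_W) \sub N_a(f(W^0))$, $\spt(\d S_W) \sub N_a(f((\d W)^0))$, and $d(f(x),\spt(S_W)) \le a$ whenever $x \in W$ and $d(x,\d W) \ge a$. (For $n = 1$ one uses Proposition~\ref{prop:lip-qflats} instead, replacing each quasi-geodesic edge by a Lipschitz quasi-geodesic with the help of condition~(CI$_0$).) Orient the facets so that $\sum_W \epsilon_W\bb{W}$ (with $\epsilon_W \in \{\pm 1\}$) is a cycle in $\cP_n(\d\Del)$ --- possible since $\d\Del \cong S^n$ --- and set $Z := \iota\bigl(\sum_W \epsilon_W\bb{W}\bigr) = \sum_W \epsilon_W S_W$; since $\iota$ is a chain map, $Z \in \bZ_{n,\cs}(X)$, and $Z$ has $(C',a)$-controlled density with $C' := (n+2)C$. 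Moreover $Z$ bounds, by condition~(CI$_n$), so $\Fi(Z) = 0$.

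The second step treats one facet $W$ at a time. Put $Y_W := N_a(f((\d W)^0)) \cup \bigcup_{W' \ne W}\spt(S_{W'})$, a compact set contained in $N_a\bigl(f(\ol{\d\Del\sm W})\bigr)$, since $f((\d W)^0) \sub f(\ol{\d\Del\sm W})$ and $\spt(S_{W'}) \sub N_a(f(W'^0)) \sub N_a\bigl(f(\ol{\d\Del\sm W})\bigr)$ for $W' \ne W$. I claim $Z$ is $(Q,a)$-quasi-minimizing mod $Y_W$. Indeed, let $x \in \spt(Z)$ and $r > a$ with $\B{x}{r} \cap Y_W = \es$. Then $\B{x}{r}$ is disjoint from $\spt(S_{W'})$ for every $W' \ne W$, so, since $\spt(Z) \sub \bigcup_{W'}\spt(S_{W'})$, we get $x \in \spt(S_W)$ and $Z \on \B{x}{r} = \epsilon_W\,S_W \on \B{x}{r}$; also $\B{x}{r} \cap N_a(f((\d W)^0)) = \es$, whence the quasi-minimality of $S_W$ mod $N_a(f((\d W)^0))$ yields $\M(Z \on \B{x}{r}) = \M(S_W \on \B{x}{r}) \le Q\,\M(T)$ for every $T \in \bI_{n,\cs}(X)$ with $\d T = \d(Z \on \B{x}{r})$, for almost every such $r$. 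By Theorem~\ref{thm:morse-1} there is a constant $b = b(X,n,Q,C',a)$ with $\spt(Z) \sub N_b(Y_W)$, hence $\spt(S_W) \sub N_b(Y_W) \sub N_{b+a}\bigl(f(\ol{\d\Del\sm W})\bigr)$. Combining this with $d(f(x),\spt(S_W)) \le a$ for $x \in W$ at distance $\ge a$ from $\d W$, and with $d\bigl(f(x),f(\ol{\d\Del\sm W})\bigr) \le La + a_0$ for $x \in W$ at distance $< a$ from $\d W$ (choose $y \in \d W \sub \ol{\d\Del\sm W}$ with $d(x,y) < a$ and use the quasi-isometry bound for $f|_W$), we conclude $f(W) \sub N_D\bigl(f(\ol{\d\Del\sm W})\bigr)$ with $D := \max\{b + 2a,\ La + a_0,\ L + a_0\}$, a constant depending only on $X,n,L,a_0$.

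The crux is the construction of $Z$: one must obtain a genuine cycle whose restriction in a neighborhood of each facet is still quasi-minimizing modulo the union of the remaining facets, while keeping all constants independent of the shape of $\Del$. Being a cycle forces the minimizing fillings over the $(n-1)$-faces shared by two facets to be chosen consistently from both sides, so that those boundary contributions cancel in $\d Z$; and the uniformity forces the triangulation to have mesh and local complexity bounded in terms of $n$ alone rather than in terms of $\diam(\Del)$. Granting these two points, everything else is a routine iteration of Theorem~\ref{thm:morse-1} over the $n+2$ facets.
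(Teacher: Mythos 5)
Your proposal is correct and follows essentially the same route as the paper: triangulate $\d\Del$ with mesh and local complexity controlled by $n$ alone, build the piecewise quasi-minimizing cycle $Z=\iota(\d\bb{\Del})=\sum_W \epsilon_W S_W$ via the chain-map construction of Proposition~\ref{prop:qflats}, observe that $Z$ is $(Q,a)$-quasi-minimizing modulo a set contained in a bounded neighborhood of $f\bigl(\ol{\d\Del\sm W}\bigr)$, and invoke Theorem~\ref{thm:morse-1}. The only cosmetic difference is your choice of $Y_W$ (including the supports $\spt(S_{W'})$ directly rather than $N_a(f(M_i))$ as in the paper), which changes nothing of substance.
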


\begin{proof}
Let $W_0,\dots,W_{n+1}$ be an enumeration of the facets of $\Del$,
and write the cycle $\d\bb{\Del} \in \bZ_{n,\cs}(\R^{n+1})$ as
$\sum_{i=0}^{n+1}E_i$ for $E_i := (\d\bb{\Del}) \on W_i \in \bI_{n,\cs}(\R^{n+1})$.
Choose a triangulation of $\d\Del$ with simplices of diameter
at most $D_0$ such that every $r$-ball in $\R^{n+1}$ with $r \ge D_0$
meets at most $C_0r^n$ $n$-simplices in each $W_i$,
for some constants $C_0,D_0 > 0$ depending only on $n$.
Consider the corresponding chain complex
$\cP_*(\d\Del)$ of simplicial integral currents and proceed as in the
proof of Proposition~\ref{prop:qflats} (triangulated quasiflats) 
to get a chain map $\iota \colon \cP_*(\d\Del) \to \bI_{*,\cs}(X)$
such that the following properties hold for every 
$S_i := \iota(E_i) \in \bI_{n,\cs}(X)$ and for some constants $Q,C,a$ 
depending only on $X,n,L,a_0$:
\ben
\item
$\spt(S_i) \sub N_a(f(W_i))$ and $\spt(\d S_i) \sub N_a(f(\d W_i))$;
\item
$S_i$ is $(Q,a)$-quasi-minimizing mod $N_a(f(\d W_i))$ and 
has $(C,a)$-controlled density;
\item
$d(f(x),\spt(S_i)) \le a$ for all $x \in W_i$ with $d(x,\d W_i) \ge a$.
\een
(Here $N_a$ stands again for the closed $a$-neighborhood, and
$\d W_i$ denotes the relative boundary of $W_i$.)
Let $M_i$ denote the union of all $W_j$ with $j \ne i$.
The cycle $Z := \iota(\d\bb{\Del}) = \sum_{i=0}^{n+1} S_i$ is
$(Q,a)$-quasi-minimizing mod~$N_a(f(M_i))$ for every $i$ and 
has $((n+2)C,a)$-controlled density. It then follows from
Theorem~\ref{thm:morse-1} that the set 
$\spt(S_i) \sm N_a(f(M_i)) = \spt(Z) \sm N_a(f(M_i))$ lies within distance
at most $b$ from $N_a(f(M_i))$ for some constant $b$ depending only on
$X,n,L,a_0$. Hence, for $x \in W_i$, it follows from~(3) that
$d(f(x),f(M_i))$ is less than or equal to $2a + b$ 
if $d(x,\d W_i) \ge a$ and less than $La + a_0$ otherwise.
\end{proof}

\begin{remark}
If, for $f \colon \d\Del \to X$ as above, there
exists a map $g \colon \d\Del \to X$ such that $g|_W$ is
$L'$-Lipschitz for every facet $W$ and $d(f(x),g(x)) \le b'$
for all $x \in \d\Del$, for some constants $L',b'$ depending on $n,L,a_0$,
then one may use Proposition~\ref{prop:lip-qflats} (Lipschitz quasiflats)
instead of Proposition~\ref{prop:qflats} in the above argument.
Such a map $g$ exists if $X$ is Lipschitz $(n-1)$-connected
(compare Corollary~1.7 in~\cite{LanS}), in particular if $X$ is $\CAT(0)$
or a space with a convex bicombing.
\end{remark}

We now prove an analog of the Morse Lemma for quasi-geodesic segments.

\begin{theorem}[Morse Lemma II] \label{thm:morse-2}
Let $X$ be a proper metric space satisfying condition~{\rm (CI$_n$)} for some
$n \ge 1$, and suppose that\/ $\asrk(X) \le n$.
Then for all $Q \ge 1$, $C > 0$, and $a \ge 0$ there is a constant $b \ge 0$ 
such that the following holds.
If $S \in \bI_{n,\cs}(X)$ is a $(Q,a)$-quasi-minimizer with $(C,a)$-controlled 
density, then there exists a minimizing $\til S \in \bI_{n,\cs}(X)$ such that 
$\d S = \d \til S$, and every such $\til S$ satisfies
\[
\Hd(\spt(S),\spt(\til S)) \le b.
\]
\end{theorem}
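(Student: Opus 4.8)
The plan is to combine Theorem~\ref{thm:plateau} (minimizing filling) with Theorem~\ref{thm:morse-1} (Morse Lemma~I). First I would invoke Theorem~\ref{thm:plateau} applied to the cycle $\d S \in \bZ_{n-1,\cs}(X)$ (using condition~{\rm (CI$_{n-1}$)}, which follows from~{\rm (CI$_n$)}): this produces a minimizing $\til S \in \bI_{n,\cs}(X)$ with $\d\til S = \d S$, so in particular such an $\til S$ exists. Note that $\M(\til S) \le \M(S)$ by minimality, and then I would observe that a minimizing current with compact support and support at finite Hausdorff distance from a bounded set automatically has controlled density: indeed, by the lower density bound in Lemma~\ref{lem:density} (the case $(Q,a)=(1,0)$), a minimizing current in a space with coning inequalities satisfies $\|\til S\|(\B{x}{r}) \ge \del r^n$ for $x \in \spt(\til S)$ and $\B{x}{r}$ disjoint from $\spt(\d\til S)$, and a standard packing argument (as in Lemma~\ref{lem:doubling}) converts this into an upper density bound $\G_{p,r}(\til S) \le C'$ for a constant $C'$ depending only on $n$, $\del$, and $C$ — using $\M(\til S) \le \M(S) \le C(\diam(\spt(S)\cup\spt(\d S)))^n$-type control together with the coning inequality to bound $\M(\til S)$ in terms of $\M(\d S)$ near the boundary.

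The heart of the argument is then the following: both $S$ and $\til S$ are $(Q',a')$-quasi-minimizers (with $Q'=\max\{Q,1\}=Q$, $a'=a$ for $S$, and $(1,0)$ for $\til S$) with $(C'',a')$-controlled density for a common constant $C''$, and $\d S = \d\til S$. Therefore, as spelled out in the remark immediately following Theorem~\ref{thm:morse-1}, the difference $Z := S - \til S \in \bZ_{n,\cs}(X)$ is $(Q,a)$-quasi-minimizing mod~$\spt(S)$ and also mod~$\spt(\til S)$, and has $(2C'',a')$-controlled density; since $Z$ has compact support, $\Fi(Z) = 0 < \infty$ holds trivially. Applying Theorem~\ref{thm:morse-1} with $Y = \spt(S)$ gives $\spt(Z) \subset N_b(\spt(S))$, and with $Y = \spt(\til S)$ gives $\spt(Z) \subset N_b(\spt(\til S))$, for a constant $b$ depending only on $X,n,Q,C''$. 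Since $\spt(\til S) \subset \spt(S) \cup \spt(Z)$ (wherever $\|\til S\| > 0$ but $\|S\| = 0$ we have $\|Z\| = \|{-\til S}\| = \|\til S\| > 0$, so that point lies in $\spt(Z)$) and likewise $\spt(S) \subset \spt(\til S) \cup \spt(Z)$, both one-sided inclusions $\spt(\til S) \subset N_b(\spt(S))$ and $\spt(S) \subset N_b(\spt(\til S))$ follow, hence $\Hd(\spt(S),\spt(\til S)) \le b$.

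The main obstacle I anticipate is the first step: establishing that \emph{every} minimizing filling $\til S$ of $\d S$ — not just one produced with the distance estimate from Theorem~\ref{thm:plateau} — has controlled density with a constant depending only on $X,n,Q,C,a$. One must rule out that a minimizing filling wanders far from $\spt(S)$. Here I would argue as follows: since $\M(\til S) \le \M(S) < \infty$, the support $\spt(\til S)$ is compact (by the density lower bound, points of $\spt(\til S)$ far from $\spt(\d\til S)$ would each carry mass $\ge \del r^n$ in disjoint balls, which is impossible if $\M(\til S) < \infty$ — more precisely one gets a diameter bound $\diam(\spt(\til S)) \le \diam(\spt(\d S)) + 2(\M(\til S)/\del)^{1/n}$, exactly as in the proof of Theorem~\ref{thm:plateau}). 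Then one bounds $\M(\til S)$: comparing with the geodesic cone over $\d S$ is not available without convexity, but in the present generality one uses condition~{\rm (CI$_{n-1}$)} to bound $\M(\til S) \le \M(S)$ and $\M(S) \le C(\diam \spt(S))^n$ via the controlled density of $S$ — and $\diam\spt(S)$ is itself controlled because $S$ is quasi-minimizing with controlled density (any $x \in \spt(S)$ far from $\spt(\d S)$ carries mass $\ge \del r^n$ near it, forcing $\diam\spt(S) \lesssim (\M(S)/\del)^{1/n}$, a self-consistent bound since $\M(S) \le C(\diam\spt(S))^n$ is not yet a bound — so one must instead start from $\M(S) \le $ a coning bound $c\,r_0\,\M(\d S)$ where $r_0$ is a radius containing $\spt(\d S)$, combined with the lower density bound to close the loop). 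Once $\diam\spt(\til S)$ and $\M(\til S)$ are bounded in terms of the data, the packing argument of Lemma~\ref{lem:doubling} yields the uniform upper density bound for $\til S$, completing the proof. I would present this diameter/mass bookkeeping carefully, as it is the one genuinely delicate point; the rest is a direct citation of Theorem~\ref{thm:morse-1}.
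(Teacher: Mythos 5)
Your overall skeleton is the same as the paper's: produce $\til S$ by Theorem~\ref{thm:plateau}, show $\til S$ has controlled density, and then apply Theorem~\ref{thm:morse-1} to the cycle $S - \til S$ exactly as in the remark following that theorem (your final paragraph on the difference cycle and the two one-sided inclusions is correct). The gap is in the middle step, which is in fact the only nontrivial point of the proof: establishing that \emph{every} minimizing $\til S$ with $\d\til S = \d S$ has $(\til C,\til a)$-controlled density for constants depending only on $X,n,C,a$. Your proposed route via bounds on $\M(\til S)$ and $\diam(\spt(\til S))$ cannot work, for two reasons. First, the quantities you want to "close the loop" on --- $\M(S)$, $\M(\d S)$, $\diam(\spt(S))$, the radius $r_0$ of a ball containing $\spt(\d S)$ --- are not bounded in terms of the data $(X,n,Q,C,a)$ at all; $S$ could be, say, the restriction of a quasiflat to an arbitrarily large ball. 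Second, and more fundamentally, even if you had such global bounds, a bound on total mass and diameter does not yield the statement you need, namely the scale-invariant local estimate $\G_{p,r}(\til S)=\|\til S\|(\B{p}{r})/r^n \le \til C$ for \emph{all} $p\in X$ and \emph{all} $r>\til a$; a global mass bound controls $\G_{p,r}$ only at the largest scale. The packing argument of Lemma~\ref{lem:doubling} also runs in the opposite direction: it consumes an upper density bound (together with the lower bound of Lemma~\ref{lem:density}) to produce a doubling estimate; it does not produce an upper density bound.

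The paper obtains the controlled density of $\til S$ from Proposition~\ref{prop:part-filling} (partial filling): since $S$ and $\til S$ have compact support, $\Fi(S-\til S)=0<\infty$ trivially, and the proposition (applied at every $p \in X$, with constants independent of $p$) gives $\G_{p,r}(\til S) < C+\eps$ for all $r > a'_\eps$. That proposition is where the hypothesis $\asrk(X)\le n$ enters, via the sub-Euclidean isoperimetric inequality of Theorem~\ref{thm:subeucl} and an iteration over scales; note that your argument for this step never uses the asymptotic rank assumption, which is a sign that something is missing --- in a general space satisfying only the coning inequalities, a comparison of $\til S\on \B{p}{r}$ with a cone over its slice gives only $\|\til S\|(\B{p}{r}) \lesssim \M(\til S)\,(r/R)^{1/c}$, which is far from the required $r^n$ decay unless $c = 1/n$. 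So you should replace your diameter/mass bookkeeping by a direct appeal to Proposition~\ref{prop:part-filling}; with that substitution the rest of your argument goes through.
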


\begin{proof}
A minimizing $\til S \in \bI_{n,\cs}(X)$ with $\d \til S = \d S$ exists by 
Theorem~\ref{thm:plateau} (minimizing filling).
Since $S$ has $(C,a)$-controlled density, it follows from 
Proposition~\ref{prop:part-filling} (partial filling) that every
such $\til S$ has $(\til C,\til a)$-controlled density
for some constants $\til C \ge C$ and $\til a \ge a$ depending only on 
$n,X,C,a$. Then the cycle $S - \til S$ has $(2\til C,\til a)$-controlled
density and is $(Q,a)$-quasi-minimizing mod~$\spt(\til S)$ as well as
mod~$\spt(S)$; the result thus follows from Theorem~\ref{thm:morse-1}.
\end{proof}

Our next goal is to extend this last result to local chains.
We state an auxiliary lemma.

\begin{lemma}[$\F$-convergence] \label{lem:f-conv}
Let $X$ be a proper metric space satisfying condition~{\rm (CI$_n$)} for some
$n \ge 1$. Then a sequence $(Z_j)$ in $\bZ_{n,\loc}(X)$ converges in the local 
flat topology to $0$ if and only if\/ $\lim_{j \to \infty} \F_{p,r}(Z_j) = 0$
for all $p \in X$ and $r > 0$.
\end{lemma}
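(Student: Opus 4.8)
The plan is to prove both implications by comparing the flat distance with the partial filling functional $\F_{p,r}$, using condition~(CI$_n$) to convert ``small mass in a ball'' into ``small mass fillings'' and vice versa. For the easy direction, suppose $Z_j \to 0$ in the local flat topology. Fix $p \in X$ and $r > 0$, and choose $r' > r$. By definition of local flat convergence applied to the compact set $K := \B{p}{r'}$, there are $V_j \in \bI_{n+1,\loc}(X)$ with $(\|Z_j - \d V_j\| + \|V_j\|)(K) \to 0$. I would then slice: for a suitable $s \in (r,r')$ (chosen using the coarea inequality of Section~\ref{subsect:slicing}), the current $W_j := V_j \on \B{p}{s}$ lies in $\bI_{n+1,\cs}(X)$, has $\M(W_j) \le \|V_j\|(K) \to 0$, and the slicing term $\M(\la V_j,\rho,s\ra) \le c^{-1}\|V_j\|(K)$ is also small, where $\rho = d(p,\cdot)$. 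Since $\d W_j = (\d V_j)\on\B{p}{s} + \la V_j,\rho,s\ra$ and $\|Z_j - \d V_j\|(\B{p}{s})$ is small, the support of $Z_j - \d W_j$ can be arranged to miss $\B{p}{r}$ up to an error current of small mass; coning that small error off in a controlled ball using~(CI$_n$) (or simply absorbing it, since it is a cycle of small mass supported near $\Sph{p}{s}$, fillable with mass $\le c\,s\,\M(\cdot) \to 0$) produces $V_j' \in \bI_{n+1,\cs}(X)$ with $\spt(Z_j - \d V_j') \cap \B{p}{r} = \es$ and $\M(V_j') \to 0$. Hence $\F_{p,r}(Z_j) \to 0$.

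For the converse, assume $\F_{p,r}(Z_j) \to 0$ for all $p,r$. Fix a compact set $K$; choose $p$ and $R$ with $K \sub \B{p}{R}$. For each $j$ pick $V_j \in \bI_{n+1,\cs}(X)$ with $\spt(Z_j - \d V_j) \cap \B{p}{2R} = \es$ and $\M(V_j) < \eps_j R^{n+1}$ with $\eps_j \to 0$. The current $Z_j - \d V_j$ is then supported outside $\B{p}{2R}$, so $(\|Z_j - \d V_j\|)(K) = 0$, and $(\|V_j\|)(K) \le \M(V_j) \to 0$. This gives exactly local flat convergence to $0$ on $K$, and since $K$ was arbitrary, $Z_j \to 0$ in the local flat topology. (Note this direction does not even need~(CI$_n$); the hypothesis is used only to guarantee the relevant $V_j$ exist with finite mass, which is built into finiteness of $\F_{p,r}$.)

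The main obstacle is the first direction, specifically the bookkeeping in passing from a locally finite filling $V_j$ on a compact set to a compactly supported filling $V_j'$ that exactly kills $Z_j$ inside a slightly smaller ball, while keeping the mass small. The subtleties are: (i) $V_j$ is only defined mod a compact set, so one must restrict and slice, incurring a coarea error term supported on a sphere; (ii) the restriction error $\|Z_j - \d V_j\|(\B{p}{s})$ need not be zero, only small, so $Z_j - \d W_j$ is not genuinely disjoint from $\B{p}{r}$ — one must separately fill the small-mass cycle that this error contributes, which is where condition~(CI$_n$) enters to give a mass bound linear in the radius. All of these are handled by the standard ``slice, restrict, cone off the error'' technique already used in the proof of Theorem~\ref{thm:cptness}(2) and Proposition~\ref{prop:part-filling}; I would cite the coarea inequality and~(CI$_n$) and carry out the estimates, choosing the slicing radius $s$ so that both the coarea term and the restricted error are comparable to $\|V_j\|(K) + \|Z_j-\d V_j\|(K)$, which tends to $0$.
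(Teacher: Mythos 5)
Your overall strategy coincides with the paper's (the direction ``$\F_{p,r}(Z_j)\to 0$ for all $p,r$ implies local flat convergence'' is handled exactly as you do it, and is indeed immediate), but the hard direction has a concrete gap at the ``cone off the error'' step. After slicing at a radius $s\in(r,r')$, the piece of $Z_j-\d(V_j\on\B{p}{s})$ that meets $\B{p}{r}$ is $E_j:=(Z_j-\d V_j)\on\B{p}{s}$. This current has small mass, but it is \emph{not} a cycle -- its boundary is the slice $\d\bigl((Z_j-\d V_j)\on\B{p}{s}\bigr)$, supported on $\Sph{p}{s}$ -- and it is supported throughout $\B{p}{s}$, not ``near $\Sph{p}{s}$'' as your parenthetical asserts. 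So condition~(CI$_n$) cannot be applied to $E_j$ as written, and the step ``coning that small error off \dots\ using~(CI$_n$)'' does not go through.

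The missing idea is the following intermediate step, which is how the paper proceeds: first fill the boundary slice of $E_j$ by a \emph{minimizing} $T_j\in\bI_{n,\cs}(X)$. By minimality $\M(T_j)\le\M(E_j)\to 0$, and -- crucially -- the distance bound of Theorem~\ref{thm:plateau} forces $\spt(T_j)$ to lie within $(\M(T_j)/\del)^{1/n}$ of $\Sph{p}{s}$, hence outside $\B{p}{r}$ for large $j$. Only then is $E_j-T_j$ a cycle of small mass, to which~(CI$_n$) applies, yielding $W_j$ with $\M(W_j)\to 0$; setting $V_j':=V_j\on\B{p}{s}+W_j$ leaves $Z_j-\d V_j'=Z_j\on(X\sm\B{p}{s})-\langle V_j,\rho,s\rangle+T_j$, all three terms supported outside $\B{p}{r}$. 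Without the localization of $T_j$ (which ultimately rests on the lower density bound for minimizers), the corrected filling could reintroduce support inside $\B{p}{r}$, and the conclusion $\F_{p,r}(Z_j)\to 0$ would not follow. Your coarea bookkeeping and the choice of a single slicing radius $s$ valid for all $j$ are otherwise fine.
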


\begin{proof}
Suppose that $Z_j \to 0$ in the local flat topology, and let $p \in X$ and
$r > 0$. There is a sequence $(V_j)$ in $\bI_{n+1,\loc}(X)$ such that
\[
(\|Z_j - \d V_j\| + \|V_j\|)(\B{p}{2r}) \to 0.
\]
Note that $Z_j - \d V_j \in \bZ_{n,\loc}(X)$.
Pick $s \in (r,2r)$ such that, for $K := \B{p}{s}$, the slice
$\d((Z_j - \d V_j) \on K)$ is in $\bZ_{n-1,\cs}(X)$ for all $j$, and furthermore
$V_j \on K \in \bI_{n+1,\cs}(X)$ for all $j$.
By Theorem~\ref{thm:plateau} (minimizing filling), there exists a minimizing
current $T_j \in \bI_{n,\cs}(X)$ with $\d T_j = \d((Z_j - \d V_j) \on K)$,
and since $\spt(\d T_j) \sub \Sph{p}{s}$ and
$\M(T_j) \le \M((Z_j - \d V_j) \on K) \to 0$, it follows that
$\spt(T_j) \sub \B{p}{2r} \sm \B{p}{r}$ for $j$ sufficiently large. 
The cycles $(Z_j - \d V_j) \on K - T_j$ converge to zero in mass,
by condition~{\rm (CI$_n$)} they thus possess fillings
$W_j \in \bI_{n+1,\cs}(X)$ such that also $\M(W_j) \to 0$. 
Now define $V'_j := V_j \on K + W_j \in \bI_{n+1,\cs}(X)$. 
Then $\M(V'_j) \le \M(V_j \on K) + \M(W_j) \to 0$, and the support of 
\begin{align*}
Z_j - \d V'_j 
&= Z_j - \d(V_j \on K) - (Z_j - \d V_j) \on K + T_j \\
&= Z_j \on (X \sm K) - \bigl( \d(V_j \on K) - (\d V_j) \on K \bigr) + T_j 
\end{align*}
is disjoint from $\B{p}{r}$ for all sufficiently large $j$. 
This shows that $\F_{p,r}(Z_j) \le \M(V'_j)/r^{n+1} \to 0$.

The reverse implication is clear.
\end{proof}

We now establish a basic existence theorem for minimizing 
local $n$-chains in spaces of asymptotic rank at most $n$.

\begin{theorem}[constructing minimizers] \label{thm:constr-minimizers}
Let $X$ be a proper metric space satisfying condition~{\rm (CI$_n$)} for some
$n \ge 1$, and suppose that\/ $\asrk(X) \le n$.
Then for every $S \in \bI_{n,\loc}(X)$ with $\Gi(S) < \infty$ there
exists a minimizing $\til S \in \bI_{n,\loc}(X)$ such that $\d \til S = \d S$ 
and $\Fi(S - \til S) = 0$, and every such $\til S$ 
satisfies $\Gi(\til S) \le \Gi(S)$.
\end{theorem}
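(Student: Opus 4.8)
The plan is to produce $\til S$ by an exhaustion argument: fix a base point $p\in X$ and a sequence $r_k\uparrow\infty$, solve a sequence of Plateau-type problems on the balls $B_k:=\B{p}{r_k}$ using the controlled growth of $S$, and pass to a limit in the local flat topology. The main technical input is Theorem~\ref{thm:subeucl} (sub-Euclidean isoperimetric inequality), which lets us repair boundary defects on a ball at the cost of an arbitrarily small fraction of the ball's $(n+1)$-volume; the main limiting tool is Theorem~\ref{thm:cptness}(2) (compactness for minimizers), whose hypotheses require condition~(CI$_n$) — available here.

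First I would set things up. Given $S$ with $\Gi(S)=:G<\infty$, choose $C>G$; then $\G_{p,r}(S)\le C$ for all $r$ larger than some $a$. For each $k$, slice $S$ at a good radius $s_k\in(r_k-1,r_k)$ so that $S\on B_{s_k}\in\bI_{n,\cs}(X)$ and $\M(\d(S\on B_{s_k}))$ is controlled by $\|S\|(\{r_k-1<d(p,\cdot)<r_k\})\le Cr_k^n$ (coarea). Let $T_k\in\bI_{n,\cs}(X)$ be a mass-minimizing filling of $\d(S\on B_{s_k})$, which exists by Theorem~\ref{thm:plateau}; set $S_k:=T_k+(S-S\on B_{s_k})$, a current that agrees with $S$ outside $B_{s_k}$, has $\d S_k=\d S$, and is area-minimizing inside $B_{s_k}$. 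The key estimate is a uniform density bound for $S_k$: outside $B_{s_k}$ it is controlled by $S$; inside, one compares $\M(T_k)$ against the mass of $S\on B_{s_k}$ plus a cone-type correction over $\d(S\on B_{s_k})$ (using (CI$_{n}$) applied to the boundary slice, cf.\ the coning inequalities), giving $\|S_k\|(\B{p}{r})\le C'r^n$ for all $r$ and a constant $C'$ depending only on $n,X,C,a$. Since the supports $\spt(\d S_k)=\spt(\d S)$ are fixed, Theorem~\ref{thm:cptness}(2) applies: a subsequence converges in the local flat topology to some $\til S\in\bI_{n,\loc}(X)$ with $\d\til S=\d S$, and $\til S$ is area-minimizing. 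Weak lower semicontinuity of mass then yields $\Gi(\til S)\le C'$; refining $C\downarrow G$ (i.e.\ repeating the construction and using a diagonal argument, or simply noting the estimates can be run with $C'$ as close to $G$ as desired) gives $\Gi(\til S)\le\Gi(S)$ for the resulting minimizer.

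Next I would verify $\Fi(S-\til S)=0$. On the ball $B_{r_j}$ with $j<k$, the current $S-S_k$ is supported in $B_{s_k}$ and equals $S\on B_{s_k}-T_k$; one constructs an explicit filling of $S-S_k$ in $B_{r_j}$ of controlled mass by first filling the cycle $S\on B_{s_k}-T_k\in\bZ_{n,\cs}(X)$ — which has mass $\le 2C'r_k^n$ and support in $B_{r_k}$ — via Theorem~\ref{thm:subeucl}, obtaining $V$ with $\M(V)<\eps r_k^{n+1}$ once $r_k$ is large, hence $\F_{p,r}(S-S_k)<\eps$ for $r$ in a suitable range. Passing to the flat limit, Lemma~\ref{lem:f-conv} converts local flat convergence $S_k\to\til S$ into $\F_{p,r}(S_k-\til S)\to0$ for every fixed $p,r$; combining with the triangle inequality for $\F_{p,r}$ gives $\F_{p,r}(S-\til S)<\eps$ for all large $r$, i.e.\ $\Fi(S-\til S)=0$. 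Finally, for the "every such $\til S$" clause: if $\til S'$ is any minimizer with $\d\til S'=\d S$ and $\Fi(S-\til S')=0$, then directly $\Gi(\til S')\le\Gi(S)$ follows from Proposition~\ref{prop:part-filling} applied with the pair $(S,\til S')$ (taking $S'=\til S'$ there), since that proposition already states $\Gi(S')\le C$ whenever $\G_{p,r}(S)\le C$, $S'$ is minimizing with $\d S'=\d S$, and $\Fi(S-S')<\infty$.

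\emph{Main obstacle.} The delicate point is the uniform density bound $\|S_k\|(\B{p}{r})\le C'r^n$ for \emph{all} $r$ (not just $r$ near $r_k$), with a constant independent of $k$: for $r\gg r_k$ the estimate is inherited from $S$, but for $r$ comparable to $r_k$ one must control $\M(T_k\on\B{p}{r})$ for the minimizer $T_k$, which is not immediate from its global minimality alone. The remedy is the standard comparison: at a further good radius $t<r$ one replaces $T_k\on\B{p}{t}$ by a coning of its slice (using (CI$_n$) on the $(n-1)$-dimensional slice, whose mass is controlled by coarea against $\|T_k\|$), and minimality of $T_k$ turns this into a differential inequality for $t\mapsto\|T_k\|(\B{p}{t})$ of the same type as in the proof of Lemma~\ref{lem:density}, integrating to the desired $r^n$ bound — with the "seed" of the induction supplied by $\M(T_k)\le\M(S\on B_{s_k})+(\text{cone correction})\le C'r_k^n$. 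Getting the constants in this iteration to depend only on $n,X,C,a$, and in particular not to blow up as $C\downarrow\Gi(S)$, is the part that needs care; everything else is routine slicing and limiting.
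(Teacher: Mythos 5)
Your overall architecture (exhaustion by balls, Plateau problem on each ball via Theorem~\ref{thm:plateau}, passage to a limit by Theorem~\ref{thm:cptness}, and Lemma~\ref{lem:f-conv} to convert flat convergence into $\F_{p,r}$-convergence) is the same as the paper's, up to the cosmetic difference that you glue the tail $S\on(X\sm B_{s_k})$ back onto $T_k$ while the paper works with the compactly supported minimizers $\til S_i$ of $\d(S\on\B{p}{r_i})$ directly. But the step you yourself flag as the main obstacle --- the uniform local density bound $\|T_k\|(\B{p}{r})\le C'r^n$ with $C'$ independent of $k$ --- is not established by your proposed remedy, and this is a genuine gap. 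A cone comparison for the slice of $T_k$ at radius $t$ gives, via (CI$_{n-1}$) and minimality, the differential inequality $\mu(t)\le c\,t\,\mu'(t)$ for $\mu(t)=\|T_k\|(\B{p}{t})$, which integrates to $\mu(r)\le \M(T_k)\,(r/s_k)^{1/c}\le \const\cdot s_k^{\,n-1/c}\,r^{1/c}$. Unless the coning constant is the sharp $c=1/n$ (as in the $\CAT(0)$ case, cf.\ Remark~\ref{rem:monotonicity}), one has $n-1/c>0$ and the bound at a fixed radius $r$ blows up as $s_k\to\infty$; under the bare hypothesis (CI$_n$) this argument therefore produces nothing uniform in $k$. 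Note also that your argument makes no use of the asymptotic rank hypothesis at this point, which is a warning sign: the paper obtains exactly this uniform bound, $\G_{p,r}(\til S_i)<C+\eps$ for all $r>\til a_\eps$ and all $i$, from Proposition~\ref{prop:part-filling}, whose iterated-rescaling proof rests on the sub-Euclidean isoperimetric inequality and hence on $\asrk(X)\le n$. The fix is to apply Proposition~\ref{prop:part-filling} to the pair $(S\on B_{s_k},\,T_k)$ (note $\G_{p,r}(S\on B_{s_k})\le\G_{p,r}(S)\le C$ and $\Fi$ of their difference is finite since both are compactly supported), rather than attempting a monotonicity argument.

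The same gap resurfaces in your verification of $\Fi(S-\til S)=0$. A single application of Theorem~\ref{thm:subeucl} to the cycle $S\on B_{s_k}-T_k$ yields a filling $V$ with $\M(V)<\eps\,r_k^{\,n+1}$, which gives $\F_{p,r}(S-S_k)<\eps$ only for $r\gtrsim r_k$; but to combine with $\F_{p,r}(S_k-\til S)\to 0$ (which holds for each \emph{fixed} $r$ as $k\to\infty$) you need $\F_{p,r}(S-S_k)<\eps$ at every fixed $r>\til a_\eps$ \emph{uniformly in} $k$. That multi-scale estimate is again exactly the content of Proposition~\ref{prop:part-filling}, which descends from scale $r_k$ to all smaller scales by repeated slicing and refilling. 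Once you route both estimates through that proposition, the rest of your argument (including the localization of Theorem~\ref{thm:cptness}(2) to the balls $B_{s_k}$ where $S_k$ is minimizing, and the use of Proposition~\ref{prop:part-filling} for the final clause $\Gi(\til S)\le\Gi(S)$) is sound and matches the paper's proof.
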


Note that $\d S$ may well be zero; the assertion $\Fi(S - \til S) = 0$
then guarantees that $\til S$ is non-zero, provided $\Fi(S) \ne 0$.
Conversely, if $\Fi(S) = 0$, then $\d S = 0$, and it follows from
Lemma~\ref{lem:fill-density} (filling density) that there is no
minimizer $\til S \ne 0$ with $\d\til S = 0$ and $\Fi(\til S) = 0$.

\begin{proof}
Fix a base point $p \in X$, and choose a sequence $0 < r_i \uparrow \infty$
such that $S_i := S \on \B{p}{r_i} \in \bI_{n,\cs}(X)$ for all $i$.
Theorem~\ref{thm:plateau} (minimizing filling) provides
a corresponding sequence of minimizing currents $\til S_i \in \bI_{n,\cs}(X)$
with $\d \til S_i = \d S_i$. Since $\Gi(S) < \infty$, there exist $C > 0$ and
$a \ge 0$ such that $\G_{p,r}(S_i) \le \G_{p,r}(S) \le C$ for all $r > a$.
Proposition~\ref{prop:part-filling} (partial filling) shows that
for every $\eps > 0$ there is a constant $\til a_\eps \ge 0$
such that, for all $i$ and $r > \til a_\eps$,
\[
\|\til S_i\|(\B{p}{r}) < (C+\eps)r^n \quad \text{and} \quad
\F_{p,r}(S_i - \til S_i) < \eps.
\]
Note also that if $K \sub X$ is a compact set,
then $\|\d \til S_i\|(K) = \|\d S_i\|(K) = \|\d S\|(K)$
for all but finitely many indices $i$.
By Theorem~\ref{thm:cptness} (compactness), some subsequence 
$(\til S_{i_j})$ converges in the local flat topology to a
minimizing current $\til S \in \bI_{n,\loc}(X)$ with $\d \til S = \d S$.

To show that $\Fi(S - \til S) = 0$, put 
$Z_j := S - S_{i_j} - (\til S - \til S_{i_j}) \in \bZ_{n,\loc}(X)$ and note that 
$Z_j \to 0$ in the local flat topology.
If $\eps > 0$ and $r > \til a_\eps$, then
\[
\F_{p,r}(S - \til S) \le \F_{p,r}(Z_j) + \F_{p,r}(S_{i_j} - \til S_{i_j})
< \F_{p,r}(Z_j) + \eps.
\]
Hence, $\F_{p,r}(S - \til S) \le \eps$ by Lemma~\ref{lem:f-conv}.

Finally, a simple slicing argument shows that $\Gi(\til S) \le \Gi(S)$
for every minimizing $\til S \in \bI_{n,\loc}(X)$ with $\d \til S = \d S$ and
$\Fi(S - \til S) = 0$.
(This also follows from Proposition~\ref{prop:part-filling}.)
\end{proof}

The next result generalizes Theorem~\ref{thm:morse-2} to local currents.
Theorem~\ref{intro-morse} in the introduction corresponds to the case 
$\d S = 0$.

\begin{theorem}[Morse Lemma III] \label{thm:morse-3}
Let $X$ be a proper metric space satisfying condition~{\rm (CI$_n$)} for some
$n \ge 1$, and suppose that\/ $\asrk(X) \le n$.
Then for all $Q \ge 1$, $C > 0$, and $a \ge 0$ there is a constant $b \ge 0$
such that the following holds.
If $S \in \bI_{n,\loc}(X)$ is a $(Q,a)$-quasi-minimizer with $(C,a)$-controlled 
density, then there exists a minimizing $\til S \in \bI_{n,\loc}(X)$ such that
$\d S = \d \til S$ and $\Fi(S - \til S) = 0$, and every such $\til S$
satisfies $\Gi(\til S) \le \Gi(S)$ and 
\[
\Hd(\spt(S),\spt(\til S)) \le b.
\]
\end{theorem}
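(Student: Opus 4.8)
The plan is to run the argument of Theorem~\ref{thm:morse-2}, but with the local existence result Theorem~\ref{thm:constr-minimizers} in place of Theorem~\ref{thm:plateau}, and with Theorem~\ref{thm:morse-1} (Morse Lemma~I) supplying the distance estimate. Since $S$ has $(C,a)$-controlled density, $\Gi(S) \le C < \infty$, so Theorem~\ref{thm:constr-minimizers} yields a minimizing $\til S \in \bI_{n,\loc}(X)$ with $\d\til S = \d S$ and $\Fi(S - \til S) = 0$, and moreover asserts $\Gi(\til S) \le \Gi(S)$ for \emph{every} such $\til S$. This settles the existence statement and the density inequality, and leaves only the Hausdorff bound, which must be established for an arbitrary minimizing $\til S$ with $\d\til S = \d S$ and $\Fi(S - \til S) = 0$.

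The first step is to promote $\Fi(S - \til S) = 0$ to a uniform density bound for $\til S$. Applying Proposition~\ref{prop:part-filling} (partial filling) with $S' = \til S$, an arbitrary base point $p$, and, say, $\eps = 1$ --- its hypotheses hold because $\G_{p,r}(S) \le C$ for all $p$ and $r > a$, $\til S$ is minimizing with $\d\til S = \d S$, and $\Fi(S - \til S) = 0 < \infty$ --- gives $\G_{p,r}(\til S) < C + 1$ for all $p \in X$ and $r > \til a$, where $\til a$ depends only on $n, X, C, a$ (crucially not on $p$). Hence $\til S$ has $(C+1, \til a)$-controlled density, and the cycle $Z := S - \til S \in \bZ_{n,\loc}(X)$ has $(2C+1, \max\{a,\til a\})$-controlled density and satisfies $\Fi(Z) = 0 < \infty$.

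Next one checks that $Z$ is $(Q,a)$-quasi-minimizing both mod $\spt(\til S)$ and mod $\spt(S)$. Since $\d Z = 0$ and $\spt(\d S) = \spt(\d\til S) \subset \spt(S) \cap \spt(\til S)$, any closed ball $\B{x}{r}$ missing $\spt(\til S)$ also misses $\spt(\d S)$ and satisfies $Z \on \B{x}{r} = S \on \B{x}{r}$; as $x \in \spt(Z)$ with $x \notin \spt(\til S)$ forces $x \in \spt(S)$, the $(Q,a)$-quasi-minimality of $S$ (mod $\spt(\d S)$) yields $\M(Z \on \B{x}{r}) \le Q\,\M(T)$ for almost every such $r > a$ and all $T$ with $\d T = \d(Z \on \B{x}{r})$. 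Symmetrically, using that the minimizing current $\til S$ is $(1,0)$-quasi-minimizing mod $\spt(\d\til S)$ and $\spt(\d\til S) \subset \spt(S)$, $Z$ is $(Q,a)$-quasi-minimizing mod $\spt(S)$. Theorem~\ref{thm:morse-1}, applied once with $Y = \spt(\til S)$ and once with $Y = \spt(S)$, then provides a constant $b$, depending only on $n, X, Q, C, a$ (since $C+1$ and $\til a$ depend only on $n, X, C, a$), such that $\spt(Z)$ lies within distance $b$ of each of $\spt(\til S)$ and $\spt(S)$.

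Finally, to deduce $\Hd(\spt(S), \spt(\til S)) \le b$: if $x \in \spt(S) \setminus \spt(\til S)$, a sufficiently small ball about $x$ misses $\spt(\til S)$, on which $Z$ coincides with $S$, so $x \in \spt(Z)$ and hence $d(x, \spt(\til S)) \le b$; while $d(x,\spt(\til S)) = 0$ if $x \in \spt(\til S)$. The reverse inclusion $\spt(\til S) \subset N_b(\spt(S))$ follows in the same way with the roles of $S$ and $\til S$ interchanged. I do not anticipate a genuine obstacle here: the only points needing care are the uniformity over the base point of the constant $a'_\eps$ in Proposition~\ref{prop:part-filling} (which is already recorded there) and the routine support bookkeeping, namely that $\d S = \d\til S$ places the common boundary support inside both $\spt(S)$ and $\spt(\til S)$, and that restricting $Z$ to a ball disjoint from the support of one summand reproduces, up to sign, the other summand.
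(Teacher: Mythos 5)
Your proposal is correct and follows essentially the same route as the paper: existence and $\Gi(\til S)\le\Gi(S)$ from Theorem~\ref{thm:constr-minimizers}, uniform controlled density of $\til S$ from Proposition~\ref{prop:part-filling}, and the two-sided application of Theorem~\ref{thm:morse-1} to $Z=S-\til S$ exactly as in the proof of Theorem~\ref{thm:morse-2}. The paper simply compresses the last part to ``the rest of the proof is the same as for Theorem~\ref{thm:morse-2}''; your verification that $Z$ is quasi-minimizing mod each support and the final support bookkeeping are the details it leaves implicit.
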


\begin{proof}
Since $\Gi(S) < \infty$, Theorem~\ref{thm:constr-minimizers} shows that there
exists a minimizing $\til S \in \bI_{n,\loc}(X)$ with $\d S = \d \til S$ and
$\Fi(S - \til S) = 0$, and every such $\til S$
satisfies $\Gi(\til S) \le \Gi(S)$.
The rest of the proof is the same as for Theorem~\ref{thm:morse-2}.
\end{proof}


\section{Asymptote classes and visual metrics}
\label{sect:asymp-classes}

We now consider asymptote classes of local $n$-cycles in spaces of
asymptotic rank $n$.

\begin{definition}[asymptote classes] \label{def:f-asymptotic}
Let $X$ be a proper metric space that satisfies
condition~{\rm (CI$_n$)} for $n = \asrk(X) \ge 1$.
We put 
\[
\bZ_{n,\loc}^\infty(X) := \{S \in \bZ_{n,\loc}(X): \Gi(S) < \infty\}
\] 
and call two elements $S,S'$ of this group {\em $\F$-asymptotic}
if $\Fi(S - S') = 0$ (or, equivalently, $\Fi(S - S') < \infty$; see
Proposition~\ref{prop:part-filling} (partial filling)).
This defines an equivalence relation $\sim_\F$ on 
$\bZ_{n,\loc}^\infty(X)$. We denote the quotient space by 
\[
\cZ X := \bZ_{n,\loc}^\infty(X)/{\sim_\F}
\] 
(note that $n = \asrk(X)$ is implicit in $X$) 
and the equivalence class of $S$ by $[S] \in \cZ X$.
The addition $[S] + [S'] := [S + S']$ 
is clearly well-defined, thus $\cZ X$ is an abelian group.
\end{definition}
 
As stated in Theorem~\ref{intro-f-classes}, when $X$ is a $\CAT(0)$ space,
$\cZ X$ turns out to be canonically isomorphic to the group
$\bZ_{n-1,\cs}(\bT X)$ of integral $(n-1)$-cycles in the Tits boundary of~$X$. 
This will be discussed in Section~\ref{sect:cycles-infty}.

Theorem~\ref{thm:constr-minimizers} (constructing minimizers) shows that
every class $[S] \in \cZ X$ contains an area-minimizing
$\til S \in \bZ_{n,\loc}^\infty(X)$, and furthermore every such $\til S$ has
minimal asymptotic density among all members of $[S]$.
We will now show that for any $C > 0$ and $a \ge 0$, the set
\[
\cZ_{C,a} X := \{[S] \in \cZ X: \text{$S$ has $(C,a)$-controlled density}\}
\]
carries a family of metrics analogous to the visual metrics on $\di X$ in
the hyperbolic case.
With the present hypotheses ($X$ satisfies condition~{\rm (CI$_n$)}
for $n = \asrk(X) \ge 1$), a class in $\cZ X$ need not contain a
representative with controlled density; however, under the stronger
assumptions of the subsequent sections, in particular when $X$ is $\CAT(0)$,
every minimizer $\til S \in \bZ_{n,\loc}^\infty(X)$ has controlled density
(see Proposition~\ref{prop:contr-density} and Remark~\ref{rem:monotonicity}).
Note also that every quasiflat $f \colon \R^n \to X$ yields an $S$ with
controlled density (compare Proposition~\ref{prop:qflats}).

First, for any reference point $p \in X$ and any $[S] \in \cZ X$,
we put
\[
\gp{p}{[S]} := \inf\{d(p,\spt(\til S)): \text{$\til S \in [S]$ is minimizing}\}.
\]
Note that $\gp{p}{[S]} = \infty$ if and only if $[S] = [0]$, that is,
$\Fi(S) = 0$ (see the remark after Theorem~\ref{thm:constr-minimizers}).
Clearly $\gp{p}{[-S]} = \gp{p}{[S]}$ and
\[
\bigl| \gp{p}{[S]} - \gp{q}{[S]} \bigr| \le d(p,q).
\]  
If $X$ is a geodesic $\del$-hyperbolic space ($n = 1$) and
$S$ corresponds to a geodesic $\gam \colon \R \to X$ connecting two points
$u,v \in \di X$, then $\gp{p}{[S]}$ agrees, up to
a bounded additive error, with the Gromov product $(u\,|\,v)_p$.
The following result mimics the $\del$-inequality for the Gromov product
of points at infinity (see p.~89 in~\cite{Gro2} and Sect.~2.2 in~\cite{BuyS}). 

\begin{proposition}[$D$-inequality] \label{prop:d-ineq}
Let $X$ be a proper metric space that satisfies
condition~{\rm (CI$_n$)} for $n = \asrk(X) \ge 1$.
Then for all $C > 0$ and $a \ge 0$ there exists $D \ge 0$ such that
\[
\gp{p}{[S+S']} \ge \min\{\gp{p}{[S]},\gp{p}{[S']}\} - D
\]
for all $p \in X$ and $[S],[S'] \in \cZ_{C,a}X$.
\end{proposition}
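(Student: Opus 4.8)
The plan is to mimic the standard proof of the $\delta$-inequality for the Gromov product, transposed into the language of minimizing local $n$-cycles. The key point is that $\gp{p}{[S]}$ is small precisely when the minimizing representative of $[S]$ passes near $p$, and largeness of $\gp{p}{[S]}$ means a large ball around $p$ can be ``filled cheaply'' relative to $S$. So first I would fix $C,a$ and let $S,S'$ be $(C,a)$-controlled representatives, and pass to minimizing representatives $\til S \in [S]$, $\til S' \in [S']$; by Proposition~\ref{prop:part-filling} (partial filling) these still have $(C',a')$-controlled density for constants depending only on $n,X,C,a$. Set $t := \min\{\gp{p}{[S]},\gp{p}{[S']}\}$; I may assume $t$ is finite and, say, $t > 4a'$ (otherwise the inequality is trivial for $D$ large). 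Then $\|\til S\|(\B{p}{r}) = 0 = \|\til S'\|(\B{p}{r})$ for $r$ slightly less than $t$ — more precisely $\spt(\til S)$ and $\spt(\til S')$ avoid $\B{p}{r_0}$ for any $r_0 < t$ — so in particular $\F_{p,r_0}(\til S) = 0$ and $\F_{p,r_0}(\til S') = 0$ (the zero current fills each of them inside $\B{p}{r_0}$). Hence $\F_{p,r_0}(\til S + \til S') \le \F_{p,r_0}(\til S) + \F_{p,r_0}(\til S') = 0$, so $\F_{p,r_0}(S+S') = 0$ as well since $\Fi((S+S') - (\til S+\til S')) = 0$ and $\F_{p,\cdot}$ is, up to controlled error, insensitive to $\F$-asymptotic changes — this last point needs care and is where I would lean on Proposition~\ref{prop:part-filling}.

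The heart of the argument is then: $\F_{p,r}(S+S')$ being small for $r$ up to roughly $t$ should force the minimizing representative of $[S+S']$ to avoid $\B{p}{r}$ for $r$ comparably large, giving $\gp{p}{[S+S']} \gtrsim t$. Concretely, let $\hat S \in [S+S']$ be minimizing with $(C'',a'')$-controlled density. Suppose toward a contradiction that $\gp{p}{[S+S']} = d(p,\spt(\hat S))$ is much smaller than $t$, say there is $x \in \spt(\hat S)$ with $d(p,x) \le t - R$ for a large constant $R$ to be chosen. Then $\B{x}{R/2} \subset \B{p}{t}$ roughly, and since $\hat S$ is a minimizer (hence $(1,0)$-quasi-minimizer) with controlled density, Lemma~\ref{lem:fill-density} (filling density) gives $\F_{x,\rho}(\hat S) \ge c(n,X) > 0$ for $\rho > 4a''$. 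On the other hand, I want to contradict this by producing a cheap partial filling of $\hat S$ in $\B{x}{\rho}$, using the cheap partial filling of $S + S'$ in $\B{p}{t}$ together with the fact that $\hat S$ is $\F$-asymptotic to $S+S'$: a filling of $(S+S') - \hat S$ (which exists with $\Fi = 0$) combined with a small-mass partial filling of $S+S'$ in $\B{p}{t}$ yields a partial filling of $\hat S$ in $\B{x}{\rho}$ of small relative mass, once $R$ is large enough that $\B{x}{\rho} \subset \B{p}{t}$. Choosing $R$ large and applying Proposition~\ref{prop:part-filling} to make $\F_{p,t}(S+S')$ smaller than $c\,(\rho/t)^{n+1}$ drives $\F_{x,\rho}(\hat S) < c$, contradiction.

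The main obstacle I anticipate is bookkeeping the transition between $S+S'$ and its minimizing representative $\hat S$, and between the ball $\B{p}{t}$ and the shifted ball $\B{x}{\rho}$: filling densities $\F_{q,r}$ are defined with respect to a fixed center, and a partial filling of $S+S'$ in $\B{p}{t}$ is not automatically a partial filling of $\hat S$ in any ball. The clean way around this is to work at the level of the \emph{difference cycle} $Z := (S+S') - \hat S \in \bZ_{n,\loc}(X)$, which has $\Fi(Z) = 0$ by construction, so that Proposition~\ref{prop:part-filling} applied to $Z$ (with $S$ there equal to $Z$, $S' = 0$) gives $\F_{q,r}(Z) \to 0$ as $r \to \infty$ \emph{uniformly in the base point} in the sense that $\F_{q,r}(Z) < \eps$ for $r > a'_\eps$ independent of $q$. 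Combining a small partial filling of $Z$ in $\B{x}{\rho}$ with a small partial filling of $S+S'$ in the concentric ball gives a small partial filling of $\hat S$ in $\B{x}{\rho}$. Then the constant $D$ emerges as: $R$ (the gap forced by the filling-density constant $c$ via Lemma~\ref{lem:fill-density}) plus the additive errors accumulated from the several applications of Proposition~\ref{prop:part-filling} and the passage to controlled-density minimizing representatives — all depending only on $n, X, C, a$, as required. I would also need to dispose of the boundary cases $[S]=[0]$ or $[S']=[0]$ (where $t = \infty$ and the claim reads $\gp{p}{[S']} \ge \gp{p}{[S']} - D$, trivially true) and the case $t \le 4a'$ separately at the outset.
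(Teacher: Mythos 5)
Your final paragraph lands on essentially the paper's own argument. The paper's proof picks minimizers $\til S\in[S]$, $\til S'\in[S']$ and a near-optimal minimizer $\hat S\in[S+S']$, uses Proposition~\ref{prop:part-filling} to give the difference cycle $Z:=\hat S-(\til S+\til S')$ controlled density, observes that $Z$ is minimizing mod $Y:=\spt(\til S)\cup\spt(\til S')$ with $\Fi(Z)=0$, and then invokes Theorem~\ref{thm:morse-1} (Morse Lemma I) to conclude $\spt(\hat S)\sub N_D(Y)$; the inequality follows since $\min\{\gp{p}{[S]},\gp{p}{[S']}\}\le d(p,Y)\le d(p,\spt(\hat S))+D$. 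Your contradiction at a point $x\in\spt(\hat S)$ with $d(p,x)\le t-R$ --- Lemma~\ref{lem:fill-density} giving $\F_{x,\rho}(\hat S)\ge c$ versus a cheap partial filling on $\B{x}{\rho}$ --- is exactly the engine inside Theorem~\ref{thm:morse-1}, just unfolded by hand.

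One point needs correcting, because it recurs in your second and third paragraphs: you cannot make $\F_{p,t}(S+S')$ or $\F_{x,\rho}(S+S')$ small via Proposition~\ref{prop:part-filling}. By the dichotomy stated after that proposition, a local cycle with finite $\Gi$ has $\Fi$ equal to $0$ or $\infty$, and for a nonzero class it is $\infty$ (otherwise the class would be trivial); so no small-mass partial filling of $S+S'$ exists at large scales. What actually makes the argument close --- and what produces an \emph{additive} rather than multiplicative error --- is the observation from your first paragraph: $d(p,Y)\ge t$, so $\B{x}{\rho}\cap Y=\es$ whenever $d(p,x)\le t-R$ and $\rho<R$, and hence the \emph{zero} current is a partial filling of $\til S+\til S'$ in $\B{x}{\rho}$. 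Then $\F_{x,\rho}(\hat S)\le\F_{x,\rho}(Z)+\F_{x,\rho}(\til S+\til S')<\eps+0$ once $\rho>a'_\eps$, with $a'_\eps$ uniform in the base point because $Z$ has controlled density. Your middle paragraph's route through a filling of $S+S'$ in $\B{p}{t}$ would require $\eps<c(\rho/t)^{n+1}$ with $\rho$ bounded and $t$ unbounded, which is circular (as you suspected); the base-point-at-$x$ version with the zero filling substituted for the phantom ``small partial filling of $S+S'$'' is the correct one, and with that substitution your proof is sound.
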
  

\begin{proof}
Let $\eps > 0$.
Pick minimizers $\til S \in [S]$, $\til S' \in [S']$,
and $\hat S \in [S+S']$ such that
\[
d(p,\spt(\hat S)) < \gp{p}{[S+S']} + \eps.
\]
Note that $[S+S'] \in \cZ_{2C,a}X$.
Applying Proposition~\ref{prop:part-filling} (partial filling)
to each of $\til S,\til S',\hat S$, we infer that
$Z := \hat S - (\til S + \til S')$ has $(\til C,\til a)$-controlled density
for some constants $\til C,\til a$ depending only on $X,C,a$. 
Note that $\Fi(Z) = 0$. Since $Z$ is minimizing mod
$Y := \spt(\til S) \cup \spt(\til S')$,
Theorem~\ref{thm:morse-1} (Morse Lemma I) shows that $\spt(\hat S)$ is within
distance at most $D$ from $Y$ for some constant $D$ depending only on $X,C,a$.
Hence,
\[
d(p,Y) \le d(p,\spt(\hat S)) + D < \gp{p}{[S+S']} + \eps + D.
\]
Since $\min\{\gp{p}{[S]},\gp{p}{[S']}\} \le d(p,Y)$, this gives the result.
\end{proof}  

We call a metric $\nu$ on $\cZ_{C,a}X$ {\em visual\/} if
there are $p \in X$, $b > 1$ and $c \ge 1$ such that
\[
c^{-1}b^{-\gp{p}{[S - S']}} \le \nu([S],[S']) \le c\,b^{-\gp{p}{[S-S']}}
\]
for all $[S],[S'] \in \cZ_{C,a}X$. It is easily seen that
any two metrics that are visual with respect to the same parameter $b$
but different base points are bi-Lipschitz equivalent, whereas
any two visual metrics are snowflake equivalent
(compare Theorem~3.2.4 in~\cite{MacT}). In particular, all visual
metrics induce the same topology on $\cZ_{C,a}X$.

\begin{theorem}[visual metrics] \label{thm:visual-metrics}
Let $X$ be a proper metric space that satisfies condition~{\rm (CI$_n$)}
for $n = \asrk(X) \ge 1$, and let $C > 0$ and $a \ge 0$.
Then for every $p \in X$ and every sufficiently small $b > 1$ there exists
a metric $\nu$ on $\cZ_{C,a}X$ that is visual with respect to $p$ and $b$.
Furthermore, $\cZ_{C,a}X$ is compact with respect to any visual
metric.
\end{theorem}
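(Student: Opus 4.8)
The plan is to build the metric by a standard chain-construction from the quasi-ultrametric estimate, then verify compactness via the doubling property. First I would fix $p \in X$ and set $\vartheta([S],[S']) := b^{-\gp{p}{[S-S']}}$, with the convention $b^{-\infty}=0$, so that $\vartheta([S],[S'])=0$ exactly when $[S]=[S']$ (here one uses $\gp{p}{[S-S']}=\infty \iff \Fi(S-S')=0$, i.e.\ the remark after Theorem~\ref{thm:constr-minimizers}). Symmetry is immediate from $\gp{p}{[-Z]}=\gp{p}{[Z]}$. The $D$-inequality of Proposition~\ref{prop:d-ineq} applied to $Z=S-S'$ and $Z'=S'-S''$ (noting $[Z],[Z']\in\cZ_{2C,a}X$, so the relevant constant is $D=D(X,2C,a)$) gives
\[
\gp{p}{[S-S'']} \ge \min\{\gp{p}{[S-S']},\gp{p}{[S'-S'']}\} - D,
\]
hence $\vartheta([S],[S'']) \le b^{D}\max\{\vartheta([S],[S']),\vartheta([S'],[S''])\}$. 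So $\vartheta$ is a quasi-ultrametric with multiplicative constant $K:=b^{D}$. Choosing $b$ close enough to $1$ that $K \le 2^{1/(\text{something})}$ — concretely $K^{?}\le 2$ suffices — one applies the Frink-type metrization lemma (as in Proposition~14.5 of \cite{Hei}, or Theorem~3.1.2 in \cite{MacT}): define
\[
\nu([S],[S']) := \inf\Bigl\{ \sum_{i=1}^{m} \vartheta([S_{i-1}],[S_i]) : m\ge 1,\ [S_0]=[S],\ [S_m]=[S'] \Bigr\},
\]
which is a genuine metric satisfying $c^{-1}\vartheta \le \nu \le \vartheta$ for a constant $c=c(K)\ge 1$; translating back through $\vartheta([S],[S'])=b^{-\gp{p}{[S-S']}}$ gives precisely the two-sided bound in the definition of a visual metric with respect to $p$ and $b$. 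The dependence of the admissible range of $b$ on $C,a$ enters only through $D(X,2C,a)$.

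For compactness it suffices, since $(\cZ_{C,a}X,\nu)$ is a metric space, to show it is complete and totally bounded. Total boundedness is where the doubling estimate does the work. Given $\eps>0$, pick $r$ with $b^{-r}<\eps/(2c)$; I would show that any $\vartheta$-ball of radius $b^{-r}$, equivalently the set of classes $[S']$ with $\gp{p}{[S']- [S]} > r$ — wait, more carefully: I want to bound the number of classes that are pairwise $\nu$-separated by $\ge\eps$. Two classes $[S],[S']$ with $\nu([S],[S'])\ge\eps$ have $\gp{p}{[S-S']}\le r$, so each minimizing representative of $[S-S']$ passes within distance $r$ of $p$; one then fixes for every class $[S]\in\cZ_{C,a}X$ a minimizing representative $\til S$ (with $(C',a')$-controlled density, $C'=C'(X,C,a)$, by Proposition~\ref{prop:part-filling}) and, choosing a maximal $\eps$-separated family $[S^{(1)}],\dots,[S^{(N)}]$, observes that the supports $\spt(\til S^{(j)})$ restricted to a large ball $\B{p}{\rho}$ carry a definite amount of mass each (lower density bound, Lemma~\ref{lem:density}, applied to the quasi-minimizer $\til S^{(j)}- \til S^{(k)}$ off the support of $\til S^{(k)}$, exactly as in the proof of Lemma~\ref{lem:doubling}), while together they cannot overflow the mass budget coming from controlled density of the differences; this forces $N$ to be finite, uniformly in the separation scale. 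This is essentially a repackaging of the doubling/covering count of Lemma~\ref{lem:doubling} at the level of asymptote classes. Completeness I would get by a diagonal compactness argument: a $\nu$-Cauchy sequence $([S^{(j)}])$ has minimizing representatives $\til S^{(j)}$ with supports eventually within bounded Hausdorff distance of one another (by the same $D$-inequality / Morse Lemma I input) and uniformly controlled density, so Theorem~\ref{thm:cptness} extracts a local-flat limit $\til S$, which is minimizing with $\Gi(\til S)<\infty$, and one checks $\gp{p}{\til S^{(j)}-\til S}\to\infty$, i.e.\ $[\til S^{(j)}]\to[\til S]$ in $\nu$.

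The main obstacle is the total-boundedness count: one must convert the \emph{metric} separation $\nu\ge\eps$ into a genuine geometric separation of the \emph{supports} of chosen minimizing representatives inside a fixed ball, and then run a mass-packing argument there. The delicate point is that distinct $F$-asymptote classes may have minimizing representatives whose supports are not disjoint and not even within bounded Hausdorff distance globally — only near $p$ is one in control — so the packing has to be localized: one works in $\B{p}{\rho}$ for $\rho$ large relative to $r$, uses Lemma~\ref{lem:density} to get $\|\til S^{(j)}\|(\B{x}{s})\gtrsim s^n$ at a net of points $x$ on $\spt(\til S^{(j)})$ near $p$ that is \emph{not} shared with the other representatives (this requires knowing the pairwise differences are quasi-minimizing mod the respective supports, which follows because each $\til S^{(j)}$ is minimizing, hence $(1,0)$-quasi-minimizing, and has controlled density), and bounds the total mass $\sum_j \|\til S^{(j)}\|(\B{p}{\rho})$ crudely by $N\cdot C(2\rho)^n$ — so the overlap bookkeeping, rather than any new idea, is what needs care. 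Everything else (the quasi-ultrametric $\Rightarrow$ metric step, and the reduction of compactness to completeness plus total boundedness) is routine and citable.
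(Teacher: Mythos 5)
Your construction of the metric itself is correct and coincides with the paper's: the $D$-inequality for $\cZ_{2C,a}X$ gives the quasi-ultrametric $\vartheta=b^{-\gp{p}{[\cdot-\cdot]}}$ with constant $b^D$, and the chain/Frink metrization produces $\nu$ with $\nu\le\vartheta\le c\,\nu$ once $b^D\le 2$. The gap is in the compactness half. You replace the paper's direct sequential argument by ``complete $+$ totally bounded,'' and the total-boundedness step does not go through as sketched. Pairwise $\eps$-separation of classes $[S^{(1)}],\dots,[S^{(N)}]$ only tells you that for each pair $(j,k)$ some minimizing representative of the \emph{difference class} $[S^{(j)}-S^{(k)}]$ meets $\B{p}{r}$; it does not produce, for each $j$, a ball centered on $\spt(\til S^{(j)})$ near $p$ that is disjoint from the supports of the other representatives, which is what a Lemma~\ref{lem:density}-type packing needs. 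The witnessing points for different pairs can coincide, the supports can all pass through the same region with different multiplicities or orientations, and the crude bound $\sum_j\|\til S^{(j)}\|(\B{p}{\rho})\le N\,C'(2\rho)^n$ grows linearly in $N$, so no contradiction is forced for any $N$. The separation lives at the level of the cycles $\til S^{(j)}-\til S^{(k)}$, not of the supports, and the tool that converts it into a finiteness statement is precompactness in the local flat topology plus the lower filling-density bound of Lemma~\ref{lem:fill-density} --- which is exactly the paper's sequential argument, not a geometric packing.

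The paper's route avoids both of your delicate points: it takes an arbitrary sequence of $(C,a)$-controlled representatives $S_i$, extracts a local-flat limit $S$ by Theorem~\ref{thm:cptness}, and notes that $S$ is again $(C,a)$-controlled by lower semicontinuity of mass on open sets, so $[S]\in\cZ_{C,a}X$ automatically. (In your completeness step you pass to the limit of the \emph{minimizers} $\til S^{(j)}$, and it is then not clear that the limit class contains a $(C,a)$-controlled representative under the bare hypotheses (CI$_n$) and $\asrk(X)=n$; also your claim that a $\nu$-Cauchy sequence has representatives at bounded \emph{global} Hausdorff distance is false --- $\gp{p}{[\til S^{(j)}-\til S^{(k)}]}$ large only controls matters near $p$.) Convergence $\gp{p}{[S_{i_j}-S]}\to\infty$ is then obtained by combining Lemma~\ref{lem:f-conv} (which gives $\F_{p,r}(S_{i_j}-S)\to 0$ for each fixed $r$), Proposition~\ref{prop:part-filling} (a uniform bound $\F_{p,r}(\til Z_j-Z_j)<\eps/2$ for the minimizers $\til Z_j\in[S_{i_j}-S]$), and Lemma~\ref{lem:fill-density} (if $\spt(\til Z_j)$ met $\B{p}{r/2}$ the filling density could not be small). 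I would rewrite your compactness argument along these lines rather than trying to repair the packing count.
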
  

\begin{proof}
Let $p \in X$ and $b > 1$, and put $\til\nu([S],[S']) := b^{-\gp{p}{[S-S']}}$;
then
\[
\til\nu([S],[S'']) \le \kap \max\{\til\nu([S],[S']),\til\nu([S'],[S''])\}   
\]
for all $[S],[S'],[S''] \in \cZ_{C,a}X$, where $\kap = b^D$ and $D$ is
the constant from Proposition~\ref{prop:d-ineq} associated with
the parameters $2C$ and $a$.
Note that $\til\nu([S],[S']) = 0$ if and only if $[S] = [S']$. 
If $\kap \le 2$, then a standard chain construction yields a metric $\nu$
on $\cZ_{C,a}X$ such that
\[
\frac{1}{2\kap}\,\til\nu([S],[S']) \le \nu([S],[S']) \le \til\nu([S],[S'])    
\]
(see Lemma~2.2.5 in~\cite{BuyS}). Thus $\nu$ is visual with respect to
$p$ and $b$.

To prove the compactness assertion, let $(S_i)$ be a sequence in
$\bZ_{n,\loc}^\infty(X)$ such that each $S_i$ has $(C,a)$-controlled density.
By Theorem~\ref{thm:cptness} (compactness), some
subsequence $(S_{i_j})$ converges in the local flat topology,
hence also weakly, to an $S \in \bZ_{n,\loc}(X)$. For all $p \in X$
and $s > r > a$,
\[
\|S\|(\B{p}{r}) \le \liminf_{j \to \infty}\|S_{i_j}\|(\B{p}{s}) \le C s^n  
\]
by the lower semicontinuity of mass on open sets; thus $S$ has
$(C,a)$-controlled density. Suppose now that $\nu$ is a visual metric
on $\cZ_{C,a}X$ with respect to $p \in X$, and note that
$\nu([S_{i_j}],[S]) \to 0$ if and only if $\gp{p}{[Z_j]} \to \infty$,
where $Z_j := S_{i_j} - S$.
Consider a sequence of minimizers $\til Z_j \in [Z_j]$, and let $\eps > 0$.
Since $\G_{p,r}(Z_j) \le 2C$ for all $r > a$,
Proposition~\ref{prop:part-filling} (partial filling) shows that 
if $r$ is sufficiently large, then $\F_{p,r}(\til Z_j - Z_j) < \eps/2$ for
all $j$. Furthermore, it follows from Lemma~\ref{lem:f-conv}
($\F$-convergence) that $\F_{p,r}(Z_j) \to 0$ for every $r > 0$.
Hence, for every sufficiently large $r > 0$, there is an index $j_0$ such that
\[
\F_{p,r}(\til Z_j) \le \F_{p,r}(\til Z_j - Z_j) + \F_{p,r}(Z_j) < \eps 
\]
for all $j \ge j_0$. Let $V_j \in \bI_{n+1,\cs}(X)$ be such that 
$\spt(\til Z_j - \d V) \cap \B{p}{r} = \es$ and $\M(V_j) < \eps r^{n+1}$.
For a point $x \in \spt(\til Z_j) \cap \B{p}{r/2}$,
Lemma~\ref{lem:fill-density} (filling density) then shows that
$\F_{x,r/2}(\til Z_j) \ge c = c(X) > 0$, thus $\M(V_j) \ge c(r/2)^{n+1}$.
Choosing $\eps = c/2^{n+1}$ we conclude that for every sufficiently large
$r$ there is a $j_0$ such that $d(p,\spt(\til Z_j)) > r/2$ for all
$j \ge j_0$. This shows that $\gp{p}{[Z_j]} \to \infty$ as desired.
\end{proof}

Visual metrics will be discussed further in Remark~\ref{rem:visual-metrics}
and Remark~\ref{rem:hoelder}.


\section{Conical representatives} \label{sect:conical}

Our next goal is to relate $\F$-asymptote classes to geodesic cones
and to cycles at infinity. For this purpose, we now impose a convexity
condition on the metric space~$X$.

A curve $\rho \colon I \to X$ defined on some interval $I \sub \R$ is a
{\em geodesic} if there is a constant $s \ge 0$, the {\em speed}
of $\rho$, such that $d(\rho(t),\rho(t')) = s |t - t'|$ for all $t,t' \in I$.
A geodesic defined on $I = \R_+ := [0,\infty)$ is called a {\em ray}.

\begin{definition}[convex bicombing] \label{def:bicombing}
By a {\em convex bicombing} $\si$ on a metric space $X$ 
we mean a map $\si \colon X \times X \times [0,1] \to X$ such that
\ben
\item   
$\si_{xy} := \si(x,y,\cdot) \colon [0,1] \to X$ is a geodesic from
$x$ to $y$ for all $x,y \in X$;
\item
$t \mapsto d(\si_{xy}(t),\si_{x'y'}(t))$ is convex on $[0,1]$
for all $x,y,x',y' \in X$;
\item
$\im(\si_{pq}) \sub \im(\si_{xy})$ whenever $x,y \in X$ and
$p,q \in \im(\si_{xy})$.
\een
A geodesic $\rho \colon I \to X$ is then called a
{\em $\si$-geodesic} if $\im(\si_{xy}) \sub \im(\rho)$ whenever
$x,y \in \im(\rho)$.
A convex bicombing $\si$ on $X$ is {\em equivariant\/} if
$\gam \circ \si_{xy} = \si_{\gam(x)\gam(y)}$ for every
isometry $\gam$ of $X$ and for all $x,y \in X$.
\end{definition}

Note that in~(3), we do not specify the order of $p$ and $q$ with respect to
the parameter of $\si_{xy}$, in particular $\si_{yx}(t) = \si_{xy}(1-t)$.
In the terminology of~\cite{DesL1}, $\si$ is a {\em reversible} and
{\em consistent convex geodesic bicombing} on $X$.
In Section~10.1 of~\cite{Kle}, metric spaces with such a structure $\si$
are called {\em often convex}. 
This class of spaces includes all $\CAT(0)$ and Busemann spaces 
as well as (linearly) convex subsets of normed spaces; at the same time, 
it is closed under various limit and product constructions such as 
ultralimits, (complete) Gromov--Hausdorff limits, and $l_p$ products 
for $p \in [1,\infty]$. 

A large part of the theory of spaces of nonpositive curvature
extends to this more general setting, see~\cite{Bas,DesL1,DesL2}.
Furthermore, as was shown in~\cite{DesL1,Lan4}, every word hyperbolic
group acts geometrically on a proper metric space of finite topological 
dimension with an equivariant convex bicombing $\si$.
In the recent paper~\cite{Des} it is shown that 
Theorem~\ref{thm:rank-conditions} (rank conditions)
still holds for every proper and cocompact metric space $X$ with a convex 
bicombing.
In fact, Theorem~1.1 in that paper shows that
if the unit ball of some $n$-dimensional normed space $V$ is an asymptotic
subset of $X$, then $V$ itself embeds isometrically into $X$.

Let now $X$ be a proper metric space with a convex bicombing $\si$.
It follows from Section~\ref{subsect:homotopies} that $X$ satisfies 
condition~(CI$_n$) for every $n \ge 1$, 
thus all the preceding results are still at our disposal.
The boundary at infinity of $(X,\si)$ is defined in the usual way, as for
$\CAT(0)$ spaces, except that only $\si$-rays are taken into account.
Specifically, we let $\Ray^\si X$ and $\Ray^\si_1 X$ denote the sets of
all $\si$-rays and $\si$-rays of speed one, respectively, in $X$.
For every pair of rays $\rho,\rho' \in \Ray^\si X$, the function
$t \mapsto d(\rho(t),\rho'(t))$ is convex, and $\rho$ and $\rho'$
are called {\em asymptotic} if this function is bounded.
This defines an equivalence relation $\sim$ on $\Ray^\si X$ as well as
on $\Ray^\si_1 X$.
The {\em boundary at infinity} or {\em visual boundary} of $(X,\si)$ is 
the set
\[
\di X := \Ray^\si_1 X/{\sim}
\]
(whereas $\Ray^\si X/{\sim}$ is the set underlying the Tits cone of $X$,
see the end of Section~\ref{sect:visibility}).
Given $\rho \in \Ray^\si_1 X$ and $p \in X$, there is a unique ray 
$\rho_p \in \Ray^\si_1 X$ asymptotic to $\rho$ with $\rho_p(0) = p$. The set
\[
\olX := X \cup \di X
\]
carries a natural metrizable topology,
analogous to the cone topology for $\CAT(0)$ spaces. With this topology,
$\olX$ is a compact absolute retract, and $\di X$ is a $Z$-set
in $\olX$. See Section~5 in~\cite{DesL1} for details.
For a subset $A \sub X$, the limit set $\di(A)$ is defined as the set of
all points in $\di X$ that belong to the closure of $A$ in~$\olX$.
For a point $p \in X$ we define the geodesic homotopy
\[
h_p \colon [0,1] \times X \to X
\]
by $h_p(\lam,x) := h_{p,\lam}(x) := \si_{px}(\lam)$. Note that
the map $h_{p,\lam} \colon X \to X$ is $\lam$-Lipschitz. 
For a set $A \sub X$,
\[
\C_p(A) := h_p([0,1] \times A)
\]
denotes the geodesic cone from $p \in X$ over $A$, and $\ol\C_p(A)$
denotes its closure in $X$. Similarly, if $\Lam \sub \di X$, then 
$\C_p(\Lam) \sub X$ denotes the union of the traces of the rays emanating 
from $p$ and representing points of $\Lam$.

Let now $S \in \bZ_{n,\loc}(X)$. We write 
\[
\Lam(S) := \di(\spt(S)) \sub \di X
\]
for the {\em limit set\/} of (the support of) $S$, and we call
$S$ \emph{conical\/} if there is a point $p \in X$ such that
\[
h_{p,\lam\#} S = S
\] 
for all $\lam \in (0,1)$. The following lemma collects a number of basic 
properties.

\begin{lemma}[conical] \label{lem:conical}
Let $X$ be a proper metric space with a convex bicombing~$\si$, and suppose 
that $S \in \bZ_{n,\loc}(X)$ is conical with respect to some point $p \in X$. 
Then
\ben
\item
$S \on \B{p}{r} \in \bI_{n,\cs}(X)$ and 
$h_{p,\lam\#}(S \on \B{p}{r}) = S \on \B{p}{\lam r}$ for all $r > 0$ and
$\lam \in (0,1)$;
\item
the functions $r \mapsto \G_{p,r}(S)$ and 
$r \mapsto \F_{p,r}(S)$ are non-decreasing on $(0,\infty)$;
\item
if $S \ne 0$, then $\Fi(S) > 0$ (possibly $\Fi(S) = \infty$);
\item
$\spt(S) \sub \C_p(\Lam(S))$.
\een
\end{lemma}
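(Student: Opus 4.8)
The four assertions are all consequences of the defining invariance $h_{p,\lam\#}S = S$ for $\lam \in (0,1)$ together with the elementary facts about $h_{p,\lam}$ recorded earlier (it is $\lam$-Lipschitz, fixes $p$, and maps $\B{p}{r}$ onto $\B{p}{\lam r}$, since $d(p,\si_{px}(\lam)) = \lam\,d(p,x)$). The plan is to prove (1) first, then derive (2), (3), (4) from it.

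\textit{Proof of (1).} For almost every $r > 0$ we have $S \on \B{p}{r} \in \bI_{n,\cs}(X)$ by the slicing facts in Section~\ref{subsect:slicing}; but if $S$ is conical then this holds for \emph{every} $r > 0$, because $S \on \B{p}{r} = h_{p,\lam\#}(S \on \B{p}{r/\lam})$ for suitable $\lam \in (0,1)$ with $r/\lam$ a good slicing radius, and push-forwards of compactly supported integral currents under proper Lipschitz maps are again in $\bI_{n,\cs}(X)$. To see this identity, note that $h_{p,\lam}^{-1}(\B{p}{r}) \supset \B{p}{r/\lam}$ (in fact equality on the relevant set), so by the compatibility of restriction with push-forward, $h_{p,\lam\#}(S \on h_{p,\lam}^{-1}(\B{p}{r})) = (h_{p,\lam\#}S)\on \B{p}{r} = S \on \B{p}{r}$; and $S \on h_{p,\lam}^{-1}(\B{p}{r})$ agrees with $S \on \B{p}{r/\lam}$ up to a piece of $\|S\|$-measure zero, which one arranges by choosing $r/\lam$ so that $\|S\|(\Sph{p}{r/\lam}) = 0$. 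The same computation with the roles reversed gives $h_{p,\lam\#}(S \on \B{p}{r}) = (h_{p,\lam\#}S)\on \B{p}{\lam r} = S \on \B{p}{\lam r}$ for all $r,\lam$, which is the second claim of (1).

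\textit{Proof of (2).} Since $h_{p,\lam}$ is $\lam$-Lipschitz, the mass estimate for push-forwards gives $\|S\on\B{p}{\lam r}\| = \|h_{p,\lam\#}(S\on\B{p}{r})\|(X) \le \lam^n\,\|S\on\B{p}{r}\|(X)$, i.e. $\|S\|(\B{p}{\lam r}) \le \lam^n\|S\|(\B{p}{r})$, which is exactly $\G_{p,\lam r}(S) \le \G_{p,r}(S)$; thus $r \mapsto \G_{p,r}(S)$ is non-decreasing. For the filling density, given $V \in \bI_{n+1,\cs}(X)$ with $\spt(S - \d V)\cap \B{p}{r} = \es$, the push-forward $h_{p,\lam\#}V$ is an $(n+1)$-current with $\d(h_{p,\lam\#}V) = h_{p,\lam\#}(\d V)$, and $\spt(S - h_{p,\lam\#}\d V) = \spt(h_{p,\lam\#}(S - \d V)) \subset h_{p,\lam}(\spt(S-\d V))$, which misses $\B{p}{\lam r}$; since $\M(h_{p,\lam\#}V) \le \lam^{n+1}\M(V)$, taking infima yields $\F_{p,\lam r}(S)\,(\lam r)^{n+1} \le \lam^{n+1}\F_{p,r}(S)\,r^{n+1}$, i.e. $\F_{p,\lam r}(S) \le \F_{p,r}(S)$. (There is a small care point: one must use $S = h_{p,\lam\#}S$ to rewrite the support condition, which is where conicality enters.)

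\textit{Proof of (3) and (4).} If $S \ne 0$, pick $x \in \spt(S)$ and $r_0 := d(p,x) > 0$; then $\|S\|(\B{p}{r}) > 0$ for all $r \ge r_0$, so by Lemma~\ref{lem:fill-density}-type reasoning — or more directly, since a current supported outside a ball cannot fill a nonzero cycle whose support meets that ball — we get $\F_{p,r}(S) > 0$ for $r > r_0$, hence $\Fi(S) = \limsup_{r\to\infty}\F_{p,r}(S) > 0$ by monotonicity from (2). Actually the cleanest route for (3): by (2), $\F_{p,r}(S)$ is non-decreasing, so $\Fi(S) \ge \F_{p,r_0+1}(S)$, and $\F_{p,r_0+1}(S) > 0$ because any $V$ with $\spt(S-\d V)$ disjoint from $\B{p}{r_0+1}$ forces $S \on \B{p}{r_0+1} = (\d V)\on\B{p}{r_0+1}$ to be a nonzero integral cycle boundary, so $\M(V) > 0$. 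For (4), let $x \in \spt(S)$ and consider the geodesic $\si_{px}$. The invariance $h_{p,\lam\#}S = S$ means $\|S\| = \|h_{p,\lam\#}S\|$ is concentrated on $h_{p,\lam}(\spt(S))$, so $h_{p,\lam}(\spt(S))$ is $\|S\|$-full; combined with closedness of supports and the fact that $h_{p,\lam}(x) = \si_{px}(\lam) \to p$ as $\lam \to 0$ and $\to x$ as $\lam \to 1$, one deduces $\si_{px}([0,1]) \subset \spt(S)$ whenever $x \in \spt(S)$ — indeed each $h_{p,\lam}(x) \in \spt(h_{p,\lam\#}S) = \spt(S)$. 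Passing to limit points in $\olX$: if $x_i \in \spt(S)$ with $x_i \to \xi \in \Lam(S)$, the rays $\si_{p x_i}$ (extended by $\si$-rays if $x_i \notin$ unbounded part, or directly the relevant $\si$-ray through $p$ and $x_i$) converge to the $\si$-ray from $p$ representing $\xi$, and that whole ray lies in $\ol{\spt(S)} \cap X = \spt(S)$ by the previous sentence applied along the sequence and closedness. Hence every point of $\C_p(\Lam(S))$ — which by definition is the union of traces of $\si$-rays from $p$ representing points of $\Lam(S)$ — lies in $\spt(S)$, giving $\spt(S) \subset \C_p(\Lam(S))$ once we also note $\spt(S) = \C_p(\di\spt(S))$ would need the reverse too; but the stated inclusion is only $\spt(S) \subset \C_p(\Lam(S))$, and this follows because each $x \in \spt(S)$ lies on the geodesic $\si_{px}$, which extends (by condition (3) of the bicombing and properness together with the invariance, pushing out $h_{p,1/\lam}$) to a ray from $p$ representing a point of $\Lam(S)$ — so $x \in \C_p(\Lam(S))$.

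\textit{Main obstacle.} The delicate point is (4): carefully justifying that each $x \in \spt(S)$ lies on a $\si$-\emph{ray} from $p$ (not merely the geodesic segment $\si_{px}$) whose ideal endpoint is in $\Lam(S)$. One must use conicality in the ``outward'' direction — applying $h_{p,1/\lam}$ is not directly available since $1/\lam > 1$, so instead one argues that the points $\si_{px}(1/\lam')$ for the appropriate reparametrization, equivalently the points $y$ with $h_{p,\lam'\#}$-preimage hitting $x$, all lie in $\spt(S)$; letting these escape to infinity along $\im(\si_{px})$ (extended via bicombing axiom (3) to a ray, using properness) produces an ideal point $\xi \in \Lam(S)$ with $x \in \im(\rho_p^\xi)$. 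The rest is bookkeeping with push-forwards, the mass bound $\|f_\#S\| \le L^n\|S\|$, and lower semicontinuity, all of which is routine given the preliminaries.
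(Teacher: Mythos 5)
Your proof is correct and follows essentially the same route as the paper's: (1) and (2) via the exact identity $h_{p,\lam}^{-1}(\B{p}{r}) = \B{p}{r/\lam}$ (no measure-zero caveat is actually needed, since $d(p,h_{p,\lam}(x)) = \lam\,d(p,x)$) together with the push-forward mass bound, (3) by monotonicity plus the observation that a competitor $V$ for $\F_{p,s}(S)$ must be non-trivial, and (4) by producing preimages $y_k \in \spt(S)$ with $h_{p,1/k}(y_k)=x$ from $\spt(S) = \spt(h_{p,\lam\#}S) \sub h_{p,\lam}(\spt(S))$ and letting them escape to infinity. One parenthetical in (4) is wrong — the inclusion goes $\spt(f_\#S) \sub f(\spt(S))$ and not conversely, so ``$h_{p,\lam}(x) \in \spt(h_{p,\lam\#}S)$'' and hence $\si_{px}([0,1]) \sub \spt(S)$ are not justified as stated — but your actual derivation of $\spt(S) \sub \C_p(\Lam(S))$ does not rely on that claim.
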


\begin{proof}
Put $B_r := \B{p}{r}$ for all $r > 0$. 
To see that $S \on B_r \in \bI_{n,\cs}(X)$ for {\em every} $r > 0$, 
note that $S \on B_s \in \bI_{n,\cs}(X)$ for almost every $s > 0$, 
pick such an $s > r$, and put $\lam := r/s$. 
Now $h_{p,\lam\#}(S \on B_s) \in \bI_{n,\cs}(X)$, and since 
$B_s = h_{p,\lam}^{\,-1}(B_r)$ and $h_{p,\lam\#}S = S$, it follows that
\[
h_{p,\lam\#}(S \on B_s) = (h_{p,\lam\#}S) \on B_r = S \on B_r.
\]
From this, the second assertion of~(1) is also clear.

We show~(2). For any $s > r > 0$,
\[
\|S\|(B_r) = \|h_{p,r/s\#}S\|(B_r) \le (r/s)^n\|S\|(B_s),
\]
thus $\G_{p,r}(S)$ is non-decreasing in $r$. Similarly, if there exists 
$V \in \bI_{n+1,\cs}(X)$ such that $\spt(S - \d V) \cap B_s = \es$,
then $\M(h_{p,r/s\#}V) \le (r/s)^{n+1}\M(V)$, and the support of 
$S - \d(h_{p,r/s\#}V) = h_{p,r/s\#}(S - \d V)$ is disjoint from $B_r$,
thus $\F_{p,r}(S) \le \M(V)/s^{n+1}$. Taking the infimum over all such $V$,
we get that $\F_{p,r}(S) \le \F_{p,s}(S)$ (where $\F_{p,s}(S) = \infty$ if no
such $V$ exists).

As for~(3), note that if $S \ne 0$, there is an $s > 0$ such that 
$\spt(S) \cap B_s \ne \es$. Then any $V$ as above must be non-zero,
thus $\F_{p,s}(S) \in (0,\infty]$, and $\Fi(S) \ge \F_{p,s}(S)$ 
by monotonicity. 

Finally, observe that
$\spt(S) = \spt(h_{p,\lam\#}S) \sub h_{p,\lam}(\spt(S))$ for all
$\lam \in (0,1]$. Hence, for every $x_1 \in \spt(S)$ there exist
$x_2,x_3,\ldots \in \spt(S)$ such that $h_{p,1/k}(x_k) = x_1$, and~(4) follows.  
\end{proof}

We now prove a first part of Theorem~\ref{intro-f-classes} stated in the
introduction.

\begin{theorem}[conical representative] \label{thm:conical-repr}
Let $X$ be a proper metric space with a convex bicombing~$\si$ and with
$\asrk(X) = n \ge 2$. Suppose that $S \in \bZ_{n,\loc}^\infty(X)$
and $p \in X$. Then there exists a unique local cycle 
$S_{p,0} \in \bZ_{n,\loc}^\infty(X)$ that is conical with respect to $p$ and\/ 
$\F$-asymptotic to $S$. Furthermore, $\Gi(S_{p,0}) \le \Gi(S)$,
$\Lam(S_{p,0}) \sub \Lam(S)$, and $\spt(S_{p,0}) \sub \C_p(\Lam(S))$.
\end{theorem}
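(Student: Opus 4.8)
The plan is to construct $S_{p,0}$ as a limit of rescaled push-forwards of $S$ under the geodesic contractions $h_{p,\lam}$, exploiting that $\asrk(X)=n$ forces any ``radial cone'' over an $n$-cycle to become stationary. Concretely, for $\lam\in(0,1)$ consider $h_{p,\lam\#}S\in\bZ_{n,\loc}(X)$; since $h_{p,\lam}$ is $\lam$-Lipschitz and $S$ has $\Gi(S)<\infty$, one checks that $\Gi(h_{p,\lam\#}S)\le\lam^0\cdot(\text{const})$ is uniformly bounded as $\lam\to 1$ — more precisely, for a ball $\B{p}{r}$ one has $\|h_{p,\lam\#}S\|(\B{p}{r})\le\lam^n\|S\|(\B{p}{r/\lam})$, which for fixed $r$ and $\lam$ near $1$ is $\le(C+\eps)r^n$ by the density bound that Proposition~\ref{prop:part-filling} upgrades from the asymptotic density. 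First I would show that the family $(h_{p,\lam\#}S)_{\lam\in(0,1)}$ has uniformly locally bounded mass and boundary (boundary zero, since $S$ is a cycle), so by Theorem~\ref{thm:cptness}(2) — applicable since a space with a convex bicombing satisfies (CI$_n$) — any sequence $\lam_k\uparrow 1$ has a subsequence along which $h_{p,\lam_k\#}S$ converges in the local flat topology to some $S_{p,0}\in\bZ_{n,\loc}(X)$. The semicontinuity of mass then gives $\Gi(S_{p,0})\le\Gi(S)$.

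Next I would verify that the limit $S_{p,0}$ is conical and $\F$-asymptotic to $S$. Conicality: for a fixed $\mu\in(0,1)$, $h_{p,\mu\#}(h_{p,\lam_k\#}S)=h_{p,\mu\lam_k\#}S$ by the semigroup property of the contractions (which follows from consistency, axiom~(3), of the bicombing), and $h_{p,\mu\lam_k\#}S$ also converges to $S_{p,0}$ (it is just another sequence of the same type with parameter tending to $1$ after the obvious reparametrization — here one must be slightly careful and instead note that $\mu\lam_k\to\mu$, so actually $h_{p,\mu\lam_k\#}S\to h_{p,\mu\#}S_{p,0}$ by continuity of push-forward under local flat convergence along a $\mu$-Lipschitz map); comparing limits forces $h_{p,\mu\#}S_{p,0}=S_{p,0}$. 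The $\F$-asymptoticity is the crucial point: I would show $\Fi(S-h_{p,\lam\#}S)\to 0$ as $\lam\to 1$, using that $S-h_{p,\lam\#}S=\d W_\lam$ where $W_\lam$ is (up to a boundary term vanishing in the cycle case) the homotopy current $h_{p\#}(\bb{\lam,1}\times(S\on\B{p}{r}))$ restricted appropriately, whose mass in $\B{p}{r}$ is controlled by $(1-\lam)\cdot r\cdot\|S\|(\B{p}{r/\lam})\le(1-\lam)C r^{n+1}$ — giving $\F_{p,r}(S-h_{p,\lam\#}S)\le(1-\lam)C\cdot(\text{const})$, uniformly in $r$; then Lemma~\ref{lem:f-conv} ($\F$-convergence) combined with the local flat convergence $h_{p,\lam_k\#}S\to S_{p,0}$ yields $\Fi(S-S_{p,0})=0$, i.e.\ $S_{p,0}\sim_\F S$. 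In particular $S_{p,0}\in\bZ_{n,\loc}^\infty(X)$.

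For uniqueness, suppose $S',S''$ are both conical with respect to $p$ and both $\F$-asymptotic to $S$; then $Z:=S'-S''$ is conical with respect to $p$ (push-forward is linear and $h_{p,\lam\#}$ is linear) and satisfies $\Fi(Z)=0$. By Lemma~\ref{lem:conical}(3), a nonzero conical cycle has $\Fi>0$; hence $Z=0$. This is clean and short. For the inclusions $\Lam(S_{p,0})\subset\Lam(S)$ and $\spt(S_{p,0})\subset\C_p(\Lam(S))$: the support inclusion $\spt(S_{p,0})\subset\C_p(\Lam(S_{p,0}))$ is exactly Lemma~\ref{lem:conical}(4), so it suffices to prove $\Lam(S_{p,0})\subset\Lam(S)$. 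For this I would argue that every point of $\spt(S_{p,0})$ is a local-flat-limit obstruction, more concretely that $\spt(S_{p,0})\subset\ol{\C_p(\spt(S))}$: indeed $\spt(h_{p,\lam\#}S)\subset h_{p,\lam}(\spt(S))\subset\C_p(\spt(S))$, and support behaves upper-semicontinuously enough under local flat convergence (if $x\notin\ol{\C_p(\spt(S))}$ then a whole ball around $x$ is disjoint from all $\spt(h_{p,\lam\#}S)$, so $\|S_{p,0}\|$ vanishes there). Taking limit sets and using that $\di(\C_p(\spt(S)))=\Lam(S)$ (a cone from $p$ adds no new boundary points beyond those of the base, by the cone topology) gives $\Lam(S_{p,0})\subset\Lam(S)$.

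The main obstacle I anticipate is the bookkeeping in the $\F$-asymptoticity step — in particular making the homotopy/cone mass estimate genuinely uniform in the radius $r$ and correctly handling that $S\on\B{p}{r}$ is only an integral current for a.e.\ $r$, so the homotopy current $h_{p\#}(\bb{\lam,1}\times(S\on\B{p}{r}))$ must be assembled along a good sequence of radii and its boundary term $h_{p\#}(\bb{\lam,1}\times\d(S\on\B{p}{r}))$ (which is nonzero even though $S$ is a cycle, since $\d(S\on\B{p}{r})$ is the slice) must be absorbed. The convexity axiom~(2) of the bicombing is what makes the Lipschitz constants of $h_{p,\lam}$ equal to $\lam$ and makes the cone mass bound $\le(1-\lam)r\,\M(S\on\B{p}{r})$ valid, via the estimate recalled at the end of Section~\ref{subsect:homotopies}; I would lean on that rather than re-deriving it. A secondary subtlety is that a priori the limit might depend on the chosen sequence $\lam_k$, but uniqueness (Lemma~\ref{lem:conical}(3)) retroactively shows it does not, so I would prove existence along a subsequence first and invoke uniqueness to conclude the full limit exists and is canonical.
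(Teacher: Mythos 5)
There is a genuine gap: you run the contraction in the wrong direction. The conical representative must be obtained as the limit of $S_{p,\lam}:=h_{p,\lam\#}S$ as $\lam\downarrow 0$, not as $\lam\uparrow 1$. Since $d(h_{p,\lam}(x),x)=(1-\lam)\,d(p,x)$, the maps $h_{p,\lam_k}$ with $\lam_k\uparrow 1$ converge to the identity uniformly on bounded sets, so $h_{p,\lam_k\#}S\to S$ in the local flat topology and your limit is just $S$ itself, which is not conical in general. Your conicality argument reflects this: you correctly note that $h_{p,\mu\lam_k\#}S\to h_{p,\mu\#}S_{p,0}$, but since $\mu\lam_k\to\mu\ne 1$ this is \emph{not} another sequence of the family whose limit you already know to be $S_{p,0}$, so ``comparing limits'' produces nothing. (In the correct argument, with $\lam_i\downarrow 0$ one also has $\mu\lam_i\downarrow 0$, so $S_{p,\mu\lam_i}$ is again a subsequence of the contracting family converging to $S_{p,0}$, and this is what forces $h_{p,\mu\#}S_{p,0}=S_{p,0}$.)

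Second, and independently of the direction issue, your $\F$-asymptoticity estimate does not reach $\lam=0$. The homotopy current $h_{p\#}\bigl(\bb{\lam,1}\times(S\on\B{p}{R})\bigr)$ has mass of order $(1-\lam)R\cdot\M(S\on\B{p}{R})\approx(1-\lam)CR^{n+1}$, which is small only for $\lam$ near $1$; in the regime $\lam$ near $0$ the bound is useless, and telescoping over a geometric sequence $\lam_k=\mu^k$ diverges (the accumulated bound is of order $k(1-\mu)\to\infty$ as $\mu^k\to 0$). This is exactly where the hypothesis $\asrk(X)=n$ must enter, and your proposal never invokes it for this step. The paper instead forms the cycle $Z:=S_{p,\lam}\on\B{p}{\lam R}-S\on\B{p}{R}+T$, with $T$ the truncated geodesic cone over the slice $\d(S\on\B{p}{R})$, checks that $Z$ has controlled density, and applies Proposition~\ref{prop:part-filling} (hence the sub-Euclidean isoperimetric inequality) to produce fillings of mass $<\eps r^{n+1}$ once $\lam R>r$, yielding $\F_{p,r}(S_{p,\lam}-S)<\eps$ for all $r>a'_\eps$ uniformly in $\lam$. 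Your uniqueness argument via Lemma~\ref{lem:conical}(3) and your derivation of the support and limit-set inclusions are correct and agree with the paper, but the existence half of the proof fails as written.
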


Note that by uniqueness, $S_{p,0} = 0$ if and only if $\Fi(S) = 0$.
For the proof of Theorem~\ref{thm:conical-repr}, we consider the family 
of all
\[
S_{p,\lam} := h_{p,\lam\#}S \in \bZ_{n,\loc}(X)
\]
for $\lam \in (0,1]$. We show that, as $\lam \to 0$, this family 
converges in the local flat topology to the desired local cycle $S_{p,0}$. 

\begin{proof}
Pick any $C > \Gi(S)$. Then there exists an $a \ge 0$ such that
$\G_{p,r}(S) \le C$ for all $r > a$. We write again $B_r := \B{p}{r}$. 
Since $h_{p,\lam}$ is $\lam$-Lipschitz,
\[
\|S_{p,\lam}\|(B_r) \le \lam^n \|S\|(B_{\lam^{-1}r}) \le Cr^n
\]
for all $r > \lam a$ (see Section~\ref{subsect:mass}), 
thus $\G_{p,r}(S_{p,\lam}) \le C$ for all $r > \lam a$.

First we construct partial fillings of $S_{p,\lam} - S$ for a fixed 
$\lam \in (0,1)$. Let $R' > a/2$. Then $\|S\|(B_{2R'}) \le C(2R')^n$,
hence there exists an $R \in (R',2R')$ such 
that $S \on B_R \in \bI_{n,\cs}(X)$,
\[
\M(\d(S \on B_R)) \le 2^nC (R')^{n-1} \le 2^nC R^{n-1},
\]
and $h_{p,\lam\#}(S \on B_R) = S_{p,\lam} \on B_{\lam R} \in \bI_{n,\cs}(X)$.
The truncated geodesic cone 
$T := h_{p\#} \bigl( \bb{\lam,1} \times \d(S \on B_R) \bigr) \in \bI_{n,\cs}(X)$ 
with boundary
\[
\d T = \d(S \on B_R) - h_{p,\lam\#}\d(S \on B_R)
\]
satisfies $\M(T) \le R\,\M(\d(S \on B_R)) \le 2^nC R^n$
(see Section~\ref{subsect:homotopies}). 
It follows that $\|T\|(B_r) \le 2^nC r^n$ for all $r > 0$. 
Hence,
\[
Z := S_{p,\lam} \on B_{\lam R} - S \on B_R + T
\] 
is a cycle satisfying $\G_{p,r}(Z) \le C'$ for all $r > a$ and for some
constant $C' = C'(C,n)$. 
Proposition~\ref{prop:part-filling} (partial filling) shows that
for every $\eps > 0$ there is an $a'_\eps = a'_\eps(X,C',a) \ge 0$
such that if $r > a'_\eps$, there exists $V \in \bI_{n+1,\cs}(X)$ with 
$\spt(Z - \d V) \cap B_r = \es$ and $\M(V) < \eps r^{n+1}$.
If we choose $R'$ sufficiently large, so that $\lam R > r$, then
$\spt(T) \cap B_r = \es$ and $\spt(S_{p,\lam} - S - \d V) \cap B_r = \es$. 
This shows that $\F_{p,r}(S_{p,\lam} - S) < \eps$ 
whenever $\eps > 0$ and $r > a'_\eps$.

Next, suppose that $0 < \lam' < \lam \le 1$. 
Let $\eps > 0$ and $r > \lam a'_\eps$. 
Since $\lam'/\lam < 1$ and $r/\lam > a'_\eps$,
the above result yields that $\F_{p,r/\lam}(S_{p,\lam'/\lam} - S) < \eps$.
Since $h_{p,\lam}$ is $\lam$-Lipschitz, it follows that
$\F_{p,r}(S_{p,\lam'} - S_{p,\lam}) < \eps$.

We can now conclude the proof. Since $\G_{p,r}(S_{p,\lam}) \le C$ for
$r > \lam a$, Theorem~\ref{thm:cptness} (compactness) shows that for some
sequence $\lam_i \downarrow 0$, the respective $S_{p,\lam_i}$ converge in the
local flat topology to a limit $S_{p,0} \in \bZ_{n,\loc}(X)$.
By Lemma~\ref{lem:f-conv} ($\F$-convergence),
$\F_{p,r}(S_{p,0} - S_{p,\lam_i}) \to 0$ for every fixed $r > 0$. 
Using the inequality
\[
\F_{p,r}(S_{p,0} - S_{p,\lam}) 
\le \F_{p,r}(S_{p,0} - S_{p,\lam_i}) + \F_{p,r}(S_{p,\lam_i} - S_{p,\lam}),
\]
we infer that $\F_{p,r}(S_{p,0} - S_{p,\lam}) \le \eps$ whenever
$\lam \in (0,1]$, $\eps > 0$, and $r > \lam a'_\eps$.
This shows at once that $\Fi(S_{p,0} - S) = 0$ and 
that $S_{p,\lam} \to S_{p,0}$ in the local flat topology, as $\lam \to 0$.
To see that $S_{p,0}$ is conical with respect to $p$, note that for any 
$\mu \in (0,1)$, $h_{p,\mu\#}S_{p,0}$ is the weak limit of 
$h_{p,\mu\#}S_{p,\lam} = S_{p,\mu\lam}$ for $\lam \to 0$, which is again $S_{p,0}$. 

Next we show that $\Gi(S_{p,0}) \le \Gi(S)$. For all pairs $s > r > 0$,
\[
\|S_{p,0}\|(\B{p}{r}) \le \liminf_{\lam \to 0}\|S_{p,\lam}\|(\B{p}{s}) \le Cs^n
\]
by the lower semicontinuity of mass with respect to weak convergence 
and since $\G_{p,s}(S_{p,\lam}) \le C$ for $s > \lam a$. 
As $C > \Gi(S)$ was arbitrary, this gives the result. In particular
$S_{p,0} \in \bZ_{n,\loc}^\infty(X)$.

If $S' \in \bZ_{n,\loc}^\infty(X)$ is another local cycle that is 
conical with respect to $p$ and $\F$-asymptotic to $S$, 
then $S' \sim_\F S_{p,0}$ and so $S' = S_{p,0}$ by Lemma~\ref{lem:conical}.

By construction, $\spt(S_{p,\lam}) \sub \C_p(\spt(S))$ for all
$\lam \in (0,1)$. Therefore $\spt(S_{p,0}) \sub \ol\C_p(\spt(S))$ and thus
\[
\Lam(S_{p,0}) \sub \di(\ol\C_p(\spt(S))) = \Lam(S).
\]
Hence, by Lemma~\ref{lem:conical}, $\spt(S_{p,0}) \sub \C_p(\Lam(S_{p,0}))
\sub \C_p(\Lam(S))$.
\end{proof}

A consequence of Theorem~\ref{thm:conical-repr}
(and Proposition~\ref{prop:part-filling}) is the following uniform
density bound.

\begin{proposition}[controlled density] \label{prop:contr-density}
Let $X$ be a proper metric space with a convex bicombing~$\si$ and 
with $\asrk(X) = n \ge 2$. Then for all $C,\eps > 0$ there is a constant
$a \ge 0$ such that every minimizing $S \in \bZ_{n,\loc}^\infty(X)$
with $\Gi(S) \le C$ has $(C+\eps,a)$-controlled density.
\end{proposition}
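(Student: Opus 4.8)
The plan is to exploit the conical representative from Theorem~\ref{thm:conical-repr} together with the monotonicity of the density function for conical cycles. Fix constants $C,\eps > 0$. The key point is that for a conical cycle the function $r \mapsto \G_{p,r}$ is \emph{non-decreasing} (Lemma~\ref{lem:conical}(2)), so a bound on its limit at infinity automatically controls it at every scale and at every center. I would therefore first reduce the statement about arbitrary minimizers to a statement about conical minimizers.

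\medskip

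First I would argue as follows. Let $S \in \bZ_{n,\loc}^\infty(X)$ be minimizing with $\Gi(S) \le C$, and fix $p \in X$. By Theorem~\ref{thm:conical-repr} there is a conical representative $S_{p,0} \in [S]$ with $\Gi(S_{p,0}) \le \Gi(S) \le C$; since $S_{p,0}$ is conical with respect to $p$, Lemma~\ref{lem:conical}(2) gives $\G_{p,r}(S_{p,0}) \le \Gi(S_{p,0}) \le C$ for \emph{all} $r > 0$. Now I want to compare $S$ and $S_{p,0}$. Both are $(1,0)$-quasi-minimizing (being minimizing) with, say, $(C+1,a_0)$-controlled density for a suitable $a_0$ — here I would first invoke Proposition~\ref{prop:part-filling} applied to $S$ with $S' = S_{p,0}$ (which is legitimate since $\Fi(S - S_{p,0}) = 0$ by Theorem~\ref{thm:conical-repr}) to get that $\G_{p,r}(S) < C + 1$ for $r$ large; but this only gives control \emph{at the single center} $p$, which is exactly the subtlety. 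So the comparison has to be run through the Morse Lemma: $Z := S - S_{p,0} \in \bZ_{n,\loc}(X)$ has $\Fi(Z) = 0$, and has controlled density (with constants depending only on $X,C$, via Proposition~\ref{prop:part-filling} applied at each center, or more simply because both summands do once we know $S$ has controlled density — but establishing that for $S$ is the goal). The cleanest route is: apply Theorem~\ref{thm:morse-1} to $Z$, which is $(1,0)$-quasi-minimizing mod $\spt(S_{p,0})$, to conclude $\Hd(\spt(S),\spt(S_{p,0})) \le b$ for some $b = b(X,C)$.

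\medskip

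Given the Hausdorff bound $\Hd(\spt(S),\spt(S_{p,0})) \le b$, I would finish by a slicing/covering estimate. For any $q \in X$ and $r > 0$: if $\B{q}{r} \cap \spt(S) = \es$ then $\G_{q,r}(S) = 0$; otherwise pick $x \in \spt(S) \cap \B{q}{r}$, so there is $x' \in \spt(S_{p,0})$ with $d(x,x') \le b$, hence $\B{q}{r} \sub \B{x'}{r + d(q,x) + b} \sub \B{x'}{2r + b}$. Since $S_{p,0}$ is conical with respect to $p$, Lemma~\ref{lem:conical}(2) applies only for balls centered at $p$, not at $x'$ — so here I need instead to compare $S$ directly with $S$ itself using quasi-minimality, or control $\|S_{p,0}\|$ on balls centered at $x'$ via $\Gi(S_{p,0}) \le C$, which is a \emph{center-independent} bound; indeed $\|S_{p,0}\|(\B{x'}{2r+b}) \le \Gi(S_{p,0})\,(2r+b)^n$ for $2r + b$ large, by the center-independence of $\Gi$. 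But a Hausdorff bound on supports does not by itself transfer a mass bound. So the honest finish is: use Lemma~\ref{lem:fill-density} and Proposition~\ref{prop:part-filling} in the style of Theorem~\ref{thm:morse-1}'s proof to bound $\F_{q,r}(S)$ for $q \in \spt(S)$, then invoke the quasi-minimality (minimality) of $S$ together with Theorem~\ref{thm:isop-ineq} to turn the filling-density bound into a mass lower bound, and a monotonicity/integration argument (as in Lemma~\ref{lem:density}) to get the \emph{upper} bound $\G_{q,r}(S) \le C + \eps$ for $r > a$, with $a = a(X,C,\eps)$.

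\medskip

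The main obstacle I anticipate is precisely this last transfer: passing from the density bound for the \emph{conical} representative (where monotonicity in $r$ is available but only about the base point $p$) to a \emph{uniform, center-independent} density bound $\G_{q,r}(S) \le C + \eps$ for the original minimizer. The resolution should be to observe that for a minimizing $S$ the density ratio $\G_{q,r}(S)$ is itself monotone in $r$ away from $\spt(\d S)$ — a standard monotonicity formula for area-minimizers (it follows by the same integration argument as in the proof of Lemma~\ref{lem:density}, comparing $S \on \B{q}{s}$ with the cone over its boundary slice and using the $\CAT(0)$-type cone inequality $\M(\text{cone}) \le (s/(n+1))\M(R)$ valid for spaces with a convex bicombing) — so that $\G_{q,r}(S) \le \limsup_{r\to\infty}\G_{q,r}(S) = \Gi(S) \le C$ for all $r$, with the failure of exact monotonicity near small scales absorbed into the additive $\eps$ and the threshold $a$. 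Getting the constants in this monotonicity formula to depend only on $X$ (through its bicombing) and not on $S$ is the delicate bookkeeping, but it is forced by the structure already set up in Section~\ref{subsect:homotopies}.
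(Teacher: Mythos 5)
There is a genuine gap, and it sits exactly where you flag the ``subtlety.'' The paper's proof is a one-liner that you circle around but never land on: the point is not to transfer the bound from the single center $p$ to other centers, but to \emph{rerun the whole argument at every center}. For an arbitrary $q \in X$, Theorem~\ref{thm:conical-repr} gives a conical representative $S_{q,0}$ with respect to \emph{that} $q$, with $\Gi(S_{q,0}) \le \Gi(S) \le C$, and Lemma~\ref{lem:conical}(2) then gives $\G_{q,r}(S_{q,0}) \le C$ for \emph{all} $r > 0$, i.e.\ with threshold $a = 0$. Feeding this into Proposition~\ref{prop:part-filling} (with the roles $S \rightsquigarrow S_{q,0}$ and $S' \rightsquigarrow S$, legitimate since $S$ is minimizing, $\d S = \d S_{q,0} = 0$, and $\Fi(S - S_{q,0}) = 0$) yields $\G_{q,r}(S) < C + \eps$ for all $r > a'_\eps$, where $a'_\eps$ depends only on $X, n, C, \eps$ and the threshold $a=0$ --- crucially, \emph{not} on $q$. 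Since $q$ was arbitrary, that is the whole proof. Your detour through Theorem~\ref{thm:morse-1} and a covering argument is unnecessary, and as you yourself observe, a Hausdorff bound on supports cannot transfer a mass bound, so that branch does not close.

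Your proposed fallback --- the interior monotonicity formula $\G_{q,r}(S) \le \Gi(S)$ for minimizers --- is also not available in the stated generality. It requires the sharp cone inequality $\M(T_s) \le (s/n)\,\M(R_s)$ for the cone over the $(n-1)$-dimensional slice $R_s$, which the paper has only for $\CAT(0)$ spaces (Theorem~4.1 in~\cite{Wen3}; see Remark~\ref{rem:monotonicity}, which makes precisely this distinction). For a general convex bicombing, Section~\ref{subsect:homotopies} provides only $\M(T_s) \le s\,\M(R_s)$, and the resulting differential inequality $\mu(s) \le s\,\mu'(s)$ yields monotonicity of $\mu(s)/s$, not of $\mu(s)/s^n$; this gives no control on $\G_{q,r}(S)$ in terms of $\Gi(S)$. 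So your claim that the $\CAT(0)$-type constant is ``valid for spaces with a convex bicombing'' is false, and the final step of your argument fails outside the $\CAT(0)$ case.
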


\begin{proof}
Let $p \in X$. Since $\Fi(S - S_{p,0}) = 0$ and 
$\G_{p,r}(S_{p,0}) \le \Gi(S_{p,0}) \le \Gi(S) \le C$ for all $r > 0$,
Proposition~\ref{prop:part-filling} (partial filling)
shows that for every $\eps > 0$ there is an $a = a(X,C,\eps) \ge 0$
such that $\G_{p,r}(S) \le C + \eps$ for all $r > a$.
As $p$ was arbitrary, this yields the result.
\end{proof}

\begin{remark} \label{rem:monotonicity}
When $X$ is a proper $\CAT(0)$ space, it follows more directly that
every minimizing $S \in \bZ_{n,\loc}^\infty(X)$ has $(C,0)$-controlled 
density for $C := \Gi(S)$, regardless of the asymptotic rank of $X$. 
In fact, for every fixed $p \in X$, the function $r \mapsto \G_{p,r}(S)$ 
is non-decreasing on $(0,\infty)$.
This monotonicity property is shown by an argument very similar 
to the proof of Lemma~\ref{lem:density} (density), using the
sharp cone inequality $\M(T_s) \le (s/n)\,\M(R_s)$
instead of the Euclidean isoperimetric inequality
$\M(T_s) \le \gam\,\M(R_s)^{n/(n-1)}$ (compare Corollary~4.4 in~\cite{Wen3}).
\end{remark}


\section{Visibility and applications} \label{sect:visibility}

Theorem~\ref{thm:conical-repr} (conical representative) shows in particular
that for every $S \in \bZ_{n,\loc}^\infty(X)$ and $p \in X$ there is an 
$S_{p,0} \in [S]$ with support in the geodesic cone $\C_p(\Lam(S))$. 
We now assume in addition that $S \in \bZ_{n,\loc}^\infty(X)$ 
is quasi-minimizing. If both $S$ and $S_{p,0}$ had controlled density,
we could conclude directly from Theorem~\ref{thm:morse-1} (Morse Lemma I)  
that the support of $S$ is within uniformly bounded distance from
$\spt(S_{p,0})$ and hence from $\C_p(\Lam(S))$. The following result,
which subsumes Theorem~\ref{intro-visibility}, provides
a sublinear bound for the general case. As indicated in the introduction,
this may be viewed as an analog of the visibility axiom from~\cite{EbeO}.

\begin{theorem}[visibility property] \label{thm:visibility} 
Let $X$ be a proper metric space with a convex bicombing~$\si$ and with
$\asrk(X) = n \ge 2$.
Then for all $Q \ge 1$, $C > 0$, $a \ge 0$, and $\eps > 0$ there 
exists $r_\eps > 0$ such that the following holds. 
Suppose that $S \in \bZ_{n,\loc}(X)$ is $(Q,a)$-quasi-minimizing and satisfies
$\G_{p,r}(S) \le C$ for some $p \in X$ and for all $r > a$.
If $x \in \spt(S)$ is a point with $d(p,x) \ge r_\eps$, then
\ben
\item
for every $\lam \in (0,1)$ there is an
$x_\lam \in \spt(S)$ such that $d(x,h_{p,\lam}(x_\lam)) < \eps\,d(p,x)$;
\item
there exists a ray $\rho \in \Ray^\si_1 X$ with $\rho(0) = p$ and 
$[\rho] \in \Lam(S)$ such that $d(x,\im(\rho)) < \eps\,d(p,x)$.
\een  
\end{theorem}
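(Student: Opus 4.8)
The plan is to derive both assertions from the machinery already in place: the lower filling-density bound of Lemma~\ref{lem:fill-density}, the partial-filling estimate of Proposition~\ref{prop:part-filling}, and the conical representative $S_{p,0} \in [S]$ produced by Theorem~\ref{thm:conical-repr}, which lives in the geodesic cone $\C_p(\Lam(S))$. The rough strategy is a ``pigeonhole over scales'' argument. Fix $x \in \spt(S)$ and set $t := d(p,x)$. For a parameter $\lam \in (0,1)$, I want to locate a point of $\spt(S)$ near the geodesic sphere $\Sph{p}{\lam t}$ that is close to the ray $\si_{px}$; then scaling by $h_{p,\lam}$ (which is $\lam$-Lipschitz and fixes $p$) will pull that point back near $x$. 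The obstruction to doing this naively is that $S$ itself need not have controlled density globally (only $\G_{p,r}(S) \le C$), and $S_{p,0}$ need not be quasi-minimizing, so Theorem~\ref{thm:morse-1} does not directly apply.

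First I would reduce~(2) to~(1): given the family $x_\lam$ from~(1) with $d(x,h_{p,\lam}(x_\lam)) < \eps\,d(p,x)$, the points $h_{p,\lam}(x_\lam)$ lie in $\C_p(\spt(S))$ on segments $\si_{p x_\lam}$; by properness and a diagonal/compactness argument as $\lam \to 0$ (passing to a convergent subsequence of the directions at $p$, using that $\olX$ is compact and $\di X$ metrizable), one extracts a $\si$-ray $\rho$ from $p$ with $[\rho] \in \Lam(S)$ and $d(x,\im(\rho)) \le \eps\,d(p,x)$ — the strict inequality is recovered by starting from a slightly smaller $\eps$. So the heart is~(1). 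For~(1), suppose toward a contradiction that for some $\eps>0$ and a sequence of points $x_i \in \spt(S)$ with $d(p,x_i) \to \infty$ there is a $\lam$ with $d(x_i, h_{p,\lam}(\spt(S))) \ge \eps\,d(p,x_i)$. Consider the ball $\B{x_i}{\eps' t_i}$ with $t_i := d(p,x_i)$ and $\eps'$ a small multiple of $\eps$, chosen so that this ball avoids $h_{p,\lam}(\spt(S))$ for the relevant range of $\lam$ — more precisely, I would work with the cone $\C_p(\Lam(S)) \supseteq \spt(S_{p,0})$ and argue that a ball of radius comparable to $\eps t_i$ about $x_i$ is disjoint from $\spt(S_{p,0})$, using the convexity of the bicombing to control how the cone spreads.

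The key computation: because $S$ is $(Q,a)$-quasi-minimizing, Lemma~\ref{lem:fill-density} gives $\F_{x_i, r}(S) \ge c = c(X,Q) > 0$ for all $r$ with $4a < r$ (here there is no mod-$Y$ since $\d S = 0$). On the other hand, apply Proposition~\ref{prop:part-filling} with base point $x_i$, with $S$ itself in the role of ``$S$'', and with $S' := S_{p,0}$ (which is minimizing — wait, $S_{p,0}$ need not be minimizing; instead take $S'$ to be a minimizer $\til S \in [S]$, which has $\Gi(\til S) \le \Gi(S) \le C$ and satisfies $\Fi(S - \til S) = 0$ by Theorem~\ref{thm:constr-minimizers}). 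But $\til S$ need not sit in the cone. So instead I would argue directly: $S$ has $\G_{p,r}(S) \le C$, hence $\G_{x_i,r}(S) \le C' = C'(C)$ for $r$ large compared to $t_i$... this fails for $r \ll t_i$. The correct move is to use that $\Fi(S - S_{p,0}) = 0$ together with a density bound for $S_{p,0}$ relative to $x_i$: since $\spt(S_{p,0}) \subseteq \C_p(\Lam(S))$ and the ball $\B{x_i}{\eps' t_i}$ is disjoint from this cone, $S_{p,0}$ contributes nothing in that ball, so a small filling of $S_{p,0}$ restricted near $x_i$ is empty, and combined with $\Fi(S - S_{p,0}) = 0$ and Lemma~\ref{lem:f-conv}-type estimates one gets $\F_{x_i, \eps' t_i}(S) \to 0$ along the sequence — contradicting the uniform lower bound $c$ once $\eps' t_i > 4a$, i.e.\ once $t_i$ is large, which pins down $r_\eps$. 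The main obstacle I anticipate is making the cone-disjointness quantitative — showing that $\B{x_i}{c_0 \eps t_i} \cap \C_p(\Lam(S)) = \es$ under the contradiction hypothesis, uniformly in $i$ — since the geodesic cone from $p$ over the limit set can come back close to $x_i$ along different rays; handling this requires the convexity property~(2) of the bicombing to bound, for any ray $\rho$ from $p$ and the point $x$, the distance $d(x,\im(\rho))$ from below in terms of $d(x, h_{p,\lam}(\im \rho))$ at the matching radius, and carefully tracking constants so that ``$x$ far from $h_{p,\lam}(\spt(S))$ for one good $\lam$'' upgrades to ``$x$ far from the whole cone $\C_p(\Lam(S))$''. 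Once that geometric lemma is in hand, the filling-density dichotomy closes the argument and yields $r_\eps = r_\eps(X,Q,C,a,\eps)$.
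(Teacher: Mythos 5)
Your core mechanism is the paper's: play the uniform lower bound $\F_{x,s}(S)\ge c$ from Lemma~\ref{lem:fill-density} against an arbitrarily small partial filling coming from Proposition~\ref{prop:part-filling} applied to the cycle $S-S_{p,\lam}$ (with $S'=0$, which the proposition allows since $\Fi(S-S_{p,\lam})=0$ is already known from the proof of Theorem~\ref{thm:conical-repr}). Two remarks on where your plan diverges from a clean execution. First, the ``main obstacle'' you anticipate --- upgrading ``$x$ far from $h_{p,\lam}(\spt(S))$'' to a quantitative disjointness from the cone --- is not there. In the contrapositive, the hypothesis for~(1) is literally that $\B{x}{\eps\,d(p,x)}$ misses $h_{p,\lam}(\spt(S))\supseteq\spt(S_{p,\lam})$, and for~(2) that it misses $\C_p(\Lam(S))\supseteq\spt(S_{p,0})$; no convexity argument about how the cone spreads is needed, and the paper treats all $\lam\in[0,1)$ uniformly by bounding $d(x,\spt(S_{p,\lam}))$, which also makes your limiting reduction of~(2) to~(1) unnecessary (though it would work). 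Second, the point that actually requires care --- and which you circle around with several false starts --- is the one you correctly diagnosed: the density bound is only available at $p$, not at $x$. The fix is to apply Proposition~\ref{prop:part-filling} at the base point $p$ with radius $r_x+s$ (where $r_x=d(p,x)$ and $s<d(x,\spt(S_{p,\lam}))$), obtaining $V$ with $\M(V)<\del(r_x+s)^{n+1}$ and $\spt(S-S_{p,\lam}-\d V)\cap\B{p}{r_x+s}=\es$; since $S_{p,\lam}$ has no support in $\B{x}{s}\sub\B{p}{r_x+s}$, this $V$ is a partial filling of $S$ alone on $\B{x}{s}$, so $cs^{n+1}\le\M(V)<\del(r_x+s)^{n+1}$ and $s<(c^{-1}\del)^{1/(n+1)}(r_x+s)$. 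Choosing $\del$ small in terms of $\eps$ and using that $a'_\del$ in Proposition~\ref{prop:part-filling} depends only on $X,C,a,\del$ gives the uniform $r_\eps$; your appeal to ``Lemma~\ref{lem:f-conv}-type estimates'' and to convergence along a sequence is the wrong tool here, since the needed uniformity is exactly the quantitative content of Proposition~\ref{prop:part-filling}. With these corrections your plan closes and coincides with the paper's proof.
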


We prove~(1) and~(2) in a unified way by bounding the distance
of $x$ from $\spt(S_{p,\lam}) = \spt(h_{p,\lam\#}S)$ for $\lam \in (0,1)$ and
from $\spt(S_{p,0})$, respectively.

\begin{proof}
Let $\lam \in [0,1)$. We know from
Theorem~\ref{thm:conical-repr} (conical representative) and its proof that 
$\Fi(S - S_{p,\lam}) = 0$ and $\G_{p,r}(S_{p,\lam}) \le C$ for all 
$r > \lam a$. In particular, $\G_{p,r}(S - S_{p,\lam}) \le 2C$ for all $r > a$.
Suppose now that $x \in \spt(S)$ and $s > 0$ are such that 
$\B{x}{s} \cap \spt(S_{p,\lam}) = \es$, and put $r_x := d(p,x)$.
Proposition~\ref{prop:part-filling} (partial filling) shows that for every
$\del > 0$ there is a constant $a'_\del = a'_\del(X,C,a) \ge 0$ such that if
$r_x + s > a'_\del$, there exists $V \in \bI_{n+1,\cs}(X)$ with 
$\spt(S - S_{p,\lam} - \d V) \cap \B{p}{r_x + s} = \es$ and
\[
\M(V) < \del (r_x + s)^{n+1}.
\]
Since $\B{x}{s}$ is disjoint from $\spt(S_{p,\lam})$ and contained in
$\B{p}{r_x + s}$, it follows that $\spt(S - \d V) \cap \B{x}{s} = \es$.
Now Lemma~\ref{lem:fill-density} (filling density) shows that 
if $s > 4a$, then
\[
\M(V) \ge c s^{n+1}
\]
for some constant $c > 0$ depending only on $X$ and $Q$. Hence,
\[
s < \bigl( c^{-1}\del \bigr)^{1/(n+1)} (r_x + s)
\]
whenever $r_x = d(p,x) > a'_\del$ and $4a < s < d(x,\spt(S_{p,\lam}))$. 
By choosing $\del$ sufficiently small, in dependence of $n,c,a$ and 
$\eps > 0$, we infer that  
\[
d(x,\spt(S_{p,\lam})) < \eps\,d(p,x)
\]
for all $x \in \spt(S)$ with $d(p,x) > a'_\del$.

From this, (1) and~(2) follow easily.
Note first that if $\lam \in (0,1)$, then
$\spt(S_{p,\lam}) = \spt(h_{p,\lam\#}S) \sub h_{p,\lam}(\spt(S))$;
it thus follows that there is a point
$x_\lam \in \spt(S)$ such that $d(x,h_{p,\lam}(x_\lam)) < \eps\,d(p,x)$.
Similarly, if $\lam = 0$, then $\spt(S_{p,0}) \sub \C_p(\Lam(S))$
by Theorem~\ref{thm:conical-repr} (conical representative), thus there exists
a ray $\rho \in \Ray^\si_1 X$ emanating from $p$ such that 
$[\rho] \in \Lam(S)$ and $d(x,\im(\rho)) < \eps\,d(p,x)$.
\end{proof}

As a by-product of this argument we obtain the following supplement to
Theorem~\ref{thm:conical-repr} (conical representative).

\begin{proposition}[equal limit sets] \label{prop:lim-sets}
Let $X$ be a proper metric space with a convex bicombing~$\si$ and with
$\asrk(X) = n \ge 2$.
If $S \in \bZ_{n,\loc}^\infty(X)$ is quasi-minimizing, then
$\Lam(S_{p,0}) = \Lam(S)$ for every $p \in X$.
\end{proposition}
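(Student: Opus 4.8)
The plan is to prove the inclusion $\Lam(S) \sub \Lam(S_{p,0})$; the reverse inclusion $\Lam(S_{p,0}) \sub \Lam(S)$ is already part of Theorem~\ref{thm:conical-repr} (conical representative). Fix $p \in X$. Since $S \in \bZ_{n,\loc}^\infty(X)$, there are $C > 0$ and $a \ge 0$ with $\G_{p,r}(S) \le C$ for all $r > a$, and $S$ is $(Q,a)$-quasi-minimizing for suitable $Q,a$, so the hypotheses of Theorem~\ref{thm:visibility} (visibility property) hold for this $p$. I would first record the intermediate estimate established in the \emph{proof} of that theorem (the case $\lam = 0$ of the displayed inequality there): for every $\eps > 0$ there is a radius $r_\eps > 0$ such that
\[
d\bigl(x,\spt(S_{p,0})\bigr) < \eps\,d(p,x) \quad\text{for all } x \in \spt(S) \text{ with } d(p,x) \ge r_\eps,
\]
i.e.\ $\spt(S)$ tracks $\spt(S_{p,0})$ with sublinear error measured from $p$.

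Next, given $\xi \in \Lam(S) = \di(\spt(S))$, I would use the metrizability of $\olX$ to choose a sequence $x_k \in \spt(S)$ with $x_k \to \xi$ in $\olX$, so that $R_k := d(p,x_k) \to \infty$. Applying the estimate above with $\eps = 1/m$ and passing to a subsequence, I obtain points $y_k \in \spt(S_{p,0})$ with $\del_k := d(x_k,y_k)$ satisfying $\del_k/R_k \to 0$; in particular $R'_k := d(p,y_k) \ge R_k - \del_k \to \infty$ and $|R_k - R'_k| \le \del_k$. It then remains to show $y_k \to \xi$ in the cone topology, and here the convex bicombing is used: for a fixed $r > 0$ and $k$ large, the point $\si_{px_k}(r/R_k)$ at distance $r$ from $p$ on $\si_{px_k}$ is within $(r/R_k)\del_k$ of $\si_{py_k}(r/R_k)$ by convexity of $t \mapsto d(\si_{px_k}(t),\si_{py_k}(t))$ (which vanishes at $t = 0$), while $\si_{py_k}(r/R_k)$ is within $r\,|R_k - R'_k|/R_k \le r\del_k/R_k$ of $\si_{py_k}(r/R'_k)$ since $\si_{py_k}$ has speed $R'_k$. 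Both bounds tend to $0$. Since $x_k \to \xi$ means $\si_{px_k}(r/R_k) \to \rho_\xi(r)$ for every $r \ge 0$, where $\rho_\xi \in \Ray^\si_1 X$ is the ray with $\rho_\xi(0) = p$ representing $\xi$, the triangle inequality yields $\si_{py_k}(r/R'_k) \to \rho_\xi(r)$ for every $r$, and together with $R'_k \to \infty$ this gives $y_k \to \xi$ by the characterization of the cone topology (Section~5 in~\cite{DesL1}). As $\spt(S_{p,0})$ is closed in the proper space $X$, it follows that $\xi \in \di(\spt(S_{p,0})) = \Lam(S_{p,0})$, which completes the proof.

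I expect the only step needing genuine care to be this last one — checking that sublinear proximity of base points forces convergence in the cone topology — and I would carry it out exactly as indicated, being attentive to the different speeds of the geodesics $\si_{px_k}$ and $\si_{py_k}$; everything preceding it is a direct appeal to the proof of Theorem~\ref{thm:visibility}.
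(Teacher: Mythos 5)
Your proposal is correct and follows essentially the same route as the paper: quote the sublinear tracking estimate from the proof of Theorem~\ref{thm:visibility} in the case $\lam = 0$ to produce points $y_k \in \spt(S_{p,0})$ with $d(x_k,y_k) < \eps\,d(p,x_k)$, then conclude $y_k \to \xi$ in the cone topology. The paper leaves that last convergence implicit ("this implies that $y_i \to v$"), whereas you verify it carefully via convexity of the bicombing and the speed discrepancy; that verification is sound.
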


\begin{proof}
Let $p \in X$. We already know that $\Lam(S_{p,0}) \sub \Lam(S)$. 
On the other hand, given $v \in \Lam(S)$, it follows from the proof
of Theorem~\ref{thm:visibility} that there exist sequences of
points $x_i \in \spt(S)$ and $y_i \in \spt(S_{p,0})$ such that $x_i \to v$
and $d(x_i,y_i) < (1/i) \,d(p,x_i)$. This implies that $y_i \to v$, 
thus $v \in \Lam(S_{p,0})$.
\end{proof}

We now consider an asymptotic Plateau problem.

\begin{theorem}[minimizer with prescribed asymptotics] 
\label{thm:a-plateau}
Let $X$ be a proper metric space with a convex bicombing~$\si$ and with
$\asrk(X) = n \ge 2$. Suppose that $S_0 \in \bZ_{n,\loc}^\infty(X)$ is conical 
with respect to some point $p \in X$. Then there exists a minimizing
$S \in \bZ_{n,\loc}^\infty(X)$ that is $\F$-asymptotic to $S_0$;
thus $S_{p,0} = S_0$. 
Every such $S$ satisfies $\Gi(S) = \Gi(S_0)$ and $\Lam(S) = \Lam(S_0)$. 
Furthermore, if $S' \in \bZ_{n,\loc}^\infty(X)$ is another minimizer 
$\F$-asymptotic to $S_0$, then $\Hd(\spt(S),\spt(S')) \le b$ for some 
constant $b \ge 0$ depending only on $X$ and $\Gi(S_0)$.
\end{theorem}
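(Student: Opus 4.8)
The plan is to deduce everything from Theorem~\ref{thm:constr-minimizers} (constructing minimizers), Theorem~\ref{thm:conical-repr} (conical representative), Proposition~\ref{prop:lim-sets} (equal limit sets), and Theorem~\ref{thm:morse-1} (Morse Lemma~I), using Proposition~\ref{prop:contr-density} (controlled density) only to keep the final constant uniform. For existence, note that $S_0 \in \bZ_{n,\loc}^\infty(X)$ has $\Gi(S_0) < \infty$ and $\d S_0 = 0$, so Theorem~\ref{thm:constr-minimizers} produces a minimizing $S$ with $\d S = \d S_0 = 0$, hence $S \in \bZ_{n,\loc}(X)$, satisfying $\Fi(S_0 - S) = 0$ and $\Gi(S) \le \Gi(S_0) < \infty$; thus $S \in \bZ_{n,\loc}^\infty(X)$ and $S \sim_\F S_0$.

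Now let $S \in \bZ_{n,\loc}^\infty(X)$ be any minimizer with $S \sim_\F S_0$. Since $S_0$ is conical with respect to $p$ and $\F$-asymptotic to $S$, the uniqueness clause of Theorem~\ref{thm:conical-repr} forces $S_{p,0} = S_0$, which in particular yields the displayed identity in the existence part. For the density, Theorem~\ref{thm:conical-repr} gives $\Gi(S_0) = \Gi(S_{p,0}) \le \Gi(S)$, while the final clause of Theorem~\ref{thm:constr-minimizers} applied with $S_0$ in the role of $S$ and our $S$ in the role of $\til S$ gives $\Gi(S) \le \Gi(S_0)$; hence $\Gi(S) = \Gi(S_0)$. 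For the limit set, $S$ is minimizing and therefore $(1,0)$-quasi-minimizing, so Proposition~\ref{prop:lim-sets} yields $\Lam(S) = \Lam(S_{p,0}) = \Lam(S_0)$.

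It remains to bound $\Hd(\spt(S),\spt(S'))$ for two minimizers $S,S' \in \bZ_{n,\loc}^\infty(X)$ that are $\F$-asymptotic to $S_0$. By the previous paragraph $\Gi(S) = \Gi(S') = \Gi(S_0) =: C_0$, and $\Fi(S - S') = 0$ by the triangle inequality for $\Fi$. By Proposition~\ref{prop:contr-density} there is a constant $a \ge 0$, depending only on $X$ and $C_0$, such that $S$ and $S'$ both have $(C_0 + 1,a)$-controlled density. Since $S$ and $S'$ are minimizing and $\d S = \d S'$, the cycle $Z := S - S'$ has $(2(C_0 + 1),a)$-controlled density, satisfies $\Fi(Z) = 0$, and is $(1,0)$-quasi-minimizing mod $\spt(S)$ as well as mod $\spt(S')$: on any ball $\B{x}{r}$ disjoint from $\spt(S')$ one has $Z \on \B{x}{r} = S \on \B{x}{r}$ with $S$ minimizing, and symmetrically with $S$ and $S'$ interchanged. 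This is precisely the situation of the remark following Theorem~\ref{thm:morse-1}, which gives $\Hd(\spt(S),\spt(S')) \le b$ for a constant $b$ determined by $Q = 1$, the density bound $2(C_0 + 1)$, and $a$, hence by $X$ and $C_0 = \Gi(S_0)$ alone.

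The argument is essentially an assembly of earlier results, and I do not anticipate a real obstacle. The two points requiring care are: checking that Theorem~\ref{thm:conical-repr} identifies $S_{p,0}$ with $S_0$, which is immediate since $S_0$ is itself conical with respect to $p$ and lies in $[S]$; and tracking constants so that the final $b$ depends only on $X$ and $\Gi(S_0)$, which is guaranteed because the constants in Proposition~\ref{prop:contr-density} and in Theorem~\ref{thm:morse-1} depend only on $X$ and on the relevant density bounds (with $n = \asrk(X)$ absorbed into the dependence on $X$).
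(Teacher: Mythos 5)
Your proof is correct and follows essentially the same route as the paper's: existence and the bound $\Gi(S)\le\Gi(S_0)$ from Theorem~\ref{thm:constr-minimizers}, the identification $S_{p,0}=S_0$ and the reverse inequality from Theorem~\ref{thm:conical-repr}, the limit sets from Proposition~\ref{prop:lim-sets}, and the Hausdorff bound by feeding $Z=S-S'$ (controlled density via Proposition~\ref{prop:contr-density}, $(1,0)$-quasi-minimizing mod each support) into Theorem~\ref{thm:morse-1}. The constant-tracking is also handled exactly as in the paper.
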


\begin{proof}  
By Theorem~\ref{thm:constr-minimizers} (constructing minimizers)
there exists a minimizing $S \in \bZ_{n,\loc}(X)$ with $\Fi(S - S_0) = 0$,
and every such $S$ satisfies $\Gi(S) \le \Gi(S_0)$.
Then $S_{p,0} = S_0$ by the uniqueness assertion of
Theorem~\ref{thm:conical-repr} (conical representative), and since
$\Gi(S_{p,0}) \le \Gi(S)$, it follows that $\Gi(S) = \Gi(S_0)$.
By Proposition~\ref{prop:lim-sets}, $\Lam(S) = \Lam(S_{p,0}) = \Lam(S_0)$.

If $S' \in \bZ_{n,\loc}^\infty(X)$ is another minimizer $\F$-asymptotic to $S_0$,
then $\Fi(S - S') = 0$, and by 
Proposition~\ref{prop:contr-density} (controlled density)
$S - S'$ has $(2C,a)$-controlled density for some constants $C,a$
depending only on $X$ and $\Gi(S_0)$. 
Since $S - S'$ is $(1,0)$-quasi-minimizing mod~$\spt(S)$ as well as 
mod~$\spt(S')$, it follows from Theorem~\ref{thm:morse-1} (Morse Lemma I) 
that $\Hd(\spt(S),\spt(S')) \le b$ for some constant $b$ as claimed.
\end{proof}

Proposition~\ref{prop:lim-sets} and Theorem~\ref{thm:a-plateau} 
show in particular that the following three classes of compact subsets
of $\di X$ agree.

\begin{definition}[canonical class of limit sets] \label{def:can-lim-sets}
Let $X$ be a proper metric space with a convex bicombing $\si$ and with
$\asrk(X) = n \ge 2$. We put
\begin{align*} 
\cL X
&:= \{\Lam(S):
\text{$S \in \bZ_{n,\loc}^\infty(X)$ is conical} \} \\
&\phantom{:}= \{\Lam(S):
\text{$S \in \bZ_{n,\loc}^\infty(X)$ is minimizing} \} \\
&\phantom{:}= \{\Lam(S):
\text{$S \in \bZ_{n,\loc}^\infty(X)$ is quasi-minimizing} \}.
\end{align*}
\end{definition}

We now prove Theorem~\ref{intro-dense-orbit}, reformulated 
for spaces with a convex bicombing.

\begin{theorem}[dense orbit] \label{thm:dense-orbit}
Let $X$ be a proper metric space with a convex bicombing~$\si$ and with
$\asrk(X) = n \ge 2$,
and suppose that\/ $\Gam$ is a cocompact group of isometries of $X$.
Then, for every non-empty set $\Lam \in \cL X$, the orbit of $\Lam$ under
the action of\/ $\Gam$, extended to $\olX = X \cup \di X$, is dense in 
$\di X$ (with respect to the cone topology).
\end{theorem}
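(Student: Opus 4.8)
The statement is: if $X$ is a proper metric space with a convex bicombing, $\asrk(X) = n \ge 2$, and $\Gamma$ acts cocompactly by isometries, then for every nonempty $\Lambda \in \cL X$ the $\Gamma$-orbit of $\Lambda$ is dense in $\di X$ (cone topology). The idea is to use the visibility property (Theorem~\ref{thm:visibility}) together with cocompactness to "spread" a single limit set over the whole boundary. First I would fix a representative: since $\Lambda \in \cL X$ is nonempty, by Definition~\ref{def:can-lim-sets} there is a quasi-minimizing $S \in \bZ_{n,\loc}^\infty(X)$ with $\Lambda(S) = \Lambda$; say $S$ is $(Q,a)$-quasi-minimizing. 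By Theorem~\ref{intro-t-plateau} or Proposition~\ref{prop:contr-density} (after replacing $S$ by a minimizer in its class, which does not change $\Lambda$ by Theorem~\ref{thm:a-plateau}) we may assume $S$ has $(C,0)$-controlled density for suitable constants; in any case $S$ satisfies the hypotheses of Theorem~\ref{thm:visibility} with some $p \in X$. Note that the constants $Q, C, a$ and hence the radius $r_\eps$ from Theorem~\ref{thm:visibility} depend only on $X$ and these constants, \emph{not} on the particular $S$; and they are invariant under the isometries $\gamma \in \Gamma$, since $\gamma_\# S$ is again $(Q,a)$-quasi-minimizing with the same density bound (using that the bicombing need not be equivariant, but the relevant properties of $S$ — quasi-minimality, controlled density — are metric and hence isometry-invariant, and $\Lambda(\gamma_\# S) = \gamma(\Lambda)$).

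\textbf{Main step.} Fix a basepoint $p \in X$, a target point $\xi \in \di X$, and a cone-topology neighborhood $U$ of $\xi$. I would show $\gamma(\Lambda) \cap U \ne \es$ for some $\gamma \in \Gamma$. A cone-neighborhood of $\xi$ is determined by a radius $t$ and an angular-type tolerance: unwinding the definition of the cone topology on $\olX$ (Section~5 in \cite{DesL1}), it suffices to produce, for every $\eps > 0$ and every $t > 0$, some $\gamma \in \Gamma$ and a ray $\rho$ from $p$ with $[\rho] \in \gamma(\Lambda)$ such that $\rho$ passes $\eps t$-close to $\si_{p\xi_t}(t)$, where $\xi_t$ is the point at distance $t$ on the ray from $p$ to $\xi$ — more precisely, such that $\rho$ "$\eps$-fellow-travels" the geodesic $p\xi$ up to a large distance. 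The mechanism: let $x := \si_{p\xi}(R)$ be the point at distance $R$ (large) along the ray to $\xi$. By cocompactness, fix a compact $K \sub X$ with $\Gamma K = X$; pick $\gamma \in \Gamma$ with $\gamma^{-1}x \in K$, i.e. $d(x, \gamma K) $ bounded by $\diam K =: \kappa$. Now $\gamma S$ is quasi-minimizing with controlled density; I want $x$ (or a nearby point of $\spt(\gamma S)$) to be far from the basepoint $\gamma p_0$ where $p_0 \in K$ is the point with $\gamma p_0 = x$-ish. Since $S$ is quasi-minimizing, for $\kappa' = \kappa + 4a$ there is (by Lemma~\ref{lem:density} applied to $\gamma S$) a point $x' \in \spt(\gamma S)$ with $d(x, x') \le \kappa'$ (any ball of radius $>2a$ around a basepoint-region meets the support with positive density — one uses that $\spt(S)$ comes within bounded distance of the cone $\C_{p}(\Lambda)$ hence $\spt(\gamma S)$ comes within bounded distance of $\gamma K$-translates; actually more carefully: translating a \emph{fixed} point $y_0 \in \spt(S) \cap K'$ where $K'$ is a slightly enlarged fundamental domain, we get $\gamma y_0 \in \spt(\gamma S)$ with $d(x, \gamma y_0) \le \const$). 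Then applying Theorem~\ref{thm:visibility}(2) to $\gamma S$ with basepoint $p$: provided $d(p, x') = d(p, x) \mp \kappa' \ge r_\eps$, there is a ray $\rho$ from $p$ with $[\rho] \in \Lambda(\gamma S) = \gamma(\Lambda)$ and $\inf_t d(x', \rho(t)) < \eps\, d(p, x')$. Choosing $R$ large makes $d(p,x)$ as large as we like, so this applies; and a ray from $p$ passing $\eps R$-close to a point $x'$ that is within $\kappa'$ of the point $x = \si_{p\xi}(R)$ on the ray to $\xi$ must itself define a boundary point within cone-distance $O(\eps)$ of $\xi$ (convexity of the bicombing controls the angle at $p$: two rays from $p$ staying within $\eps R + \kappa'$ of each other at distance $R$ subtend a small angle). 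Letting $R \to \infty$ and $\eps \to 0$ shows $\xi$ is in the closure of $\bigcup_\gamma \gamma(\Lambda)$.

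\textbf{The hard part.} The routine calculations (density lower bound giving a support point near $x$; visibility giving the close ray; convexity turning "close at distance $R$" into "small angle at $p$" and hence cone-closeness) are all available from the stated results. The genuine point requiring care is the \emph{uniformity of the visibility radius $r_\eps$ across the orbit}: Theorem~\ref{thm:visibility} yields $r_\eps = r_\eps(X,Q,C,a)$ depending only on these constants, and one must verify that $\gamma_\# S$ satisfies the same $(Q,a)$-quasi-minimality and the same $\G_{\gamma p_0, r}(\gamma_\# S) \le C$ bound with \emph{the same} constants — this is immediate since $\gamma$ is an isometry and these are intrinsic metric conditions on the current (quasi-minimality is defined via masses of competitors, density via $\|\cdot\|$-measure of balls, both isometry-invariant). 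A second delicate point is the precise translation between the conclusion of Theorem~\ref{thm:visibility} (a ray $\eps$-close to a given \emph{point} $x$) and cone-topology convergence $[\rho] \to \xi$: here one invokes that the geodesic $\si_{px}$ to $x = \si_{p\xi}(R)$ agrees with the initial segment of the ray to $\xi$, and that by condition~(2) of Definition~\ref{def:bicombing} (convexity) a $\si$-ray $\rho$ from $p$ with $d(\rho(t_0), x) < \eps R$ for some $t_0$ satisfies $d(\rho(t), \si_{p\xi}(t)) \le \eps R + \kappa'$ for \emph{all} $t \le \min\{t_0, R\}$ after reparametrization, hence $\rho$ fellow-travels the ray to $\xi$ up to distance roughly $(1-\eps)R$, which forces $[\rho]$ into any prescribed cone-neighborhood of $\xi$ once $R$ is large and $\eps$ small. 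I would also remark that the same argument, combined with Theorem~\ref{intro-t-plateau} to produce a nonzero $S$ with $\Lambda(S)$ prescribed, shows $\cL X$ contains plenty of nonempty sets so the hypothesis is non-vacuous.
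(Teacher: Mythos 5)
Your proposal is correct and follows essentially the same route as the paper's proof: pass to a minimizing representative with controlled density (Proposition~\ref{prop:contr-density}), use cocompactness to translate a fixed point of $\spt(S)$ to within bounded distance of a far-out point on the ray toward the target boundary point, apply Theorem~\ref{thm:visibility} to the translated current (whose quasi-minimality and density constants are isometry-invariant), and conclude via convexity that the resulting ray converges to the target in the cone topology. The only cosmetic difference is that the paper parametrizes by points $\rho_0(t)$ along the target ray directly rather than introducing $x=\si_{p\xi}(R)$, and it asserts the final cone-topology step tersely where you spell it out.
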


\begin{proof}
Suppose that $\Lam = \Lam(S)$, where $S \in \bZ_{n,\loc}^\infty(X)$
is minimizing.
By Proposition~\ref{prop:contr-density} (controlled density), $S$ has 
$(C,a)$-controlled density for some constants $C,a$ depending only 
on $X$ and $\Gi(S)$.
Let $p \in X$, and let $\rho_0 \in \Ray^\si_1 X$ be a ray
emanating from $p$. Since $\Gam$ acts cocompactly, there is a constant $b > 0$
such that for every $t \ge 0$ there exist an isometry $\gam_t \in \Gam$ and
a point $x_t \in \gam_t(\spt(S)) = \spt(\gam_{t\#}S)$
such that $d(\rho_0(t),x_t) \le b$. 
Note that $\gam_{t\#}S$ is minimizing, and 
$\G_{p,r}(\gam_{t\#}S) = \G_{\gam_t^{-1}(p),r}(S) \le C$ for all $r > a$.
Hence, given $\eps > 0$, if $t$ is sufficiently large, then by
Theorem~\ref{thm:visibility} there is a ray $\rho \in \Ray^\si_1 X$ 
with $\rho(0) = p$ such that $[\rho] \in \Lam(\gam_{t\#}S)$ and
$d(x_t,\im(\rho)) < \eps\,d(p,x_t) \le \eps(t + b)$. Then
\[
d(\rho_0(t),\im(\rho)) < b + \eps(t + b).
\]
Note that $[\rho] \in \di(\gam_t(\spt(S))) = \ol\gam_t(\Lam)$
for the extension $\ol\gam_t$ of $\gam_t$ to $\olX$.
Since $\eps > 0$ was arbitrary, this shows that every neighborhood of
$[\rho_0]$ in $\di X$ contains a point of the orbit of $\Lam$.
\end{proof}

Theorem~\ref{thm:visibility} shows that the support of a quasi-minimizer
$S \in \bZ_{n,\loc}^\infty(X)$ lies within sublinear distance 
from $\C_p(\Lam(S))$, in terms of the distance to~$p$. 
Next we show that, conversely, the entire cone $\ol\C_p(\spt(S))$ is within 
sublinear distance from $\spt(S)$; however, the estimate now depends on $S$ 
and $p$ rather than just on the data of~$S$. The proof relies on 
Theorem~\ref{thm:visibility} and a ball packing argument.

\begin{theorem}[asymptotic conicality] \label{thm:conicality}
Let $X$ be a proper metric space with a convex bicombing $\si$ and with
$\asrk(X) = n \ge 2$.
Suppose that $S \in \bZ_{n,\loc}^\infty(X)$ is quasi-minimizing,
and $p \in X$. Then for all $\eps > 0$ there exists $r > 0$ such that
\[
d(y,\spt(S)) < \eps \,d(p,y)
\]
whenever $y \in \ol\C_p(\spt(S))$ and $d(p,y) \ge r$.
\end{theorem}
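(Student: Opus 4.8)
The plan is to argue by contradiction using a ball-packing estimate, combining the density bound on $S$ with the visibility property from Theorem~\ref{thm:visibility}. Suppose the conclusion fails for some $\eps > 0$: then there is a sequence of points $y_i \in \ol\C_p(\spt(S))$ with $d(p,y_i) =: \rho_i \to \infty$ but $d(y_i,\spt(S)) \ge \eps \rho_i$. Since $y_i$ lies on (the closure of) a geodesic $\si_{px_i}$ from $p$ to some $x_i \in \spt(S)$, and by convexity of the bicombing the distance from $\si_{px_i}(t)$ to $\spt(S)$ is controlled along the segment, I will use the visibility property to replace $x_i$ by a nearby ray, or directly work with the geodesic $\si_{py_i}$ itself. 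First I would fix $C, a$ with $\G_{p,r}(S) \le C$ for $r > a$ (these exist since $S$ has controlled density by the combination of quasi-minimality and — via Theorem~\ref{thm:conical-repr} and Proposition~\ref{prop:part-filling} — the finiteness of $\Gi(S)$; more precisely, one first passes to a base point and invokes that $S$ satisfies $\G_{p,r}(S) \le C'$ for all $r$ large).

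The key mechanism is the following. Along the segment from $p$ to $y_i$, consider the ``shadow'' region: for each $t \in [0,1]$, the point $\si_{py_i}(t)$ has some distance $\delta_i(t)$ to $\spt(S)$, with $\delta_i(1) \ge \eps \rho_i$ and, since $y_i \in \ol\C_p(\spt(S))$, one can arrange $\delta_i$ to be small (say $\le$ a fixed constant) for $t$ near $1$ if $y_i$ were actually close to $\spt(S)$ — but it is not, so instead I use that the segment terminates near a point of $\spt(S)$ only in the limiting sense. The cleaner route: apply Theorem~\ref{thm:visibility}(2) to a point $x \in \spt(S)$ with $\si_{px}$ passing through (near) $y_i$, obtaining a ray $\rho$ emanating from $p$ with $[\rho] \in \Lam(S)$ and $d(x,\im(\rho)) < \eps' d(p,x)$ for a small $\eps'$ to be chosen. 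By convexity this forces $d(y_i, \im(\rho))$ to be small relative to $\rho_i$, say $< \tfrac{\eps}{4}\rho_i$. Then, since $[\rho] \in \Lam(S) = \di(\spt(S))$, there are points of $\spt(S)$ converging to $[\rho]$; I will run a separate packing argument showing that along the ray $\rho$, points of $\spt(S)$ cannot stay uniformly far from $\im(\rho)$ on an entire segment of length comparable to $\rho_i$ at distance $\approx \rho_i$ from $p$, for otherwise one could fit too many disjoint balls of radius $\approx \eps\rho_i$ centered on $\spt(S)$ inside $\B{p}{C''\rho_i}$, each carrying mass $\ge \delta(\eps\rho_i)^n$ by Lemma~\ref{lem:density}, contradicting $\|S\|(\B{p}{C''\rho_i}) \le C(C''\rho_i)^n$.

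More carefully, the ball-packing step is the heart of the matter. Using that $\spt(S)$ accumulates at $[\rho]$ and $d(y_i,\im(\rho))$ is small, one produces a point $z_i \in \spt(S)$ with $d(p,z_i) \asymp \rho_i$ and $d(z_i, y_i) \ge \tfrac{\eps}{2}\rho_i$ (using the triangle inequality and $d(y_i,\spt(S)) \ge \eps\rho_i$ together with $d(y_i,\im(\rho))$ small and the choice of an approximating point on $\im(\rho)$, then on $\spt(S)$). The quantitative contradiction comes from iterating: one can find not one but a definite proportion $\sim \eta(\eps) \rho_i$ worth of ``progress'' along $\rho$ where $\spt(S)$ must come within $o(\rho_i)$ of $\im(\rho)$ (again by Theorem~\ref{thm:visibility} applied at scale $\rho_i$), and the resulting cluster of $\spt(S)$-points near a segment of $\im(\rho)$ that avoids $\B{y_i}{\tfrac{\eps}{2}\rho_i}$ is impossible because the ball $\B{y_i}{\tfrac{\eps}{2}\rho_i}$ would then be disjoint from $\spt(S)$, yet by the lower density bound (Lemma~\ref{lem:density}, applicable since $\B{y_i}{\tfrac{\eps}{2}\rho_i}$ can be taken disjoint from $\spt(\d S)$ for $\rho_i$ large, as $S$ is a cycle here) any point of $\spt(S)$ within distance $\tfrac{\eps}{2}\rho_i$ of $y_i$ forces $\|S\|$-mass there — but there is such a point, namely the nearest point of $\spt(S)$ to $y_i$ on $\im(\rho)$'s neighborhood, contradicting $d(y_i,\spt(S)) \ge \eps\rho_i$ once the small constants are chosen with $\eps' \ll \eps$.

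\textbf{Main obstacle.} I expect the delicate point to be making the ball-packing/visibility combination genuinely quantitative: Theorem~\ref{thm:visibility} gives, for each fixed small $\eps'$, only that \emph{sufficiently distant} points of $\spt(S)$ are $\eps'$-close to \emph{some} ray representing a point of $\Lam(S)$, but one needs to control where that ray goes relative to the specific direction $\si_{py_i}$ and, crucially, to know that $\spt(S)$ \emph{fills out} a whole neighborhood of such a ray near distance $\rho_i$ — this is what lets the ``missing ball'' at $y_i$ produce a contradiction. Bridging ``$y_i \in \ol\C_p(\spt(S))$'' (which only says the \emph{direction} from $p$ to $y_i$ is a limit of directions to $\spt(S)$) to ``$\spt(S)$ is actually close to the segment $\si_{py_i}$ at scale $\rho_i$'' requires a compactness/limiting argument at the relevant scale, i.e.\ passing to an asymptotic object, and this is where the rank hypothesis $\asrk(X) = n$ and Proposition~\ref{prop:part-filling} must be re-invoked to keep the blown-up picture under control.
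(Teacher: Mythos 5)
There is a genuine gap, and you have in fact located it yourself in your ``Main obstacle'' paragraph: your contradiction hinges on producing a point of $\spt(S)$ within $\tfrac{\eps}{2}\rho_i$ of $y_i$ (``but there is such a point, namely the nearest point of $\spt(S)$ to $y_i$\dots''), which is precisely the conclusion to be proved. The hypothesis $y_i \in \ol\C_p(\spt(S))$ only says the geodesic from $p$ through $y_i$ can be prolonged to reach $\spt(S)$ at some possibly much larger scale $t \gg \rho_i$, and the fact that $[\rho] \in \Lam(S)$ only says $\spt(S)$ accumulates at $[\rho]$ \emph{eventually} — neither gives a point of $\spt(S)$ at distance comparable to $\rho_i$ from $p$ and close to $y_i$. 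Theorem~\ref{thm:visibility} goes in the wrong direction for this: it places points of $\spt(S)$ near cone points, not cone points near $\spt(S)$. So the ``missing ball'' never yields a contradiction as written, and the blow-up/asymptotic-cone argument you gesture at is not supplied.

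The paper closes this gap with a combinatorial mechanism you do not have: a pigeonhole on an integer-valued packing function. For $K_s := \spt(S) \cap \B{p}{s}$, let $\cN_{\del,s}$ be the maximal cardinality of a $\del s$-separated subset of $K_s$; by Lemma~\ref{lem:density} and the controlled density, $\bar\cN_\del := \limsup_{s\to\infty}\cN_{\del,s}$ is finite, and being a non-increasing integer function of $\del$ it stabilizes: one can pick $\del,\mu$ with $\bar\cN_{\del+8\mu} = \bar\cN_\del$ and $\del + 2\mu \le \eps$. Theorem~\ref{thm:visibility}(1) (plus convexity of the bicombing) lets one transport a maximal $(\del+8\mu)r$-separated set in $K_r$ up to scale $t$, losing only $4\mu$ of normalized separation at each of two steps; the stabilization then forces the transported $\del t$-separated set $N_t \sub K_t$ to be \emph{maximal}, hence a $\del t$-net in $K_t$. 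That is the step which converts ``$\spt(S)$ is near certain rays'' into ``every point of $K_t$ is within $\del t$ of a marked point whose projection $h_{p,s/t}$ lands within $2\mu s$ of $K_s$,'' and applying this to $y = h_{p,s/t}(y')$ with $y' \in K_t$ gives $d(y,K_s) < (\del+2\mu)s \le \eps s$. Without some substitute for this net-transport and maximality argument, your proposal does not go through.
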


\begin{proof}
We consider the family of the compact sets 
\[
K_s := \spt(S) \cap \B{p}{s}
\]
for $s > 0$.
Let $\mu > 0$. It follows from Theorem~\ref{thm:visibility} that there
exists an $r > 0$ such that for all $s \ge r$, $x \in K_s$, and
$\lam \in (0,1]$, there is a point $x' \in \spt(S)$ such that
\[
d(x,h_{p,\lam}(x')) < \mu s.
\]
Then $\lam \,d(p,x') = d(p,h_{p,\lam}(x')) \le d(p,x) + \mu s \le (1+\mu)s$.
Hence, given any $t \ge s$, by choosing $\lam := \min\{1, (1+\mu)s/t\}$ and
$x' := x$ in the case that $\lam = 1$, we get that $d(p,x') \le t$. Then
\[
d(h_{p,\lam}(x'),h_{p,s/t}(x')) = \left( \lam - \frac{s}{t} \right) d(p,x')
\le \lam t - s \le \mu s. 
\]
We conclude that for every $x \in K_s$ and every $t \ge s$ there exists
an $x' \in K_t$ such that $d(x,h_{p,s/t}(x')) < 2\mu s$.
Furthermore, if $(y,y') \in K_s \times K_t$ is another such pair with
$d(y,h_{p,s/t}(y')) < 2\mu s$, then
\[
\frac{1}{t}\,d(x',y') \ge \frac{1}{s}\,d(h_{p,s/t}(x'),h_{p,s/t}(y')) >
\frac{1}{s}\,d(x,y) - 4\mu
\]
by convexity.

Let now $\eps > 0$.
For $\del > 0$, denote by $\cN_{\del,s}$ the maximal possible cardinality 
of a $\del s$-separated set $N \sub K_s$ 
(see Section~\ref{subsect:metric}).
By the assumptions on $S$ and Lemma~\ref{lem:density} (density), 
$\bar\cN_\del := \limsup_{s \to \infty} \cN_{\del,s}$ is finite.
Using the monotonicity of $\bar\cN_\del \in \Z$ in $\del$ we now fix 
$\del,\mu > 0$ such that $\bar\cN_{\del + 8\mu} = \bar\cN_\del$ and 
$\del + 2\mu \le \eps$. Then we choose $r > 0$ so large that the result of 
the first part of the proof holds, $\cN_{\del + 8\mu,r} = \bar\cN_{\del + 8\mu}$,
and $\cN_{\del,t} \le \bar\cN_\del$ for all $t \ge r$. Let $N_r \sub K_r$
be a $(\del + 8\mu)r$-separated set with maximal cardinality 
$|N_r| = \cN_{\del + 8\mu,r} = \bar\cN_{\del + 8\mu} = \bar\cN_\del$.
For all $t \ge s \ge r$, it follows from the first part of the proof that
there exists a bijection $f$ from $N_r$ to a $(\del + 4\mu)s$-separated set
$N_s \sub K_s$ as well as a bijection $g$ from $N_s$ to a $\del t$-separated
set $N_t \sub K_t$ such that $d(x,h_{p,s/t}(x')) < 2\mu s$ for all $x \in N_s$
and $x' := g(x) \in N_t$. 
Now $|N_t| \le \cN_{\del,t} \le \bar\cN_\del = |N_r| = |N_t|$, thus $N_t$ is in 
fact maximal and forms a $\del t$-net in $K_t$.

Finally, suppose that $y$ is a point in $\C_p(\spt(S))$ with 
$s := d(p,y) \ge r$. Then $y = h_{p,s/t}(y')$ for some $y' \in \spt(S)$, 
where $t := d(p,y') \ge s$. As we have just shown, there exist $x \in K_s$ 
and $x' \in K_t$ such that
$d(y',x') \le \del t$ and $d(h_{p,s/t}(x'),x) < 2\mu s$,
thus $d(y,h_{p,s/t}(x')) \le (s/t) \,d(y',x') \le \del s$ and 
\[
d(y,x) \le d(y,h_{p,s/t}(x')) + d(h_{p,s/t}(x'),x) < (\del + 2\mu)s \le \eps s.
\]
Hence, $d(y,\spt(S)) < \eps\, d(p,y)$. This yields the result.
\end{proof}

We now turn to the Tits geometry. As a first application of 
Theorem~\ref{thm:conicality} we will show that the limit sets
$\Lam \in \cL X$ are compact with respect to the Tits topology.
 
For a proper metric space $X$ with a convex bicombing $\si$, 
the \emph{Tits cone} of $(X,\si)$ is defined as the set 
\[
\CT X := \Ray^\si X/{\sim}
\]
(see Section~\ref{sect:conical}), 
equipped with the metric given by 
\[
\dT([\rho],[\rho']) := \lim_{t \to \infty} \frac{1}{t}\,d(\rho(t),\rho'(t)).
\]
Note that $t \mapsto d(\rho(t),\rho'(t))$ is convex,
thus $t \mapsto d(\rho(t),\rho'(t))/t$ is non-decreasing if $\rho,\rho'$
are chosen such that $\rho(0) = \rho'(0)$. From this it is easily 
seen that $\CT X$ is complete.
On $\CT X$, multiplication by a scalar $\lam \in \R_+$ is 
defined by $\lam[\rho(\,\cdot\,)] := [\rho(\lam\,\cdot\,)]$.
This yields a homothety
\[
h_\lam \colon \CT X \to \CT X,
\]
thus $h_\lam(v) = \lam v$ and $\dT(h_\lam(v),h_\lam(v')) = \lam \,\dT(v,v')$. 
The {\em cone vertex} $o$ of $\CT X$ is the class of the constant rays.
For every base point $p \in X$ there exists a canonical $1$-Lipschitz map 
\[
\can_p \colon \CT X \to X
\] 
such that $\can_p([\rho]) = \rho(1)$ for all $\rho \in \Ray^\si X$ 
with $\rho(0) = p$. The {\em Tits boundary} of $(X,\si)$ is the unit sphere
\[
\bT X := \Sph{o}{1} = \Ray^\si_1 X/{\sim}
\]
in $\CT X$, endowed with the topology induced by $\dT$. This topology
is finer than the cone topology on the visual boundary $\di X$, 
which agrees with $\bT X$ as a set. However, the following holds.

\begin{proposition}[compact limit sets] \label{prop:cpt-lim-sets}
Let $X$ be a proper metric space with a convex bicombing $\si$ and with
$\asrk(X) = n \ge 2$. Then every $\Lam \in \cL X$ is still compact
when viewed as a subset of $\bT X$.
\end{proposition}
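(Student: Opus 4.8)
The plan is to prove that $\Lam$, which is a closed subset of $\di X$ in the cone topology (being $\di(\spt(S))=\di X\cap\ol{\spt(S)}$ for any representing current $S$), is moreover \emph{totally bounded} in the Tits metric $\dT$. Since the Tits topology refines the cone topology, $\Lam$ is then closed in $\bT X=\Sph{o}{1}$, hence complete ($\bT X$ being a closed, thus complete, subset of the complete space $\CT X$); total boundedness will therefore give compactness in $\bT X$. By Definition~\ref{def:can-lim-sets} I may fix a minimizing $S\in\bZ_{n,\loc}^\infty(X)$ with $\Lam(S)=\Lam$; by Proposition~\ref{prop:contr-density} (controlled density) $S$ has $(C,a)$-controlled density for some $C>0$, $a\ge 0$, and it is $(1,0)$-quasi-minimizing. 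Fix a base point $p\in X$, and for $v\in\Lam$ let $\rho_v\in\Ray^\si_1 X$ denote the ray with $\rho_v(0)=p$ and $[\rho_v]=v$.

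First I would record that $\rho_v(t)\in\ol\C_p(\spt(S))$ for every $v\in\Lam$ and $t\ge 0$: since $v\in\di(\spt(S))$ there are $x_i\in\spt(S)$ with $x_i\to v$ in $\olX$; the whole image $\im(\si_{px_i})$ lies in $\C_p(\spt(S))$, and by the definition of the cone topology on $\olX$ (Section~5 in~\cite{DesL1}) the point of $\si_{px_i}$ at distance $t$ from $p$ converges to $\rho_v(t)$. Applying Theorem~\ref{thm:conicality} (asymptotic conicality) to the quasi-minimizer $S$ and the point $p$, I obtain: for every $\mu>0$ there is an $r_\mu>0$ such that $d(\rho_v(t),\spt(S))<\mu t$ for all $v\in\Lam$ and all $t\ge r_\mu$. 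This \emph{uniform} sublinear closeness is the crucial input.

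Now fix $\eps\in(0,2]$ (larger $\eps$ are trivial, since $\dT\le 2$ on $\bT X$) and a $\dT$-$\eps$-separated finite set $\{v_1,\dots,v_m\}\subset\Lam$; the aim is to bound $m$ independently of this set. For each pair $i\ne j$ the function $t\mapsto d(\rho_{v_i}(t),\rho_{v_j}(t))/t$ is non-decreasing with limit $\dT(v_i,v_j)>\eps$, so there is a $t_{ij}$ with $d(\rho_{v_i}(t),\rho_{v_j}(t))>\eps t$ for $t\ge t_{ij}$. Put $\mu:=\eps/4$ and choose $t$ larger than $r_\mu$, than $\max_{i\ne j}t_{ij}$, and large enough that $\eps t/4>2a$. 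Picking $y_i\in\spt(S)$ with $d(\rho_{v_i}(t),y_i)<\mu t$, one has $d(p,y_i)<(1+\mu)t$ and $d(y_i,y_j)>(\eps-2\mu)t=(\eps/2)t$ for $i\ne j$, so the balls $\B{y_i}{\eps t/4}$ are pairwise disjoint and all contained in $\B{p}{2t}$ (using $\eps\le 2$). Lemma~\ref{lem:density} (density) gives $\|S\|(\B{y_i}{\eps t/4})\ge\del(\eps t/4)^n$ with $\del$ the constant of that lemma, while $\|S\|(\B{p}{2t})\le C(2t)^n$ by controlled density; summing over the disjoint balls yields $m\,\del(\eps t/4)^n\le C(2t)^n$, that is, $m\le(C/\del)(8/\eps)^n=:N(\eps)$. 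Hence any maximal $\dT$-$\eps$-separated subset of $\Lam$ is an $\eps$-net of cardinality at most $N(\eps)$, so $\Lam$ is totally bounded, and therefore compact, in $\bT X$.

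The delicate point is the apparent circularity in the preceding paragraph: the scale $t$ at which the $\rho_{v_i}$ have pulled $\eps t$ apart depends a priori on $m$. This is harmless because the ball-packing estimate is scale-invariant — $t^n$ occurs on both sides and cancels — so $N(\eps)$ does not involve $t$ or the chosen separated set. The argument works only because \emph{both} inputs, the closeness estimate of Theorem~\ref{thm:conicality} and the lower density bound of Lemma~\ref{lem:density}, are uniform in $t$ and in the relevant points.
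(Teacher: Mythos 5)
Your proof is correct and takes essentially the same route as the paper's: both use Theorem~\ref{thm:conicality} to place the points $\rho_{v}(t)$ of the geodesic cone within $\mu t$ of $\spt(S)$, and then combine the lower density bound of Lemma~\ref{lem:density} with the upper mass bound on $\B{p}{2t}$ to bound the cardinality of a $\dT$-separated set by a scale-invariant packing estimate, concluding via total boundedness and completeness. The only cosmetic difference is that you pass to a minimizing representative with controlled density, whereas the paper works directly with an arbitrary quasi-minimizing representative (for which $\Gi(S)<\infty$ already suffices for the upper mass bound).
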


\begin{proof}
Suppose that $\Lam = \Lam(S)$, where $S \in \bZ_{n,\loc}^\infty(X)$ is
quasi-minimizing. Fix $p \in X$, and let $\eps > 0$.
Let $N \sub \Lam$ be a finite $3\eps$-separated set; thus $\dT(u,u') > 3\eps$
for distinct $u,u' \in N$. For $r > 0$ sufficiently large, 
$\can_p(rN)$ is $3\eps r$-separated, and every point in this set is at 
distance less than $\eps r$ from $\spt(S)$ by Theorem~\ref{thm:conicality}. 
This yields an $\eps r$-separated subset of $\spt(S) \cap \B{p}{r+\eps r}$ 
of the same cardinality as $N$. For $r$ sufficiently large, it then follows 
from Lemma~\ref{lem:density} (density) that the cardinality of such sets is 
bounded from above by a constant depending on $\eps$ but not on~$r$.
We conclude that $\Lam$ is totally bounded.
Since $\CT X$ is complete and $\Lam$ is closed in the Tits topology,
this gives the result.
\end{proof}


\section{Cycles at infinity} \label{sect:cycles-infty}

In this section we show that if $S \in \bZ_{n,\loc}^\infty(X)$ is 
conical with respect to $p \in X$, then the cone $\R_+\Lam \sub \CT X$ over 
the limit set $\Lam = \Lam(S) \in \cL X$ is the support of a unique local 
$n$-cycle $\Sig$ in $\CT X$ satisfying $\can_{p\#}\Sig = S$. 
We then complete the proofs of Theorem~\ref{intro-f-classes} and 
Theorem~\ref{intro-t-plateau}.

In general, Tits cones are not locally compact, therefore the theory of local 
currents from~\cite{Lan3}, which depends on the supply of compactly supported 
Lipschitz functions, is not directly applicable to $\CT X$. 
However, by Proposition~\ref{prop:cpt-lim-sets} above, 
$\R_+\Lam$ is proper, and $\Sig$ will be constructed as a 
current in its own support $\spt(\Sig) = \R_+\Lam$.
Thus, we \mbox{(re-)define} $\bZ_{n,\loc}(\CT X)$
as the collection of all local cycles $\Sig \in \bZ_{n,\loc}(K_\Sig)$
such that $K_\Sig \sub \CT X$ is proper and $\spt(\Sig) = K_\Sig$
(compare the discussion after Proposition~3.3 in~\cite{Lan3}). 
The sum of two elements $\Sig,\Sig' \in \bZ_{n,\loc}(\CT X)$
may be formed by viewing them temporarily as currents 
in $K_{\Sig} \cup K_{\Sig'}$; thus $\bZ_{n,\loc}(\CT X)$ is an abelian group.
The complexes $\bI_{*,\loc}(\CT X)$, $\bI_{*,\cs}(\CT X)$ and $\bI_{*,\cs}(\bT X)$ 
are understood similarly.

We start with a basic fact.

\begin{lemma}[uniform convergence] \label{lem:unif-conv}
Let $X$ be a proper metric space with a convex bicombing $\si$. 
Suppose that $K$ is a compact subset of $\CT X$, and $p \in X$.
Then for every $\eps > 0$ there is an $r_\eps > 0$ such that
\[
\dT(u,v) - \eps \le r^{-1} d(\can_p(ru),\can_p(rv)) \le \dT(u,v)
\]
for all $r \ge r_\eps$ and $u,v \in K$. In particular, $K$ is an asymptotic 
subset of $X$ as defined in Definition~\ref{def:as-rank}. 
\end{lemma}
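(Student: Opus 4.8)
The plan is to fix $p\in X$ and a compact $K\subset\CT X$, and to control the discrepancy between the Tits metric on $K$ and the rescaled pulled-back metric via $\can_p$. The upper bound is immediate and scale-free: since $\can_p$ is $1$-Lipschitz and $\dT(h_r(u),h_r(v))=r\,\dT(u,v)$, we have $d(\can_p(ru),\can_p(rv))\le\dT(ru,rv)=r\,\dT(u,v)$ for all $r>0$ and all $u,v\in\CT X$; dividing by $r$ gives the right-hand inequality with no restriction on $r$. So the content is the lower bound, and it needs to hold uniformly over the compact set $K$.

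For a single pair $u=[\rho]$, $v=[\rho']$ with $\rho(0)=\rho'(0)=p$, recall from the discussion preceding the lemma that $t\mapsto d(\rho(t),\rho'(t))/t$ is non-decreasing with supremum (indeed limit) $\dT(u,v)$; hence $r^{-1}d(\can_p(ru),\can_p(rv))=r^{-1}d(\rho(r),\rho'(r))$ increases monotonically up to $\dT(u,v)$, so for each pair there is an $r$-threshold past which the left-hand inequality holds. The work is to make this threshold uniform. First I would fix a finite $(\eps/5)$-net $u_1,\dots,u_m$ of $K$ in the Tits metric (using compactness of $K$), and choose $r_\eps$ large enough that $r^{-1}d(\can_p(ru_i),\can_p(ru_j))\ge\dT(u_i,u_j)-\eps/5$ for all $i,j$ and all $r\ge r_\eps$ — possible since finitely many monotone limits are involved. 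Then for arbitrary $u,v\in K$, pick net points $u_i,u_j$ with $\dT(u,u_i),\dT(v,u_j)\le\eps/5$; using the triangle inequality in $\dT$, the already-established upper bound (which gives $r^{-1}d(\can_p(ru),\can_p(ru_i))\le\dT(u,u_i)\le\eps/5$, similarly for $v,u_j$), and the triangle inequality in $X$ for $d(\can_p(ru),\can_p(rv))$, one estimates
\[
r^{-1}d(\can_p(ru),\can_p(rv))\ge r^{-1}d(\can_p(ru_i),\can_p(ru_j))-\tfrac{2\eps}{5}\ge\dT(u_i,u_j)-\tfrac{3\eps}{5}\ge\dT(u,v)-\eps,
\]
as desired. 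This argument only uses that $K$ is compact and that the rescaled distances converge monotonically, so there is no real obstacle; the one point to be careful about is ensuring $\can_p$ is well-defined on all of $\CT X$ (every class has a representative ray starting at $p$, by the existence of $\rho_p$ noted after the definition of $\di X$), so that $\can_p(ru)=\rho(r)$ makes sense.

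For the final sentence, $K$ is an asymptotic subset of $X$: taking $r_i\to\infty$ and $Y_i:=\can_p(r_iK)\subset X$, the inequality just proved shows that $(Y_i,r_i^{-1}d)$ is $\eps$-close in the (uniform, hence Gromov--Hausdorff) sense to $(K,\dT)$ for $i$ large, so $(Y_i,r_i^{-1}d)\to(K,\dT)$ in the Gromov--Hausdorff topology, which is exactly the definition of an asymptotic subset (Definition~\ref{def:as-rank}); here one also uses that $\can_p$ restricted to $r_iK$ is surjective onto $Y_i$ so the maps $r_i^{-1}\can_p\colon r_iK\to Y_i$ are the required $\eps$-isometries.
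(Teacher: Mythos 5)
Your proposal is correct and follows essentially the same route as the paper: the paper observes that the functions $(u,v)\mapsto r^{-1}d(\can_p(ru),\can_p(rv))$ are uniformly ($1$-)Lipschitz and converge pointwise to $\dT$ by definition, hence uniformly on the compact set $K\times K$; your finite-net argument is just this equicontinuity-plus-compactness step written out explicitly, with the monotonicity of $t\mapsto d(\rho(t),\rho'(t))/t$ supplying the pointwise convergence. The Gromov--Hausdorff conclusion is handled identically in both.
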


\begin{proof}
For every $r > 0$, the map $u \mapsto \can_p(ru)$ is $r$-Lipschitz
on $\CT X$. It follows that the function
$\rho_r \colon (u,v) \mapsto r^{-1} d(\can_p(ru),\can_p(rv))$ is $1$-Lipschitz 
with respect to the $l_1$ product metric on $\CT X \times \CT X$. 
Moreover, as $r \to \infty$, $\rho_r \to \dT$ pointwise on 
$\CT X \times \CT X$ by the definition of $\dT$. 
Hence the convergence is uniform on $K \times K$ for every compact 
set $K \sub \CT X$. 

In particular, the rescaled sets $(\can_p(rK),r^{-1}d)$ 
converge in the Gromov--Hausdorff topology to $K$.
\end{proof}

\begin{remark} \label{rem:dimension}
It follows from Lemma~\ref{lem:unif-conv} and Remark~\ref{rem:as-rank} that 
if $\asrk(X) = n$ and $m > n$, then $\CT X$ contains no set bi-Lipschitz
homeomorphic to a compact subset of $\R^m$ with positive Lebesgue measure, 
and this implies that $\bI_{m,\loc}(\CT X) = \{0\}$ and 
$\bI_{m-1,\cs}(\bT X) = \{0\}$.

As a consequence, if $\asrk(X) = n$, then {\em every\/} local cycle 
$\Sig \in \bZ_{n,\loc}(\CT X)$ is conical with respect to the cone 
vertex $o$, that is, $h_{\lam\#}\Sig = \Sig$ for all $\lam > 0$. To see this, 
consider the radial homotopy $H \colon (t,v) \mapsto (1-t+\lam t)v$ of 
$\CT X$; then $h_{\lam\#}\Sig - \Sig$  equals the boundary of 
$H_\#(\bb{0,1} \times \Sig) = 0 \in \bI_{n+1,\loc}(\CT X)$.
\end{remark}

We now prove the following general result, which is independent of the 
asymptotic rank. However, the assumption $\asrk(X) = n$ will guarantee that 
$\Lam(S) \sub \bT X$ is compact.

\begin{theorem}[lifting cones] \label{thm:lift-cones}
Let $X$ be a proper metric space with a convex bicombing $\si$. 
Suppose that $S \in \bZ_{n,\loc}^\infty(X)$ is conical with respect to
some point $p \in X$, and $\Lam := \Lam(S)$ is compact in the Tits topology.
Then there is a unique local cycle $\Sig \in \bZ_{n,\loc}(\CT X)$ such 
that $\can_{p\#} \Sig = S$. Moreover, $\Sig \in \bZ_{n,\loc}(\CT X)$ is conical
with respect to $o$, $\M(\Sig \on \B{o}{1}) = \Gi(S)$, 
$\spt(\Sig) = \R_+\Lam$, and $\spt(\d(\Sig \on \B{o}{1})) = \Lam$.
\end{theorem}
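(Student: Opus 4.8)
The plan is to construct $\Sig$ by ``lifting'' the conical structure of $S$ to the Tits cone through the canonical maps $\can_p$, exploiting the fact that $S$ is essentially the image of a current on $\CT X$ under $\can_p$. First I would note that for $\lam \in (0,1]$ the map $\can_p$ conjugates the homothety $h_\lam$ on $\CT X$ with the contraction $h_{p,\lam}$ on $X$: indeed $\can_p \circ h_\lam = h_{p,\lam} \circ \can_p$, since for a $\si$-ray $\rho$ with $\rho(0)=p$ one has $\can_p(h_\lam([\rho])) = \can_p([\rho(\lam\,\cdot)]) = \rho(\lam) = h_{p,\lam}(\rho(1)) = h_{p,\lam}(\can_p([\rho]))$. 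This relation, together with conicality $h_{p,\lam\#}S = S$, is the backbone of both existence and uniqueness.

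For existence I would proceed by an exhaustion/limit argument. Since $\Lam$ is compact in the Tits topology (by hypothesis, guaranteed in applications by Proposition~\ref{prop:cpt-lim-sets}), the set $\R_+\Lam \cap \B{o}{r}$ is compact for each $r$, so $\R_+\Lam$ is proper and one may work with locally integral currents supported in it. For each large $r$ with $S \on \B{p}{r} \in \bI_{n,\cs}(X)$, I would show there is a current $\Sig_r \in \bI_{n,\cs}(\CT X)$ with support in $\R_+\Lam \cap \B{o}{r}$ such that $\can_{p\#}\Sig_r = S \on \B{p}{r}$: build it on annuli $\B{o}{s}\sm\B{o}{s'}$ using the fact that on these annuli $\can_p$ is a bi-Lipschitz-type correspondence via Lemma~\ref{lem:unif-conv} (the map $u \mapsto \can_p(ru)$ becomes almost isometric for large $r$), and then rescale using $h_\lam$ to glue. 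Alternatively, and perhaps more cleanly, I would fix one large scale $r_0$, produce $\Sig_{r_0}$ on the annulus via a deformation/slicing argument matching $\d(S\on\B{p}{r_0})$ to a cycle ``at infinity,'' and then {\em define} $\Sig$ on all of $\R_+\Lam$ by $\Sig := \lim_{\lam\to 0} h_{1/\lam\#}(\Sig_{\lam r_0})$, using that $h_{p,\lam\#}S = S$ forces consistency. Compactness Theorem~\ref{thm:cptness}(1) gives a weak limit, and the controlled-density bound $\Gi(S) < \infty$ together with Lemma~\ref{lem:unif-conv} transfers to a uniform mass bound $\|\Sig\|(\B{o}{r}) \le \Gi(S)\,r^n$, so the limit lives in $\bZ_{n,\loc}(\CT X)$. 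That $\can_{p\#}\Sig = S$ follows because $\can_{p\#}$ is continuous under weak convergence on compact sets and agrees with $S$ at every scale by construction. Conicality of $\Sig$ with respect to $o$ then follows either from Remark~\ref{rem:dimension} (when $\asrk(X)=n$) or directly: $h_{\mu\#}\Sig$ is the weak limit of $h_{\mu\#}h_{1/\lam\#}(\Sig_{\lam r_0}) = h_{\mu/\lam\#}(\Sig_{\lam r_0})$, which is again $\Sig$.

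For the remaining assertions: $\M(\Sig \on \B{o}{1}) = \Gi(S)$ follows by combining the upper bound $\|\Sig\|(\B{o}{r}) \le \Gi(S)r^n$ (hence $\M(\Sig\on\B{o}{1})\le\Gi(S)$) with the reverse inequality obtained from $\can_{p\#}\Sig = S$ and $1$-Lipschitzness of $\can_p$: $\|S\|(\B{p}{r}) = \|\can_{p\#}\Sig\|(\B{p}{r}) \le \|\Sig\|(\can_p^{-1}(\B{p}{r}))$, and by Lemma~\ref{lem:unif-conv} $\can_p^{-1}(\B{p}{r})$ is squeezed between $\B{o}{r}$ and $\B{o}{(1+\eps)r}$ for large $r$; conicality makes both sides scale exactly like $r^n$, so taking $r\to\infty$ gives $\Gi(S) \le \M(\Sig\on\B{o}{1})$. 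The support statements $\spt(\Sig) = \R_+\Lam$ and $\spt(\d(\Sig\on\B{o}{1})) = \Lam$ follow from $\spt(\Sig) \sub \R_+\Lam$ by construction, the reverse inclusion from $\can_p(\spt(\Sig)) \supseteq \spt(S)$ and taking limit sets (using Lemma~\ref{lem:conical}(4) that $\spt(S)\sub\C_p(\Lam(S))$ so $\Lam(S)=\Lam$ is the full radial closure), and the slice support from the conicality of $\Sig$ (which forces $\Sig\on\B{o}{1}$ to be a cone over its spherical boundary, by the same cone-homotopy argument as in Section~\ref{subsect:homotopies}).

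For uniqueness: if $\Sig, \Sig'$ both satisfy $\can_{p\#}\Sig = \can_{p\#}\Sig' = S$, set $\Psi := \Sig - \Sig'$; then $\can_{p\#}\Psi = 0$. The hard part will be showing $\Psi = 0$ — this is where injectivity of $\can_{p\#}$ on the relevant class of currents must be established, and it is the main obstacle. The key point is that $\can_p$ restricted to $\R_+\Lam$ is, at each fixed scale and up to the asymptotically vanishing error from Lemma~\ref{lem:unif-conv}, bi-Lipschitz onto its image; but at a fixed finite scale $\can_p$ need not be injective. I would resolve this by exploiting conicality: since both $\Sig$ and $\Sig'$ are conical with respect to $o$, it suffices to show $\Psi \on \B{o}{r} = 0$ for all $r$, and $h_{1/\lam\#}(\Psi\on\B{o}{\lam r}) = \Psi\on\B{o}{r}$, so we may push everything out to infinity. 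There, using Lemma~\ref{lem:unif-conv}, $\can_p$ becomes bi-Lipschitz up to error $\eps$ on $\B{o}{r}\sm\B{o}{\eta r}$; a current with $\can_{p\#}\Psi = 0$ on such a nearly-isometric annulus, with uniformly controlled mass and boundary mass, must have flat norm tending to $0$, and then Theorem~\ref{thm:cptness} and a diagonal argument force $\Psi = 0$. The technical care needed is to handle the inner region near $o$ (where the error is not small) — but conicality again reduces this to the annular estimate by rescaling, so the argument closes.
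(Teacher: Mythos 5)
Your high-level picture --- rescale, invoke the asymptotic isometry of Lemma~\ref{lem:unif-conv}, and use conicality to propagate a single-scale construction to all of $\R_+\Lam$ --- matches the paper's strategy, and the conjugation identity $\can_p\circ h_\lam = h_{p,\lam}\circ\can_p$ is correct and is indeed the backbone. But there is a genuine gap at the heart of the existence argument: you never explain how to manufacture a current $\Sig_r$ on $\CT X$ (or on any fixed space) out of $S_r := S\on\B{p}{r}$, which lives in $X$. A push-forward requires a Lipschitz map defined on the support of the current you start with, and there is no map from $X$ to $\CT X$ available; Lemma~\ref{lem:unif-conv} does not make $\can_p$ a ``bi-Lipschitz-type correspondence'' on any fixed annulus --- it only says that the comparison maps $u\mapsto \can_p(ru)$ become isometric \emph{after rescaling by $1/r$ and letting $r\to\infty$} (the error is additive of order $\eps r$, and $\can_p$ can genuinely fail to be injective at every finite scale). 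So the currents $\Sig_{\lam r_0}$ whose rescaled limits you propose to take are never constructed, and the compactness step of Theorem~\ref{thm:cptness} has nothing to act on; the same unresolved mechanism reappears in your uniqueness sketch, where ``flat norm tending to $0$'' is asserted without specifying the ambient space in which flat norms are measured or why smallness at infinity forces vanishing at finite scales.

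The paper's resolution, which is the key idea missing from your proposal, is to embed everything into a single compact \emph{injective} space: take $K = [0,1]\Lam\sub\CT X$, let $Y\sub l_\infty(K)$ be the compact convex set of $1$-Lipschitz functions on $K$, and map $\spt(S_r)$ into $Y$ by $f_r(x) := r^{-1}d(x,\can_p(r\,\cdot))$; this rescales distances by $1/r$ and is compatible with the canonical isometric embedding $f$ of $K$ into $Y$. The currents $f_{r\#}S_r$ then all live in the fixed compact space $Y$; the affine homotopies available in $Y$, together with uniform bounds on $\M(f_{r\#}S_r)$ and on $\M(\d(f_{r\#}S_r))$ (the latter requires the integration argument giving $\M(\d S_r)\le nCr^{n-1}$, which you omit), show the family is Cauchy in the flat distance, and the limit is automatically supported on $f(K)$, which produces $\Sig_1$. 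The same device --- in particular the $1$-Lipschitz extension property of $Y$, which yields the monotonicity $\cF(T_r)\le\cF(T_s)$ for the discrepancies $T_r := f_{r\#}(\pi_{r\#}\Sig_1 - S_r)$ and hence $T_r = 0$ --- is exactly what makes both the identity $\can_{p\#}\Sig = S$ and the injectivity of $\Sig'\mapsto(\pi_{r\#}\Sig')_{r>0}$ rigorous. Without some such common ambient space (an injective hull, an ultraproduct, or another implementation of Gromov--Hausdorff convergence of currents), the argument does not close.
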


Note that since $\Sig \in \bZ_{n,\loc}(\CT X)$ is conical, 
$\Sig \on \B{o}{\lam} \in \bI_{n,\cs}(\CT X)$ for all $\lam > 0$
(compare Lemma~\ref{lem:conical} (conical)). 

To construct $\Sig$ we will consider the family of all 
$S_r := S \on \B{p}{r} \in \bI_{n,\cs}(X)$ for $r > 0$. 
First we embed each $S_r$ by a map that dilates
all distances by the factor $1/r$ into a fixed compact metric space $Y$.
The embedded family converges, as $r \to \infty$,
to an integral current in $Y$ with support in an
isometric copy of the cone $K := [0,1]\Lam \sub \CT X$, and this yields
$\Sig \on \B{o}{1}$. As regards $Y$, we will use the following general fact.
Given any compact metric space $(K,d_K)$ with diameter $D$, the set $Y$
of all $1$-Lipschitz functions $y \colon K \to [0,D]$, endowed 
with the metric defined by
\[
d_Y(y,y') := \sup_{v \in K}|y(v) - y'(v)|,
\]
is a compact convex subspace of $l_\infty(K)$, and the map
$u \mapsto d_K(u,\cdot)$ is an isometric embedding of $K$ into~$Y$.
Furthermore, $Y$ is an injective metric space; that is, every $1$-Lipschitz
map $\rho \colon A \to Y$ defined on a subset $A$ of a metric space $B$
extends to a $1$-Lipschitz map $\bar\rho \colon B \to Y$. In fact,
such an extension is given by
\[
\bar\rho(b)(v) := \sup_{a \in A} \max\{\rho(a)(v) - d(a,b),0\}
\]
for all $b \in B$ and $v \in K$. 

\begin{proof}
For $s > r > 0$, we put $\pi_r := \can_p \circ h_r \colon \CT X \to X$
and $\pi_{s,r} := h_{p,r/s} \colon X \to X$. Note that $\pi_r$ is $r$-Lipschitz,
$\pi_{s,r}$ is $(r/s)$-Lipschitz, and $\pi_r = \pi_{s,r} \circ \pi_s$.

Let first $K \sub \CT X$ be an arbitrary compact set, and put 
$K_r := \pi_r(K)$. Let $(Y,d_Y)$ be the compact convex subspace of 
$l_\infty(K)$ as described before the proof, and let
\[
f \colon K \to Y, \quad f(u) := \dT(u,\cdot),
\]
denote the canonical isometric embedding of $K$ into $Y$.
Similarly, since $\pi_r$ is $r$-Lipschitz, there is a map 
\[
f_r \colon K_r \to Y, \quad f_r(x) := r^{-1}d(x,\pi_r(\cdot)),
\]
and since $\pi_r$ maps $K$ onto $K_r$, it follows that
\[
d_Y(f_r(x),f_r(x')) = r^{-1} \sup_{v \in K}
\bigl| d(x,\pi_r(v)) - d(x',\pi_r(v)) \bigr| = r^{-1}d(x,x')
\]
for all $x,x' \in K_r$. Note also that $f_r(p) = f(o) =: y_0 \in Y$.

Let $\eps > 0$. By Lemma~\ref{lem:unif-conv} there is an $r_\eps > 0$
such that if $s > r \ge r_\eps$, then 
$s^{-1}d(\pi_s(u),\pi_s(v)) \le \dT(u,v) \le r^{-1}d(\pi_r(u),\pi_r(v)) + \eps$
for all $u,v \in K$. We infer that
\begin{align*}
&d_Y(f_s(\pi_s(u)),f_r(\pi_r(u))) \\
&\qquad = \sup_{v \in K} \bigl| s^{-1}d(\pi_s(u),\pi_s(v))
- r^{-1}d(\pi_r(u),\pi_r(v)) \bigr| \le \eps
\end{align*}
for all $u \in K$ and hence
\[
d_Y(f_s(x),(f_r \circ \pi_{s,r})(x)) \le \eps
\] 
for all $x \in K_s$. Similarly, 
\[
d_Y(f(u),(f_r \circ \pi_r)(u)) \le \eps
\]
for all $u \in K$. Thus $f_r(K_r)$ lies within distance $\eps$ of $f(K)$.

We now apply this construction for the cone $K := [0,1]\Lam \sub \CT X$. 
Let $C := \Gi(S)$. By Lemma~\ref{lem:conical} (conical), for all $s > r > 0$,
$S_r := S \on \B{p}{r} \in \bI_{n,\cs}(X)$, $\pi_{s,r\#}S_s = S_r$,
$\M(S_r) \le Cr^n$, and $\spt(S) \sub \C_p(\Lam)$; 
thus $\spt(S_r) \sub K_r = \pi_r(K)$. Since $\pi_{s,r}$ is $r/s$-Lipschitz
and $\pi_{s,r\#}(\d S_s) = \d S_r$, it follows that 
$s^{n-1}\M(\d S_r) \le r^{n-1}\M(\d S_s)$, and an integration
over $s$ yields
\[
\frac{R^n - r^n}{n}\M(\d S_r) \le r^{n-1}\M(S_R) \le Cr^{n-1}R^n
\]
for all $R > r$; thus $\M(\d S_r) \le nCr^{n-1}$. Since $f_r \colon K_r \to Y$
is $(1/r)$-Lipschitz, we get the uniform bounds
\[
\M(f_{r\#}S_r) \le C, \quad \M(\d(f_{r\#}S_r)) = \M(f_{r\#}(\d S_r)) \le nC.
\]
For $\eps > 0$ and $s > r \ge r_\eps$, let $H \colon [0,1] \times K_s \to Y$ 
be the affine homotopy from $f_s$ to $f_r \circ \pi_{s,r}$ in 
$Y \sub l_\infty(K)$. Then $H(t,\cdot)$ is $(1/s)$-Lipschitz for every 
$t \in [0,1]$, and $H(\cdot,x)$ is a segment of length at most $\eps$
for every $x \in K_s$. It follows that the family $(f_{r\#}S_r)_{r > 0}$ 
is Cauchy with respect to the flat distance $\cF$ on $\bI_{n,\cs}(Y)$
(see Sections~\ref{subsect:homotopies} and~\ref{subsect:convergence}),
and by Theorem~\ref{thm:cptness} (compactness) there exists
a current $\bar\Sig_1 \in \bI_{n,\cs}(Y)$ such that 
\[
\lim_{r \to \infty}\cF(f_{r\#}S_r - \bar\Sig_1) = 0.
\]
Note that $\M(\bar\Sig_1) \le C$ and $\spt(\bar\Sig_1) \sub f(K)$,
furthermore $\spt(\d\bar\Sig_1) \sub \Sph{y_0}{1}$ because
$\spt(\d(f_{r\#}S_r)) \sub f_r(\spt(\d S_r)) \sub \Sph{y_0}{1}$ for all $r > 0$.
Via the isometric embedding $f^{-1} \colon f(K) \to \CT X$ 
we get a current $\Sig_1 := (f^{-1})_\#\bar\Sig_1 \in \bI_{n,\cs}(\CT X)$ 
with $\spt(\Sig_1) \sub K$, $\spt(\d\Sig_1) \sub \Lam \sub \Sph{o}{1}$,
and $\M(\Sig_1) \le C = \Gi(S)$.

Next we show that for each $r > 0$, $\pi_{r\#} \Sig_1 = S_r$.
We know that if $\eps > 0$ and $s \ge r_\eps$, then 
$d_Y(f(u),(f_s \circ \pi_s)(u)) \le \eps$ for all $u \in K$. 
Since $f_\#\Sig_1 = \bar\Sig_1$, this yields
\[
\lim_{s \to \infty} \cF(f_{s\#}(\pi_{s\#}\Sig_1) - \bar\Sig_1) = 0.
\]
Putting $T_s := f_{s\#}(\pi_{s\#}\Sig_1 - S_s)$, we get that 
$\lim_{s \to \infty}\cF(T_s) = 0$. For every $s > r$, the $1$-Lipschitz 
map $\rho_s := f_r \circ \pi_{s,r} \circ f_s^{\,-1} \colon f_s(K_s) \to f_r(K_r)$
satisfies $\rho_{s\#}T_s = T_r$ and possesses a $1$-Lipschitz extension
$\bar\rho_s \colon Y \to Y$. It follows that $\cF(T_r) \le \cF(T_s)$ for all 
$s > r$, thus $\cF(T_r) = 0$ and therefore $T_r = 0$. 
Hence, $\pi_{r\#}\Sig_1 - S_r = (f_r^{\,-1})_\#T_r = 0$, as claimed.

As a consequence, $\M(S_r) \le r^n\M(\Sig_1)$ and 
$\spt(\d S_r) \sub \pi_r(\spt(\d\Sig_1))$ for all $r > 0$, thus
$\Gi(S) \le \M(\Sig_1)$ and $\Lam \sub \spt(\d\Sig_1)$. Hence, in view of the
relations shown above, $\M(\Sig_1) = \Gi(S)$ and $\spt(\d\Sig_1) = \Lam$.

Finally, consider the family $\{\Sig_\lam\}_{\lam > 0}$ in $\bI_{n,\cs}(\CT X)$
such that $\Sig_\lam = h_{\lam\#}\Sig_1$ for every $\lam > 0$. Then 
$\pi_{r\#}\Sig_\lam = S_{\lam r}$ for all $r > 0$, and we claim that 
$\Sig_\lam$ is the unique element of $\bI_{n,\cs}(\CT X)$ with this property.
Let $\Sig'$ be any non-zero element of $\bI_{n,\cs}(\CT X)$. It suffices to 
show that $\pi_{r\#}\Sig' \ne 0$ for some $r > 0$. 
Put $K := \spt(\Sig')$, $K_r := \pi_r(K)$, and define $Y$, $f$, and $f_r$ 
as above. Then it follows that 
$\lim_{r \to \infty}\cF(f_{r\#}(\pi_{r\#}\Sig') - f_\#\Sig') = 0$.
Since $f_\#\Sig' \ne 0$, this implies the claim.
Now if $0 < \lam < \lam'$, then $\B{o}{\lam} = \pi_r^{-1}(\B{p}{\lam r})$ 
and hence
\[
\pi_{r\#}(\Sig_{\lam'} \on \B{o}{\lam}) 
= (\pi_{r\#}\Sig_{\lam'}) \on \B{p}{\lam r}
= S_{\lam'r} \on \B{p}{\lam r} = S_{\lam r}
\]
for all $r > 0$; therefore $\Sig_{\lam'} \on \B{o}{\lam} = \Sig_\lam$ by 
uniqueness. It follows that the family $\{\Sig_\lam\}_{\lam > 0}$ determines 
a local cycle $\Sig \in \bZ_{n,\loc}(\CT X)$ such that $\Sig \on \B{o}{\lam} = 
\Sig_\lam$ for all $\lam > 0$, and it is easily verified that $\Sig$ has 
the desired properties.
Note that $\lam\Lam = \spt(\d\Sig_\lam) \sub \spt(\Sig) \sub \R_+\Lam$ 
for all $\lam > 0$, thus $\spt(\Sig) = \R_+\Lam$.
\end{proof}

From Theorem~\ref{thm:lift-cones} we obtain the following result which,
in conjunction with Theorem~\ref{thm:conical-repr} (conical representative) 
and Proposition~\ref{prop:lim-sets} (equal limit sets), 
establishes Theorem~\ref{intro-f-classes} stated in the introduction.

\begin{theorem}[Tits boundary] \label{thm:f-classes}
Let $X$ be a proper metric space with a convex bicombing $\si$ and with
$\asrk(X) = n \ge 2$. Then for every $S \in \bZ_{n,\loc}^\infty(X)$ there exists
a unique local cycle $\Sig \in \bZ_{n,\loc}(\CT X)$ such that $\can_{p\#}\Sig = 
S_{p,0}$ for all $p \in X$; furthermore $\Sig$ is conical with respect to $o$, 
and the slice $\d(\Sig \on \B{o}{1})$ defines an element 
$\bT S = \bT[S] \in \bZ_{n-1,\cs}(\bT X)$ with $\spt(\bT S) = \Lam(S_{p,0})$ 
for all $p \in X$. This yields an isomorphism 
\[
\bT \colon \cZ X \to \bZ_{n-1,\cs}(\bT X). 
\] 
\end{theorem}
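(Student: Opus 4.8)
The plan is to assemble Theorem~\ref{thm:f-classes} from the pieces already in place. First I would fix $S \in \bZ_{n,\loc}^\infty(X)$. By Theorem~\ref{thm:conical-repr} (conical representative), for each $p$ there is a unique $S_{p,0} \in [S]$ conical with respect to $p$, with $\Gi(S_{p,0}) \le \Gi(S) < \infty$ and $\Lam(S_{p,0}) \sub \Lam(S)$. Since $\asrk(X) = n$, Proposition~\ref{prop:cpt-lim-sets} (compact limit sets) shows $\Lam(S_{p,0}) \in \cL X$ is compact in the Tits topology, so Theorem~\ref{thm:lift-cones} (lifting cones) applies to $S_{p,0}$: there is a unique $\Sig_p \in \bZ_{n,\loc}(\CT X)$ with $\can_{p\#}\Sig_p = S_{p,0}$, conical with respect to $o$, with $\M(\Sig_p \on \B{o}{1}) = \Gi(S_{p,0})$, $\spt(\Sig_p) = \R_+\Lam(S_{p,0})$, and $\spt(\d(\Sig_p \on \B{o}{1})) = \Lam(S_{p,0})$.

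The first substantive point is that $\Sig_p$ is independent of $p$. The natural argument: for $q \in X$, consider the geodesic homotopy from $\can_p$ to $\can_q$ on $\CT X$ (at scale $\lam$, move along $\si$ from $\can_p(\lam v)$ to $\can_q(\lam v)$); this has uniformly bounded fibers on $\B{o}{\lam}$, so $\can_{q\#}\Sig_p$ and $\can_{p\#}\Sig_p = S_{p,0}$ differ by a boundary in each ball, giving $\Fi(\can_{q\#}\Sig_p - S_{p,0}) = 0$, hence $\can_{q\#}\Sig_p \sim_\F S$. Since $\Sig_p$ is conical with respect to $o$ and $\can_q$ intertwines the $o$-homothety of $\CT X$ with $h_{q,\lam}$ on $X$ (indeed $\can_q \circ h_\lam = h_{q,\lam} \circ \can_q$), the cycle $\can_{q\#}\Sig_p$ is conical with respect to $q$; by the uniqueness clause of Theorem~\ref{thm:conical-repr} it equals $S_{q,0}$. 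Thus $\can_{q\#}\Sig_p = S_{q,0}$ for all $q$, and by the uniqueness clause of Theorem~\ref{thm:lift-cones} (applied with base point $q$) we get $\Sig_p = \Sig_q =: \Sig$. Then $\bT S := \d(\Sig \on \B{o}{1}) \in \bI_{n-1,\cs}(\bT X)$; it is a cycle since $\d\d = 0$, its support is $\Lam(S_{p,0})$ for every $p$ by Theorem~\ref{thm:lift-cones}, and it depends only on $[S]$ because $S_{p,0}$ does (two $\F$-asymptotic elements have the same conical representative at each $p$).

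Next I would verify that $\bT$ is a homomorphism $\cZ X \to \bZ_{n-1,\cs}(\bT X)$: additivity follows because $(S+S')_{p,0} = S_{p,0} + S'_{p,0}$ (again by uniqueness of conical representatives), the lift of a sum is the sum of the lifts (the lift of Theorem~\ref{thm:lift-cones} is characterized by $\can_{p\#}\Sig = S_{p,0}$, and $\can_{p\#}$ is linear, so $\Sig_{S+S'} = \Sig_S + \Sig_{S'}$ by uniqueness), and $\d$ and $\on\B{o}{1}$ are linear. For injectivity: if $\bT S = 0$, then $\d(\Sig \on \B{o}{1}) = 0$; since $\Sig$ is conical with respect to $o$, $\spt(\d(\Sig\on\B{o}{1})) = \Lam(S_{p,0})$, so $\Lam(S_{p,0}) = \es$, forcing $\spt(S_{p,0}) = \es$ by Lemma~\ref{lem:conical}(4), i.e. $S_{p,0} = 0$, i.e. $\Fi(S) = 0$, i.e. $[S] = [0]$. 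For surjectivity: given $R \in \bZ_{n-1,\cs}(\bT X)$, take the geodesic cone $\Sig := \{o\} \cc R$ in $\CT X$ (using that $\CT X$ is $\CAT(0)$ and $\spt(R)$ compact) — concretely $\Sig \on \B{o}{\lam}$ is the cone from $o$ over $\lam R$ — which is a conical-with-respect-to-$o$ local cycle with $\d(\Sig\on\B{o}{1}) = R$; set $S := \can_{p\#}\Sig$ for any fixed $p$. The controlled-density / $r^n$-growth bound of $S$ follows from the sharp $\CAT(0)$ cone inequality, giving $\Gi(S) \le \M(R)/n < \infty$, so $S \in \bZ_{n,\loc}^\infty(X)$; since $\can_{p\#}$ intertwines the homotheties, $S$ is conical with respect to $p$, hence $S = S_{p,0}$, hence the lift of $S$ is exactly $\Sig$ and $\bT[S] = R$. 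The main obstacle is the base-point independence step — making the homotopy estimate clean enough to conclude $\can_{q\#}\Sig_p \sim_\F S$ and then correctly invoking the two uniqueness statements (of Theorem~\ref{thm:conical-repr} and Theorem~\ref{thm:lift-cones}) in the right order; the additivity and bijectivity bookkeeping is then routine given the tools already developed. (Surjectivity here overlaps with Theorem~\ref{intro-t-plateau}/Theorem~\ref{thm:t-plateau}, which the paper may prefer to cite instead of redoing the cone estimate.)
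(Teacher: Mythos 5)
Your proposal is correct and follows essentially the same route as the paper: conical representative plus compact limit sets feeding into the lifting theorem, base-point independence via the $\si$-homotopy from $\can_p$ to $\can_q$ combined with the two uniqueness assertions, and then the (routine) verification that $\bT$ is an isomorphism, which the paper in fact leaves to the reader. The only caveat is in your surjectivity step: in the general bicombing setting $\CT X$ need not be $\CAT(0)$, so you should build the cone over $R$ using only the radial homothety structure of $\CT X$ (which gives $\M(\Sig \on \B{o}{1}) \le \M(R)$ rather than $\M(R)/n$) --- this cruder bound still yields $\Gi(S) < \infty$, which is all that is needed.
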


\begin{proof}
Let $S \in \bZ_{n,\loc}^\infty(X)$, and let $p,p' \in X$. 
By Theorem~\ref{thm:conical-repr},
$S_{p,0}$ and $S_{p',0}$ are the unique representatives of $[S]$ 
that are conical with respect to $p$ and $p'$, respectively. 
Theorem~\ref{thm:lift-cones} together with 
Proposition~\ref{prop:cpt-lim-sets} (compact limit sets) then shows that
there exist unique elements $\Sig,\Sig' \in \bZ_{n,\loc}(\CT X)$ such that 
$\can_{p\#}\Sig = S_{p,0}$ and $\can_{p'\#}\Sig' = S_{p',0}$; furthermore 
$\Sig,\Sig'$ are conical with respect to $o$, 
and $\spt(\d(\Sig \on \B{o}{1})) = \Lam(S_{p,0})$. 
Now $\can_{p\#}\Sig' \in \bZ_{n,\loc}^\infty(X)$ is conical with respect to $p$,
and using the $\si$-homotopy $H \colon [0,1] \times \CT X \to X$
from $\can_p$ to $\can_{p'}$ one can easily check that 
$\can_{p\#}\Sig'$ is $\F$-asymptotic to $S_{p',0}$ and hence to $S$.
It follows from the above uniqueness assertions 
that $\can_{p\#}\Sig' = S_{p,0}$ and $\Sig' = \Sig$.
This shows that $\can_{p'\#}\Sig = S_{p',0}$ for all $p' \in X$. 
In particular, $\Sig$ depends only on $[S]$. 
Viewing $\d(\Sig \on \B{o}{1}) \in \bZ_{n-1,\cs}(\CT X)$ as an element 
$\bT S = \bT[S] \in \bZ_{n-1,\cs}(\bT X)$, we get a map 
$\bT \colon \cZ X \to \bZ_{n-1,\cs}(\bT X)$, and it is easily verified 
that this is an isomorphism. 
\end{proof}

Returning to the asymptotic Plateau problem, we may now reformulate 
Theorem~\ref{thm:a-plateau} as follows.

\begin{theorem}[minimizer with prescribed Tits data] \label{thm:t-plateau}
Let $X$ be a proper metric space with a convex bicombing $\si$ and with
$\asrk(X) = n \ge 2$. Then for every cycle $R \in \bZ_{n-1,\cs}(\bT X)$
there exists an area-minimizing local cycle $S \in \bZ_{n,\loc}^\infty(X)$
with $\bT S = R$. Every such $S$ satisfies $\Lam(S) = \spt(R)$
and $\M(R)/n \le \Gi(S) \le \M(R)$.
\end{theorem}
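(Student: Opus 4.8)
The idea is to read off the statement from the isomorphism $\bT\colon\cZ X\to\bZ_{n-1,\cs}(\bT X)$ of Theorem~\ref{thm:f-classes}, combined with Theorem~\ref{thm:a-plateau}, and then to deduce the density bounds from the structure of the lifted cone $\Sig\in\bZ_{n,\loc}(\CT X)$ supplied by Theorem~\ref{thm:lift-cones}.

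First I would fix $p\in X$ and put $[S_0]:=\bT^{-1}(R)\in\cZ X$. By Theorem~\ref{thm:conical-repr} this class has a representative $S_0:=(S_0)_{p,0}$ that is conical with respect to $p$, and by Theorems~\ref{thm:lift-cones} and~\ref{thm:f-classes} there is a unique $\Sig\in\bZ_{n,\loc}(\CT X)$, conical with respect to $o$, such that $\can_{q\#}\Sig=(S_0)_{q,0}$ for all $q\in X$ and $\d(\Sig\on\B{o}{1})$ equals $R$ when viewed in $\bZ_{n-1,\cs}(\bT X)$. Applying Theorem~\ref{thm:a-plateau} to $S_0$ produces an area-minimizing $S\in\bZ_{n,\loc}^\infty(X)$ that is $\F$-asymptotic to $S_0$; then $S_{p,0}=S_0$, so $[S]=[S_0]$ and $\bT S=\bT[S]=R$, which proves existence. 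Conversely, if $S$ is any area-minimizing element of $\bZ_{n,\loc}^\infty(X)$ with $\bT S=R$, then injectivity of $\bT$ forces $[S]=[S_0]$, hence $S_{p,0}=S_0$ and $S$ is $\F$-asymptotic to $S_0$; Theorem~\ref{thm:a-plateau} then gives $\Gi(S)=\Gi(S_0)$ and $\Lam(S)=\Lam(S_0)$. Since $\Lam(S_0)=\Lam(S_{p,0})=\spt(\bT S)=\spt(R)$ by Theorem~\ref{thm:f-classes}, we obtain $\Lam(S)=\spt(R)$; and since $\Gi(S_0)=\Gi((S_0)_{p,0})=\M(\Sig\on\B{o}{1})$ by Theorem~\ref{thm:lift-cones}, it remains only to bound $\M(\Sig\on\B{o}{1})$ in terms of $\M(R)$.

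For the upper bound I would observe that $\Sig\on\B{o}{1}$ is in fact the geodesic cone from $o$ over its boundary slice $R$ (which is supported in $\Sph{o}{1}$): any other filling of $R$ in $\CT X$ differs from it by a compactly supported $n$-cycle, and coning such a cycle over $o$ gives an element of $\bI_{n+1,\loc}(\CT X)=\{0\}$ by Remark~\ref{rem:dimension}, so the two fillings agree. The convexity coning inequality of Section~\ref{subsect:homotopies}, applied in $\CT X$ with radius $1$ (the radial homotopy satisfies $\dT(tv,tv')=t\,\dT(v,v')$ since each $h_t$ is a homothety), then yields $\M(\Sig\on\B{o}{1})\le\M(R)$. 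For the lower bound I would use conicality of $\Sig$ with respect to $o$: then $\Sig\on\B{o}{t}=h_{t\#}(\Sig\on\B{o}{1})$ for every $t>0$, and since $h_t$ scales distances by $t$ one has $\|\Sig\|(\B{o}{t})=t^n\,\M(\Sig\on\B{o}{1})$ and $\M(\d(\Sig\on\B{o}{t}))=t^{n-1}\M(R)$. Feeding these into the coarea inequality of Section~\ref{subsect:slicing} for $\rho:=\dT(o,\cdot)$ on $(0,1)$ gives
\[
\frac{\M(R)}{n}=\int_0^1 t^{n-1}\M(R)\,dt=\int_0^1\M(\d(\Sig\on\B{o}{t}))\,dt\le\M(\Sig\on\B{o}{1}),
\]
that is, $\Gi(S)\ge\M(R)/n$. (When $X$ is $\CAT(0)$ the sharper cone inequality of Section~\ref{subsect:homotopies}, cf.\ Remark~\ref{rem:monotonicity}, improves the upper bound to $\M(R)/n$, recovering the equality in Theorem~\ref{intro-t-plateau}.)

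The only genuine subtlety is that $\CT X$ need not be locally compact, so the currents $\Sig\on\B{o}{t}$, the geodesic cone over $R$, the slicing and coarea estimates, and the vanishing of $\bZ_{n,\cs}(\CT X)$ must all be interpreted inside the proper subspace $\R_+\Lam=\spt(\Sig)$, whose balls about $o$ are compact by Proposition~\ref{prop:cpt-lim-sets}, exactly as in the redefinition of $\bZ_{n,\loc}(\CT X)$ preceding Theorem~\ref{thm:lift-cones}; one should also check that the mass of $R$ is unchanged whether computed in $\bT X$ or, via the isometric inclusion, in $\CT X$, which is immediate from McShane extension of $1$-Lipschitz functions. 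Apart from this, the argument is a direct assembly of Theorems~\ref{thm:f-classes},~\ref{thm:a-plateau}, and~\ref{thm:lift-cones}.
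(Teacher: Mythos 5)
Your proposal is correct and follows essentially the same route as the paper: existence of a conical representative $S_0$ with $\bT S_0 = R$ via Theorems~\ref{thm:f-classes} and~\ref{thm:conical-repr}, the minimizer and the identities $\Lam(S)=\spt(R)$, $\Gi(S)=\Gi(S_0)$ via Theorem~\ref{thm:a-plateau}, the lower bound $\M(R)/n$ from Theorem~\ref{thm:lift-cones} and the coarea inequality, and the upper bound from identifying $\Sig\on\B{o}{1}$ with the cone over $R$. Your additional justifications (uniqueness of the filling in $\CT X$ via $\bI_{n+1,\loc}(\CT X)=\{0\}$, and the remark on working inside the proper support $\R_+\Lam$) are details the paper leaves implicit, and they are sound.
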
  

\begin{proof}
Let $R \in \bZ_{n-1,\cs}(\bT X)$. By Theorem~\ref{thm:f-classes}
and Theorem~\ref{thm:conical-repr} there is a conical local cycle
$S_0 \in \bZ_{n,\loc}^\infty(X)$ with Tits boundary $\bT S_0 = R$,
and $\Lam(S_0) = \spt(R)$. By Theorem~\ref{thm:a-plateau} there exists
a minimizing $S \in [S_0]$, and every such $S$ satisfies
$\Lam(S) = \Lam(S_0) = \spt(R)$ and $\Gi(S) = \Gi(S_0)$.
Note that $S \in [S_0]$ if and only if $\bT S = R$.
By Theorem~\ref{thm:lift-cones} and the coarea inequality,
\[
\Gi(S_0) = \M(\Sig \on \B{o}{1}) \ge \int_0^1 \lam^{n-1}\M(R) \,d\lam
= \M(R)/n,
\]
and since $\Sig \on \B{o}{1}$ agrees with the cone over $R$,
$\M(\Sig \on \B{o}{1}) \le \M(R)$.
\end{proof}

When $X$ is a $\CAT(0)$ space, the last inequality holds with $\M(R)/n$ in
place of $\M(R)$; then $\Gi(S) = \M(R)/n$ for every
minimizing $S \in \bZ_{n,\loc}^\infty(X)$ with $\bT S = R$.
Furthermore, $\G_{p,r}(S) \le \Gi(S)$ for all
$p \in X$ and $r > 0$ by monotonicity (see Remark~\ref{rem:monotonicity}),
and $\lim_{r \to 0} \G_{p,r}(S) \ge \om_n$ for $\|S\|$-almost every $p$
(see~\cite{Wen3}, (4.28)); thus $\Gi(S) \ge \om_n$ whenever $R \ne 0$.
This proves Theorem~\ref{intro-t-plateau}.

\begin{remark} \label{rem:visual-metrics}
By the above results, we may rephrase
Theorem~\ref{thm:visual-metrics} (visual metrics) in terms of cycles at
infinity. For a reference point $p \in X$ and $[S] \in \cZ X$, we put
$\gp{p}{\bT[S]} := \gp{p}{[S]}$, thus
\[
\gp{p}{R} = \inf\{d(p,\spt(\til S)):
\text{$\til S \in \bZ_{n,\loc}^\infty(X)$ is minimizing, $\bT \til S = R$}\}  
\]
for all $R \in \bZ_{n-1,\cs}(\bT X)$. Let $C > 0$ and $a \ge 0$.
Then, for every sufficiently small $b > 1$, there exist a constant $c \ge 1$
and a metric $\nu$ on $\bT(\cZ_{C,a}X) \sub \bZ_{n-1,\cs}(\bT X)$ satisfying
\[
c^{-1}b^{-\gp{p}{R-R'}} \le \nu(R,R') \le c\,b^{-\gp{p}{R-R'}}
\]
for all $R,R' \in \bT(\cZ_{C,a}X)$; furthermore $\bT(\cZ_{C,a}X)$ is compact
with respect to any such metric.
Note that if $[S] \in \cZ_{C,a}X$ and $\til S \in [S]$ is minimizing,
then $\Gi(\til S) \le C$
by Theorem~\ref{thm:constr-minimizers} (constructing minimizers)
and thus
\[
\bT(\cZ_{C,a}X) \sub \{R \in \bZ_{n-1,\cs}(\bT X): \M(R) \le nC\}
\]
by Theorem~\ref{thm:t-plateau}.
When $X$ is $\CAT(0)$, these two sets agree for each $a \ge 0$.
\end{remark}  


\section{Quasi-isometries} \label{sect:qi}

We now turn to quasi-isometric embeddings of $X$ into another proper metric
space $\bar X$ with a convex bicombing.

The following auxiliary result will be used in conjunction with 
Lemma~\ref{lem:doubling} (doubling). 

\begin{proposition}[Lipschitz extension] \label{prop:lip-ext}
Suppose that $X$ is a metric space, $\bar X$ is a metric space 
with a convex bicombing $\bar\si$, 
and $A \sub X$ is a non-empty closed set that is doubling. 
Then there is a constant $\mu \ge 1$, depending only on the 
doubling constant, such that for every $L$-Lipschitz map 
$f \colon A \to \bar X$ there is a $\mu L$-Lipschitz map 
$g \colon X \to \bar X$ with $g|_A = f$.
\end{proposition}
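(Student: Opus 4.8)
The plan is to construct $g$ by a Whitney‑type partition‑of‑unity argument: cover the open set $U:=X\setminus A$ by a family of \emph{bounded multiplicity adapted to $A$}, glue the values of $f$ along this family using the bicombing $\bar\si$ to take weighted convex combinations in $\bar X$, and set $g=f$ on $A$. We may assume $A\ne X$; write $M$ for the doubling constant of $A$.

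\emph{A cover of $U$ adapted to $A$.} For each $k\in\Z$ I would fix a maximal $2^k$‑separated subset $A_k\subset A$, which is then a $2^k$‑net in $A$, and for $a\in A_k$ set $W_{a,k}:=\{x\in X:2^{k-1}<d(x,A)<2^{k+3}\}\cap B(a,2^{k+4})$ (open ball). These are open, and they cover $U$: if $x\in U$ with $2^m\le d(x,A)<2^{m+1}$, choose $a'\in A$ with $d(x,a')\le 2\,d(x,A)$ and then $a\in A_m$ with $d(a,a')\le 2^m$, so $x\in W_{a,m}$ with room to spare. The place where the doubling hypothesis enters is the multiplicity bound: \emph{each $y\in U$ lies in at most $N=N(M)$ of the $W_{a,k}$}. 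Indeed $y\in W_{a,k}$ forces $d(y,A)$ and $2^k$ to be comparable within a fixed factor, so only about four values of $k$ can occur; for fixed $k$ the competing points $a\in A_k$ are pairwise more than $2^k$ apart yet all lie in one ball of $A$ of radius $\approx 2^{k+5}$, so iterating the doubling property of $A$ a fixed number of times bounds their number by a power of $M$. Note also $\diam W_{a,k}\le 2^{k+5}$ and $d(W_{a,k},A)\ge 2^{k-1}$; fix once and for all the reference point $a_{a,k}:=a\in A$.

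\emph{Partition of unity and the map.} Subordinate to $\{W_{a,k}\}$ I would take the $1$‑Lipschitz bumps $\lambda_{a,k}(x):=\min\{(d(x,A)-2^{k-1})_+,(2^{k+3}-d(x,A))_+,(2^{k+4}-d(x,a))_+\}$, with $\{\lambda_{a,k}>0\}=W_{a,k}$, and one checks $\sum_{b,l}\lambda_{b,l}(x)\ge\tfrac14 d(x,A)$ for every $x\in U$; normalize to $\phi_{a,k}:=\lambda_{a,k}\big/\sum_{b,l}\lambda_{b,l}$, so $\sum\phi_{a,k}\equiv 1$ on $U$. A routine quotient estimate, together with the multiplicity bound, then gives $\sum_i|\phi_i(x)-\phi_i(x')|\le c(N)\,d(x,x')/d(x,A)$ whenever $x,x'\in U$ and $d(x,x')\le\tfrac14 d(x,A)$. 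I set $g:=f$ on $A$ and, for $x\in U$, let $g(x)\in\bar X$ be the weighted convex combination of the at most $N$ points $f(a_i)$ with weights $\phi_i(x)$, built by iterating $\bar\si$; here I would invoke that a complete metric space with a consistent convex bicombing carries such a barycenter operation on finite weighted sets, continuous in the data and satisfying $d(\mathrm{bar}((t_i),(y_i)),y_{i_0})\le\diam\{y_j\}$ and $d\big(\mathrm{bar}((t_i),(y_i)),\mathrm{bar}((s_i),(y_i))\big)\le\kappa\big(\sum_i|t_i-s_i|\big)\diam\{y_j\}$ for a constant $\kappa=\kappa(N)$ (cf.~\cite{Bas,DesL1,DesL2}); alternatively one well‑orders the index set and verifies these estimates directly from the convexity of $\bar\si$. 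Crucially, $\kappa$ depends only on $N$, hence only on $M$, so any bound in $N$ suffices.

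\emph{The Lipschitz bound, and the main obstacle.} It then remains to bound $d(g(x),g(x'))$ by $\mu L\,d(x,x')$ with $\mu=\mu(N)$, by cases. For $x,x'\in A$ this is the hypothesis on $f$. For $x\in A$, $x'\in U$: every $a_i$ with $\phi_i(x')>0$ has $d(a_i,x')\lesssim d(x',A)\le d(x,x')$, hence $d(a_i,x)\lesssim d(x,x')$, so $g(x)=f(x)$ and $g(x')$ are both within $O(L\,d(x,x'))$ of $f(x)$. For $x,x'\in U$ with $d(x,x')\ge\tfrac14\max\{d(x,A),d(x',A)\}$: taking $a\in A$ near $x$ and $a'\in A$ near $x'$ gives $d(a,a')\lesssim d(x,x')$ and $g(x),g(x')$ within $O(L\,d(x,x'))$ of $f(a),f(a')$. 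For $x,x'\in U$ with $d(x,x')<\tfrac14\max\{d(x,A),d(x',A)\}$ — the interior case — $d(x,A)$ and $d(x',A)$ are comparable, all relevant $a_i$ lie within $O(d(x,A))$ of $x$ so $\diam\{f(a_i)\}\le O(L\,d(x,A))$, and the weight estimate yields $\sum_i|\phi_i(x)-\phi_i(x')|\le c(N)\,d(x,x')/d(x,A)$; feeding these into the barycenter estimate, the factor $d(x,A)$ cancels, leaving $d(g(x),g(x'))\le\kappa(N)c(N)\,O(L)\,d(x,x')$. Taking $\mu$ to be the largest constant arising finishes the proof. The genuinely delicate step is the multiplicity bound $N=N(M)$: since $X$ itself is not assumed doubling, one must count the competing cover elements by projecting their index points into $A$ and invoking the doubling property of $A$ alone, and the annular cutoff $2^{k-1}<d(\cdot,A)<2^{k+3}$ is precisely what makes both ``only $\approx$ four scales occur'' and ``all competitors lie in one bounded ball of $A$'' hold simultaneously; everything else is routine or available off the shelf.
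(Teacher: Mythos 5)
Your construction is correct, but it is not the route the paper takes. The paper disposes of the proposition in two lines by quoting the Lipschitz extension theorem of Lang--Schlichenmaier (Theorem~1.6 in \cite{LanS}): a doubling set has finite Nagata dimension (by \cite{LeDR}, the Nagata dimension is at most the Assouad dimension), and a space with a convex bicombing is Lipschitz $k$-connected for every $k\ge 0$, so the general theorem applies and yields $\mu$ depending only on the doubling constant. What you have written out is in effect a self-contained specialization of that theorem's proof: your annular Whitney cover indexed by nets in $A$ is precisely the kind of cover that witnesses finite Nagata dimension of $X\setminus A$ relative to $A$ (and you are right that only the doubling of $A$ enters the multiplicity bound, since the index points live in $A$); but where the general theorem must extend over the nerve of the cover skeleton by skeleton using Lipschitz $n$-connectedness, you exploit the bicombing directly to form iterated weighted combinations of the values $f(a_i)$. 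That shortcut is legitimate: with a fixed well-ordering of the index set, the iterated barycenter satisfies $d(\mathrm{bar}(t,y),y_{i_0})\le\diam\{y_j\}$ and is $(N-1)\,\diam\{y_j\}$-Lipschitz in the weight vector for the $\ell^1$ norm, by induction on $N$ using only the conical inequality $d(\bar\si_{xy}(t),\bar\si_{x'y'}(t))\le(1-t)d(x,x')+t\,d(y,y')$; moreover dropping zero-weight points does not change the value, so the change of active index set between nearby points is harmless. The trade-off is clear: your argument is elementary and keeps the proof inside the paper at the cost of a page of routine estimates, while the paper's citation is shorter and its black box applies to any Lipschitz $n$-connected target, not only to bicombed ones. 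Either way the constant depends only on the doubling constant of $A$, as required.
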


This follows from Theorem~1.6 in~\cite{LanS} since $\bar X$ is Lipschitz 
$k$-connected for all $k \ge 0$ and doubling sets have finite Nagata dimension 
(in fact, according to Theorem~1.1 in~\cite{LeDR}, the latter is less 
than or equal to the Assouad dimension). 

\begin{remark} \label{rem:lip-ext}
The assumption in Proposition~\ref{prop:lip-ext} that $A$ be doubling can 
be dropped if, for example, 
$\bar X$ is a homogeneous Hadamard manifold or a Euclidean building; 
the constant $\mu$ then depends (only) on $\bar X$. 
See Theorem~1.2 in~\cite{LanPS}.
It is still unknown whether every Hadamard manifold has this property. 
\end{remark}

By virtue of Lemma~\ref{lem:doubling} and Proposition~\ref{prop:lip-ext},
given a quasi-isometric embedding $f \colon X \to \bar X$
and a quasi-minimizer $S \in \bZ_{n,\loc}(X)$ with controlled density,
one can easily produce a Lipschitz map $g \colon X \to \bar X$ with 
$\sup_{x \in \spt(S)} d(f(x),g(x)) < \infty$ by extending $f|_A$ for a suitable 
separated net $A$ in $\spt(S)$. We now show that then 
$g_\#S \in \bZ_{n,\loc}(\bar X)$ is again a quasi-minimizer with controlled
density.

\begin{proposition}[quasi-isometry invariance] \label{prop:qi-inv}
Let $X$ be a proper metric space with a convex bicombing~$\si$
and with $\asrk(X) = n \ge 2$. Then for all $L,Q \ge 1$, $C > 0$, and 
$a \ge 0$ there exist $\bar Q \ge 1$, $\bar C > 0$, and $\bar a \ge 0$ 
such that the following holds. Suppose that $\bar X$ is another proper 
metric space, $S \in \bZ_{n,\loc}(X)$ is a $(Q,a)$-quasi-minimizer with 
$(C,a)$-controlled density, and $g \colon \spt(S) \to \bar X$ is a map 
satisfying 
\[
L^{-1}d(x,y) - a \le d(g(x),g(y)) \le L\,d(x,y)
\]
for all $x,y \in \spt(S)$. Then $\bar S := g_\#S \in \bZ_{n,\loc}(\bar X)$ 
is a $(\bar Q,\bar a)$-quasi-minimizer with $(\bar C,\bar a)$-controlled 
density, and $d(g(x),\spt(\bar S)) \le \bar a$ for all $x \in \spt(S)$.
\end{proposition}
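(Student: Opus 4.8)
The plan is to verify the three conclusions about $\bar S = g_\# S$ in order: first the density bound, then the ``near net'' property, and finally the quasi-minimality. For the controlled density, I would use that, by Lemma~\ref{lem:doubling} (doubling), a maximal $4a$-separated subset $A \sub \spt(S)$ has Assouad dimension at most $n$ with doubling data depending only on $n$, the constant $\del$ from Lemma~\ref{lem:density}, and $C$. Given $\bar p \in \bar X$ and $r > 0$, the preimage $g^{-1}(\B{\bar p}{r})$ has diameter at most $L(2r + a)$, so by the doubling property it can be covered by a bounded number (depending on $r/a$ and the doubling constant) of balls of radius $4a$ centered in $\spt(S)$; combined with the $(C,a)$-controlled density of $S$ and the mass push-forward inequality $\|g_\#S\|(B) \le L^n\|S\|(g^{-1}(B))$ from Section~\ref{subsect:mass}, this yields $\G_{\bar p,r}(\bar S) \le \bar C$ for $r > \bar a$ with $\bar a$ a fixed multiple of $a$. (One must also check $g_\#S$ is a genuine local cycle, i.e.\ $g$ extends to a Lipschitz map on a neighborhood so that push-forward is defined — but since $A$ is doubling, Proposition~\ref{prop:lip-ext} gives a Lipschitz extension of $g|_A$, and $\spt(S)$ is itself doubling up to the $4a$-scale, so a small argument handles this.)

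For the near-net property $d(g(x),\spt(\bar S)) \le \bar a$: since $S$ is a $(Q,a)$-quasi-minimizer, Lemma~\ref{lem:density} gives $\G_{x,r}(S) \ge \del$ for $r > 2a$, in particular $\spt(S)$ meets every ball $\B{x}{r}$ with $r > 2a$ around a point of $\spt(S)$; pushing forward and using the lower Lipschitz bound $d(g(x),g(y)) \ge L^{-1}d(x,y) - a$ shows $\bar S$ has positive mass in controlled balls around $g(x)$, hence $\spt(\bar S) \ni$ a point within $\bar a$ of $g(x)$.

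The main step, and the expected obstacle, is quasi-minimality of $\bar S$. The strategy mirrors the proof of Proposition~\ref{prop:lip-qflats} (Lipschitz quasiflats): fix $\bar x = g(x) \in \spt(\bar S)$ and a large $r$, and a competitor $T \in \bI_{n,\cs}(\bar X)$ with $\d T = \d(\bar S \on \B{\bar x}{r})$; I must produce a lower bound $\M(T) \ge \bar c\, r^n$. To do this I would build a ``retraction'' of $\bar X$ near $g(\spt(S))$ back to $X$: extend $(g|_A)^{-1}\colon g(A) \to A \sub X$ to a Lipschitz map $\bar g \colon \bar X \to X$ using Proposition~\ref{prop:lip-ext} (here I need $X$ to admit Lipschitz extensions into it, which follows from the convex bicombing on $X$ — but note the roles: I need to extend a map \emph{into} $X$, so I should instead extend $(g|_A)^{-1}$ as a map into $\R^N$ is not available; rather, since $X$ has a convex bicombing it is Lipschitz $k$-connected for all $k$, so Theorem~1.6 in~\cite{LanS} applies and $\bar g$ exists with constant depending only on $n, L$ and the doubling data). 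Then $h := \bar g \circ g$ is a Lipschitz self-map of $\spt(S)$ moving points a bounded amount; using the geodesic homotopy along $\si$ from $\id$ to $h$ one gets a current realizing $h_\#(S \on \B{x}{r}) - S \on \B{x}{r}$ as a boundary of a chain of bounded mass supported near $\spt(\d(S \on \B{x}{r}))$, hence outside $\B{x}{r/(2L)}$ for $r$ large. Consequently $\bar g_\# T$ and $\bar g_\#(\bar S \on \B{\bar x}{r})$ have the same boundary, the latter differs from $S \on \B{x}{r}$ by a chain supported outside $\B{x}{r/(2L)}$, and therefore $\bar g_\# T$ together with that homotopy chain provides a competitor for the $(Q,a)$-quasi-minimality of $S$ inside a ball of radius $\approx r/(2L) > 2a$ about $x$; Lemma~\ref{lem:density} then forces $\M(\bar g_\#T) \gtrsim r^n$, and since $\bar g$ is $\bar L$-Lipschitz, $\M(T) \ge \bar L^{-n}\M(\bar g_\#T) \gtrsim r^n$. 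Combined with the density bound $\M(\bar S \on \B{\bar x}{r}) \le \bar C r^n$, this gives $\M(\bar S \on \B{\bar x}{r}) \le \bar Q\,\M(T)$ with $\bar Q = \bar C \bar L^n / \bar c$, as desired. The delicate point throughout is that all the geometric constants — the scale $r$ above which estimates kick in, the homotopy displacement bound, the Lipschitz constant of $\bar g$ — must be controlled \emph{only} by $L, Q, C, a$ and $n$ (via $\del$ and the doubling constant), independently of $S$ and of $\bar X$; keeping this bookkeeping clean, especially ensuring the homotopy chain is supported far enough from $x$, is where the care is needed.
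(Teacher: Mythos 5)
Your setup for the quasi-minimality step (the separated net, Lemma~\ref{lem:doubling}, the Lipschitz retraction $\bar g \colon \bar X \to X$ via Proposition~\ref{prop:lip-ext}, and the bounded displacement of $h = \bar g \circ g$) matches the paper's, but the final comparison does not close as you describe, and the gap sits exactly where the hypothesis $\asrk(X) = n$ must enter --- which your sketch never uses. The current $\bar g_\#T$, corrected by the boundary cylinder $R$, has boundary $\d S'$ with $S' = S \on g^{-1}(\B{g(x)}{r})$, supported on $g^{-1}(\Sph{g(x)}{r})$; it is \emph{not} a competitor for $S \on \B{x}{s}$ for any ball $\B{x}{s}$, and patching it into one by adding the annular piece $S' - S\on\B{x}{s}$ costs mass of order $C(2Lr)^n$, which swamps the lower bound $\del s^n$ you want to extract from Lemma~\ref{lem:density}. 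This is precisely the point where Proposition~\ref{prop:lip-qflats}, which you say you are mirroring, uses $\bZ_{n,\cs}(\R^n) = \{0\}$ to conclude $\bar g_\#T = \bar g_\#S'$ outright; here $\bZ_{n,\cs}(X) \ne \{0\}$ in general, and the two currents only share a boundary. The paper instead (i) replaces $\bar g_\#\bar T$ by a minimizing filling $T$ of $h_\#(\d S')$ whose support stays near $g^{-1}(\Sph{g(x)}{r})$ (Theorem~\ref{thm:plateau}), (ii) fills the cycle $h_\#S' - T$, which has mass $O(r^n)$ and support in a ball of radius $O(r)$, by a chain of mass $o(r^{n+1})$ using the sub-Euclidean isoperimetric inequality (Theorem~\ref{thm:subeucl} --- this is where the rank hypothesis is consumed), and (iii) combines this with the deformation chain $W$ to contradict the lower filling-density bound $\F_{x,r/(3L)}(S) > c$ from Lemma~\ref{lem:fill-density}. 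Without step (ii) there is no contradiction and the argument does not go through.

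A second, smaller gap: your derivation of $d(g(x),\spt(\bar S)) \le \bar a$ from the lower density bound for $S$ is invalid, because push-forward can cancel mass --- the inequality $\|g_\#S\|(B) \le L^n\,\|S\|(g^{-1}(B))$ goes only one way, and $\|S\|(\B{x}{r}) > 0$ gives no lower bound on $\|\bar S\|$ near $g(x)$. In the paper this conclusion is a byproduct of the quasi-minimality argument, obtained by running it with the competitor $\bar T := \bar S \on \B{g(x)}{r}$ itself to force $\|\bar S\|(\B{g(x)}{r}) \ge \eps_0 r^n$. (Your controlled-density computation, by contrast, is fine, though the doubling detour is unnecessary: $g^{-1}(\B{\bar p}{r})$ has diameter at most $L(2r+a)$, so the $(C,a)$-controlled density of $S$ applies directly; and the worry about defining $g_\#S$ is moot, since push-forward is defined for proper Lipschitz maps on $\spt(S)$.)
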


\begin{proof}
If $x \in \spt(S)$ and $r > a$, then
$g^{-1}(\B{g(x)}{r}) \sub \B{x}{2Lr}$ and thus
\[
\|\bar S\|(\B{g(x)}{r}) 
\le L^n \|S\|(\B{x}{2Lr}) \le L^nC_1r^n
\]   
for $C_1 := (2L)^nC$. Hence, given any $\bar p \in \bar X$ and $r > a/2$ 
such that $\B{\bar p}{r} \cap \spt(\bar S) \ne \es$, it follows that 
$\|\bar S\|(\B{\bar p}{r}) \le \|\bar S\|(\B{g(x)}{2r}) \le L^nC_1(2r)^n$
for some $x \in \spt(S)$. This shows that 
$\bar S$ has $((2L)^nC_1,a/2)$-controlled density.

Next we show that there is a Lipschitz map $\bar g \colon \bar X \to X$
such that $h := \bar g \circ g$ is at finite distance from the
identity on $\spt(S)$.
Let $N \sub \spt(S)$ be a $4La$-separated $4La$-net in $\spt(S)$.
By Lemma~\ref{lem:doubling} (doubling), $N$ is doubling,
and $g|_N \colon N \to g(N)$ is $(4L/3)$-bi-Lipschitz,
so $g(N)$ is doubling as well. The doubling constant depends only on
$n,L,C$. Then, by Proposition~\ref{prop:lip-ext}, 
$(g|_N)^{-1}$ admits an $\bar L$-Lipschitz extension 
$\bar g \colon \bar X \to X$ for some constant $\bar L$ depending on $n,L,C$.
For every $x \in \spt(S)$ there is a $y \in N$ such that
$d(x,y) \le 4La$. Then $h(y) = y$, and 
\[
d(h(x),x) \le d(h(x),h(y)) + d(y,x) \le (L\bar L + 1)\,d(x,y) \le b
\]
for $b := 4(L\bar L + 1)La$.

Let again $x \in \spt(S)$ and $r > a$, and put $B_r := \B{g(x)}{r}$. 
For almost every such $r$, both $\bar S' := \bar S \on B_r$ and 
$S' := S \on g^{-1}(B_r)$ are integral currents, $g_\#S' = \bar S'$, and 
\[
\M(S') \le C_1r^n, \quad \M(\bar S') \le L^nC_1r^n, \quad 
\M(h_\#S') \le (L\bar L)^nC_1r^n.
\]
Let $H \colon [0,1] \times \spt(S) \to X$ denote the homotopy from
$\id_{\spt(S)}$ to $h$ given by $H(t,x) = \si(x,h(x),t)$.
The deformation chain $W := H_\#(\bb{0,1} \times S') \in \bI_{n+1,\cs}(X)$ 
satisfies 
\[
\M(W) \le C_2r^n
\]
for $C_2 := (n+1)(L\bar L)^nb\,C_1$. Furthermore, the support of 
the cylinder $R := H_\#(\bb{0,1} \times \d S') = h_\#S' - S' - \d W$
lies in the closed $b$-neighborhood of $\spt(\d S')$, and 
$\spt(\d S') \sub \spt(S - S')$ is at distance at least $r/L$ from $x$
because $g$ is $L$-Lipschitz. 

Suppose now that $\bar T \in \bI_{n,\cs}(\bar X)$ and $\eps > 0$ are such that 
$\d\bar T = \d\bar S'$ and 
\[
\M(\bar T) \le \eps r^n.
\]
Since $\M(\bar g_\#\bar T) \le \bar L^n \eps r^n$ and 
$\d(\bar g_\#\bar T) = \bar g_\#(\d\bar S') = h_\#(\d S')$,
Theorem~\ref{thm:plateau} (minimizing filling) shows that there is a 
minimizing $T \in \bI_{n,\cs}(X)$ with $\d T = h_\#(\d S')$ and 
\[
\M(T) \le \bar L^n \eps r^n,
\]
and if $\eps$ is sufficiently small, then $\spt(T)$ is within distance
$r/(3L)$, say, from $\spt(h_\#(\d S'))$.
For $r > 3Lb$, it follows that $r/L - b - r/(3L) > r/(3L)$ and thus
$\spt(T - R) \cap \B{x}{r/(3L)} = \es$. Note that $\d(T - R) = \d S'$.
By Lemma~\ref{lem:fill-density} there is a constant $c > 0$ 
such that $\F_{x,r/(3L)}(S) > c$ for $r > 12La$.
Put $Z := h_\#S' - T \in \bZ_{n,\cs}(X)$. It follows from 
Theorem~\ref{thm:subeucl} (sub-Euclidean isoperimetric inequality) that 
there is a constant $\bar a \ge 3Lb \ge 12La$ such that if $r > \bar a$,
then $Z$ possesses a filling $V \in \bI_{n+1,\cs}(X)$ with 
\[
\M(V - W) \le \M(V) + \M(W) < c r^{n+1}.
\]
Since $\d(V - W) = S' - (T - R)$ and $\spt(T - R) \cap \B{x}{r/(3L)} = \es$,
this contradicts the fact that $\F_{x,r/(3L)}(S) > c$. Hence, there is an
$\eps_0 > 0$ such that, for almost all $r > \bar a$, $\M(\bar T) \ge \eps_0r^n$
and thus
\[
\M(\bar S') \le L^nC_1r^n \le Q\,\M(\bar T)
\]
for $Q := L^nC_1/\eps_0$. In the case that $g(x) \in \spt(\bar S)$, 
this shows that $S$ is $(\bar Q,\bar a)$-quasi-minimizing.

If $g(x) \not\in \spt(\bar S)$, the same argument 
for $\bar T := \bar S' = \bar S \on \B{g(x)}{r}$ shows that 
$\|\bar S\|(\B{g(x)}{r}) \ge \eps_0r^n > 0$ for almost all $r > \bar a$.
Thus $d(g(x),\spt(\bar S)) \le \bar a$.
\end{proof}

Our next goal is to prove Theorem~\ref{thm:mapping-as-classes} below.
We need the following auxiliary results.

\begin{lemma}[mapping small fillings] \label{lem:map-small-f}
Let $(X,\si)$ be a proper metric space with a convex bicombing.
Suppose that $n \ge 1$, $Z \in \bZ_{n,\loc}(X)$, $p \in X$,
and $g \colon X \to \bar X$ is an $L$-Lipschitz map into a proper metric
space $\bar X$ such that $d(g(p),g(z)) \ge L^{-1}d(p,z) - a$ for all
$z \in \spt(Z)$, for some constants $L \ge 1$ and $a \ge 0$.
If\/ $\Fi(Z) = 0$, then $\bar Z := g_\#Z \in \bZ_{n,\loc}(\bar X)$
satisfies $\Fi(\bar Z) = 0$.
\end{lemma}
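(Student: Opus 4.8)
The plan is to reduce the assertion to a statement about a \emph{local} filling of $Z$, and to push that filling forward. Since $\Fi$ is independent of the base point, it suffices to bound $\F_{\bar p,\bar r}(\bar Z)$ with $\bar p:=g(p)$, for all large $\bar r$. The naive attempt is this: using $\Fi(Z)=0$, pick $V\in\bI_{n+1,\cs}(X)$ with $\spt(Z-\d V)\cap\B{p}{r}=\es$ and $\M(V)<\eps r^{n+1}$ at scale $r:=L(\bar r+a)+1$, and set $\bar V:=g_\#V$. Then $\M(\bar V)\le L^{n+1}\M(V)$ is as small as we like relative to $\bar r^{n+1}$, and $\bar Z-\d\bar V=g_\#(Z-\d V)$; for $w\in\spt(Z)$ with $d(p,w)>r$ one has $d(\bar p,g(w))\ge L^{-1}r-a>\bar r$, so the part of $\spt(Z-\d V)$ lying in $\spt(Z)$ causes no trouble. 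The flaw is that $\spt(Z-\d V)$ may stick out into $\spt(V)\sm\spt(Z)$, where $g$ is merely $L$-Lipschitz and can fold points far from $p$ back into $\B{\bar p}{\bar r}$; this is the one thing that must be fixed.

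To fix it, I would observe that it is enough to produce a locally integral current $V\in\bI_{n+1,\loc}(X)$ with $\d V=Z$, with sublinear volume growth $\|V\|(\B{p}{r})=o(r^{n+1})$, and with support in a sublinear neighbourhood of $\spt(Z)$, i.e.\ $\spt(V)\sub\{x: d(x,\spt(Z))\le\psi(d(p,x))\}$ for some nondecreasing $\psi$ with $\psi(t)/t\to0$. Indeed, combining the lower bound $d(g(p),g(z))\ge L^{-1}d(p,z)-a$ on $\spt(Z)$ with the $L$-Lipschitz control on the $\psi$-neighbourhood shows that $g|_{\spt(V)}$ is a coarse quasi-isometric embedding, hence proper and, for $\bar r$ large, $g^{-1}(\B{\bar p}{\bar r})\cap\spt(V)\sub\B{p}{C\bar r}$ with $C=C(L,a)$. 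Then $g_\#V\in\bI_{n+1,\loc}(\bar X)$ is well defined with $\d(g_\#V)=g_\#Z=\bar Z$, and for generic $t\in(\bar r,2\bar r)$ the truncation $(g_\#V)\on\B{\bar p}{t}\in\bI_{n+1,\cs}(\bar X)$ is a partial filling of $\bar Z$ in $\B{\bar p}{\bar r}$, since
\[
\bar Z-\d\bigl((g_\#V)\on\B{\bar p}{t}\bigr)=\bar Z\on(\bar X\sm\B{\bar p}{t})-\la g_\#V,\,d(\bar p,\cdot),\,t\ra
\]
is supported outside $\B{\bar p}{\bar r}$; its mass is $\le\|g_\#V\|(\B{\bar p}{2\bar r})\le L^{n+1}\|V\|(\B{p}{2C\bar r})=o(\bar r^{n+1})$. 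Hence $\F_{\bar p,\bar r}(\bar Z)=o(1)$ and $\Fi(\bar Z)=0$.

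It remains to construct $V$, which I would do by patching. From $\Fi(Z)=0$ choose $\eps_j\downarrow0$, radii $r_j\uparrow\infty$, and $V_j\in\bI_{n+1,\cs}(X)$ with $\spt(Z-\d V_j)\cap\B{p}{r_j}=\es$ and $\M(V_j)<\eps_j r_j^{\,n+1}$; a coarea-generic choice of the slicing radii lets one arrange $\spt(V_j)\sub\B{p}{r_j}$ and $\M(\la V_j,d(p,\cdot),r_j\ra)<\eps_j r_j^{\,n}$, so that $\d(V_j)=Z\on\B{p}{r_j}+\la V_j,d(p,\cdot),r_j\ra$ is a compactly supported cycle, and by Theorem~\ref{thm:plateau} $V_j$ may be taken minimizing, hence supported within $(\M(V_j)/\del)^{1/(n+1)}=o(r_j)$ of $\spt(\d V_j)$. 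One then glues the pieces $V_j\on(\B{p}{r_j}\sm\B{p}{r_{j-1}})$ into a single $V$ with $\d V=Z$, inserting across each sphere $\Sph{p}{r_{j-1}}$ a small-mass correction chain that reconciles the slices of consecutive $V_j$'s, obtained from the coning inequality {\rm (CI$_{n+1}$)} available in $X$. The mass bounds give $\|V\|(\B{p}{r})=o(r^{n+1})$, and the minimality of the $V_j$ together with the smallness of the slices and corrections is what keeps $\spt(V)$ inside the required sublinear neighbourhood of $\spt(Z)$. The main obstacle is precisely this last point: the slices $\la V_j,d(p,\cdot),r_j\ra$ and the inter-scale correction chains a priori lie in the auxiliary fillings $\spt(V_j)$ rather than on $\spt(Z)$, so one must use that their masses are of order $o(r_j^{\,n})$, respectively $o(r_j^{\,n+1})$, and invoke the distance estimate in Theorem~\ref{thm:plateau} (and, implicitly, Lemma~\ref{lem:density}) to certify that the minimizing fillings built from them stay within $o(r_j)$ of $\spt(Z)$. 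This is the step where the convexity of $X$ genuinely enters; once it is in place, the push-forward estimate of the second paragraph is routine.
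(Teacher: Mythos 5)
Your diagnosis of the difficulty is exactly right, and your key tool — replace the partial filling by a minimizing one and use the distance bound of Theorem~\ref{thm:plateau} to keep its support close to $\spt(Z)$, where the lower bound on $g$ applies up to a sublinear error — is the same as the paper's. But the construction of the global $V$ has a genuine gap at precisely the point you flag as ``the main obstacle''. When you replace $V_j$ by a minimizing filling of $\d(V_j\on\B{p}{r_j})=Z\on\B{p}{r_j}+\la V_j,d(p,\cdot),r_j\ra$, Theorem~\ref{thm:plateau} only places $\spt(V_j)$ within $o(r_j)$ of the support of that \emph{boundary}, which contains the artificial slice on $\Sph{p}{r_j}$; that slice has small mass, but its support can sit anywhere on the sphere, at distance comparable to $r_j$ from $\spt(Z)$. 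So the part of $\spt(V_j)$ near the outer sphere --- exactly the part your annular piece $V_j\on(\B{p}{r_j}\sm\B{p}{r_{j-1}})$ retains --- need not lie in a sublinear neighbourhood of $\spt(Z)$, and the containment $\spt(V)\sub\{x:d(x,\spt(Z))\le\psi(d(p,x))\}$ fails as argued. The same defect afflicts the inter-scale corrections: a chain produced by the coning inequality is a geodesic cone from the centre of a ball containing the mismatch cycle and can pass near $p$, far from $\spt(Z)$; you would need minimizing fillings there as well, together with control on where the mismatch cycles themselves live.

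Both problems are repaired by the observation that for a genuine partial filling $V$ at scale $r$ one has $\spt(\d V)\cap\operatorname{int}\B{p}{r}\sub\spt(Z)$ --- no artificial slice appears until you restrict --- so that after truncating the minimizing $V$ to $W:=V\on\B{p}{s}$ with $r/2<s$ and $s+\eps cr<r$, every point of $\spt(W)$ is within $\eps cr$ of $\spt(Z)$ itself. Once this is in place, the entire gluing apparatus is unnecessary: since $\Fi(\bar Z)=0$ only requires a small partial filling of $\bar Z$ at each large scale separately, one works one scale at a time, pushes forward $W$ (not a global $V$), and checks that $\spt(g_\#(Z-\d W))$ misses $\B{g(p)}{\bar r}$ for $\bar r\approx r/(2L)$ while $\M(g_\#W)\le L^{n+1}(\eps r)^{n+1}$. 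This is the paper's proof; your second paragraph is essentially this computation, aimed at a global object whose construction is both harder than, and not needed for, the statement.
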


\begin{proof}
Let $\eps > 0$. For every sufficiently large $r > 0$ there exists
$V \in \bI_{n+1,\cs}(X)$ such that $\spt(Z - \d V) \cap \B{p}{r} = \es$ and
$\M(V) < (\eps r)^{n+1}$. By Theorem~\ref{thm:plateau} (minimizing filling)
we can assume that $V$ is minimizing and $d(x,\spt(\d V)) < \eps c r$ for
all $x \in \spt(V)$, where $c > 0$ depends only on $n$.
Assuming that $\eps c < 1/2$, we find an $s > r/2$ such that 
$W := V \on \B{p}{s} \in \bI_{n+1,\cs}(X)$, $\spt(Z - \d W) \cap
\B{p}{r/2} = \es$, and $d(x,\spt(Z)) < \eps c r$ for all 
$x \in \spt(W)$.
Put $\bar W := g_\# W \in \bI_{n+1,\cs}(\bar X)$.
Then, for any $x \in \spt(Z - \d W) \sub \spt(Z) \cup \spt(W)$ and 
$z \in \spt(Z)$ with $d(x,z) < \eps c r$,
\begin{align*} 
d(g(p),g(x)) 
&\ge d(g(p),g(z)) - d(g(x),g(z)) \\
& \ge L^{-1}(d(p,x) - d(x,z)) - a - L\, d(x,z) \\
& > (2L)^{-1}r - (L^{-1} + L)\eps c r - a =: \bar r.
\end{align*}
If $\eps$ is sufficiently small and $r$ is sufficiently large,
so that $r \le 3L\bar r$ say, then
$\M(\bar W) < (\eps L r)^{n+1} \le (3\eps L^2 \bar r)^{n+1}$, 
and the support of $\bar Z - \d \bar W = g_\#(Z - \d W)$ 
is disjoint from $\B{g(p)}{\bar r}$. This gives the result.
\end{proof}

The next lemma states a simple general fact about Lipschitz maps.

\begin{lemma}[combining Lipschitz maps] \label{lem:comb-lip}
Let $X$ be a proper metric space, and let $\bar X$ be a metric space with
a convex bicombing $\bar\si$. Suppose that $A_1,A_2 \sub X$ are two closed
non-empty sets, $L,a \ge 0$ are constants, and $g_1,g_2 \colon X \to \bar X$ 
are $L$-Lipschitz maps such that $d(g_1(x_1),g_2(x_2)) \le L\,d(x_1,x_2) + a$
for all $(x_1,x_2) \in A_1 \times A_2$.
Then there exists a $7L$-Lipschitz map $\hat g \colon X \to \bar X$ such that
$d(\hat g(x),g_i(x)) \le a/2$ for all $x \in A_i$, for $i = 1,2$.
\end{lemma}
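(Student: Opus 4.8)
The plan is to build $\hat g$ by interpolating between $g_1$ and $g_2$ along $\bar\si$-geodesics, using a partition-of-unity weight that is controlled by the distances to $A_1$ and $A_2$. First I would define the two auxiliary distance functions $\phi_i(x) := d(x,A_i)$, which are $1$-Lipschitz on $X$. The idea is then to set
\[
t(x) := \frac{\phi_1(x)}{\phi_1(x) + \phi_2(x)} \in [0,1], \qquad
\hat g(x) := \bar\si\bigl(g_1(x),g_2(x),t(x)\bigr),
\]
but this requires $\phi_1 + \phi_2$ to be bounded away from zero, which need not hold near $A_1 \cap A_2$. To fix this I would instead replace $t(x)$ by a value built from a \emph{smoothed} selector: pick a fixed Lipschitz ``ramp'' $\beta\colon[0,\infty)\to[0,1]$ with $\beta(0)=1$, $\beta(u)=0$ for $u\ge 1$, and $\Lip(\beta)\le 2$, and work with cutoffs at scale comparable to the relevant quantity. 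Concretely, on the set where $\phi_1(x)\le \phi_2(x)$ define $\hat g$ to follow $g_1$ near $A_1$ and transition, and symmetrically on $\phi_2\le\phi_1$; the two prescriptions agree on $\{\phi_1=\phi_2\}$, so they glue. The key point making the estimate work is the hypothesis $d(g_1(x_1),g_2(x_2))\le L\,d(x_1,x_2)+a$ with $(x_1,x_2)\in A_1\times A_2$: for any $x\in X$, choosing nearest points $a_i\in A_i$ (which exist since $X$ is proper and $A_i$ is closed) gives
\[
d\bigl(g_1(x),g_2(x)\bigr) \le L\phi_1(x) + L\phi_2(x) + \bigl(L\phi_1(x)+L\phi_2(x)\bigr) + a,
\]
i.e. the ``vertical gap'' between the two maps at $x$ is at most $2L(\phi_1(x)+\phi_2(x))+a$, which is small precisely where both $\phi_i$ are small.

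The Lipschitz estimate for $\hat g$ then splits into two contributions. Using property~(2) of the convex bicombing $\bar\si$ (convexity of $t\mapsto d(\bar\si_{xy}(t),\bar\si_{x'y'}(t))$) together with the trivial bound $d(\bar\si_{xy}(t),\bar\si_{xy}(t'))\le |t-t'|\,d(x,y)$, one gets for $x,x'\in X$
\[
d\bigl(\hat g(x),\hat g(x')\bigr)
\le (1-t)\,d(g_1(x),g_1(x')) + t\,d(g_2(x),g_2(x'))
 + \bigl|t(x)-t(x')\bigr|\cdot d\bigl(g_1(x'),g_2(x')\bigr).
\]
The first two terms are bounded by $L\,d(x,x')$ since each $g_i$ is $L$-Lipschitz. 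For the last term, $t$ is Lipschitz with a constant of order $1/(\phi_1(x')+\phi_2(x'))$ while $d(g_1(x'),g_2(x'))\le 2L(\phi_1(x')+\phi_2(x'))+a$; here I would choose the smoothing scale so that the factor $a$ is always absorbed — that is, use the transition only on the region where $\phi_i$ is at least comparable to $a/L$, and on the complementary region (near $A_1\cap A_2$, where both maps are within $a$ of each other) keep $\hat g$ equal to $g_1$ outright. A short computation then yields $\Lip(\hat g)\le 7L$. Finally, for $x\in A_i$ one has $\phi_i(x)=0$, so $t(x)\in\{0,1\}$ selects $g_i$ up to the transition region; on the transition region $d(g_1(x),g_2(x))\le a$ (from the displayed bound, since $\phi_1(x)=0$ forces $\phi_2(x)$ small in the regime where we interpolate), whence $d(\hat g(x),g_i(x))\le a/2$ after choosing the ramp symmetrically about its midpoint.

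The main obstacle I anticipate is the bookkeeping at the overlap $A_1\cap A_2$ and, more generally, getting a \emph{clean} single constant $7L$ rather than something depending on $a$: the naive interpolation $t(x)=\phi_1/(\phi_1+\phi_2)$ blows up in Lipschitz constant exactly where $\phi_1+\phi_2\to 0$, and one must exploit that the vertical gap $d(g_1,g_2)$ shrinks there at the same rate. Balancing these two competing scales — and verifying that the two one-sided definitions on $\{\phi_1\le\phi_2\}$ and $\{\phi_2\le\phi_1\}$ patch together to a globally $7L$-Lipschitz map — is the only genuinely delicate part; everything else is a routine application of the convexity axiom for $\bar\si$ and the triangle inequality.
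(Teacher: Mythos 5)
Your overall strategy is the paper's: interpolate along $\bar\si$-geodesics with a weight built from $d(\cdot,A_1)$ and $d(\cdot,A_2)$, and beat the blow-up of the weight's Lipschitz constant near $A_1\cap A_2$ against the shrinking of the gap $d(g_1(x),g_2(x))\le 2L(\phi_1(x)+\phi_2(x))+a$, which you derive correctly. You also correctly isolate the one delicate point. But the device you propose to resolve it — a ramp cutoff with $\hat g\equiv g_1$ on a neighborhood of $A_1\cap A_2$ — has a genuine gap: for a point $x\in A_2\sm A_1$ lying in that neighborhood, your construction gives $d(\hat g(x),g_2(x))=d(g_1(x),g_2(x))$, which your own estimate only bounds by $2L\phi_1(x)+a$; this can exceed the required $a/2$ even as $\phi_1(x)\to 0$. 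Your closing remark about ``choosing the ramp symmetrically about its midpoint'' points in the right direction but contradicts ``keep $\hat g$ equal to $g_1$ outright,'' and neither the gluing of the two one-sided definitions nor the constant $7L$ is actually verified.

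The paper avoids the case split entirely with one small trick: set $u_i(x):=L\,d(x,A_i)+a/4$, $w:=u_1+u_2\ge a/2$, $\lam:=u_1/w$, and $\hat g(x):=\bar\si(g_1(x),g_2(x),\lam(x))$. The offset $a/4$ does three jobs at once. It keeps $w$ bounded below so $\lam$ is everywhere defined, with $|\lam(x)-\lam(y)|\le 3L\,d(x,y)/w(x)$; it upgrades your gap bound to the exact form $d(g_1(x),g_2(x))\le 2w(x)$, so the cross term is $\le 6L\,d(x,y)$ and, with the $L\,d(x,y)$ from convexity of the bicombing, gives $7L$; and on $A_1$ it forces $\lam(x)=(a/4)/w(x)$, whence $d(g_1(x),\hat g(x))\le \lam(x)\,d(g_1(x),g_2(x))\le (a/4)\cdot 2=a/2$ (symmetrically on $A_2$). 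I recommend replacing your ramp construction with this offset; the rest of your plan then goes through as written (the case $a=0$ needs the separate observation that $g_1=g_2$ on $A_1\cap A_2$).
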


\begin{proof}
We assume that $a > 0$; the case $a = 0$ requires only minor modifications
(note that then $g_1 = g_2$ on $A_1 \cap A_2$ by assumption).

For $i = 1,2$, let $u_i \colon X \to \R$ be the $L$-Lipschitz function 
defined by $u_i(x) := L\,d(x,A_i) + a/4$.
Put $w := u_1 + u_2$, $\lam := u_1/w$, and define 
$\hat g \colon X \to \bar X$ by
\[
\hat g(x) := \bar\si(g_1(x),g_2(x),\lam(x)).
\]
Let $x,y \in X$, and put $\bar z := \bar\si(g_1(x),g_2(x),\lam(y))$. Then
\begin{align*}
d(\hat g(x),\hat g(y)) &\le d(\hat g(x),\bar z) + d(\bar z,\hat g(y)), \\
d(\bar z,\hat g(y)) &\le (1-\lam(y))\,d(g_1(x),g_1(y)) 
+ \lam(y)\,d(g_2(x),g_2(y)) \\
&\le L\,d(x,y), 
\end{align*}
and $d(\hat g(x),\bar z) = |\lam(x) - \lam(y)|\,d(g_1(x),g_2(x))$.
Furthermore,
\begin{align*}
|\lam(x) - \lam(y)|
&\le \biggl| \lam(x) - \frac{u_1(y)}{w(x)} \biggr|
+ \biggl| \frac{u_1(y)}{w(x)} - \lam(y) \biggr| \\
&\le \frac{1}{w(x)} |u_1(x) - u_1(y)| + \frac{\lam(y)}{w(x)} |w(y) - w(x)| \\
&\le \frac{3L}{w(x)} \,d(x,y),
\end{align*}
and if $x_1 \in A_1$ and $x_2 \in A_2$ are such 
that $d(x,x_i) = d(x,A_i)$, then
\begin{align*}
d(g_1(x),g_2(x)) &\le d(g_1(x),g_1(x_1)) + d(g_1(x_1),g_2(x_2)) 
+ d(g_2(x_2),g_2(x)) \\
&\le L\,d(x,x_1) + \bigl( L\,d(x_1,x_2) + a \bigr) + L\,d(x_2,x) \\
&\le 2L\,d(x,x_1) + 2L\,d(x,x_2) + a \\
&= 2w(x). 
\end{align*}
It follows that $\hat g$ is $7L$-Lipschitz. 
If $x \in A_1$, then $\lam(x) = a/(4w(x))$, 
thus $d(g_1(x),\hat g(x)) \le \lam(x)\,d(g_1(x),g_2(x)) \le a/2$. 
Similarly, $d(\hat g(x),g_2(x)) \le a/2$ for all $x \in A_2$.
\end{proof}

We now consider again the group~$\cZ X$ of 
$\F$-asymptote classes from Definition~\ref{def:f-asymptotic}. 

\begin{theorem}[mapping asymptote classes] \label{thm:mapping-as-classes}
Let $(X,\si)$ and $(\bar X,\bar\si)$ be two proper metric spaces with
convex bicombings and with $\asrk(X) = \asrk(\bar X) = n \ge 2$, and suppose
that $f \colon X \to \bar X$ is a quasi-isometric embedding.
Then there exists a unique monomorphism
\[
\cZ f \colon \cZ X \to \cZ \bar X
\]
with the property that if $S \in \bZ_{n,\loc}^\infty(X)$ and
$g \colon X \to \bar X$ is a Lipschitz map
such that $\sup_{x \in \spt(S)} d(f(x),g(x)) < \infty$, then
$\cZ f\,[S] = [g_\#S]$. 
If $f$ is a quasi-isometry, then $\cZ f$ is an isomorphism.
\end{theorem}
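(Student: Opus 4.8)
The plan is to define $\cZ f$ on one convenient representative of each class, to verify the choice is immaterial, and then to read off the remaining assertions; the independence step is the heart of the matter. For the construction, fix $[S]\in\cZ X$. By Theorem~\ref{thm:constr-minimizers} there is an area-minimizing $\til S\in[S]$, and by Proposition~\ref{prop:contr-density} it has $(C,a)$-controlled density for suitable $C,a$. Pick a maximal $4La$-separated set $N\sub\spt(\til S)$; it is doubling by Lemma~\ref{lem:doubling}, and since $d(x,y)\ge 4La$ forces $d(f(x),f(y))\le L\,d(x,y)+a\le 2L\,d(x,y)$, the restriction $f|_N$ is Lipschitz, hence extends to a Lipschitz $g\colon X\to\bar X$ by Proposition~\ref{prop:lip-ext}; comparing nearby net points gives $\sup_{x\in\spt(\til S)}d(f(x),g(x))<\infty$. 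I set $\cZ f[S]:=[g_\#\til S]$. That this lies in $\cZ\bar X$ uses the following remark, applied repeatedly below: if $T\in\bZ_{n,\loc}^\infty(X)$ and a Lipschitz map $g$ satisfies $b_0:=\sup_{x\in\spt(T)}d(f(x),g(x))<\infty$, then $g|_{\spt(T)}$ is an $(L,a+2b_0)$-quasi-isometric embedding, so $g^{-1}(\B{q}{r})\cap\spt(T)$ has diameter $O(r)$, whence $\|g_\#T\|(\B{q}{r})\le\const\cdot\|T\|(\B{x_0}{O(r)})=O(r^n)$ for large $r$ by $\Gi(T)<\infty$, i.e.\ $g_\#T\in\bZ_{n,\loc}^\infty(\bar X)$.

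The independence step: suppose $T,T'\in\bZ_{n,\loc}^\infty(X)$ are $\F$-asymptotic and $g,g'$ are Lipschitz with $\sup_{\spt(T)}d(f,g)<\infty$, $\sup_{\spt(T')}d(f,g')<\infty$; I claim $g_\#T\sim_\F g'_\#T'$. Since $d(g(x),g'(x'))\le L\,d(x,x')+\const$ for $x\in\spt(T)$, $x'\in\spt(T')$, Lemma~\ref{lem:comb-lip} yields a Lipschitz $\hat g\colon X\to\bar X$ boundedly close to $g$ on $\spt(T)$ and to $g'$ on $\spt(T')$. First, $g_\#T\sim_\F\hat g_\#T$: for large $R$, the $\bar\si$-geodesic homotopy chain from $g$ to $\hat g$ applied to $T\on\B{p}{R}$ has mass $\le(n+1)\const\cdot\|T\|(\B{p}{R})=O(R^n)$ (using convexity of $\bar\si$ to bound the slicewise Lipschitz constants, see Section~\ref{subsect:homotopies}), while its boundary error term lies outside $\B{\bar p}{r}$ once $R\asymp Lr$, because $g$ and $\hat g$ are quasi-isometric on $\spt(T)$; hence $\F_{\bar p,r}(g_\#T-\hat g_\#T)=O(1/r)\to 0$. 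Likewise $g'_\#T'\sim_\F\hat g_\#T'$. Second, $\hat g_\#T\sim_\F\hat g_\#T'$ by Lemma~\ref{lem:map-small-f} applied to $Z:=T-T'$ (which has $\Fi(Z)=0$) and to $\hat g$, whose required lower distance bound on $\spt(Z)$ holds since $\hat g$ is boundedly close to the quasi-isometric embedding $f$ there. Chaining the three relations gives $g_\#T\sim_\F g'_\#T'$. Consequently $[g_\#\til S]$ is independent of the minimizer $\til S$ and of $g$, and, taking $(T,T')=(S,\til S)$, $\cZ f[S]=[g_\#S]$ for every admissible pair $(S,g)$; this is the asserted property and it forces uniqueness of $\cZ f$. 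Additivity is immediate: for $[S],[S']$ choose minimizers $\til S,\til S'$ and a single Lipschitz $g$ boundedly close to $f$ on the doubling set $\spt(\til S)\cup\spt(\til S')$, so $g_\#(\til S+\til S')=g_\#\til S+g_\#\til S'$.

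For the monomorphism property, suppose $\cZ f[S]=[0]$, i.e.\ $\Fi(g_\#\til S)=0$, with $\til S,g$ as in the construction. Exactly as in that construction, using that $g|_N$ is bi-Lipschitz on the net $N$ (so $g(N)$ is doubling), extend $(g|_N)^{-1}$ to a Lipschitz $\bar g\colon\bar X\to X$; then $h:=\bar g\circ g$ is boundedly close to $\id_X$ on $\spt(\til S)$, and $\bar g$ is quasi-isometric on $g(\spt(\til S))$, hence, by continuity, on $\spt(g_\#\til S)$. Lemma~\ref{lem:map-small-f} gives $\Fi(h_\#\til S)=\Fi(\bar g_\#g_\#\til S)=0$, while the homotopy estimate of the previous paragraph, now comparing $h$ with $\id_X$, gives $h_\#\til S\sim_\F\til S$. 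Therefore $\Fi(\til S)=0$, i.e.\ $[S]=[0]$.

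Finally, suppose $f$ is a quasi-isometry, and fix a coarse-inverse quasi-isometry $\bar f\colon\bar X\to X$, so $d(\bar f\circ f,\id_X)$ and $d(f\circ\bar f,\id_{\bar X})$ are bounded. Since $\asrk(\bar X)=n\ge 2$ and $X$ carries the convex bicombing $\si$, the construction applied to $\bar f$ yields a homomorphism $\cZ\bar f\colon\cZ\bar X\to\cZ X$. Given $[\bar S]\in\cZ\bar X$ with minimizer $\til{\bar S}$, pick a Lipschitz $\bar g\colon\bar X\to X$ boundedly close to $\bar f$ on $\spt(\til{\bar S})$; by Proposition~\ref{prop:qi-inv}, $\bar g_\#\til{\bar S}$ is a quasi-minimizer with controlled density and $d(\bar g(x),\spt(\bar g_\#\til{\bar S}))\le\bar a$ for $x\in\spt(\til{\bar S})$. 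By the property already proved, $\cZ f\bigl(\cZ\bar f[\bar S]\bigr)=\cZ f[\bar g_\#\til{\bar S}]=[(g'\circ\bar g)_\#\til{\bar S}]$ for a Lipschitz $g'\colon X\to\bar X$ boundedly close to $f$ on $\spt(\bar g_\#\til{\bar S})$. Combining $d(\bar g(x),\spt(\bar g_\#\til{\bar S}))\le\bar a$, the closeness of $\bar g$ to $\bar f$, and $d(f\circ\bar f,\id_{\bar X})<\infty$ shows $\sup_{x\in\spt(\til{\bar S})}d\bigl((g'\circ\bar g)(x),x\bigr)<\infty$; the homotopy estimate then yields $(g'\circ\bar g)_\#\til{\bar S}\sim_\F\til{\bar S}$, so $\cZ f(\cZ\bar f[\bar S])=[\bar S]$. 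Symmetrically $\cZ\bar f\circ\cZ f=\id_{\cZ X}$, whence $\cZ f$ is an isomorphism. I expect the main obstacle to be the independence argument of the second paragraph, where the deformation (homotopy) filling estimate must be combined with Lemma~\ref{lem:comb-lip} and Lemma~\ref{lem:map-small-f} while carefully tracking base points and the scale $R\asymp Lr$ so that all error currents escape to infinity.
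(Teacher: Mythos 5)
Your proposal is correct and follows essentially the same route as the paper: represent each class by a minimizer with controlled density, build $g$ by Lipschitz extension of $f$ from a doubling separated net (Lemma~\ref{lem:doubling} and Proposition~\ref{prop:lip-ext}), prove independence of all choices by combining Lemma~\ref{lem:comb-lip}, the $\bar\si$-homotopy mass/escape estimate, and Lemma~\ref{lem:map-small-f}, and obtain the isomorphism statement from a coarse inverse $\bar f$ exactly as the paper sketches. The one genuine divergence is the injectivity step: the paper argues that for a nonzero minimizer $S$ with controlled density, $g_\#S$ is a nonzero quasi-minimizer by Proposition~\ref{prop:qi-inv}, so $\Fi(g_\#S)>0$ by Lemma~\ref{lem:fill-density}; you instead build a Lipschitz coarse retraction $\bar g$ extending $(g|_N)^{-1}$ and transport $\Fi(g_\#\til S)=0$ back to $\Fi(\til S)=0$ via Lemma~\ref{lem:map-small-f} and the homotopy estimate comparing $\bar g\circ g$ with $\id_X$. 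Both work; the paper's version is shorter because Proposition~\ref{prop:qi-inv} already packages the needed lower filling-density bound, while yours stays closer to the well-definedness machinery and only uses Proposition~\ref{prop:qi-inv} in the surjectivity step (where it is indeed needed to know that $\bar g_\#\til{\bar S}$ has controlled density and that $\bar g(\spt(\til{\bar S}))$ is boundedly close to $\spt(\bar g_\#\til{\bar S})$). Minor quibbles only: the bound $\|T\|(\B{x_0}{O(r)})=O(r^n)$ from $\Gi(T)<\infty$ should be run with the fixed base point $g(p)$ rather than a varying $x_0$, and in the additivity step the common map is most cleanly obtained from Lemma~\ref{lem:comb-lip} (as in your independence step) rather than by extending from a net in the union, though that route can also be made to work.
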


Note that if $S$ and $g$ are as in the theorem, then 
$g_\#S \in \bZ_{n,\loc}^\infty(\bar X)$ by the argument in 
the first paragraph of the proof of Proposition~\ref{prop:qi-inv},
thus the class $[g_\#S] \in \cZ \bar X$ is defined.
Combining Theorem~\ref{thm:mapping-as-classes}
with Theorem~\ref{thm:f-classes} (Tits boundary), 
we get a monomorphism $f_\T$ that makes the diagram  
\[
\begin{CD}
\bZ_{n-1,\cs}(\bT X) @>{f_\T}>> \bZ_{n-1,\cs}(\bT \bar X)\\
@A{\bT}AA @AA{\bT}A\\
\cZ X @>>{\cZ f}> \cZ \bar X 
\end{CD}
\]
commutative. This yields Theorem~\ref{intro-tits-cycles} in the introduction.

\begin{proof}
Due to Theorem~\ref{thm:constr-minimizers} (constructing minimizers) and 
Proposition~\ref{prop:contr-density} (controlled density), 
every class in $\cZ X$ is represented by a minimizer 
$S \in \bZ_{n,\loc}^\infty(X)$ with controlled density.
It then follows from Lemma~\ref{lem:doubling} (doubling) and 
Proposition~\ref{prop:lip-ext} that 
a Lipschitz map $g \colon X \to \bar X$ with
$\sup_{x \in \spt(S)} d(f(x),g(x)) < \infty$ exists. 
In particular, there is at most one map $\cZ f \colon \cZ X \to \cZ \bar X$ 
with the property stated in the theorem.

Suppose now that $S_1,S_2 \in \bZ_{n,\loc}^\infty(X)$ are arbitrary and 
$g_1,g_2 \colon X \to \bar X$ are Lipschitz maps with 
$\sup_{x \in \spt(S_i)} d(f(x),g_i(x)) < \infty$ for $i = 1,2$.
It follows from Lemma~\ref{lem:comb-lip} that there exists a 
Lipschitz map $\hat g \colon X \to \bar X$ such that
$\sup_{\spt(S_1) \cup \spt(S_2)} d(f(x),\hat g(x)) < \infty$.
Using the $\bar\si$-homotopy from $g_i$ to $\hat g$ one can easily check 
that $g_{i\#}S_i \sim_\F \hat g_\#S_i$.  
In the case that $S_1 \sim_\F S_2$, Lemma~\ref{lem:map-small-f} shows that 
$\hat g_\#S_1 \sim_\F \hat g_\#S_2$, thus $g_{1\#}S_1 \sim_\F g_{2\#}S_2$. 
This yields the existence of a unique map 
$\cZ f \colon \cZ X \to \cZ \bar X$ with the property stated in the 
theorem. Furthermore, since 
\begin{align*}
\cZ f\,[S_1] + \cZ f\,[S_2] &= [\hat g_\#S_1] + [\hat g_\#S_2] = 
[\hat g_\#(S_1+S_2)] = \cZ f\,[S_1+S_2] \\
&= \cZ f\,([S_1] + [S_2]),
\end{align*}
$\cZ f$ is a homomorphism. To show that $\cZ f$ is injective, suppose
that $[S] \ne 0$, where $S$ is a minimizer with controlled density.
Then it follows from Proposition~\ref{prop:qi-inv}
that $g_\#S$ is quasi-minimizing and non-zero for any Lipschitz 
map $g \colon X \to \bar X$ with $\sup_{x \in \spt(S)}d(f(x),g(x)) < \infty$. 
Lemma~\ref{lem:fill-density} (filling density) then shows that 
$\Fi(g_\#S) \ne 0$, thus $\cZ f\,[S] = [g_\#S] \ne 0$.

If $f$ is a quasi-isometry, then there is a quasi-isometric 
embedding $\bar f \colon \bar X \to X$ such that 
$\sup_{\bar x \in \bar X}d((f \circ \bar f)(\bar x),\bar x) < \infty$, 
and it is not difficult to show that 
$\cZ f \circ \cZ \bar f$ is the identity on $\cZ \bar X$.
\end{proof}

\begin{remark} \label{rem:hoelder}
Resuming the discussion of visual metrics, we note that when
$f \colon X \to \bar X$ is an $(L,a_0)$-quasi-isometric embedding,
the monomorphism $\cZ f \colon \cZ X \to \cZ \bar X$ maps each of
the subsets $\cZ_{C,a}X \sub \cZ X$ into $\cZ_{\bar C,\bar a}\bar X$, where
$\bar C,\bar a$ depend on $X,L,a_0,C,a$. Furthermore, there is a constant
$\bar D$, depending in addition on $\bar X$, such that if $S,S' \in \cZ_{C,a}X$
and $Z \in [S-S']$, $\bar Z \in \cZ f[S-S']$ are minimizing, then 
$\spt(\bar Z)$ is at Hausdorff distance at most $\bar D$ from $f(\spt(Z))$. 
As a consequence, for every $p \in X$,
\[
L^{-1}\gp{p}{[S-S']} - a_0 - \bar D
\le \gp{f(p)}{\cZ f[S-S']} \le L\,\gp{p}{[S-S']} + a_0 + \bar D.
\]  
It follows readily that both the restriction of $\cZ f$ to $\cZ_{C,a}X$ and
its inverse are H\"older continuous with exponent $1/L$ for any pair
of visual metrics on $\cZ_{C,a}X$ and $\cZ_{\bar C,\bar a}\bar X$ with
the same parameter $b$.

Higher rank visual metrics will be further discussed elsewhere.
\end{remark} 


\section{Mapping limit sets} \label{sect:lim-sets}

We will now describe the effect of a quasi-isometric embedding 
$f \colon X \to \bar X$, or of the associated monomorphism
$\cZ f \colon \cZ X \to \cZ \bar X$, on the collection of limit sets $\cL X$ 
introduced in Definition~\ref{def:can-lim-sets}. 
We associate to every class $[S] \in \cZ X$ a limit set 
$\Lam[S] \sub \di X$ such that
\[
\Lam[S] = \Lam(S')
\]
for every $S' \in [S]$ that is quasi-minimizing or conical; 
in these cases the invariance of $\Lam(S')$ is granted by 
Theorem~\ref{thm:conical-repr} (conical representative) and 
Proposition~\ref{prop:lim-sets} (equal limit sets).
Thus $\Lam[S] \in \cL X$. For any $S \in \bZ_{n,\loc}^\infty(X)$,
the set $\Lam[S]$ also agrees with $\spt(\bT S)$; however,
Theorem~\ref{thm:lift-cones} (lifting cones) and 
Theorem~\ref{thm:f-classes} (Tits boundary) are not needed for the
proof of Theorem~\ref{thm:map-lim-sets} below.

The following preliminary result relies on 
Theorem~\ref{thm:visibility} (visibility property)
and Theorem~\ref{thm:conicality} (asymptotic conicality).

\begin{proposition}[mapping cones] \label{prop:map-cones}
Let $(X,\si)$ and $(\bar X,\bar\si)$ be two proper metric spaces with
convex bicombings and with $\asrk(X) = \asrk(\bar X) = n \ge 2$,
and let $f \colon X \to \bar X$ be a quasi-isometric embedding.
Suppose that $[S] \in \cZ X$ and $\cZ f\,[S] = [\bar S] \in \cZ \bar X$.
Choose base points $p \in X$ and $\bar p \in \bar X$, and consider the 
geodesic cones $K := \C_p(\Lam[S]) \sub X$ and 
$\bar K := \C_{\bar p}(\Lam[\bar S]) \sub \bar X$.
Then for all $\eps > 0$ there exists an $r > 0$ such that 
\[
d(f(x),\bar K) < \eps\,d(p,x)
\]
for all $x \in K$ with $d(p,x) \ge r$ and 
\[
d(\bar x,f(K)) < \eps\,d(\bar p,\bar x)
\]
for all $\bar x \in \bar K$ with $d(\bar p,\bar x) \ge r$.
\end{proposition}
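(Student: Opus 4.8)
The plan is to exploit the fact that we already have good control over cones in a single space and transport it across the quasi-isometry using a Lipschitz approximation of $f$. First I would fix a minimizing representative $S \in [S] \cap \bZ_{n,\loc}^\infty(X)$ with controlled density (available by Theorem~\ref{thm:constr-minimizers} and Proposition~\ref{prop:contr-density}), so that $\Lam[S] = \Lam(S)$, and similarly pick a Lipschitz map $g \colon X \to \bar X$ with $\sup_{x \in \spt(S)}d(f(x),g(x)) < \infty$ (via Lemma~\ref{lem:doubling} and Proposition~\ref{prop:lip-ext}). Then $\bar S := g_\# S \in \bZ_{n,\loc}^\infty(\bar X)$ is $(\bar Q,\bar a)$-quasi-minimizing with controlled density by Proposition~\ref{prop:qi-inv}, and $[\bar S] = \cZ f[S]$, hence $\Lam[\bar S] = \Lam(\bar S)$ again by Proposition~\ref{prop:lim-sets}. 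Since $f$ and $g$ agree up to bounded error on $\spt(S)$, it suffices throughout to work with $g$ in place of $f$ and with $\spt(S)$, $\spt(\bar S)$ in place of the limit sets, translating back at the end via the triangle inequality (note $g$ is $L$-bi-Lipschitz in the large on $\spt(S)$).

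Next I would establish the first inclusion, $d(f(x),\bar K) < \eps\,d(p,x)$ for $x \in K = \C_p(\Lam(S))$ far from $p$. By Theorem~\ref{thm:conicality} (asymptotic conicality applied to the quasi-minimizer $S$), every such $x$ is within sublinear distance of $\spt(S)$; applying the $L$-Lipschitz map $g$ sends $x$ to a point within sublinear distance of $\spt(\bar S) = g(\spt(S))$, hence within sublinear distance of $\bar x' \in \spt(\bar S)$. Then Theorem~\ref{thm:visibility} applied to $\bar S$ in $\bar X$ places a ray from $\bar p$ representing a point of $\Lam(\bar S)$ within $\eps'\,d(\bar p,\bar x')$ of $\bar x'$, i.e.\ $\bar x'$ — and hence $f(x)$ — is within a controlled fraction of $d(\bar p, f(x))$ of $\bar K$. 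Comparing $d(\bar p, f(x))$ with $d(p,x)$ (they agree up to an $L$-multiplicative and additive bounded factor since $g$ is a large-scale bi-Lipschitz map and $d(\bar p, g(p))$ is bounded) lets me absorb the constants and conclude. The second inclusion, $d(\bar x, f(K)) < \eps\,d(\bar p,\bar x)$ for $\bar x \in \bar K$, is symmetric in structure but requires a pseudo-inverse: I would apply Theorem~\ref{thm:conicality} to $\bar S$ to push $\bar x$ sublinearly close to $\spt(\bar S) = g(\spt(S))$, pull back through the large-scale inverse of $g$ on $\spt(S)$ to land sublinearly close to $\spt(S) \subset K$, and then note $f(\spt(S))$ is boundedly close to $g(\spt(S))$, so $\bar x$ is within the desired fraction of $f(K)$.

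The main obstacle I anticipate is bookkeeping the three kinds of error — the bounded error between $f$ and $g$, the bounded error $d(p, \bar g(\bar p))$ type shifts in basepoints, and the genuinely sublinear (but not bounded) errors coming from Theorem~\ref{thm:conicality} — and verifying that their \emph{sum}, divided by $d(p,x)$ (resp.\ $d(\bar p,\bar x)$), can be made $< \eps$ for all sufficiently large radius. The subtlety is that Theorem~\ref{thm:conicality} gives, for each prescribed $\mu > 0$, a threshold $r_\mu$ beyond which $d(y,\spt(S)) < \mu\,d(p,y)$; one must choose $\mu$ small in terms of $\eps$, $L$, and the ambient constants \emph{before} fixing the radius threshold $r$, and then check that $L$-Lipschitz images and the bi-Lipschitz comparison between $d(p,x)$ and $d(\bar p, f(x))$ do not inflate $\mu$ beyond $\eps$. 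A secondary point is that Theorem~\ref{thm:conicality} is stated for $y \in \ol\C_p(\spt(S))$ rather than $\C_p(\Lam(S))$; but $\C_p(\Lam(S)) \subset \ol\C_p(\spt(S))$ since every point of $\Lam(S)$ is a limit of points of $\spt(S)$ and geodesic cones vary continuously, so the hypothesis is met. Once these dependencies are ordered correctly, the argument is a routine, if slightly lengthy, chase through the triangle inequality.
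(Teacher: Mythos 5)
Your proposal is correct and follows essentially the same route as the paper: use Theorem~\ref{thm:conicality} to move from the cone to the support of a quasi-minimizing representative, transport via the Lipschitz approximation $g$ together with Proposition~\ref{prop:qi-inv}, and return to the cone on the other side via Theorem~\ref{thm:visibility}, keeping the bounded errors ($f$ vs.\ $g$, basepoints, $\spt(\bar S)$ vs.\ $g(\spt(S))$) separate from the sublinear ones. One small inaccuracy in your sketch of the second inclusion: $\spt(S)\sub K=\C_p(\Lam(S))$ is false in general (only the conical representative $S_{p,0}$ has support in the cone), but the argument survives because Theorem~\ref{thm:visibility}(2) places every point of $\spt(S)$ within $\eps\,d(p,\cdot)$ of a ray representing a point of $\Lam(S)$, hence sublinearly close to $K$.
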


\begin{proof}
We assume that $f$ is an $(L,a)$-quasi-isometric embedding, $f(p) = \bar p$,
$S$ is a quasi-minimizer with controlled density, and $\bar S = g_\#S$ 
for some Lipschitz map $g \colon X \to \bar X$ 
with $\bar b := \sup_{x \in \spt(S)}d(f(x),g(x)) < \infty$. 

Let $\eps' \in (0,1)$. If $r > 0$ is sufficiently large, then it follows from
Theorem~\ref{thm:conicality} that for 
every $x \in K$ with $d(p,x) \ge r$ there is a $y \in \spt(S)$ such that 
$d(x,y) < \eps' d(p,x)$, thus
\[
d(f(x),f(y)) \le L\eps' d(p,x) + a
\]
and $(1-\eps')\,d(p,x) \le d(p,y) \le (1+\eps')\,d(p,x)$.
By Proposition~\ref{prop:qi-inv} (quasi-isometry invariance), 
$\bar S$ is a quasi-minimizer with controlled density, and 
there is a point $\bar y \in \spt(\bar S)$
such that $d(g(y),\bar y) \le \bar a$ for some constant $\bar a \ge 0$, thus
\[
d(f(y),\bar y) \le \bar a + \bar b =: \bar c
\]
and $d(\bar p,\bar y) \ge d(f(p),f(y)) - \bar c \ge
L^{-1}(1-\eps')r - a - \bar c$.
Hence, if $r$ is sufficiently large, then by the second part of 
Theorem~\ref{thm:visibility},
\[
d(\bar y, \bar K) < \eps' \,d(\bar p,\bar y) \le 2L\eps' d(p,x),
\]
as $d(\bar p,\bar y) \le d(f(p),f(y)) + \bar c 
\le L(1+\eps')\,d(p,x) + a + \bar c \le 2L\,d(p,x)$.
Combining these estimates we get the first assertion, and the second is proved 
similarly.
\end{proof}

We now prove that $f$ induces an injective map 
$\cL f \colon \cL X \to \cL \bar X$. 
If $\cL f(\Lam) = \bar \Lam$, then the cones $\R_+\Lam \sub \CT X$
and $\R_+\bar\Lam \sub \CT \bar X$ are bi-Lipschitz homeomorphic. 

\begin{theorem}[mapping limit sets] \label{thm:map-lim-sets}
Let $(X,\si)$ and $(\bar X,\bar\si)$ be two proper metric spaces with
convex bicombings and with $\asrk(X) = \asrk(\bar X) = n \ge 2$, and suppose
that $f \colon X \to \bar X$ is an $(L,a)$-quasi-isometric embedding. 
Then there exists an injective map
\[
\cL f \colon \cL X \to \cL \bar X
\]
such that $\cL f(\Lam[S]) = \Lam[\bar S]$ whenever $\cZ f\,[S] = [\bar S]$.
For every finite union $M := \bigcup_{i=1}^k \Lam_i$ of sets $\Lam_i \in \cL X$
and the corresponding union $\bar M := \bigcup_{i=1}^k \bar \Lam_i$ of the sets
$\bar\Lam_i := \cL f(\Lam_i)$, there 
is a pointed $L$-bi-Lipschitz homeomorphism 
$\Phi \colon \R_+M \to \R_+\bar M$ such that 
$\Phi(\R_+\Lam_i) = \R_+\bar\Lam_i$ for $i = 1,\dots,k$.
If $f$ is a quasi-isometry, then $\cL f$ is a bijection.
\end{theorem}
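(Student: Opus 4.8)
The plan is to derive every assertion from one construction: a ``tangent cone at infinity'' of $f$, built from Proposition~\ref{prop:map-cones} (mapping cones) and Lemma~\ref{lem:unif-conv} (uniform convergence). Fix once and for all a non-principal ultrafilter $\om$ and a sequence $r_j \to \infty$, and fix base points $p \in X$ and $\bar p := f(p)$ (Proposition~\ref{prop:map-cones} allows this choice). Given finitely many $\Lam_1,\dots,\Lam_k \in \cL X$ together with a choice of classes $[S_i] \in \cZ X$ with $\Lam[S_i] = \Lam_i$, put $\bar\Lam_i := \Lam[\cZ f\,[S_i]]$, $M := \bigcup_i\Lam_i$, $\bar M := \bigcup_i\bar\Lam_i$, and $K_i := \C_p(\Lam_i)$, $\bar K_i := \C_{\bar p}(\bar\Lam_i)$. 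By Proposition~\ref{prop:cpt-lim-sets} (compact limit sets) all $\Lam_i,\bar\Lam_i$ are Tits-compact, so $\R_+M \sub \CT X$ and $\R_+\bar M \sub \CT\bar X$ are proper. For $v = su \in \R_+M$ choose $i$ with $u \in \Lam_i$; then $\can_p(r_jv) \in K_i$ at distance $r_js$ from $p$, so Proposition~\ref{prop:map-cones} gives $d\bigl(f(\can_p(r_jv)),\bar K_i\bigr)=o(r_j)$ as $j\to\infty$, hence $\bar v_j \in \R_+\bar\Lam_i$ with $d\bigl(f(\can_p(r_jv)),\can_{\bar p}(\bar v_j)\bigr)=o(r_j)$; since $f$ is $(L,a)$-quasi-isometric, $\dT(o,\bar v_j)=d(\bar p,\can_{\bar p}(\bar v_j))\le Lr_js+o(r_j)$, so the points $\bar w_j:=r_j^{-1}\bar v_j$ stay in a fixed compact subset of $\R_+\bar M$. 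Define $\Phi(v):=\lim_\om\bar w_j$.

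This $\Phi$ is independent of all choices: if $\bar v_j'$ is another admissible sequence then $d\bigl(\can_{\bar p}(\bar v_j),\can_{\bar p}(\bar v_j')\bigr)=o(r_j)$, so Lemma~\ref{lem:unif-conv} gives $\dT(r_j^{-1}\bar v_j,r_j^{-1}\bar v_j')\to0$ and the two $\om$-limits coincide; in particular $\Phi(v)$ is independent of $i$, whence $\Phi(v)\in\R_+\bar\Lam_i$ for every $i$ with $u\in\Lam_i$. Clearly $\Phi(o)=o$. For $v,v'\in\R_+M$ (with admissible sequences $\bar v_j,\bar v_j'$), combining Lemma~\ref{lem:unif-conv} (used in $X$ and $\bar X$ on the relevant compacta), the $o(r_j)$-estimates, and the quasi-isometry inequalities for $f$ gives
\[
\dT(\Phi v,\Phi v')=\lim_\om\tfrac{1}{r_j}\,d\bigl(\can_{\bar p}(\bar v_j),\can_{\bar p}(\bar v_j')\bigr)
=\lim_\om\tfrac{1}{r_j}\,d\bigl(f(\can_p(r_jv)),f(\can_p(r_jv'))\bigr)\in\bigl[L^{-1}\dT(v,v'),\,L\,\dT(v,v')\bigr],
\]
so $\Phi$ is $L$-bi-Lipschitz, hence injective with $L$-Lipschitz inverse on its image. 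For surjectivity onto $\R_+\bar M$, given $\bar v\in\R_+\bar\Lam_i$ I would use the second estimate of Proposition~\ref{prop:map-cones} to get $x_j\in K_i$ with $d\bigl(\can_{\bar p}(r_j\bar v),f(x_j)\bigr)=o(r_j)$; writing $x_j=\can_p(r_jv_j)$ with $v_j\in\R_+\Lam_i$ bounded and $v:=\lim_\om v_j\in\R_+\Lam_i$, the $1$-Lipschitz property of $\can_p$ gives $d\bigl(\can_p(r_jv_j),\can_p(r_jv)\bigr)\le r_j\,\dT(v_j,v)=o(r_j)$, so $(r_j\bar v)$ is an admissible sequence for $v$ and $\Phi(v)=\bar v$. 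With the inclusion $\Phi(\R_+\Lam_i)\sub\R_+\bar\Lam_i$ this gives $\Phi(\R_+\Lam_i)=\R_+\bar\Lam_i$ for each $i$, and $\Phi\colon\R_+M\to\R_+\bar M$ is a pointed $L$-bi-Lipschitz homeomorphism.

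All remaining assertions now reduce to this. For well-definedness of $\cL f$, run the construction with $k=2$ and $\Lam_1=\Lam_2=\Lam$ but (possibly) different representatives $[S_1],[S_2]$: here $\R_+M=\R_+\Lam$, the image of $\Phi$ lies in $\R_+\bar\Lam_1\cap\R_+\bar\Lam_2$ by the independence of $i$, yet equals $\R_+\bar M=\R_+\bar\Lam_1\cup\R_+\bar\Lam_2$ by surjectivity; hence $\R_+\bar\Lam_1=\R_+\bar\Lam_2$, so $\bar\Lam_1=\bar\Lam_2$, i.e.\ $\Lam[\cZ f\,[S]]$ depends only on $\Lam[S]$, and we set $\cL f(\Lam):=\Lam[\cZ f\,[S]]\in\cL\bar X$. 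By construction $\cL f(\Lam[S])=\Lam[\bar S]$ whenever $\cZ f\,[S]=[\bar S]$. For injectivity, if $\cL f(\Lam_1)=\cL f(\Lam_2)=:\bar\Lam$, run the construction on $\Lam_1,\Lam_2$: then $\bar\Lam_1=\bar\Lam_2=\bar\Lam$, so $\Phi(\R_+\Lam_1)=\R_+\bar\Lam=\Phi(\R_+\Lam_2)$, and injectivity of $\Phi$ forces $\R_+\Lam_1=\R_+\Lam_2$, whence $\Lam_1=\Lam_2$. The cone homeomorphism in the statement is exactly the map $\Phi$ produced by the construction for any choice of representatives of $\Lam_1,\dots,\Lam_k$, the resulting $\bar\Lam_i$ being $\cL f(\Lam_i)$ by well-definedness. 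Finally, if $f$ is a quasi-isometry, choose a quasi-inverse $\bar f\colon\bar X\to X$; by Theorem~\ref{thm:mapping-as-classes} the map $\cZ f$ is an isomorphism with inverse $\cZ\bar f$, so $\cL\bar f\circ\cL f=\id_{\cL X}$ and $\cL f\circ\cL\bar f=\id_{\cL\bar X}$, and $\cL f$ is a bijection.

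The main obstacle is the core construction of $\Phi$ in the first two paragraphs: one must keep the sublinear error terms supplied by Proposition~\ref{prop:map-cones} consistent with the uniform (on compacta) Gromov--Hausdorff convergence of the rescaled maps $\can_p,\can_{\bar p}$ from Lemma~\ref{lem:unif-conv}, and in particular verify surjectivity of $\Phi$ onto $\R_+\bar M$ using the $1$-Lipschitz property of $\can_p$. Once that is in hand, well-definedness and injectivity of $\cL f$ and the cone homeomorphism follow formally, by applying the construction to the pair $\Lam_1,\Lam_2$ (or to a single $\Lam$ with two representatives) and using only that $\Phi$ is an injection restricting to a bijection on each subcone $\R_+\Lam_i\to\R_+\bar\Lam_i$.
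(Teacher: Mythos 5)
Your proof is correct, and it rests on exactly the same pillars as the paper's argument --- Proposition~\ref{prop:map-cones} (mapping cones), Lemma~\ref{lem:unif-conv} (uniform convergence), Proposition~\ref{prop:cpt-lim-sets} (for properness of the cones), and Theorem~\ref{thm:mapping-as-classes} (for the quasi-isometry case) --- but it organizes the deduction differently. The paper first establishes existence and injectivity of $\cL f$ by a direct $\eps$-argument (if $\Lam[S]=\Lam[T]$, the sublinear approximation of Proposition~\ref{prop:map-cones} forces $\Lam[\bar S]=\Lam[\bar T]$, and conversely), and only afterwards builds $\Phi$ by choosing, for a sequence of finite $\eps_l$-nets $N_l\sub\R_+M$ and radii $r\to\infty$, approximate image points $\bar u_{r,i}$ satisfying $L$-bi-Lipschitz estimates up to additive error $4\eps$, and extracting a diagonal subsequence. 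You instead construct $\Phi$ in one step as an ultralimit of rescaled approximate images of cone points and then read off both the well-definedness and the injectivity of $\cL f$ formally from the fact that $\Phi$ is an injection carrying each subcone $\R_+\Lam_i$ onto $\R_+\bar\Lam_i$; running the construction with $k=2$ and two representatives of the same limit set to get well-definedness is a nice touch absent from the paper. The ultrafilter packaging buys a cleaner, more unified logical structure (everything is a corollary of one construction), at the cost of introducing an ultrafilter and making the two halves of the theorem interdependent; the paper's version is ultrafilter-free and keeps them separate. Your surjectivity step --- the only delicate point --- correctly combines the second estimate of Proposition~\ref{prop:map-cones} with the $r$-Lipschitz property of $u\mapsto\can_p(ru)$ and properness of $\R_+\Lam_i$, noting that the resulting $o(r_j)$ bounds need only hold $\om$-almost surely.
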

 
Here $\Phi$ is said to be {\em pointed\/} if $\Phi(o) = \bar o$, where 
$o$ and $\bar o$ are the cone vertices of $\CT X$ and $\CT \bar X$, 
respectively.

\begin{proof}
Choose base points $p \in X$ and $\bar p := f(p) \in \bar X$.
Suppose that $\cZ f\,[S] = [\bar S]$ and $\cZ f\,[T] = [\bar T]$.
We use Proposition~\ref{prop:map-cones}. 
If $\Lam[S] = \Lam[T]$, then for every $\eps \in (0,1)$ and every 
$\bar x \in \C_{\bar p}(\Lam[\bar S])$ with sufficiently large distance 
to $\bar p$ there exists an $x \in C_p(\Lam[S]) = \C_p(\Lam[T])$ such that
\[
d(\bar x,f(x)) < \eps\,d(\bar p,\bar x)
\]
and $(2L)^{-1}d(\bar p,\bar x) \le d(p,x) \le 2L\,d(\bar p,\bar x)$; 
then there is also a point $\bar y \in C_{\bar p}(\Lam[\bar T])$ such that 
\[
d(f(x),\bar y) < \eps\,d(p,x) \le 2L\eps\,d(\bar p,\bar x).
\]
It follows that $\Lam[\bar S] \sub \Lam[\bar T]$, and the reverse 
inclusion holds by symmetry. Conversely, if 
$\Lam[\bar S] = \Lam[\bar T]$, then a similar argument shows that 
$\Lam[S] = \Lam[T]$. This yields the existence of an injective map
$\cL f \colon \cL X \to \cL \bar X$
such that $\cL f(\Lam[S]) = \Lam[\bar S]$ whenever 
$\cZ f\,[S] = [\bar S]$.

Let now $M$ and $\bar M$ be given as in the theorem. 
By Proposition~\ref{prop:cpt-lim-sets} (compact limit sets), 
the cones $\R_+M$ and $\R_+\bar M$ are proper and thus separable.
For $r > 0$, let $\pi_r \colon \CT X \to X$ and 
$\bar\pi_r \colon \CT \bar X \to \bar X$ denote the $r$-Lipschitz maps
defined by
\[
\pi_r(u) := \can_p(ru), \quad \bar \pi_r(\bar u) := \can_{\bar p}(r\bar u).
\]
Let first $N \sub \R_+M$ be a finite set containing $o$, and let $\eps > 0$.
It follows from Proposition~\ref{prop:map-cones} that if we pick $r > 0$
sufficiently large, then for every $u \in N$ and 
$i \in I(u) := \{i: u \in \R_+\Lam_i\}$ there is 
a point $\bar u_{r,i} \in \R_+\bar \Lam_i$ such that 
\[
d(f(\pi_r(u)),\bar\pi_r(\bar u_{r,i})) \le \eps r,
\]
where $\bar o_{r,i} := \bar o$ for $i = 1,\dots,k$. Then, for all 
$u,v \in N$ and $i \in I(u)$, $j \in I(v)$,
\begin{align*}
L^{-1} d(\pi_r(u),\pi_r(v)) - a - 2\eps r 
&\le d(\bar\pi_r(\bar u_{r,i}),\bar\pi_r(\bar v_{r,j})) \\
&\le L\,d(\pi_r(u),\pi_r(v)) + a + 2\eps r
\end{align*}
and $d(\bar p,\bar\pi_r(\bar u_{r,i})) \le L\,d(p,\pi_r(u)) + a + \eps r$,
thus $\dT(\bar o,\bar u_{r,i}) \le L\,\dT(o,u) + r^{-1}a + \eps$.
We infer from Lemma~\ref{lem:unif-conv} (uniform convergence) 
that if $r > a/\eps$ is sufficiently large, then 
\[
L^{-1} \dT(u,v) - 4\eps \le
\dT(\bar u_{r,i},\bar v_{r,j}) \le L\,\dT(u,v) + 4\eps.
\]
For $u = v$, this also shows that the set $\{\bar u_{r,i}: i \in I(u)\}$ 
associated to $u$ has diameter at most $4\eps$. Let $s \ge 2\eps$.
It follows again from Proposition~\ref{prop:map-cones} that 
if $r > a/\eps$ is sufficiently large,
then for every $\bar w \in [\eps,s]\bar\Lam_i$ there is a
$w \in \R_+\Lam_i$ such that
\[
d(f(\pi_r(w)),\bar\pi_r(\bar w)) \le \eps r
\] 
and $\dT(o,w) \le L(\dT(\bar o,\bar w) + 2\eps) \le 2Ls$.
Then, for $u \in N \cap \R_+\Lam_i$, we can conclude as above 
that $\dT(\bar u_{r,i},\bar w) \le L\,\dT(u,w) + 4\eps$, provided $r$ is 
large enough. Hence, if we assume that $N \cap [0,2Ls]\Lam_i$ is an 
$\eps$-net in $[0,2Ls]\Lam_i$, then $\{\bar u_{r,i}: u \in N \cap \R_+\Lam_i\}$
forms an $(L + 4)\eps$-net in $[0,s]\bar\Lam_i$. Repeating this construction 
for some sequences $\eps_l \to 0$ and $s_l \to \infty$ 
and a suitable sequence $N_1 \sub N_2 \sub \ldots$ of subsets 
of $\R_+M$, we get the desired map 
$\Phi \colon \R_+M \to \R_+\bar M$ via a diagonal sequence argument.

Finally, if $f$ is a quasi-isometry, then $\cZ f \colon \cZ X \to \cZ \bar X$
is an isomorphism by Theorem~\ref{thm:mapping-as-classes}. Hence, for every 
$\bar\Lam = \Lam[\bar S] \in \cL \bar X$ there exists a 
$\Lam = \Lam[S] \in \cL X$ such that $\cZ f\,[S] = [\bar S]$ and
thus $\cL f(\Lam) = \bar\Lam$. 
\end{proof}

This result readily implies Theorem~\ref{intro-lim-sets}
in the introduction. Note that if $P = \bigcap_{i=1}^j\Lam_i$, 
$Q = \bigcap_{i=j+1}^k\Lam_i$, and 
$\bar P,\bar Q$ are the corresponding intersections of the sets 
$\bar\Lam_i := \cL f(\Lam_i)$, then the existence of a map 
$\Phi$ as in Theorem~\ref{thm:map-lim-sets} guarantees that $P \sub Q$ 
if and only if $\bar P \sub \bar Q$.

If $X$ and $\bar X$ are symmetric spaces of non-compact type and 
of rank $n \ge 2$, then their Tits boundaries have the structure 
of thick $(n-1)$-dimensional spherical buildings, and every Weyl chamber 
is the intersection of the limit sets of two $n$-flats.
It then follows from Theorem~\ref{intro-lim-sets} that every quasi-isometry 
$f \colon X \to \bar X$ induces an isomorphism (order preserving bijection) 
between the two buildings, which must carry apartments to apartments. 
This shows that the map $\cL f \colon \cL X \to \cL \bar X$ 
takes limit sets of $n$-flats to limit sets of $n$-flats, and it follows from
the case $k = 1$ of Theorem~\ref{thm:struct-qflats} below 
or Theorem~\ref{intro-qflats} that for 
every $n$-flat $F \sub X$ there is an $n$-flat $\bar F \sub \bar X$ at 
uniformly bounded Hausdorff distance from $f(F)$. This constitutes a major 
step in the proof of the quasi-isometric rigidity theorem for symmetric 
spaces of non-compact type without rank one de Rham factors; compare 
Corollary~7.1.5 in~\cite{KleL} and Lemma~8.6 in~\cite{EskF}. 
The proof may then be completed along the lines in these papers, using
Tits' work~\cite{Tit}.

\begin{theorem}[structure of quasiflats] \label{thm:struct-qflats} 
Let $X$ be a proper metric space with a convex bicombing~$\si$ and 
with $\asrk(X) = n \ge 2$. 
Let $f \colon \R^n \to X$ be an $(L,a_0)$-quasi-isometric embedding
with limit set $\Lam := \di(f(\R^n))$. 
Then the cone $K := \R_+\Lam \sub \CT X$ is $L$-bi-Lipschitz equivalent 
to $\R^n$. Suppose that $K$ is the union of closed sets 
$K_1,\dots,K_k$ such that, for some point $p \in X$, $\can_p|_{K_i}$ is a
($1$-Lipschitz) $(L,a_0)$-quasi-isometric embedding for $i = 1,\dots,k$. 
Then $f(\R^n)$ is within distance at most $b$ from $\C_p(\Lam) = \can_p(K)$
for some constant $b$ depending only on $X,L,a_0$ and $k$.
In the case $k = 1$, $f(\R^n)$ is at Hausdorff distance at 
most $b$ from $\C_p(\Lam)$.
\end{theorem}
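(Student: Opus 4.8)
The plan is to obtain the first assertion essentially for free from Theorem~\ref{thm:map-lim-sets}, and then to pin $f(\R^n)$ down relative to $\C_p(\Lam)$ using the conical–representative/Tits–cone machinery together with the filling–density dichotomy (Lemma~\ref{lem:fill-density} versus Proposition~\ref{prop:part-filling}). For the first assertion I would apply Theorem~\ref{thm:map-lim-sets} with source the Euclidean space $\R^n$ — which carries a convex bicombing and has $\asrk(\R^n)=n\ge2$ — and target $X$, to the given quasi–isometric embedding $f$. Since $\bb{\R^n}$ is area–minimizing with $\G_{q,r}(\bb{\R^n})=\om_n$ for all $q,r$, it lies in $\bZ_{n,\loc}^\infty(\R^n)$ and $\Lam[\bb{\R^n}]=\di\R^n$, so $\R_+\Lam[\bb{\R^n}]=\CT\R^n=\R^n$; and choosing, via Lemma~\ref{lem:doubling} and Proposition~\ref{prop:lip-ext}, a Lipschitz $g\colon\R^n\to X$ with $\sup_x d(f(x),g(x))<\infty$, Proposition~\ref{prop:qi-inv} makes $g_\#\bb{\R^n}$ a quasi–minimizer of controlled density whose support lies at finite Hausdorff distance from $f(\R^n)$, so that $\Lam[\cZ f\,\bb{\R^n}]=\di(f(\R^n))=\Lam$. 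Theorem~\ref{thm:map-lim-sets} (with $M=\di\R^n$) then supplies the pointed $L$–bi–Lipschitz homeomorphism $\Phi\colon\R^n\to\R_+\Lam=K$, which is the first assertion.

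Next I would set up the cycles used in the distance estimates. The locally finite version of Proposition~\ref{prop:qflats}, applied to $\R^n$ triangulated with bounded geometry, gives $S\in\bZ_{n,\loc}(X)$ that is $(Q,a)$–quasi–minimizing with $(C,a)$–controlled density, with $\Hd(\spt(S),f(\R^n))$ bounded and $Q,C,a$ depending only on $X,L,a_0$; in particular $\Lam(S)=\Lam$. By Theorem~\ref{thm:morse-3} there is an area–minimizing $\til S$ with $\Fi(S-\til S)=0$, $\Hd(\spt(S),\spt(\til S))\le b_0(X,L,a_0)$, controlled density (Proposition~\ref{prop:contr-density}) and $\Lam(\til S)=\Lam$ (Proposition~\ref{prop:lim-sets}). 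Let $S_{p,0}$ be the conical representative of $[S]=[\til S]$ at $p$ (Theorem~\ref{thm:conical-repr}), so $\spt(S_{p,0})\sub\C_p(\Lam)=\can_p(K)$, and let $\Sig\in\bZ_{n,\loc}(\CT X)$ be its conical lift, with $\spt(\Sig)=K$, $\can_{p\#}\Sig=S_{p,0}$ and $\M(\Sig\on\B{o}{1})=\Gi(S_{p,0})\le C$, furnished by Proposition~\ref{prop:cpt-lim-sets} and Theorem~\ref{thm:lift-cones}. Pushing $\Sig$ forward under the proper Lipschitz map $\Phi^{-1}\colon K\to\R^n$ of the first assertion produces a locally integral $n$–cycle in $\R^n$, necessarily a multiple $c\,\bb{\R^n}$ with $|c|$ bounded in terms of $X,L,a_0$ (compare masses over $\B{o}{1}$); hence $\Sig=c\,\Phi_\#\bb{\R^n}$, and from bi–Lipschitz invariance of mass one gets the uniform bound $\|\Sig\|(\B{v}{t})\le C_2(X,L,a_0)\,t^n$ for all $v\in K$ and $t>0$.

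For the second assertion I would fix pairwise disjoint Borel sets $U_1,\dots,U_k$ with $\bigcup_i U_i=K$ and $U_i\sub K_i$, and set $\hat S_i:=\can_{p\#}(\Sig\on U_i)$, so $\spt(\hat S_i)\sub\can_p(K_i)\sub\C_p(\Lam)$ and $\sum_i\hat S_i=\can_{p\#}\Sig=S_{p,0}$. Since $\can_p|_{K_i}$ is $1$–Lipschitz and an $(L,a_0)$–quasi–isometric embedding, the preimage under it of any $r$–ball lies in a single ball of radius $L(2r+a_0)$ in $K_i$, so the last display yields $\G_{q,r}(\hat S_i)\le C_3(X,L,a_0)$ for all $q$ and $r>a_0$; hence $Z:=\til S-S_{p,0}$ is a cycle with $\Fi(Z)=0$ (as $[\til S]=[S_{p,0}]$) and $(C_4,a_0')$–controlled density, where $C_4=C_4(X,L,a_0,k)$ absorbs the $k$ summands. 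If $x\in\spt(\til S)$ and $\B{x}{r}$ avoids $\C_p(\Lam)$, then $Z=\til S$ near $x$, so $Z$ is $(1,0)$–quasi–minimizing mod $Y:=\bigcup_i\spt(\hat S_i)$ there and Lemma~\ref{lem:fill-density} gives $\F_{x,r}(Z)\ge c(X,n)$; but Proposition~\ref{prop:part-filling}, applied with base point $x$ using the controlled–density bound for $Z$, forces $\F_{x,r}(Z)<c(X,n)$ for all $r>a'(X,L,a_0,k)$. Thus $d(x,\C_p(\Lam))\le a'$ for every $x\in\spt(\til S)$, and combining with the Hausdorff bound from the previous paragraph, $f(\R^n)\sub N_b(\C_p(\Lam))$ for some $b=b(X,L,a_0,k)$. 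When $k=1$, $\can_p|_K$ is itself an $(L,a_0)$–quasi–isometric embedding with $1$–Lipschitz upper bound, so $f':=\can_p\circ\Phi$ is a Lipschitz quasi–isometric embedding of $\R^n$ with $f'(\R^n)=\C_p(\Lam)$; Proposition~\ref{prop:qi-inv} applied to $f'$ furnishes a cycle $S':=f'_\#\bb{\R^n}$ at finite Hausdorff distance from $\C_p(\Lam)$, and, after changing the sign of $S'$ if needed, $[S']=[S]$ — the relevant multiplicities agreeing because $\Phi$ is a homeomorphism and $f$ a quasi–isometric embedding of the manifold $\R^n$. Then $S-S'$ has $\Fi(S-S')=0$ and is quasi–minimizing both mod $\spt(S)$ and mod $\spt(S')$, so Theorem~\ref{thm:morse-1} yields $\Hd(\spt(S),\spt(S'))\le b$ and hence $\Hd(f(\R^n),\C_p(\Lam))\le b'(X,L,a_0)$.

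The hard part will be the quantitative control of the conical lift $\Sig$ in the second paragraph. Since $\CT X$ is not locally compact, the monotonicity of Remark~\ref{rem:monotonicity} is unavailable there, so the uniform density bound $\|\Sig\|(\B{v}{t})\le C_2 t^n$ — as well as, for $k=1$, the fact that the multiplicity of $\Sig$ agrees with that of the quasiflat $f(\R^n)$ — must be extracted from the first assertion ($K$ bi–Lipschitz to $\R^n$, which forces $\Sig$ to be a bounded multiple of $\Phi_\#\bb{\R^n}$) together with the hypothesis that $\can_p$ is undistorted on each $K_i$. This is precisely what makes $b$ depend on $k$ but otherwise only on $X,L,a_0$, and is where keeping track of all the constants is least automatic.
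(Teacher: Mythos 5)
Your proposal is correct and follows essentially the same route as the paper's proof: the first assertion via Theorem~\ref{thm:map-lim-sets} applied to $f$ with source $\R^n$, the identification of the conical lift $\Sig$ as a bounded integer multiple of $\Phi_\#\bb{\R^n}$, the decomposition of $K$ into the pieces $K_i$ (on which $\can_p$ is undistorted) to obtain a controlled density bound for $S_{p,0}$ with constant depending on $k$, and then the filling-density dichotomy, which is exactly Theorem~\ref{thm:morse-1} unrolled. The only blemish is in the case $k=1$, where you assert $[S']=[S]$ after a sign change, i.e.\ that the multiplicity $c$ equals $\pm1$ --- a claim the paper deliberately avoids having to prove --- but your argument goes through verbatim with $cS'$ in place of $S'$, since $c\ne0$ (otherwise $\Fi(S)=0$, contradicting Lemma~\ref{lem:fill-density}) and $\spt(cS')=\spt(S')$.
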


\begin{proof}
Let $E := \bb{\R^n} \in \bZ_{n,\loc}(\R^n)$. 
By Proposition~\ref{prop:lip-qflats} (Lipschitz quasiflats) there are 
constants $Q,C,a$, depending only on $n,L,a_0$, such that $\cZ f\,[E] = [S]$ 
for some $(Q,a)$-quasi-minimizer $S \in \bZ_{n,\loc}(X)$ with 
$(C,a)$-controlled density and $\Hd(\spt(S),f(\R^n)) \le a$. 
Then $\Lam = \Lam(S) = \Lam[S]$, and Theorem~\ref{thm:map-lim-sets} shows 
that there exists an $L$-bi-Lipschitz homeomorphism
$\phi \colon \R^n \to K = \R_+\Lam$. 

Suppose now that the additional assumption in the theorem holds
for some $p \in X$. By Theorem~\ref{thm:conical-repr} (conical representative),
$\spt(S_{p,0}) \sub \C_p(\Lam)$ and $\Gi(S_{p,0}) \le \Gi(S) \le C$.
Our aim is to show that $S_{p,0}$ has controlled density.
By Theorem~\ref{thm:lift-cones} (lifting cones) there exists 
a local cycle $\Sig \in \bZ_{n,\loc}(K)$ in $K$ such that 
$\can_{p\#}\Sig = S_{p,0}$ and $\Gi(\Sig) \le C$. 
Note, however, that if $k = 1$ and $\can_p|_K$ is bi-Lipschitz or even 
isometric (for example, if $X$ is $\CAT(0)$ and $\C_p(\Lam)$ is a flat),  
then one can simply put $\Sig := (\can_p|_K^{\,-1})_\#S_{p,0}$ and the 
theorem is not needed.
Now $(\phi^{-1})_\#\Sig$ is an element of $\bZ_{n,\loc}(\R^n)$ and hence of 
the form $m E$ for some constant integer multiplicity $m$. 
Since $\phi^{-1}$ is $L$-bi-Lipschitz, it follows that $|m|$ is bounded in 
terms of $C$ and $L$ (there is no need to show that in fact $|m| = 1$). 
For $i = 1,\dots,k$, let $\psi_i$ denote the restriction of 
$\can_p \circ \phi$ to $\phi^{-1}(K_i)$.
Note that $\psi_i$ is $L$-Lipschitz and $(L^2,a_0)$-quasi-isometric
by the assumption on $\can_p|_{K_i}$.
Choose Borel sets $B_i \sub K_i$ such that the union 
$\bigcup_{i=1}^k B_i = K$ is disjoint. Since $\phi_\#(mE) = \Sig$, 
\[
\psi_{i\#}(mE \on \phi^{-1}(B_i)) 
= \can_{p\#}\bigl( \phi_\#(mE \on \phi^{-1}(B_i)) \bigr)
= \can_{p\#}(\Sig \on B_i).
\]
If $q \in X$ and $r > a_0$, then $\psi_i^{\,-1}(\B{q}{r})$
has diameter at most $L^2(2r + a_0) \le 3L^2r$, and it follows that
\[
\|\can_{p\#}(\Sig \on B_i)\|(\B{q}{r}) \le C_0r^n
\] 
for some constant $C_0$ depending only on $m,n,L$. 
Since $\sum_{i=1}^k\can_{p\#}(\Sig \on B_i) = \can_{p\#}\Sig = S_{p,0}$, 
we conclude that $S_{p,0}$ has $(kC_0,a_0)$-controlled density. Now  
Theorem~\ref{thm:morse-1} (Morse Lemma I) yields the first 
conclusion of the theorem.

If $k = 1$, then $\psi_1 = \can_p \circ \phi$ is a Lipschitz quasiflat,
hence $S_{p,0} = \psi_{1\#}(mE)$ is quasi-minimizing and $\spt(S_{p,0})$
is at finite Hausdorff distance from $\psi_1(\R^n) = \C_p(\Lam)$
by Proposition~\ref{prop:lip-qflats} (which extends to higher multiples of 
$E = \bb{\R^n}$). The desired estimate follows again from 
Theorem~\ref{thm:morse-1}.
\end{proof}

Theorem~\ref{intro-qflats} stated in the introduction follows as a 
special case. In the case $k = 1$, this applies in particular 
to $\CAT(0)$ spaces with isolated flats; compare Lemma~3.1 
in~\cite{Sch} (the case $\mathbf{F} = \R$) and Theorem~4.1.1 
in~\cite{HruK}. Furthermore, it follows easily that every $n$-dimensional 
quasiflat in a nonpositively curved symmetric space of rank $n \ge 2$ lies 
within uniformly bounded distance from the union of a finite, uniformly 
bounded number of $n$-flats; compare Theorem~1.2.5 in~\cite{KleL} and 
Theorem~1.1 in~\cite{EskF}. 
We also refer to~\cite{BehHS2, BesKS, Hua, KapL1, LanSc} for various similar 
statements.



\end{document}